\newtheorem{theorem}{Theorem}[chapter]
\newtheorem{lemma}[theorem]{Lemma}
\newtheorem{proposition}[theorem]{Proposition}
\newtheorem{corollary}[theorem]{Corollary}
\newtheorem{problem}{Open problem}
\theoremstyle{definition}
\theoremstyle{remark}
\newtheorem{remark}[theorem]{Remark}
\numberwithin{section}{chapter}
\numberwithin{equation}{chapter}
\numberwithin{equation}{section}
\newcommand{\red}[1]{{\color{red}  #1}}
\newcommand{\blue}[1]{{\color{blue}  #1}}
\newcommand{\green}[1]{{\color{gray}  #1}}
\newcommand{\II}{\mathord{I\!I}}
\newcommand{\longroot}{\textrm{long}}
\newcommand{\shortroot}{\textrm{short}}
\newcommand{\fwc}{\omega} 
\newcommand{\fwa}{\varpi} 
\renewcommand{\skew}[2]{#1/#2}
\renewcommand{\subseteq}{\subset}
\renewcommand{\supseteq}{\supset}
\newcommand{\charge}{\textrm{c}}
\DeclareMathOperator{\Trace}{Tr}
\DeclareMathOperator{\weight}{weight}
\DeclareMathOperator*{\Pf}{Pf}
\DeclareMathOperator{\Hom}{Hom}
\DeclareMathOperator{\shape}{shape}
\DeclareMathOperator{\Tab}{SSYT}
\DeclareMathOperator{\sgn}{sgn}
\DeclareMathOperator{\eup}{e}
\DeclareMathOperator{\symp}{sp}
\DeclareMathOperator{\so}{so}
\newcommand{\iup}{\hspace{0.5pt}\mathrm{i}\hspace{0.5pt}}
\DeclareMathOperator{\dup}{d\hspace{-1.5pt}}
\newcommand{\tees}{t_0,t_1,t_2,t_3}
\newcommand{\ceil}[1]{\lceil#1\rceil}
\newcommand{\floor}[1]{\lfloor#1\rfloor}
\newcommand{\Ceil}[1]{\big\lceil#1\big\rceil}
\newcommand{\Floor}[1]{\big\lfloor#1\big\rfloor}
\newcommand{\qhypc}[2]{\fourIdx{}{#1}{}{#2}\phi}
\newcommand{\qhyp}[3]{\fourIdx{}{#1}{#3}{#2}\Phi}
\newcommand{\Whyp}[3]{\fourIdx{}{#1}{#3}{\hspace{-1pt}#2}W}
\renewcommand{\sc}{\scriptstyle}
\newcommand{\Rat}{\mathbb Q}
\newcommand{\Real}{\mathbb R}
\newcommand{\Nat}{\mathbb Z_{\geqslant 0}}
\newcommand{\Z}{\mathbb Z}
\newcommand{\F}{\mathbb F}
\newcommand{\Complex}{\mathbb C}
\newcommand{\Symm}{\mathfrak{S}}
\newcommand{\abs}[1]{\lvert#1\rvert}
\newcommand{\bigabs}[1]{\big\lvert#1\big\rvert}
\newcommand{\la}{\lambda}
\newcommand{\ip}[2]{\langle#1,#2\rangle}
\newcommand{\ipbig}[2]{\big\langle#1,#2\big\rangle}
\newcommand{\qbin}[2]{\genfrac{[}{]}{0pt}{}{#1}{#2}}
\newcommand{\gfrak}{\mathfrak{g}}
\DeclareMathOperator{\ch}{ch}
\DeclareMathOperator{\mult}{mult}
\DeclareMathOperator{\op}{odd}
\DeclareMathOperator{\ep}{even}
\newcommand{\halfm}[1]{(\hspace{-0.7pt}\frac{m}{2}\hspace{-0.7pt})^{#1}}
\begin{document}

\frontmatter

\title[Bounded Littlewood identities]{Bounded Littlewood identities}

\author{Eric M. Rains}
\address{Department of Mathematics, 
California Institute of Technology,
Pasadena, CA 91125, USA}
\email{rains@caltech.edu}

\author{S. Ole Warnaar}
\address{School of Mathematics and Physics,
The University of Queensland, Brisbane, QLD 4072,
Australia}
\email{o.warnaar@maths.uq.edu.au}
\thanks{Work supported by the National Science Foundation 
(grant number DMS-1001645) and the Australian Research Council}


\subjclass[2010]{Primary 05E05, 05E10, 17B67, 33D67}

\keywords{Macdonald--Koornwinder polynomials, 
Hall--Littlewood polynomials, virtual Koornwinder integrals,
character formulas, Rogers--Ramanujan identities, 
plane partitions}

\begin{abstract}
We describe a method, based on the theory of Macdonald--Koorn\-win\-der 
polynomials, for proving bounded Littlewood identities.
Our approach provides an alternative to Macdonald's partial fraction
technique and results in the first examples of bounded Littlewood identities 
for Macdonald polynomials.
These identities, which take the form of decomposition formulas for 
Macdonald polynomials of type $(R,S)$ in terms of ordinary Macdonald 
polynomials, are $q,t$-analogues of known branching formulas for 
characters of the symplectic, orthogonal and special orthogonal groups.
In the classical limit, our method implies that
MacMahon's famous ex-conjecture for the generating function of symmetric 
plane partitions in a box follows from the identification of 
$\big(\mathrm{GL}(n,\mathbb{R}),\mathrm{O}(n)\big)$ as a Gelfand pair.
As further applications, we obtain combinatorial formulas for 
characters of affine Lie algebras; 
Rogers--Ramanujan identities for affine Lie algebras, 
complementing recent results of Griffin et al.; and
quadratic transformation formulas for Kaneko--Macdonald-type basic 
hypergeometric series.
\end{abstract}

\maketitle

\tableofcontents

\chapter*{Acknowledgements}
We are grateful to Fokko van de Bult, Hjalmar Rosengren, Michael Schlosser, 
and Jasper Stokman for helpful discussions on hypergeometric functions, 
Macdonald identities and Macdonald--Koornwinder polynomials. 
We thank Richard Stanley for pointing out the paper \cite{Schur18} 
by Schur, and Chul-hee Lee for his many small corrections to an earlier
version of this paper. Finally, we wish to thank the anonymous referee 
for his or her careful reading of the paper and for noticing a connection with 
the recent work of Cohl et al.\ on basic hypergeometric 
functions~\cite{CCSHW14}.
This has led to the inclusion of the material on very-well poised multiple 
basic hypergeometric series in Section~\ref{Sec_KM}.

\mainmatter


\chapter{Introduction}

\section{Littlewood identities}
In his 1950 text on group characters \cite{Littlewood50}, 
D.~E.~Littlewood presented three identities for Schur functions which can 
be viewed as reciprocals of the Weyl denominator formulas for the classical 
groups $\mathrm{B}_n,\mathrm{C}_n$ and $\mathrm{D}_n$. 
The $\mathrm{B}_n$ case---which earlier appeared in 
an exercise by Schur \cite{Schur18}---is given by
\cite[Eq.~(11.9;\,6)]{Littlewood50}
\begin{equation}\label{Eq_L1196}
\sum_{\la} s_{\la}(x)=
\prod_{i=1}^n \frac{1}{1-x_i}
\prod_{1\leqslant i<j\leqslant n} \frac{1}{1-x_ix_j},
\end{equation}
where $s_{\la}(x)=s_{\la}(x_1,\dots,x_n)$ is a Schur function
indexed by the partition $\la$.
Almost 30 years later, Macdonald \cite{Macdonald79} proved the
following bounded analogue of \eqref{Eq_L1196}:
\begin{equation}\label{Eq_MacB}
\sum_{\substack{\la \\[1pt] \la_1\leqslant m}} s_{\la}(x)=
\frac{\det_{1\leqslant i,j\leqslant n} (x_i^{m+2n-j}-x_i^{j-1})}
{\prod_{i=1}^n (x_i-1)\prod_{1\leqslant i<j\leqslant n}(x_i-x_j)(x_ix_j-1)},
\end{equation} 
for $m$ a nonnegative integer, and observed that it implied MacMahon's 
famous conjecture \cite{MacMahon98} for the generating function of 
symmetric plane partitions in a box.
By reading off the `sequence of diagonal slices'---an idea 
more recent than \cite{Macdonald79}, see e.g., \cite{OR03}---it
immediately follows that the generating function
\begin{equation}\label{Eq_GF-pp}
\sum_{\substack{\pi\subseteq\mathrm{B}(n,n,m) \\[1pt] \pi \text{ symmetric}}} 
q^{\abs{\pi}}
\end{equation}
for symmetric plane partitions $\pi$ contained in a box $\mathrm{B}(n,n,m)$ 
of size $n\times n\times m$ is given by
\begin{equation}\label{Eq_GF-pp-Schur}
\sum_{\substack{\la\\[1pt]\la_1\leqslant m}}s_{\la}(q,q^3,\dots,q^{2n-1}).
\end{equation}
Hence MacMahon's formula \cite{MacMahon98} 
\begin{equation}\label{Eq_MacMahon}
\sum_{\substack{\pi\subseteq\mathrm{B}(n,n,m) \\[1pt] \pi \text{ symmetric}}} 
q^{\abs{\pi}}=\prod_{i=1}^n \frac{1-q^{m+2i-1}}{1-q^{2i-1}}
\prod_{1\leqslant i<j\leqslant n} \frac{1-q^{2(m+i+j-1)}}{1-q^{2(i+j-1)}}
\end{equation}
should follow from the evaluation of the determinant on the
right of \eqref{Eq_MacB} in which the variables $x_i$ are specialised 
as $x_i=q^{2i-1}$ for $1\leqslant i\leqslant n$. 
Since the unspecialised determinant is essentially a 
character of the irreducible $\mathrm{SO}(2n+1,\Complex)$-module 
of highest weight $m\fwc_n$, the required determinant evaluation 
corresponds to the $q$-dimension of this module, and follows from 
the Weyl character formula \cite{Humphreys72}.
To prove \eqref{Eq_MacB}---a $\mathrm{SO}(2n+1,\Complex)$ to 
$\mathrm{GL}(n,\Complex)$ branching formula---Macdonald developed
a partial fraction method, resulting in a more general $t$-analogue for 
Hall--Littlewood polynomials.

Since the work of Littlewood and Macdonald, many additional Littlewood 
identities have been discovered and applied to problems in combinatorics,
$q$-series and representation theory. 
Examples include the enumeration of plane partitions and related
combinatorial objects such as tableaux, tilings, longest
increasing subsequences and alternating-sign matrices 
\cite{BK72,BR01,BW14,BWZ14,Bressoud00,Bressoud99,Desarmenien86,
Desarmenien89,Goulden92,Krattenthaler93,Panova12,Panova14,
Proctor90,Stembridge90,WZ16}, the computation of characters and
branching rules for classical groups and affine Lie algebras
\cite{BW13,IW99,IW06,Kawanaka91,Kawanaka99,Krattenthaler98,Okada98},
and applications to Schubert calculus \cite{LLT89}
Rogers--Ramanujan identities \cite{GOW14,IJZ06,JZ05,Stembridge90,W06}
and multiple elliptic hypergeometric series \cite{LSW09,Rains12}.
Surprisingly, despite the interest in Littlewood identities,
$q,t$-analogues of \eqref{Eq_MacB} 
and other bounded Littlewood identities for Schur and Hall--Littlewood 
polynomials have remained elusive.
In this paper we present an approach to Littlewood 
identities based on the theory of Macdonald--Koornwinder polynomials.
As a result we obtain the missing $q,t$-analogues,
including the following generalisation of Macdonald's 
determinantal formula \eqref{Eq_MacB}.\footnote{A second 
generalisation in terms of $P^{(\mathrm{B}_n,\mathrm{C}_n)}_{\halfm{n}}$
is given in Theorem~\ref{THM_BOUNDED5}.}
\begin{theorem}\label{Thm_qtL}
For $x=(x_1,\dots,x_n)$ and $m$ a nonnegative integer,
\begin{multline}\label{Eq_qt_Littlewood}
\sum_{\substack{\la \\[1pt] \la_1\leqslant m}} 
P_{\la}(x;q,t) 
\prod_{\substack{s\in\la \\[1pt] l'(s) \textup{ even}}} 
\frac{1-q^{m-a'(s)}t^{l'(s)}}{1-q^{m-a'(s)-1}t^{l'(s)+1}}
\prod_{\substack{s\in\la \\[1pt] l(s) \textup{ even}}}
\frac{1-q^{a(s)}t^{l(s)+1}}{1-q^{a(s)+1}t^{l(s)}} \\
=(x_1\cdots x_n)^{\frac{m}{2}}\, 
P^{(\mathrm{B}_n,\mathrm{B}_n)}_{\halfm{n}}(x;q,t,t).
\end{multline}
\end{theorem}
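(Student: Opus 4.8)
The plan is to prove Theorem~\ref{Thm_qtL} by placing both sides inside the theory of Macdonald--Koornwinder polynomials and computing the expansion of the right-hand side in type-$\A$ Macdonald polynomials by means of a \emph{virtual Koornwinder integral}. The right-hand side is a monomial twist of the Macdonald polynomial $P^{(\B_n,\B_n)}_{\halfm{n}}$ of type $(\B_n,\B_n)$, indexed by the rectangle $\halfm{n}=(m/2,\dots,m/2)$ with its extra parameter specialized to $t$. Because the weights occurring in $P^{(\B_n,\B_n)}_{\halfm{n}}$ lie in the convex hull of the $W(\BC_n)$-orbit of $\halfm{n}$, the twist $(x_1\cdots x_n)^{m/2}P^{(\B_n,\B_n)}_{\halfm{n}}(x;q,t,t)$ is an honest symmetric polynomial of degree at most $m$ in each variable, hence automatically lies in $\mathrm{span}\{P_\lambda(x;q,t):\lambda_1\le m\}$. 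The real content of the theorem is therefore the \emph{evaluation} of the coefficients in this expansion as the explicit $q,t$-hook products appearing on the left.

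Write $b_\lambda(q,t;m)$ for the coefficient of $P_\lambda$ on the left,
\[
b_\lambda(q,t;m)=\prod_{\substack{s\in\lambda\\ l'(s)\text{ even}}}\frac{1-q^{m-a'(s)}t^{l'(s)}}{1-q^{m-a'(s)-1}t^{l'(s)+1}}\;\prod_{\substack{s\in\lambda\\ l(s)\text{ even}}}\frac{1-q^{a(s)}t^{l(s)+1}}{1-q^{a(s)+1}t^{l(s)}}\,.
\]
Its second factor is independent of $m$ and is precisely the weight governing the \emph{unbounded} $q,t$-Littlewood identity of type $(\B,\B)$, while the first factor (which tends to $1$ as $m\to\infty$ for $\abs{q}<1$) is a finitization factor; thus \eqref{Eq_qt_Littlewood} should degenerate to that unbounded identity as $m\to\infty$, to Macdonald's \eqref{Eq_MacB} as $t=q$ (where $P_\lambda(x;q,q)=s_\lambda(x)$), and to the bounded Hall--Littlewood identity as $q\to0$; these three limits fix all normalisations and provide independent checks. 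To obtain the coefficients themselves I would use orthogonality with respect to the type-$\A$ Macdonald density on the torus: the sought coefficient equals $\langle (x_1\cdots x_n)^{m/2}P^{(\B_n,\B_n)}_{\halfm{n}},P_\lambda\rangle/\langle P_\lambda,P_\lambda\rangle$. The numerator is evaluated through a Cauchy-type reproducing-kernel identity for the pairing of a type-$(\B_n,\B_n)$ polynomial against a type-$\A$ one; this is the point where the virtual Koornwinder integral is indispensable, since the natural home of such a pairing is a genuine Koornwinder integral in a ``half-integral number of variables'', and the virtual-integral formalism is what renders the manipulation legitimate. Carrying out the evaluation and dividing by $\langle P_\lambda,P_\lambda\rangle$ should return exactly $b_\lambda(q,t;m)$.

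The step I expect to be the main obstacle is this closed-form evaluation of the mixed $(\B,\B)$--$\A$ pairing. There are two natural routes. The first proves it first for a genuine positive integer number $N$ of auxiliary variables, by combining Koornwinder's orthogonality relations with the explicit principal-specialization and norm formulas for Koornwinder polynomials, and then observes that both sides are rational functions of $T=t^N$, so that the identity persists under analytic continuation to the value of $T$ forced by the $n$ variables actually present; the delicate part is controlling the poles in $T$ and justifying the continuation, which is exactly what the theory of virtual integrals is designed to do. The second route bypasses integrals altogether and proves \eqref{Eq_qt_Littlewood} by a Pieri-type induction --- either on $m$, adjoining a column to the rectangle $\halfm{n}$, or on $n$, via a branching rule specialising $x_n$ to a distinguished value --- reducing to a one- or zero-variable base case; here the difficulty is that the two cell-parity conditions defining $b_\lambda$ interact under the recursion in a combinatorially intricate way, so that closing the induction requires a nontrivial $q,t$-hook identity. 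I would attempt the virtual-integral route first, as it is cleaner and extends uniformly to the other pairs $(R,S)$, thereby yielding the companion bounded Littlewood identities (such as the $P^{(\B_n,\C_n)}_{\halfm{n}}$ variant of Theorem~\ref{Thm_bounded5}) by the same argument.
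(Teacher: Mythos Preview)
Your overall framework matches the paper's, but the mechanism you describe for extracting the coefficient is not quite the one used, and the specific evaluation you flag as the obstacle is precisely the new technical result that has to be supplied.

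The paper does \emph{not} pair the right-hand side against $P_\lambda$ via the $n$-variable type-$\A$ inner product. Instead it uses Mimachi's dual Cauchy identity \eqref{Eq_Mim} for Koornwinder polynomials, together with the identification $P^{(\B_n,\B_n)}_\mu(x;q,t,t)=K_\mu(x;q,t;t,q^{1/2})$ from \eqref{Eq_PB-K} and Lemma~\ref{Lem_KBn}, to show (Proposition~\ref{Prop_fIK-2}) that the coefficient of $P_\lambda(x;q,t)$ equals $(-1)^{|\lambda|}\,I_K\big(P_{\lambda'}(t,q);t,q,q^{m/2};-1,t,\pm q^{1/2}\big)$. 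Note the switch: $q\leftrightarrow t$, $\lambda\mapsto\lambda'$, and the virtual integral lives on the $m$-side, not the $n$-side. The ``half-integral number of variables'' you mention is indeed the phenomenon that $m/2$ can be a half-integer; this is handled by Lemma~\ref{Lem_KBn} (relating the half-partition-indexed $K$ to an ordinary Koornwinder polynomial with shifted $t_0$) and the symmetry in Lemma~\ref{Lem_IK}, so that the virtual-integral formalism applies uniformly.

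The evaluation you identify as the crux is Theorem~\ref{Thm_VKI}:
\[
I_K\big(P_{\mu}(q,t);q,t,T;-1,q,\pm t^{1/2}\big)
=(-1)^{|\mu|}\,\frac{(T^2;q^2,t)_{\lceil\mu/2\rceil}}{(qT^2/t;q^2,t)_{\lceil\mu/2\rceil}}\cdot\frac{1}{b_{\mu}^{\textup{ea}}(q,t)}.
\]
Its proof is a hybrid of your two proposed routes rather than either one alone. A \emph{single} application of the $e$-Pieri rule inside the integral shifts the parameter $t_0$ from $q$ to $1$, reducing to the integral at $(\pm 1,\pm t^{1/2})$; the latter is the dual \eqref{Eq_45} of the known $\mathrm{U}(2n)/\mathrm{Sp}(2n)$ vanishing integral \eqref{Eq_Q1}. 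However, $(\pm 1,\pm t^{1/2})$ is a non-generic specialisation hitting poles of the lifted Koornwinder polynomials, so one cannot simply set $T=t^n$: Lemma~\ref{Lem_wrong-order} is invoked to take the limit correctly, yielding Theorem~\ref{Thm_BD}. The Pieri step then produces two terms which are shown to coincide (Lemma~\ref{Lem_psip} plus a direct computation), giving the evaluation at $T=t^n$ for all $n$; rationality in $T$ completes the proof. So your ``analytic continuation in $T$'' appears, but only at the very end, and your ``Pieri-type induction'' appears, but as a one-step reduction rather than a full recursion; the combinatorial $q,t$-hook identity you anticipated is the equality of the two Pieri contributions.
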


On the left, $P_{\la}(x;q,t)$ is a Macdonald polynomial and 
$a(s),l(s),a'(s),l'(s)$ are the arm-length, leg-length, arm-colength
and leg-colength of the square $s\in\la$.
The (Laurent) polynomial
$P^{(\mathrm{B}_n,\mathrm{B}_n)}_{\halfm{n}}(x;q,t,t)$ on the right
is a Macdonald polynomial attached to the pair of root systems
$(\mathrm{B}_n,\mathrm{B}_n)$, indexed by the rectangular partition 
or `half-partition' $(\frac{m}{2},\dots,\frac{m}{2})$ of length $n$.

Our method also leads to alternative proofs, as well as further 
examples, of Littlewood identities for the characters of irreducible 
highest-weight modules of affine Lie algebras, first discovered 
in \cite{BW13}. 
In particular we find that such characters arise by taking 
suitable limits of Hall--Littlewood polynomials of type $R$.
For example, for the twisted affine Lie algebra $\mathrm{A}_{2n}^{(2)}$
we obtain the following formula for the character of the
highest-weight module $V(m\fwa_0)$ in terms of
modified Hall--Littlewood polynomials $P'_{\la}(x;t)$ and
the large-$r$ limit of the Hall--Littlewood polynomial 
$P_{\halfm{n}}^{(\mathrm{B}_r)}(x;t,t_2)$.
\begin{theorem}\label{Thm_twistedA}
Let $\alpha_0,\dots,\alpha_n$ and $\fwa_0,\dots,\fwa_n$
be the simple roots and fundamental weights
of $\mathrm{A}_{2n}^{(2)}$, and 
$\delta=2\alpha_0+\cdots+2\alpha_{n-1}+\alpha_n$ the null root.
Set
\[
t=\eup^{-\delta}
\quad\text{and}\quad
x_i=\eup^{-\alpha_i-\cdots-\alpha_{n-1}-\alpha_n/2},
\]
and let $\ch V(\Lambda)$ denote the character of the integrable
highest-weight module $V(\Lambda)$ of highest weight $\Lambda$. Then,
for $m$ a positive integer,
\begin{align}\label{Eq_char-A2n2}
\eup^{-m\fwa_0} \ch V(m\fwa_0)
&=\lim_{N\to\infty} t^{\frac{1}{2}mnN^2} 
P^{(\mathrm{B}_{2nN})}_{\halfm{2nN}}(t^{1/2}X;t,0) \\[1mm]
&=\sum_{\substack{\la \\[1pt] \la_1\leqslant m}}
t^{\abs{\la}/2} P'_{\la}(x_1^{\pm},\dots,x_n^{\pm};t), \notag
\end{align}
where $(\dots,ax_i^{\pm},\dots):=(\dots,ax_i,ax_i^{-1},\dots)$ and
$X=X_N(x;t)$ is the alphabet
\begin{equation}\label{Eq_X}
X=(x_1^{\pm},tx_1^{\pm},\dots,t^{N-1}x_1^{\pm},\dots\dots,
x_n^{\pm},tx_n^{\pm},\dots,t^{N-1}x_n^{\pm})
\end{equation}
of cardinality $2nN$.
\end{theorem}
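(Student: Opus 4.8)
The plan is to establish the two equalities in \eqref{Eq_char-A2n2} separately. The second is a purely combinatorial \emph{stable bounded Littlewood identity}, obtained by specialising one of the Hall--Littlewood ($q=0$) bounded Littlewood identities of this paper at a geometric-progression alphabet and letting the number of variables tend to infinity; the first then identifies the resulting generating function with the character of $V(m\Fun_0)$ via the Weyl--Kac character formula for $\A_{2n}^{(2)}$.

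For the second equality I would start from the relevant Hall--Littlewood bounded Littlewood identity, which expresses $P^{(\B_r)}_{\halfm{r}}(y;t,0)$, times a monomial prefactor $(y_1\cdots y_r)^{m/2}$, as a sum over partitions $\mu$ with $\mu_1\le m$ and at most $r$ parts of type-$\A$ Hall--Littlewood polynomials $P_\mu(y;t)$ carrying explicit arm--leg $t$-factors. Setting $r=2nN$ and specialising the variables $y$ to the alphabet $t^{1/2}X_N$ of \eqref{Eq_X}, two things happen. First, since $X_N$ consists of the $2n$ mutually inverse pairs $t^{j-1}x_i^{\pm 1}$, $j=1,\dots,N$, the prefactor $(y_1\cdots y_{2nN})^{m/2}$ becomes exactly $t^{\frac{1}{2}mnN^2}$. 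Second, by homogeneity $P_\mu(t^{1/2}X_N;t)=t^{\abs{\mu}/2}P_\mu(X_N;t)$, and as $N\to\infty$ the bound on the number of parts becomes vacuous while the geometric progressions assemble into the plethystic alphabet $(x_1+x_1^{-1}+\dots+x_n+x_n^{-1})/(1-t)$, so that $P_\mu(X_N;t)$ converges to the modified Hall--Littlewood polynomial $P'_\mu(x_1^\pm,\dots,x_n^\pm;t)$. A short computation shows that the $q=0$ arm--leg factors degenerate and, combined with this limit, collapse to the bare weight $t^{\abs{\la}/2}$, producing the right-hand side of \eqref{Eq_char-A2n2} and, simultaneously, the normalising factor $t^{\frac{1}{2}mnN^2}$ in front of $P^{(\B_{2nN})}_{\halfm{2nN}}(t^{1/2}X_N;t,0)$.

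For the first equality I would feed the substitution $t=\eup^{-\delta}$, $x_i=\eup^{-\alpha_i-\dots-\alpha_{n-1}-\alpha_n/2}$ into the Weyl-type formula for the rectangular Hall--Littlewood polynomial $P^{(\B_{2nN})}_{\halfm{2nN}}(\,\cdot\,;t,0)$ — a sum over the hyperoctahedral group of $2nN$ letters — and track the $N\to\infty$ behaviour. After the normalisation $t^{\frac{1}{2}mnN^2}$, the Weyl-group sum should converge to the affine Weyl numerator attached to $m\Fun_0+\rho$, the limiting sum running over the affine Weyl group of $\A_{2n}^{(2)}$ (a semidirect product of $W(\C_n)$ with a translation lattice), while the product over the finite positive roots tends to the $\A_{2n}^{(2)}$-denominator — the Macdonald identity for the twisted affine denominator; together these reproduce $\eup^{-m\Fun_0}\ch V(m\Fun_0)$. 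Alternatively, one may bypass this computation by combining the second equality, once established, with the Littlewood-type character formula of \cite{BW13}.

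The main obstacle is the $N\to\infty$ analysis underlying the second equality: one must show that $t^{\frac{1}{2}mnN^2}P^{(\B_{2nN})}_{\halfm{2nN}}(t^{1/2}X_N;t,0)$ converges coefficientwise, as a formal power series in $t$ with Laurent-polynomial coefficients in $x_1,\dots,x_n$; compute the limit; and verify both that the geometric structure of $X_N$ matches the plethysm $y\mapsto y/(1-t)$ defining $P'_\la$ and that the power of $t$ in the normalisation is exactly $\tfrac{1}{2}mnN^2$. This requires uniform control of the Hall--Littlewood expansion together with the degeneration of the arm--leg factors at $q=0$. Recognising the limiting Weyl-type formula as the twisted affine denominator identity is the secondary, more representation-theoretic, point demanding care.
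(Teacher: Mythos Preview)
Your approach is essentially the paper's: specialise the $\B_r$ bounded Littlewood identity at the alphabet $t^{1/2}X_N$, let $N\to\infty$, and match the result with the Weyl--Kac formula for $\A_{2n}^{(2)}$.

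Two points where your sketch diverges from what the paper actually does. First, the relevant Hall--Littlewood identity is Theorem~\ref{Thm_bounded7} with $t_2=0$, which reads
\[
\sum_{\substack{\la\\ \la_1\leq m}} P_{\la}(y;t)
=(y_1\cdots y_r)^{m/2} P^{(\B_r)}_{\halfm{r}}(y;t,0),
\]
since $h_{\la}^{(m)}(0;t)=1$. There are no arm--leg $t$-factors to ``degenerate''; the weight $t^{\abs{\la}/2}$ in \eqref{Eq_char-A2n2} comes solely from homogeneity under $y\mapsto t^{1/2}X_N$. Second, for the first equality the paper does not argue abstractly that a hyperoctahedral sum converges to an affine Weyl numerator. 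Instead it uses Lemma~\ref{Lem_littlelemmaB} to rewrite $P^{(\B_r)}_{\halfm{r}}$ as a sum over $\{\pm1\}^r$, so that after the geometric specialisation only finitely many sign patterns survive for each $N$; this yields a concrete $\C_n$-type hypergeometric sum (Proposition~\ref{Prop_MPS} and its corollary, then \eqref{Eq_id56a-2} with $t_2\to 0$) whose $N\to\infty$ limit is identified with $\chi_m(\A_{2n}^{(2)})$ via the rewriting of Weyl--Kac in \cite[Lemma~2.2]{BW13}. Your fallback of simply citing \cite{BW13} would only give the case of even $m$; the half-integer rectangles (odd $m$) require the present paper's Theorem~\ref{Thm_bounded7}.
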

As shown by Griffin et al.~\cite{GOW14}, 
character formulas such as \eqref{Eq_char-A2n2}
imply Rogers--Ramanujan identities through specialisation.
Following their approach, we obtain several new examples of
Rogers--Ramanujan identities labelled by affine Lie algebras.
For example, from \eqref{Eq_char-A2n2} we obtain the following
new identity, where $P_{\la}(x;t)$ is a Hall--Littlewood polynomial,
$\theta(x;q)$ a modified theta function and $(q;q)_{\infty}$ a $q$-shifted 
factorial.
\begin{theorem}[$\mathrm{A}_{2n}^{(2)}$ Rogers--Ramanujan identity]
For $m,n$ positive integers, let $\kappa=m+2n+1$. Then
\begin{multline*}
\sum_{\substack{\la \\[1pt] \la_1\leqslant m}}
q^{\abs{\la}/2} P_{\la}(1,q,q^2,\dots;q^{2n}) \\[-1mm]
=\frac{(q^{\kappa};q^{\kappa})_{\infty}^{n-1}
(q^{\kappa/2};q^{\kappa/2})_{\infty}}
{(q;q)_{\infty}^{n-1}(q^{1/2};q^{1/2})_{\infty}}
\prod_{i=1}^n \theta(q^i;q^{\kappa/2})
\prod_{1\leqslant i<j\leqslant n} \theta(q^{j-i};q^{\kappa})
\theta(q^{i+j};q^{\kappa}).
\end{multline*}
\end{theorem}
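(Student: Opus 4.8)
The plan is to derive the identity by specialising the affine character formula \eqref{Eq_char-A2n2}, in the manner of Griffin, Ono and Warnaar \cite{GOW14}: one and the same specialisation of the two sides of \eqref{Eq_char-A2n2} produces the sum on the left out of the Hall--Littlewood side, and the product on the right out of the character $\eup^{-m\Fun_0}\ch V(m\Fun_0)$.

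First I would fix a principal-type specialisation of the affine torus of $\A_{2n}^{(2)}$, that is, an assignment $\eup^{-\alpha_i}\mapsto q^{e_i}$ of the simple roots, chosen so that $t=\eup^{-\delta}$ becomes $q^{2n}$ and the $n$ variables $x_i=\eup^{-\alpha_i-\dots-\alpha_{n-1}-\alpha_n/2}$ become the powers of $q$ that force the right-hand side of \eqref{Eq_char-A2n2} into the desired sum. On that side, the passage from the modified Hall--Littlewood polynomials $P'_\la(x_1^{\pm},\dots,x_n^{\pm};t)$ to the ordinary polynomials $P_\la(1,q,q^2,\dots;q^{2n})$ rests on the standard relation between the two families (the plethystic substitution $x\mapsto x/(1-t)$ applied to $P_\la$), which unfolds the finite alphabet $(x_1^{\pm},\dots,x_n^{\pm})$ into a geometric progression; by homogeneity of $P_\la$ the prefactor $t^{\abs{\la}/2}$ then becomes $q^{\abs{\la}/2}$. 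The precise choice of the $e_i$ is dictated by requiring this unfolding to reproduce the alphabet $1,q,q^2,\dots$ exactly, with the half-integer shifts that are responsible for the factor $q^{1/2}$ in the answer; this bookkeeping is routine.

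Next, under the same specialisation I would evaluate $\eup^{-m\Fun_0}\ch V(m\Fun_0)$ via the Weyl--Kac character formula, equivalently Lepowsky's numerator formula for the principally specialised character. The denominator of the Weyl--Kac formula specialises to an infinite product over the positive roots of $\A_{2n}^{(2)}$, counted with multiplicity, and the numerator, after the relevant Macdonald-identity evaluation, to a second such product; their ratio, into which $\kappa=m+2n+1=m+h^\vee$ enters through the shift by the Weyl vector, must then be rearranged. The imaginary-root factors assemble into $(q^\kappa;q^\kappa)_\infty^{n-1}(q^{\kappa/2};q^{\kappa/2})_\infty\big/(q;q)_\infty^{n-1}(q^{1/2};q^{1/2})_\infty$, while the real-root factors, after collecting each root with its $\delta$-translates into a $\theta$-factor, give $\prod_{i=1}^n\theta(q^i;q^{\kappa/2})$ and $\prod_{1\le i<j\le n}\theta(q^{j-i};q^\kappa)\theta(q^{i+j};q^\kappa)$. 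Equating the two evaluations of \eqref{Eq_char-A2n2} then yields the theorem.

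The main obstacle is this last step: the explicit principal specialisation of the integrable highest-weight $\A_{2n}^{(2)}$-character and the regrouping of the resulting infinite product into the stated theta-function form. This requires the precise root datum of $\A_{2n}^{(2)}$ --- its real roots with multiplicities, its Weyl vector, and in particular the half-integral feature responsible for the modulus $\kappa/2$ peculiar to this twisted type --- a choice of specialisation simultaneously compatible with the unfolding of the Hall--Littlewood side, and a nontrivial rearrangement of the product. A minor additional point is to justify term-by-term specialisation of the infinite sum over $\la$ with $\la_1\le m$, which is immediate at the level of formal power series in $q$.
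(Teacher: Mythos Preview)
Your proposal is correct and follows essentially the same route as the paper: specialise the character identity \eqref{Eq_char-A2n2} via $\eup^{-\alpha_0}\mapsto q^{1/2}$, $\eup^{-\alpha_i}\mapsto q$ for $1\le i\le n$ (so $t\mapsto q^{2n}$, $x_i\mapsto q^{n-i+1/2}$), unfold the Hall--Littlewood side plethystically, and evaluate the character side by rewriting the Weyl--Kac numerator as a $\Z^n$-sum and summing it to a product via the $\D_{n+1}^{(2)}$ Macdonald identity. The only imprecision is your appeal to Lepowsky's numerator formula, which concerns the true principal specialisation; the paper instead works directly with the Weyl--Kac formula in the form of \cite[Lemma~2.2]{BW13} and identifies the specialised numerator with the $\D_{n+1}^{(2)}$ Macdonald identity after a determinant manipulation.
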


\section{Outline}
The remainder of this paper is organised as follows.
In the next chapter we review some standard material from
Macdonald--Koornwinder theory.
This includes a discussion of ordinary (or type $\mathrm{A}$)
Macdonald polynomials, Koornwinder polynomials and their
lifted and virtual analogues,
generalised Macdonald polynomials of type $(R,S)$, 
Hall--Littlewood polynomials and Rogers--Szeg\H{o} polynomials.

In Chapter~\ref{Ch_virtual} we consider two important
functionals on the ring of symmetric functions known as virtual 
Koornwinder integrals.
We review a number of earlier results for virtual Koornwinder
integrals and prove several new integral evaluations.

In Chapter~\ref{Ch_Bounded} we present our approach to bounded 
Littlewood identities. The key idea is to show that each bounded Littlewood 
identity is equivalent to the closed-form evaluation of a 
virtual Koornwinder integral. 
We apply our method to prove several new bounded Littlewood identities, 
including Theorem~\ref{Thm_qtL} and a $q,t$-analogue of the well-known
D\'esarm\'enien--Proctor--Stembridge determinant,
see Theorem~\ref{THM_BOUNDED1}.

Chapter~\ref{Ch_Apps} contains applications of the main results of 
Chapter~\ref{Ch_Bounded}. 
First, in Section~\ref{Sec_Plane-partitions}, we show that in the 
classical limit our approach to bounded Littlewood identities 
implies that MacMahon's formula \eqref{Eq_MacMahon} is a consequence
of the well-known fact that 
$\big(\mathrm{GL}(n,\mathbb{R}),\mathrm{O}(n)\big)$ forms a Gelfand pair.
Then, in Section~\ref{Sec_char}, we show how our bounded Littlewood 
identities for Hall--Littlewood polynomials 
give rise to combinatorial formulas for
characters of affine Lie algebras, such as Theorem~\ref{Thm_twistedA}.
This provides an alternative to the recent approach of
Bartlett and the second author based on the 
$\mathrm{C}_n$ Bailey lemma \cite{BW13}. 
In Section~\ref{Sec_RR} we follow recent ideas of Griffin et al.~\cite{GOW14}
and apply the combinatorial character identities to prove new 
Rogers--Ramanujan identities for the affine Lie algebras
$\mathrm{B}_n^{(1)},\mathrm{A}_{2n-1}^{(2)},\mathrm{A}_{2n}^{(2)}$ 
and $\mathrm{D}_{n+1}^{(2)}$.
The Rogers--Ramanujan identities for $\mathrm{B}_n^{(1)}$,
given in Theorem~\ref{Thm_RR-B} and Remark~\ref{Remark_Bressoud},
generalise Bressoud's well-known Rogers--Ramanujan identities 
for even moduli \cite{Bressoud80a,Bressoud80b} to arbitrary rank $n$.
As a final application, in Section~\ref{Sec_KM} we show that after
principal specialisation our bounded $q,t$-Littlewood identities give
rise to new transformation formulas for basic hypergeometric 
series of Kaneko--Macdonald-type \cite{Kaneko96,Macdonald13}.

Finally, in Chapter~\ref{Ch_Open}, we discuss a number of open problems 
arising from our work, including several conjectures.

We conclude our paper with two appendices.
Appendix~\ref{App_A} contains some technical lemmas related to the Weyl--Kac 
character formula, needed in Sections~\ref{Sec_char} and \ref{Sec_RR}.
In Appendix~\ref{App_B} we review the known connection
between elliptic Selberg integrals and multiple basic hypergeometric series,
and use this to prove a number of nonterminating quadratic transformation 
formulas for such series stated in Section~\ref{Sec_KM}.


\chapter{Macdonald--Koornwinder theory}
\section{Partitions}
A partition $\la=(\la_1,\la_2,\dots)$ is a weakly decreasing sequence of 
nonnegative integers such that only finitely many $\la_i$ are positive.
The positive $\la_i$ are called the parts of $\la$, and the number of
parts, denoted $l(\la)$, is called the length of $\la$. 
As is customary, we often ignore the tail of zeros of a partition.
If $\abs{\la}:=\sum_i \la_i=n$ we say that $\la$ is a partition of $n$.
The unique partition of $0$ is denoted by $0$.
As usual, we identify a partition with its Young diagram---a collection 
of left-aligned rows of squares such that the $i$th row contains $\la_i$ 
squares. We use the English convention for drawing Young diagrams, with rows
labelled from top to bottom and columns from left to right.
For example, the partition $(5,3,3,1)$ corresponds to 
\medskip

\begin{center}
\begin{tikzpicture}[scale=0.35,line width=0.3pt]
\draw (0,0)--(5,0);
\draw (0,-1)--(5,-1);
\draw (0,-2)--(3,-2);
\draw (0,-3)--(3,-3);
\draw (0,-4)--(1,-4);
\draw (0,0)--(0,-4);
\draw (1,0)--(1,-4);
\draw (2,0)--(2,-3);
\draw (3,0)--(3,-3);
\draw (4,0)--(4,-1);
\draw (5,0)--(5,-1);
\end{tikzpicture}
\end{center}
and its top right-most square has coordinates $(1,5)$.
The conjugate partition $\la'$ is obtained from $\la$ by reflection in the
main diagonal, so that the parts of $\la'$ correspond to the columns of $\la$.
For example, the conjugate of $(5,3,3,1)$ is $(4,3,3,1,1)$.
Given a partition $\la$, the multiplicity $m_i(\la)=\la'_i-\la'_{i+1}$ 
counts the number of parts of size $i$. Clearly, 
$\sum_{i\geqslant 1} m_i(\la)=l(\la)$.
If $\la$ is a rectangular partition consisting of $m$ rows and $n$ columns 
we write $\la=m^n$.
If $\la$ is a partition of length at most $n$ we also write $\la+m^n$ for
$(\la_1+m,\dots,\la_n+m)$. 
The number of even and odd parts of $\lambda$ will be denoted by 
$\ep(\la)$ and $\op(\la)$ respectively. If $\op(\la)=0$ we say that $\la$
is even. Given a partitions $\la=(\la_1,\la_2,\dots)$, we write 
$2\la$ for the even partition $(2\la_1,2\la_2,\dots)$. Similarly, if
$\la=(\la_1,\la_2,\dots)$ is even, we write $\la/2$ 
for the partition $(\la_1/2,\la_2/2,\dots)$. 

For two partitions $\la,\mu$ we write $\mu\subseteq\la$ if $\mu$ is contained
in $\la$, i.e., if $\mu_i\leqslant\la_i$ for all $i\geqslant 1$.
For $\mu\subseteq\la$, the set-theoretic difference between $\la$ and $\mu$ 
is called a skew shape. To avoid a notational clash with partition 
complementation to be defined shortly, we write this difference as 
$\skew{\la}{\mu}$ instead of the more common $\la-\mu$. 
For example, the skew shape $\skew{(5,3,3,1)}{(3,3,1)}$ is given by

\medskip
\begin{center}
\begin{tikzpicture}[scale=0.35,line width=0.3pt]
\draw (3,0)--(5,0);
\draw (3,-1)--(5,-1);
\draw (1,-2)--(3,-2);
\draw (0,-3)--(3,-3);
\draw (0,-4)--(1,-4);
\draw (0,-3)--(0,-4);
\draw (1,-2)--(1,-4);
\draw (2,-2)--(2,-3);
\draw (3,0)--(3,-1);
\draw (3,-2)--(3,-3);
\draw (4,0)--(4,-1);
\draw (5,0)--(5,-1);
\end{tikzpicture}
\end{center}
As usual, we identify $\skew{\la}{0}$ and $\la$.
A skew shape $\skew{\la}{\mu}$ containing at most one square in each column,
as in the above example, is referred to as a horizontal strip. 
Analogously, a vertical strip is a skew diagram with at most one square 
in each row. If $\skew{\la}{\mu}$ is a horizontal strip then the
partitions $\la$ and $\mu$ are said to be interlacing, which we 
denote by $\la\succ\mu$. Note that $\la\succ\mu$ if and only if
\[
\la_1\geqslant\mu_1\geqslant\la_2\geqslant\mu_2\geqslant\cdots.
\]
If $\la\subseteq m^n$ we write the complement
of $\la$ with respect to $m^n$ as $m^n-\la$, that is,
$m^n-\la=(m-\la_n,\dots,m-\la_2,m-\la_1)$. For example,
the complement of $(3,2)$ with respect to $4^3$ is $(4,2,1)$. 

The dominance order on the set of partitions is defined as follows: 
$\la\geqslant\mu$ if $\la_1+\cdots+\la_k\geqslant\mu_1+\cdots+\mu_k$ for all $k\geqslant 1$.
Note that unlike \cite{Macdonald95}, we do not assume that
$\abs{\la}=\abs{\mu}$.
If $\la\geqslant\mu$ and $\la\neq\mu$ we write $\la>\mu$. 

The arm-length, arm-colength, leg-length and leg-colength of 
the square $s=(i,j)\in\la$ are given by
\begin{align*}
a(s)=a_{\la}(s)&:=\la_i-j,  & a'(s)=a'_{\la}(s)&:=j-1, \\
l(s)=l_{\la}(s)&:=\la'_j-i, & l'(s)=l'_{\la}(s)&:=i-1,
\end{align*}
and correspond to the number of squares in the same row or column 
of $s$ immediately to the east, west, south and north of $s$ respectively.
For the square $s=(3,3)\in(8,7,7,6,4,3,1)$ we have 
$(a(s),l(s),a'(s),l'(s))=(4,3,2,2)$, as shown in the following diagram: 

\medskip

\begin{center}
\begin{tikzpicture}[scale=0.35,baseline=0cm,line width=1pt]
\draw[thin] (0,0) rectangle (1,1);
\draw[thin] (0,1) rectangle (1,2);
\draw[thin] (0,2) rectangle (1,3);
\draw[thin] (0,3) rectangle (1,4);
\draw[thin] (0,5) rectangle (1,6);
\draw[thin] (0,6) rectangle (1,7);
\draw[thin] (1,1) rectangle (2,2);
\draw[thin] (1,2) rectangle (2,3);
\draw[thin] (1,3) rectangle (2,4);
\draw[thin] (1,5) rectangle (2,6);
\draw[thin] (1,6) rectangle (2,7);
\draw[thin]  (3,2) rectangle (4,3);
\draw[thin] (3,3) rectangle (4,4);
\draw[thin] (3,5) rectangle (4,6);
\draw[thin] (3,6) rectangle (4,7);
\draw[thin] (4,3) rectangle (5,4);
\draw[thin] (4,5) rectangle (5,6);
\draw[thin] (4,6) rectangle (5,7);
\draw[thin] (5,3) rectangle (6,4);
\draw[thin] (5,5) rectangle (6,6);
\draw[thin] (5,6) rectangle (6,7);
\draw[thin] (6,5) rectangle (7,6);
\draw[thin] (6,6) rectangle (7,7);
\draw[thin] (7,6) rectangle (8,7);
\fill[red](2,4) rectangle (3,5);
\begin{scope}[color=pink]
\fill(0,4) rectangle (1,5);
\fill(1,4) rectangle (2,5);
\fill(3,4) rectangle (4,5);
\fill(4,4) rectangle (5,5);
\fill(5,4) rectangle (6,5);
\fill(6,4) rectangle (7,5);
\fill(2,1) rectangle (3,2);
\fill(2,2) rectangle (3,3);
\fill(2,3) rectangle (3,4);
\fill(2,5) rectangle (3,6);
\fill(2,6) rectangle (3,7);
\end{scope}
\draw[thin](2,4) rectangle (3,5);
\draw[thin](0,4) rectangle (1,5);
\draw[thin](1,4) rectangle (2,5);
\draw[thin](3,4) rectangle (4,5);
\draw[thin](4,4) rectangle (5,5);
\draw[thin](5,4) rectangle (6,5);
\draw[thin](6,4) rectangle (7,5);
\draw[thin](2,1) rectangle (3,2);
\draw[thin](2,2) rectangle (3,3);
\draw[thin](2,3) rectangle (3,4);
\draw[thin](2,5) rectangle (3,6);
\draw[thin](2,6) rectangle (3,7);
\end{tikzpicture}
\end{center}
We also define the closely related `$\mathrm{C}_n$-type' analogues
\[
\hat{a}(s)=\hat{a}_{\la}(s):=\la_i+j-1,\qquad\qquad  
\hat{l}(s)=\hat{l}_{\la}(s):=\la'_j+i-1.
\]
Diagrammatically, $\hat{a}(s)$ corresponds to the arm-length
of the mirror image, say $\hat{s}$, of $s\in\la$ 
upon reflection in the left boundary of $\la$.
In the previous example, $\hat{s}=(3,-2)$ with arm-length
$a(\hat{s})=\hat{a}(s)=9$. 

\medskip

\begin{center}
\begin{tikzpicture}[scale=0.35,baseline=0cm,line width=1pt]
\draw[thin] (0,0) rectangle (1,1);
\draw[thin] (0,1) rectangle (1,2);
\draw[thin] (0,2) rectangle (1,3);
\draw[thin] (0,3) rectangle (1,4);
\draw[thin] (0,5) rectangle (1,6);
\draw[thin] (0,6) rectangle (1,7);
\draw[thin] (1,1) rectangle (2,2);
\draw[thin] (1,2) rectangle (2,3);
\draw[thin] (1,3) rectangle (2,4);
\draw[thin] (1,5) rectangle (2,6);
\draw[thin] (1,6) rectangle (2,7);
\draw[thin]  (3,2) rectangle (4,3);
\draw[thin] (3,3) rectangle (4,4);
\draw[thin] (3,5) rectangle (4,6);
\draw[thin] (3,6) rectangle (4,7);
\draw[thin] (4,3) rectangle (5,4);
\draw[thin] (4,5) rectangle (5,6);
\draw[thin] (4,6) rectangle (5,7);
\draw[thin] (5,3) rectangle (6,4);
\draw[thin] (5,5) rectangle (6,6);
\draw[thin] (5,6) rectangle (6,7);
\draw[thin] (6,5) rectangle (7,6);
\draw[thin] (6,6) rectangle (7,7);
\draw[thin] (7,6) rectangle (8,7);
\fill[red] (0,4) rectangle (-3,5);
\begin{scope}[color=pink]
\fill(-2,4) rectangle (1,5);
\fill(-1,4) rectangle (1,5);
\fill(0,4) rectangle (1,5);
\fill(1,4) rectangle (2,5);
\fill(2,4) rectangle (3,5);
\fill(3,4) rectangle (4,5);
\fill(4,4) rectangle (5,5);
\fill(5,4) rectangle (6,5);
\fill(6,4) rectangle (7,5);
\end{scope}
\draw (2.5,4.5) node {$s$};
\draw (-2.5,4.5) node {$\hat{s}$};
\draw[thin](2,4) rectangle (3,5);
\draw[thin](0,4) rectangle (1,5);
\draw[thin](1,4) rectangle (2,5);
\draw[thin](3,4) rectangle (4,5);
\draw[thin](4,4) rectangle (5,5);
\draw[thin](5,4) rectangle (6,5);
\draw[thin](6,4) rectangle (7,5);
\draw[thin](2,1) rectangle (3,2);
\draw[thin](2,2) rectangle (3,3);
\draw[thin](2,3) rectangle (3,4);
\draw[thin](2,5) rectangle (3,6);
\draw[thin](2,6) rectangle (3,7);
\draw[thin] (-3,4) rectangle (-2,5);
\draw[thin] (-2,4) rectangle (-1,5);
\draw[thin] (-1,4) rectangle (0,5);
\end{tikzpicture}
\end{center}
In much the same way, $\hat{l}(s)$ is the leg-length of the reflection of 
$s$ in the upper boundary of $\la$.
Note that for all $(A,L)\in\{(a,l),(a',l'),(\hat{a},\hat{l})\}$,
$A_{\la}(i,j)=L_{\la'}(j,i)$.
The usual hook-length of $s\in\la$ is given by 
$h(s)=a(s)+l(s)+1$.
The statistic $n(\skew{\la}{\mu})$, which will be used repeatedly, 
is defined as \cite[page 6]{Lascoux05}
\[
n(\skew{\la}{\mu}):=\sum_{s\in\skew{\la}{\mu}} l(s)=
\sum_{i\geqslant 1} \binom{\la'_i-\mu'_i}{2}.
\]
For $\mu=0$ this may also be expressed as the more familiar
\cite[page 3]{Macdonald95}
\[
n(\la)=\sum_{s\in\la} l'(s)=\sum_{i\geqslant 1} (i-1)\la_i.
\]

\medskip

Finally, we say that $\la=(\la_1,\dots,\la_n)$ is a half-partition if 
$\la_1\geqslant\la_2\geqslant\cdots\geqslant\la_n>0$ and all $\la_i$ are half-integers.
We sometimes write this as $\la=\mu+(1/2)^n$ with $\mu$ a partition of 
length at most $n$. Conversely, if $\la=\mu+(1/2)^n$, we also write 
$\mu=\la-(1/2)^n$. The length of a half-partition 
$\la=(\la_1,\dots,\la_n)$ is by definition $n$, and 
$m_i(\la)$ for $i$ a positive half-integer is the multiplicity 
of parts of size $i$.
We use half-partitions to generalise our earlier notion of complementation 
so that $m^n-\la$ makes sense for $m$ an integer or half-integer and
$\la\subseteq m^n$ a partition (of length at most $n$) or half-partition 
(of length $n$). For example, $4^3-(5/2,3/2,3/2)=(5/2,5/2,3/2)$ and
$(7/2)^3-(5/2,3/2,3/2)=(2,2,1)$. 
We extend the dominance order to the set of half-partitions in the 
obvious manner. However, partitions and half-partitions are by definition 
incomparable.

\section{Generalised $q$-shifted factorials}
Let 
\[
(z;q)_{\infty}:=\prod_{i\geqslant 0} (1-zq^i)
\quad\text{and}\quad
(z;q)_n:=\frac{(z;q)_{\infty}}{(zq^n;q)_{\infty}}
\]
be the standard $q$-shifted factorials \cite{GR04}. In this paper
we mostly view $q$-series as formal power series, but
occasionally we require $q$ to be a complex variable such that
$\abs{q}<1$.
The modified theta function is defined as
\begin{equation}\label{Eq_JTPI}
\theta(z;q):=(z;q)_{\infty}(q/z;q)_{\infty}=\frac{1}{(q;q)_{\infty}}
\sum_{k\in\Z} (-z)^k q^{\binom{k}{2}} \quad\text{for $z\neq 0$},
\end{equation}
where the equality between the product and the sum is known as
the Jacobi triple product identity \cite[Equation (II.28)]{GR04}.
We also need more general $q$-shifted factorials indexed by partitions:
\begin{subequations}\label{Eq_qshift}
\begin{align}
(z;q,t)_{\la}&:=\prod_{s\in\la}\big(1-zq^{a'(s)}t^{-l'(s)}\big)
=\prod_{i=1}^n (zt^{1-i};q)_{\la_i}, \\
C^{-}_{\la}(z;q,t)&:=\prod_{s\in\la}\big(1-zq^{a(s)}t^{l(s)}\big) 
\\[-2mm] & \qquad \qquad \quad
=\prod_{i=1}^n (zt^{n-i};q)_{\la_i} 
\prod_{1\leqslant i<j\leqslant n} \frac{(zt^{j-i-1};q)_{\la_i-\la_j}}
{(zt^{j-i};q)_{\la_i-\la_j}},  \notag \\
C^{+}_{\la}(z;q,t)&:=
\prod_{s\in\la}\big(1-zq^{\hat{a}(s)}t^{1-\hat{l}(s)}\big) \\[-2mm] 
& \qquad \qquad \quad
=\prod_{i=1}^n \frac{(zt^{2-2i};q)_{2\la_i}}{(zt^{2-i-n};q)_{\la_i}} 
\prod_{1\leqslant i<j\leqslant n} \frac{(zt^{2-i-j};q)_{\la_i+\la_j}}
{(zt^{3-i-j};q)_{\la_i+\la_j}}.  \notag
\end{align}
\end{subequations}
In all three cases, the choice of $n$ on the right is irrelevant 
as long as $n\geqslant l(\la)$.
We note that $(a;q,t)_{\la}$ is sometimes denoted as $C^0_{\la}(a;q,t)$, 
see e.g., \cite{RV07}, and that $C^{-}_{\la}(t;q,t)=c_{\la}(q,t)$ and 
$C^{-}_{\la}(q;q,t)=c'_{\la}(q,t)$, with $c_{\la}$ and $c'_{\la}$ the 
hook-length polynomials of Macdonald \cite[page 352]{Macdonald95}. 
In particular, $C^{-}_{\la}(q;q,q)=c_{\la}(q,q)=c'_{\la}(q,q)=H_{\la}(q)$
with 
\begin{equation}\label{Eq_Hhook}
H_{\la}(q):=\prod_{s\in\la}\big(1-q^{h(s)}\big)
\end{equation} 
the classical hook-length polynomial.
For $s\in\la$, let
\begin{equation}\label{Eq_blas}
b_{\la}(s;q,t):=\frac{1-q^{a_{\la}(s)} t^{l_{\la}(s)+1}}
{1-q^{a_{\la}(s)+1} t^{l_{\la}(s)}}.
\end{equation}
Then
\begin{equation}\label{Eq_b-def}
b_{\la}(q,t):=\frac{c_{\la}(q,t)}{c'_{\la}(q,t)}=
\prod_{s\in\la} b_{\la}(s;q,t).
\end{equation}
For both $q$-shifted factorials and theta functions, we use condensed
notation such as
\[
(z_1,\dots,z_k;q,t)_{\la}=(z_1;q,t)_{\la}\cdots (z_k;q,t)_{\la}.
\]

\medskip

It is an elementary exercise to verify the following identities,
which will be used repeatedly throughout this paper:
\begin{subequations} \label{Eq_qtswap}
\begin{align} \label{Eq_qtswap_C0}
(a;q,t)_{\la'}&=(-a)^{\abs{\la}} q^{n(\la)} t^{-n(\la')} 
(a^{-1};t,q)_{\la}, \\
C^{-}_{\la'}(a;q,t)&=C^{-}_{\la}(a;t,q), \\
C^{+}_{\la'}(a;q,t)&=(-aq)^{\abs{\la}} q^{3n(\la)} t^{-3n(\la')}
C^{+}_{\la}((aqt)^{-1};t,q),
\end{align}
\end{subequations}
\begin{subequations}
\begin{align} 
(a;q,t)_{2\la}&=(a,aq;q^2,t)_{\la},\label{Eq_double_C0} \\
C^{-}_{2\la}(a;q,t)&=C^{-}_{\la}(a,aq;q^2,t) \label{Eq_double_Cmin},
\end{align}
\end{subequations}
and
\begin{subequations}\label{Eq_Gdual}
\begin{align}\label{Eq_Gdual_C0}
(a;q,t)_{m^n-\la}
&=(-q^{1-m}t^{n-1}/a)^{\abs{\la}} 
q^{n(\la')} t^{-n(\la)}
\frac{(a;q,t)_{m^n}}{(q^{1-m}t^{n-1}/a;q,t)_{\la}}, \\
C^{-}_{m^n-\la}(a;q,t)&=
(-q^{1-m}/a)^{\abs{\la}} q^{n(\la')} t^{-n(\la)}
\frac{(at^{n-1};q,t)_{m^n} C_{\la}^{-}(a;q,t)}
{(at^{n-1},q^{1-m}/a;q,t)_{\la}}.
\end{align}
\end{subequations}

\section{Rogers--Szeg\H{o} polynomials} 
For integers $k,n$ such that $0\leqslant k\leqslant n$, let 
\[
\qbin{n}{k}_q:=\frac{(q;q)_n}{(q;q)_k(q;q)_{n-k}}
\]
be a $q$-binomial coefficient.
Then the Rogers--Szeg\H{o} polynomials $H_m(z;q)$ are defined as
\cite{Szego26}
\begin{equation}\label{RS}
H_m(z;q):=\sum_{k=0}^m z^k \qbin{m}{k}_q,
\end{equation}
for $m$ a nonnegative integer.
They have generating function \cite[page 49]{Andrews76}
\begin{equation}\label{Eq_RS-GF}
\sum_{m\geqslant 0} \frac{H_m(z;q) t^m}{(q;q)_m}=
\frac{1}{(t,tz;q)_{\infty}},
\end{equation}
and satisfy the three-term recurrence relation
\[
H_{m+1}(z;q)=(1+z)H_m(z;q)-(1-q^m) z H_{m-1}(z;q)
\]
subject to the initial conditions $H_{-1}=0$, $H_0=1$.
For $0<q<1$, the Rogers--Szeg\H{o} polynomials
satisfy the orthogonality relation
\[
\frac{1}{2\pi\iup} \int_{\mathbb{T}} 
H_m(zq^{-1/2};q) H_n(\bar{z}q^{-1/2};q)\,
\bigabs{(zq^{1/2};q)_{\infty}}^2\,
\frac{\dup z}{z}=q^{-m}(q;q)_m \, \delta_{m,n},
\]
where $\mathbb{T}$ is the positively-oriented unit circle.

The Rogers--Szeg\H{o} polynomials are closely related to symmetric 
functions and may, for example, be expressed in terms of Schur functions 
as
\begin{equation}\label{Eq_Hm-Schur}
H_m(z;q)=(q)_m \sum_{\la\vdash m} \frac{q^{n(\la)}}{H_{\la}(q)}\,
s_{\la}(1,z),
\end{equation}
with $H_{\la}(q)$ the hook-length polynomial \eqref{Eq_Hhook}. 
Indeed, by \eqref{Eq_RS-GF} and the $n=2$ case of 
\cite[page 66]{Macdonald95}
\[
\sum_{\la} \frac{q^{n(\la)}}{H_{\la}(q)}\, s_{\la}(x_1,\dots,x_n)
=\prod_{i=1}^n \frac{1}{(x_i;q)_{\infty}},
\]
the formula \eqref{Eq_Hm-Schur} immediately follows.

\medskip

We require two generalisations of the Rogers--Szeg\H{o} polynomials 
to polynomials indexed by partitions. First,
\begin{align}\label{Eq_RSBCn}
h_{\la}^{(m)}(a,b;q)&:=
\prod_{\substack{i=1 \\[0.5pt] i \text{ odd}}}^{m-1} 
(-a)^{m_i(\la)} H_{m_i(\la)}(b/a;q)
\prod_{\substack{i=1 \\[0.5pt] i \text{ even}}}^{m-1} H_{m_i(\la)}(ab;q) \\
&\hphantom{:}=(-a)^{\op(\la)} 
\prod_{\substack{i=1 \\[0.5pt] i \text{ odd}}}^{m-1} 
H_{m_i(\la)}(b/a;q)
\prod_{\substack{i=1 \\[0.5pt] i \text{ even}}}^{m-1} H_{m_i(\la)}(ab;q),
\notag 
\end{align}
where $m$ is a nonnegative integer. Compared to earlier definitions
in \cite{BW13,W06} the parameters $a$ and $b$ have been replaced 
by their negatives.
Since $H_m$ is a self-reciprocal polynomial, i.e.,
$z^m H_m(1/z;q)=H_m(z;q)$, it follows that 
$h_{\la}^{(m)}(a,b;q)$ is symmetric in $a$ and $b$:
\[
h_{\la}^{(m)}(a,b;q)=h_{\la}^{(m)}(b,a;q).
\]
Since $m_i(\la)=0$ for $i>\la_1$, the upper bound on the products over 
$i$ in \eqref{Eq_RSBCn} may be dropped if $m\geqslant \la_1+1$.
For such $m$ we simply write $h_{\la}(a,b;q)$. That is,
\begin{equation}\label{Eq_RSBCn-mlim}
h_{\la}(a,b;q):=
\prod_{\substack{i\geqslant 1 \\[0.5pt] i \text{ odd}}}
(-a)^{m_i(\la)} H_{m_i(\la)}(b/a;q)
\prod_{\substack{i\geqslant 1 \\[0.5pt] i \text{ even}}} H_{m_i(\la)}(ab;q).
\end{equation}
We further define
\begin{equation}\label{Eq_RSBn}
h_{\la}^{(m)}(a;t):=h_{\la}^{(m)}(a,-1;t)=
\prod_{i=1}^{m-1} H_{m_i(\la)}(-a;t).
\end{equation}
It follows from
\begin{subequations}\label{Eq_spec}
\begin{align}
H_m(0;t)&=1, \\
H_m(-1;t)&=\begin{cases}(t;t^2)_{m/2} & \text{$m$ even}, \\
0 & \text{$m$ odd,} \end{cases}  \label{Eq_spec2} \\
H_m(-t;t)&=(t;t^2)_{\ceil{m/2}},
\label{Eq_spec3} \\[2mm]
H_m(t^{1/2};t)&=(-t^{1/2};t^{1/2})_m, \label{Eq_spec4}
\end{align}
\end{subequations}
(see \cite[Equation (1.10)]{W06})
that for special values of $a$ and $b$ the polynomials
\eqref{Eq_RSBCn}--\eqref{Eq_RSBn} completely factor.
This will be important in Section~\ref{Sec_char} 
in our discussion of character formulas for affine Lie algebras.

\section{Plethystic notation}
Let $\Symm_n$ be the symmetric group on $n$ letters, 
$\Lambda_n=\F[x_1,\dots,x_n]^{\Symm_n}$ the ring of symmetric
functions in the alphabet $x_1,\dots,x_n$ with coefficients in $\F$, 
and $\Lambda$ the corresponding ring of symmetric functions in 
countably many variables, see e.g., \cite{Macdonald95,Stanley99}. 
We will mostly consider $\F=\Rat(q,t)$ and
$\F=\Rat(q,t,\tees)$, or minor variations thereof.

To facilitate computations in $\Lambda$ we frequently employ 
plethystic or $\la$-ring notation \cite{Lascoux01,Haglund08}. 
This is most easily described in terms of the Newton power sums 
\[
p_r(x):=x_1^r+x_2^r+\cdots,\qquad r\geqslant 1,
\]
with generating function
\[
\Psi_z(x):=\sum_{r=1}^{\infty} z^{r-1} p_r(x)
=\sum_{i\geqslant 1} \frac{x_i}{1-zx_i}.
\]
The power sums form an algebraic basis of $\Lambda$, that is, 
$\Lambda=\F[p_1,p_2,\dots]$.

If $x=(x_1,x_2,\dots)$, we additively write $x=x_1+x_2+\cdots$,
and to indicate the latter notation, we use plethystic brackets:
\[
f(x)=f(x_1,x_2,\dots)=f[x_1+x_2+\cdots]=f[x],\qquad f\in\Lambda.
\]
A power sum whose argument is the sum, difference or Cartesian product 
of two alphabets $x$ and $y$ is then defined as
\begin{subequations}\label{Eq_plet1}
\begin{align}
p_r[x+y]&:=p_r[x]+p_r[y], \\
p_r[x-y]&:=p_r[x]-p_r[y], \\
p_r[xy]&:=p_r[x]p_r[y],
\end{align}
\end{subequations}
respectively.
In particular, if $x$ is the empty alphabet then $p_r[-y]=-p_r[y]$,
(which should not be confused with $p(-y)=p(-y_1,-y_2,\dots)=(-1)^r p_r(y)$),
so that
\begin{equation}\label{Eq_minmin}
f[-(-x)]=f[x].
\end{equation}
Occasionally we need to also use an ordinary minus sign in plethystic
notation. To distinguish this from a plethystic minus sign,
we denote by $\varepsilon$ the alphabet consisting 
of the single letter $-1$, so that for $f\in\Lambda$
\[
f(-x)=f(-x_1,-x_2,\dots)=f[\varepsilon x_1+\varepsilon x_2+\cdots]=
f[\varepsilon x].
\]
Hence
\[
p_r[\varepsilon x]=(-1)^r p_r[x],\qquad p_r[-\varepsilon x]=(-1)^{r-1} p_r[x]
\]
and
\[
f[x+\varepsilon]=f(-1,x_1,x_2,\dots).
\]

For indeterminates $a,b,t$ and $f\in\Lambda$,
we further define $f[(a-b)/(1-t)]$ by
\begin{equation}\label{Eq_plet2}
p_r\Big[\frac{a-b}{1-t}\Big]=\frac{a^r-b^r}{1-t^r},
\end{equation}
and note that $(a-b)/(1-t)$ may be viewed as the
difference between the alphabets $a(1+t+t^2+\cdots)$ and 
$b(1+t+t^2+\cdots)$, where $a(1+t+t^2+\cdots)$ is the
Cartesian product of the single-letter alphabet $a$ and
the infinite alphabet $1+t+t^2+\cdots$. 
Alternatively, $(a-b)/(1-t)$ may be interpreted as
the Cartesian product of $a-b$ and $1+t+t^2+\cdots$.
We can of course combine \eqref{Eq_plet1} and \eqref{Eq_plet2}, 
and for example
\[
p_r\Big[x+\frac{a-b}{1-t}\Big]=p_r[x]+p_r\Big[\frac{a-b}{1-t}\Big].
\]

For $r\geqslant 0$, the complete symmetric functions are defined by
\[
h_r(x):=\sum_{1\leqslant i_1\leqslant i_2\leqslant\cdots\leqslant i_r}
x_{i_1}x_{i_2}\cdots x_{i_r},
\]
and admit the generating function
\begin{equation}\label{Eq_sigmaz}
\sigma_z(x):=\sum_{r\geqslant 0} z^r h_r(x)
=\prod_{i\geqslant 1} \frac{1}{1-zx_i}.
\end{equation}
Since $\Psi_z(x)=\frac{\dup\:}{\dup z}\log \sigma_z(x)$, it follows that
\begin{subequations}\label{Eq_sigma-formulas}
\begin{align}
\sigma_1[x+y]&=\sigma_1[x]\sigma_1[y]
=\prod_{i\geqslant 1} \frac{1}{(1-x_i)(1-y_i)}, \\
\sigma_1[x-y]&=\frac{\sigma_1[x]}{\sigma_1[y]} 
=\prod_{i\geqslant 1} \frac{1-y_i}{1-x_i}, \\
\sigma_1\Big[\frac{a-b}{1-t}\Big]&=
\prod_{k\geqslant 0} \frac{\sigma_1[at^k]}{\sigma_1[bt^k]}=
\frac{(b;t)_{\infty}}{(a;t)_{\infty}}.
\end{align}
\end{subequations}
These three formulas allow various infinite products to be expressed 
in terms of symmetric functions. For example, it follows immediately
from the generating function \eqref{Eq_RS-GF} that the Rogers--Szeg\H{o}
polynomials $H_m(z;q)$ may be identified as 
\[
H_m(z;q)=(q;q)_m\, h_m\Big[\frac{1+z}{1-q}\Big].
\]

Finally, for $\F=\Rat(q,t)$, $\omega_{q,t}$ is the $\F$-algebra
endomorphism of $\Lambda$ defined by \cite[page 312]{Macdonald95}
\[
\omega_{q,t}\, p_r=(-1)^{r-1} \frac{1-q^r}{1-t^r}\, p_r.
\]
Note that $\omega_{t,q}=\omega_{q,t}^{-1}$ and, plethystically,
\begin{equation}\label{Eq_omegaqt}
\omega_{q,t}\, f(x)=f\Big(\Big[{-}\varepsilon \, \frac{1-q}{1-t}\,x\Big]\Big),
\qquad f\in\Lambda.
\end{equation}

\section{Macdonald polynomials} 
Let $\F=\Rat(q,t)$. 
The power sums $p_{\la}:=\prod_{i=1}^{l(\la)} p_{\la_i}$ may be used 
to define Macdonald's $q,t$-analogue of the Hall scalar product on $\Lambda$ 
as \cite{Macdonald95}
\begin{equation*}
\ip{p_{\la}}{p_{\mu}}_{q,t}:=
\delta_{\la\mu} z_{\la} \prod_{i=1}^n
\frac{1-q^{\la_i}}{1-t^{\la_i}},
\end{equation*}
where $z_{\la}=\prod_{i\geqslant 1} m_i(\la)! \, i^{m_i(\la)}$.
The Macdonald polynomials $P_{\la}(q,t)=P_{\la}(x;q,t)$
are the unique family of symmetric functions such that \cite{Macdonald95}
\begin{equation}\label{Eq_P-mon}
P_{\la}(q,t)=m_{\la}+\sum_{\mu<\la} u_{\la\mu}(q,t) m_{\mu}
\end{equation}
and
\[
\ip{P_{\la}(q,t)}{P_{\mu}(q,t)}_{q,t}
=0\qquad \text{if$\quad\la\neq\mu$}.
\]
Here the $m_{\la}$ are the monomial symmetric functions, defined by
\[
m_{\la}(x):=\sum_{\alpha} x^{\alpha},
\]
where $\alpha$ is summed over distinct permutations of 
$\la=(\la_1,\la_2,\dots)$ and 
$x^{\alpha}:=\prod_{i\geqslant 1} x_1^{\alpha_1} x_2^{\alpha_2} \cdots$.
By the triangularity of \eqref{Eq_P-mon} and the fact that the $m_{\la}$
form a basis of $\Lambda$, it immediately follows that the Macdonald 
polynomials form a basis of $\Lambda$ as well. 
When $l(\la)>n$, $P_{\la}(x_1,\dots,x_n;q,t)=0$ and the polynomials
$P_{\la}(x_1,\dots,x_n;q,t)$ indexed by partitions $\la$ of length at most $n$
form a basis of $\Lambda_n$.

The skew Macdonald polynomials $P_{\skew{\la}{\mu}}(q,t)$ are defined by
\[
\ip{P_{\skew{\la}{\mu}}(q,t)}{P_{\nu}(q,t)}_{q,t}=
\ip{P_{\la}(q,t)}{P_{\mu}(q,t) P_{\nu}(q,t)}_{q,t},
\]
and vanish unless $\mu\subseteq\la$.
For $q=t$ the Macdonald polynomials simplify to the Schur 
functions: $P_{\la/\mu}(t,t)=s_{\la/\mu}$. 

For later comparison with the Koornwinder and $(R,S)$ Macdonald polynomials,
we remark that an alternative description of the Macdonald polynomials 
in $n$ variables is as the unique family of
polynomials \eqref{Eq_P-mon} such that \cite{Macdonald95}
\[
\ip{P_{\la}}{P_{\mu}}'_{q,t}=0 \qquad\text{if $\la\neq\mu$},
\]
where, for $\abs{q},\abs{t}<1$, $\ip{\cdot}{\cdot}'_{q,t}$
is the scalar product on $\F[x]=\F[x_1,\dots,x_n]$ defined by
\[
\ip{f}{g}'_{q,t}:=\frac{1}{n! (2\pi\iup)^n}
\int_{\mathbb{T}^n} f(x) g(x^{-1})\Delta(x;q,t)\,
\frac{\dup x_1}{x_1}\cdots \frac{\dup x_n}{x_n}.
\]
Here $f(x^{-1}):=f(x_1^{-1},\dots,x_n^{-1})$, $\Delta(x;q,t)$ is 
the Macdonald density
\begin{equation}\label{Eq_Mac-density}
\Delta(x;q,t):=
\prod_{1\leqslant i<j\leqslant n}   
\frac{(x_i/x_j,x_j/x_i;q)_{\infty}}{(tx_i/x_j,tx_j/x_i;q)_{\infty}}
\end{equation}
and $\mathbb{T}^n$ is the $n$-dimensional complex torus:
\[
\mathbb{T}^n:=
\{(x_1,\dots,x_n)\in\mathbb{C}^n: \abs{x_1}=\cdots=\abs{x_n}=1\}.
\]

Below we list a number of standard results from Macdonald polynomial
theory needed later. 
First of all, defining a second family of Macdonald polynomials 
$Q_{\la/\mu}(q,t)=Q_{\la/\mu}(x;q,t)$ as 
\begin{equation}\label{Eq_Qdef}
Q_{\la/\mu}(q,t):=\frac{b_{\la}(q,t)}{b_{\mu}(q,t)}\, P_{\la/\mu}(q,t),
\end{equation}
with $b_{\la}(q,t)$ given by \eqref{Eq_b-def}, we have the duality
\cite[page 327]{Macdonald95}
\begin{equation}\label{Eq_omegaQ}
\omega_{q,t}\,P_{\la}(q,t)=Q_{\la'}(t,q),
\end{equation}
as well as the orthogonality \cite[page 324]{Macdonald95}
\[
\ip{P_{\la}(q,t)}{Q_{\mu}(q,t)}_{q,t}=\delta_{\la\mu}.
\]
This last equation is equivalent to the Cauchy identity 
\cite[page 324]{Macdonald95}
\[
\sum_{\la} P_{\la}(x;q,t)Q_{\la}(y;q,t)=
\prod_{i,j\geqslant 1} \frac{(tx_iy_j;q)_{\infty}}{(x_iy_j;q)_{\infty}},
\]
which we repeatedly require in the dual form \cite[page 329]{Macdonald95}
\begin{equation}\label{Eq_Mac-Cauchy-a}
\sum_{\la\subseteq m^n} (-1)^{\abs{\la}} P_{\la}(x_1,\dots,x_n;q,t) 
P_{\la'}(y_1,\dots,y_m;t,q)=\prod_{i=1}^n \prod_{j=1}^m (1-x_iy_j).
\end{equation}

We also need the $g$- and $e$-Pieri rules for Macdonald polynomials
\cite[page 340]{Macdonald95},
expressed in generating function form. First, in the $g$-Pieri case
\begin{equation}\label{Eq_g-Pieri}
P_{\mu}(x;q,t) \prod_{i\geqslant 1} 
\frac{(atx_i;q)_{\infty}}{(ax_i;q)_{\infty}}
=\sum_{\la\succ\mu} a^{\abs{\skew{\la}{\mu}}} \varphi_{\la/\mu}(q,t) 
P_{\la}(x;q,t),
\end{equation}
where the Pieri coefficient
$\varphi_{\la/\mu}(q,t)$ is given by \cite[page 342]{Macdonald95}
\begin{multline}\label{Eq_varphi}
\varphi_{\la/\mu}(q,t)=
\prod_{1\leqslant i\leqslant j\leqslant l(\la)}
\frac{(qt^{j-i};q)_{\la_i-\la_j}}{(t^{j-i+1};q)_{\la_i-\la_j}}\cdot
\frac{(qt^{j-i};q)_{\mu_i-\mu_{j+1}}}{(t^{j-i+1};q)_{\mu_i-\mu_{j+1}}} \\
\times
\frac{(t^{j-i+1};q)_{\la_i-\mu_j}}{(qt^{j-i};q)_{\la_i-\mu_j}}\cdot
\frac{(t^{j-i+1};q)_{\mu_i-\la_{j+1}}}{(qt^{j-i};q)_{\mu_i-\la_{j+1}}}.
\end{multline}
Similarly, the $e$-Pieri rule is given by
\begin{equation}\label{Eq_e-Pieri}
P_{\mu}(x;q,t)\prod_{i\geqslant 1} (1+ax_i)=
\sum_{\la'\succ\mu'} a^{\abs{\skew{\la}{\mu}}} 
\psi'_{\skew{\la}{\mu}}(q,t) P_{\la}(x;q,t),
\end{equation}
where \cite[page 336]{Macdonald95}
\begin{equation}\label{Eq_psip}
\psi'_{\skew{\la}{\mu}}(q,t) = \prod
\frac{1-q^{\mu_i-\mu_j}t^{j-i-1}}{1-q^{\mu_i-\mu_j}t^{j-i}}\cdot 
\frac{1-q^{\la_i-\la_j}t^{j-i+1}}{1-q^{\la_i-\la_j}t^{j-i}}.
\end{equation}
The product in \eqref{Eq_psip} is over all $i<j$ such that 
$\la_i=\mu_i$ and $\la_j>\mu_j$.
An alternative expression for $\psi'_{\skew{\la}{\mu}}(q,t)$ is given by
\cite[page 340]{Macdonald95}
\begin{equation}\label{Eq_psip340}
\psi'_{\skew{\la}{\mu}}(q,t)=\prod 
\frac{b_{\la}(s;q,t)}{b_{\mu}(s;q,t)},
\end{equation}
where $b_{\la}(s;q,t)$ is given by \eqref{Eq_blas} and where the product is 
over all squares $s=(i,j)\in\mu\subseteq\la$ such that $i<j$, 
$\mu_i=\la_i$ and $\la'_j>\mu_j'$. 

For $\la$ a partition, define
\begin{equation}\label{Eq_evenarm}
b_{\la}^{\text{ea}}(q,t):=\prod_{\substack{s\in\la \\[1pt] a(s) \text{ even}}}
b_{\la}(s;q,t)
=\prod_{\substack{s\in\la \\[1pt] a(s) \text{ even}}}
\frac{1-q^{a(s)}t^{l(s)+1}}{1-q^{a(s)+1}t^{l(s)}},
\end{equation}
where `ea' stands for `even arm(-length)'.

\begin{lemma}\label{Lem_psip}
For partitions $\la\succ\mu$ such that
\[
\la=2\ceil{\mu/2}:=(2\ceil{\mu_1/2},2\ceil{\mu_2/2},\dots),
\]
we have
\[
\psi'_{\skew{\la}{\mu}}(q,t) \,
=\frac{C^{-}_{\la/2}(t;q^2,t)}{C^{-}_{\la/2}(q;q^2,t)}
\cdot \frac{1}{b_{\mu}^{\textup{ea}}(q,t)}.
\]
\end{lemma}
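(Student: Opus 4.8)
The plan is to unwind both sides combinatorially square-by-square, using the explicit product formula \eqref{Eq_psip} for $\psi'_{\skew{\la}{\mu}}(q,t)$. Write $\mu=\mu$ and $\la=2\ceil{\mu/2}$, so $\la_i=\mu_i$ when $\mu_i$ is even and $\la_i=\mu_i+1$ when $\mu_i$ is odd; in particular $\la/\mu$ is a vertical strip (it has one box in row $i$ precisely when $\mu_i$ is odd). First I would identify, via \eqref{Eq_psip}, the set of pairs $i<j$ contributing to $\psi'_{\skew{\la}{\mu}}$: these are the $i<j$ with $\la_i=\mu_i$ (equivalently $\mu_i$ even) and $\la_j>\mu_j$ (equivalently $\mu_j$ odd). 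So
\[
\psi'_{\skew{\la}{\mu}}(q,t)=\prod_{\substack{i<j\\ \mu_i\textup{ even},\,\mu_j\textup{ odd}}}
\frac{1-q^{\mu_i-\mu_j}t^{j-i-1}}{1-q^{\mu_i-\mu_j}t^{j-i}}\cdot
\frac{1-q^{\la_i-\la_j}t^{j-i+1}}{1-q^{\la_i-\la_j}t^{j-i}}.
\]

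Next I would reinterpret this product over row-pairs as a product over squares of $\la$ using the arm/leg statistics. For a square $s=(i,j')\in\la$, the factor $b_{\la}(s;q,t)=(1-q^{a_\la(s)}t^{l_\la(s)+1})/(1-q^{a_\la(s)+1}t^{l_\la(s)})$ involves $q^{a_\la(s)}=q^{\la_i-j'}$ and $t^{l_\la(s)}=t^{\la'_{j'}-i}$. Since $\la=2\nu$ with $\nu=\la/2=\ceil{\mu/2}$, the columns of $\la$ come in consecutive equal pairs, so $\la'_{2k-1}=\la'_{2k}=\nu'_k$. I would therefore group the squares of $\la$ into its odd columns and even columns: a square in column $2k$ has even arm, a square in column $2k-1$ has odd arm. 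The product $C^-_{\la/2}(t;q^2,t)/C^-_{\la/2}(q;q^2,t)=\prod_{s\in\nu} b_{2\nu}\text{-type factors}$ should, after the substitution $q\to q^2$ and matching $a_\nu,l_\nu$ against $a_{2\nu},l_{2\nu}$ in even columns, collapse to exactly $\prod_{s\in\la,\,a(s)\textup{ even}} b_\la(s;q,t)=b_\la^{\textup{ea}}(q,t)$. Concretely, I expect to prove the identity
\[
\frac{C^{-}_{\la/2}(t;q^2,t)}{C^{-}_{\la/2}(q;q^2,t)}=b_\la^{\textup{ea}}(q,t),
\]
using \eqref{Eq_double_C0} (namely $C^-_{2\nu}(a;q,t)=C^-_\nu(a,aq;q^2,t)$) together with the bijection $s=(i,k)\in\nu \leftrightarrow (i,2k)\in\la$, under which $a_{2\nu}((i,2k))=2a_\nu((i,k))$ (even) and $l_{2\nu}((i,2k))=l_\nu((i,k))$. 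That reduces the claim to
\[
\psi'_{\skew{\la}{\mu}}(q,t)=\frac{b_\la^{\textup{ea}}(q,t)}{b_\mu^{\textup{ea}}(q,t)},
\]
and by \eqref{Eq_psip340} the left side already equals $\prod_{s} b_\la(s;q,t)/b_\mu(s;q,t)$ over a certain square set $S$; so it remains to check that $S$ together with the remaining squares reorganises into the full even-arm sets of $\la$ and $\mu$.

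The main obstacle will be the last bookkeeping step: reconciling the square-set $S$ from \eqref{Eq_psip340} (squares $s=(i,j)\in\mu$ with $i<j$, $\mu_i=\la_i$, $\la'_j>\mu'_j$) with the symmetric-difference of even-arm sets of $\la$ and $\mu$. The point is that passing from $\mu$ to $\la=2\ceil{\mu/2}$ only changes rows with $\mu_i$ odd, by appending one box; I would argue that in every row the parity of $a(s)$ is flipped exactly for the squares of $S$ and left alone elsewhere, so that $b_\mu(s;q,t)$ and $b_\la(s;q,t)$ factors telescope against each other in $b_\la^{\textup{ea}}/b_\mu^{\textup{ea}}$, leaving precisely the product over $S$. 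A clean way to manage this is to fix a column $j$ and compare the column-$j$ contributions on both sides; within a single column the arm-length decreases by $1$ as $i$ increases by $1$ (through the part of the column present in both $\mu$ and $\la$), so "even arm" alternates, and the appended boxes (where $\la'_j>\mu'_j$) shift this alternation, producing exactly one surviving ratio per such $(i,j)$. I would carry out this column-wise comparison carefully, treating the boundary rows $i$ where $\mu_i$ switches parity relative to $\la_i$; everything else is routine cancellation. This completes the proof.
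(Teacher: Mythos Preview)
Your overall strategy is exactly the paper's: reduce to
\[
\psi'_{\la/\mu}(q,t)=\frac{b_\la^{\textup{ea}}(q,t)}{b_\mu^{\textup{ea}}(q,t)}
\]
via \eqref{Eq_psip340}, and separately identify
$C^{-}_{\la/2}(t;q^2,t)/C^{-}_{\la/2}(q;q^2,t)=b_\la^{\textup{ea}}(q,t)$
using that $\la$ is even (your bijection $(i,k)\in\la/2\leftrightarrow(i,2k)\in\la$ is the right one).

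However, your sketch of the final bookkeeping contains two slips that would derail a column-wise argument. First, your parity claim is inverted: the squares of $S$ lie in rows with $\mu_i$ \emph{even}, where $\la_i=\mu_i$ and the arm-parity does \emph{not} change; it is in rows with $\mu_i$ odd (none of which meet $S$) that the arm-parity flips for every square. Second, ``within a single column the arm-length decreases by $1$ as $i$ increases by $1$'' is false, since $a((i,j))=\la_i-j$ depends on the row-length $\la_i$, not linearly on $i$; so even-arm does not simply alternate down a column.

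The clean bookkeeping is row-wise, as in the paper. In a row with $\mu_i$ even, the even-arm squares coincide in $\la$ and $\mu$ (both are the even $j\leq\mu_i$); for such $j$ one has $b_\la((i,j))=b_\mu((i,j))$ unless $\la'_j>\mu'_j$, and the latter squares are precisely $S$. In a row with $\mu_i$ odd, the even-arm squares of $\la$ sit at even $j\in\{2,4,\dots,\mu_i+1\}$ while those of $\mu$ sit at odd $j'\in\{1,3,\dots,\mu_i\}$; the shift $j\mapsto j'=j-1$ is a bijection and, since $\la'_j=\mu'_{j-1}$ for every even $j$, gives $b_\la((i,j))=b_\mu((i,j-1))$, so these rows contribute nothing to the ratio. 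This yields $\psi'_{\la/\mu}=b_\la^{\textup{ea}}/b_\mu^{\textup{ea}}$ directly.
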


\begin{proof}
The product in \eqref{Eq_psip340} is over all squares $s$ in $\mu$
for which $\la$ and $\mu$ have the same row length but different 
column lengths.
If $\la=2\ceil{\mu/2}$ then $\la$ is obtained from $\mu$ by
adding a square to each row of odd length.
Hence the product is over all squares $s=(i,j)$ such that
$\mu_i$ and $j$ are even and such that there exists a $k>i$
with $\mu_k=j-1$. For example, if $\mu=(6,4,3,3,2,1)$ then
$\la=(6,4,4,4,2,2)$. In the diagram on the left, shown below, 
the squares contributing to $\prod b_{\la}(s;q,t)/b_{\mu}(s;q,t)$ 
are marked with a blue cross.
Because each marked square $(i,j)$ occurs in a rows of even length and has
even $j$-coordinate, each marked square must have even arm-length.
We can, however, include all other squares of $\la$ and $\mu$
with even arm-length, since their respective contributions to $b_{\la}(s;q,t)$ 
and $b_{\mu}(s;q,t)$ trivially cancel, as indicated by the added red squares
in the diagram on the right:

\smallskip

\begin{center}
\begin{tikzpicture}[scale=0.3,line width=0.3pt]
\draw[fill,color=white!95!gray] (1,-6) rectangle (2,-5);
\draw[fill,color=white!95!gray] (3,-4) rectangle (4,-3);
\draw[fill,color=white!95!gray] (3,-3) rectangle (4,-2);
\draw (0,0)--(6,0);
\draw (0,-1)--(6,-1);
\draw (0,-2)--(4,-2);
\draw (0,-3)--(4,-3);
\draw (0,-4)--(4,-4);
\draw (0,-5)--(2,-5);
\draw (0,-6)--(2,-6);
\draw (0,0)--(0,-6);
\draw (1,0)--(1,-6);
\draw (2,0)--(2,-6);
\draw (3,0)--(3,-4);
\draw (4,0)--(4,-4);
\draw (5,0)--(5,-1);
\draw (6,0)--(6,-1);
\draw[blue] (1,0)--(2,-1); \draw[blue] (2,0)--(1,-1);
\draw[blue] (3,0)--(4,-1); \draw[blue] (4,0)--(3,-1);
\draw[blue] (1,-1)--(2,-2); \draw[blue] (2,-1)--(1,-2);
\draw[blue] (3,-1)--(4,-2); \draw[blue] (4,-1)--(3,-2);
\draw[blue] (1,-4)--(2,-5); \draw[blue] (2,-4)--(1,-5);

\draw (7,0)--(2,-7.5);

\draw (6,-2)--(12,-2);
\draw (6,-3)--(12,-3);
\draw (6,-4)--(10,-4);
\draw (6,-5)--(9,-5);
\draw (6,-6)--(9,-6);
\draw (6,-7)--(8,-7);
\draw (6,-8)--(7,-8);
\draw (6,-2)--(6,-8);
\draw (7,-2)--(7,-8);
\draw (8,-2)--(8,-7);
\draw (9,-2)--(9,-6);
\draw (10,-2)--(10,-4);
\draw (11,-2)--(11,-3);
\draw (12,-2)--(12,-3);
\draw[blue] (7,-2)--(8,-3); \draw[blue] (8,-2)--(7,-3);
\draw[blue] (9,-2)--(10,-3); \draw[blue] (10,-2)--(9,-3);
\draw[blue] (7,-3)--(8,-4); \draw[blue] (8,-3)--(7,-4);
\draw[blue] (9,-3)--(10,-4); \draw[blue] (10,-3)--(9,-4);
\draw[blue] (7,-6)--(8,-7); \draw[blue] (8,-6)--(7,-7);

\draw (16,-3) node {$=$};

\draw[fill,color=white!95!gray] (21,-6) rectangle (22,-5);
\draw[fill,color=white!95!gray] (23,-4) rectangle (24,-3);
\draw[fill,color=white!95!gray] (23,-3) rectangle (24,-2);
\draw (20,0)--(26,0);
\draw (20,-1)--(26,-1);
\draw (20,-2)--(24,-2);
\draw (20,-3)--(24,-3);
\draw (20,-4)--(24,-4);
\draw (20,-5)--(22,-5);
\draw (20,-6)--(22,-6);
\draw (20,0)--(20,-6);
\draw (21,0)--(21,-6);
\draw (22,0)--(22,-6);
\draw (23,0)--(23,-4);
\draw (24,0)--(24,-4);
\draw (25,0)--(25,-1);
\draw (26,0)--(26,-1);
\draw[blue] (21,0)--(22,-1); \draw[blue] (22,0)--(21,-1);
\draw[blue] (23,0)--(24,-1); \draw[blue] (24,0)--(23,-1);
\draw[blue] (21,-1)--(22,-2); \draw[blue] (22,-1)--(21,-2);
\draw[blue] (23,-1)--(24,-2); \draw[blue] (24,-1)--(23,-2);
\draw[blue] (21,-4)--(22,-5); \draw[blue] (22,-4)--(21,-5);
\draw[red] (25,0)--(26,-1); \draw[red] (26,0)--(25,-1);
\draw[red] (23,-2)--(24,-3); \draw[red] (24,-2)--(23,-3);
\draw[red] (21,-2)--(22,-3); \draw[red] (22,-2)--(21,-3);
\draw[red] (23,-3)--(24,-4); \draw[red] (24,-3)--(23,-4);
\draw[red] (21,-3)--(22,-4); \draw[red] (22,-3)--(21,-4);
\draw[red] (21,-5)--(22,-6); \draw[red] (22,-5)--(21,-6);

\draw (27,0)--(22,-7.5);

\draw (26,-2)--(32,-2);
\draw (26,-3)--(32,-3);
\draw (26,-4)--(30,-4);
\draw (26,-5)--(29,-5);
\draw (26,-6)--(29,-6);
\draw (26,-7)--(28,-7);
\draw (26,-8)--(27,-8);
\draw (26,-2)--(26,-8);
\draw (27,-2)--(27,-8);
\draw (28,-2)--(28,-7);
\draw (29,-2)--(29,-6);
\draw (30,-2)--(30,-4);
\draw (31,-2)--(31,-3);
\draw (32,-2)--(32,-3);
\draw[blue] (27,-2)--(28,-3); \draw[blue] (28,-2)--(27,-3);
\draw[blue] (29,-2)--(30,-3); \draw[blue] (30,-2)--(29,-3);
\draw[blue] (27,-3)--(28,-4); \draw[blue] (28,-3)--(27,-4);
\draw[blue] (29,-3)--(30,-4); \draw[blue] (30,-3)--(29,-4);
\draw[blue] (27,-6)--(28,-7); \draw[blue] (28,-6)--(27,-7);
\draw[red] (31,-2)--(32,-3); \draw[red] (32,-2)--(31,-3);
\draw[red] (28,-4)--(29,-5); \draw[red] (29,-4)--(28,-5);
\draw[red] (26,-4)--(27,-5); \draw[red] (27,-4)--(26,-5);
\draw[red] (28,-5)--(29,-6); \draw[red] (29,-5)--(28,-6);
\draw[red] (26,-5)--(27,-6); \draw[red] (27,-5)--(26,-6);
\draw[red] (26,-7)--(27,-8); \draw[red] (27,-7)--(26,-8);

\end{tikzpicture}
\end{center}
Hence
\begin{align*}
\psi'_{\skew{\la}{\mu}}(q,t)
&=\prod_{\substack{s\in\la \\[1pt] a(s) \textup{ even}}} b_{\la}(s;q,t)
\prod_{\substack{s\in\mu \\[1pt] a(s) \textup{ even}}} 
\frac{1}{b^{\text{ea}}_{\mu}(s;q,t)} \\[1mm]
&=\bigg(\prod_{s\in\la/2}
\frac{1-q^{2a(s)}t^{l(s)+1}}{1-q^{2a(s)+1}t^{l(s)}}\bigg)
\cdot
\frac{1}{b^{\text{ea}}_{\mu}(q,t)},
\end{align*}
where the second equality uses the fact that $\la$ is an even partition.
By \eqref{Eq_qshift} we are done.
\end{proof}

If in \eqref{Eq_e-Pieri} we set $x_i=0$ for $i>n$ and
equate terms of degree $\abs{\mu}+n$, we obtain
\[
P_{\mu}(x_1,\dots,x_n;q,t) \, x_1\cdots x_n = 
\psi'_{\skew{(\mu+1^n)}{\mu}}(q,t) P_{\mu+1^n}(x_1,\dots,x_n;q,t).
\]
By \eqref{Eq_psip}, the Pieri coefficient on the right is $1$, so that
\cite[page 325]{Macdonald95}
\begin{equation}\label{Eq_add1}
P_{\mu}(x_1,\dots,x_n;q,t) \, x_1\cdots x_n = 
P_{\mu+1^n}(x_1,\dots,x_n;q,t).
\end{equation}

Closely related to the Pieri formulas is the branching rule
\cite[page 346]{Macdonald95}
\begin{equation}\label{Eq_BR}
P_{\la}(x_1,\dots,x_n;q,t)=
\sum_{\mu\prec\la} x_n^{\abs{\skew{\la}{\mu}}} 
\psi_{\skew{\la}{\mu}}(q,t) P_{\mu}(x_1,\dots,x_{n-1};q,t),
\end{equation}
where \cite[page 341]{Macdonald95}
\begin{equation}\label{Eq_psipsip}
\psi_{\skew{\la}{\mu}}(q,t)=\psi'_{\skew{\la'}{\mu'}}(t,q).
\end{equation}
Together with the initial condition $P_{\mu}(\text{--}\,;q,t)=\delta_{\mu,0}$,
this uniquely determines the Macdonald polynomials.

We conclude our list of results for Macdonald polynomials
with the principal specialisation formula 
\cite[page 338]{Macdonald95}
\begin{equation}\label{Eq_Pspec}
P_{\la}(1,t,\dots,t^{n-1};q,t)=
P_{\la}\Big(\Big[\frac{1-t^n}{1-t}\Big];q,t\Big)=
t^{n(\la)} \frac{(t^n;q,t)_{\la}}{C^{-}_{\la}(t;q,t)}.
\end{equation}

\section{Koornwinder polynomials}
\subsection{Koornwinder polynomials}
The Koornwinder polynomials \cite{Koornwinder92} are a generalisation of
the Macdonald polynomials to the root system $\mathrm{BC}_n$.
They depend on six parameters, except for $n=1$ when they correspond to
the $5$-parameter Askey--Wilson polynomials \cite{AW85}.

Throughout this section $x=(x_1,\dots,x_n)$. 
Then the Koornwinder density is given by
\begin{equation}\label{Eq_Kdensity}
\Delta(x;q,t;\tees):=
\prod_{i=1}^n \frac{(x_i^{\pm 2};q)_{\infty}}
{\prod_{r=0}^3 (t_r x_i^{\pm};q)_{\infty}}
\prod_{1\leqslant i<j\leqslant n} 
\frac{(x_i^{\pm}x_j^{\pm};q)_{\infty}}
{(tx_i^{\pm}x_j^{\pm};q)_{\infty}},
\end{equation}
where
\begin{align*}
(x_i^{\pm};q)_{\infty}&:=(x_i,x_i^{-1};q)_{\infty}, \\
(x_i^{\pm}x_j^{\pm};q)_{\infty}&:=
(x_ix_j,x_ix_j^{-1},x_i^{-1}x_j,x_i^{-1}x_j^{-1};q)_{\infty}.
\end{align*}
For complex $q,t,t_0,\dots,t_3$ such that 
$\abs{q},\abs{t},\abs{t_0},\dots,\abs{t_3}<1$, this
density may be used to define a scalar product on $\Complex[x^{\pm 1}]$ 
as
\[
\ip{f}{g}_{q,t;\tees}^{(n)}:=
\int_{\mathbb{T}^n} f(x) g(x^{-1})\Delta(x;q,t;\tees) \dup T(x),
\]
where 
\begin{equation}\label{Eq_measure}
\dup T(x):=\frac{1}{2^n n! (2\pi\iup)^n}\,
\frac{\dup x_1}{x_1}\cdots \frac{\dup x_n}{x_n}.
\end{equation}
Let $W=\Symm_n\ltimes (\Z/2\Z)^n$ be the hyperoctahedral group
with natural action on $\Complex[x^{\pm}]$. For $\la$ a partition
of length at most $n$, let $m_{\la}^W$ be the $W$-invariant monomial
symmetric function indexed by $\la$:
\[
m_{\la}^W(x):=\sum_{\alpha} x^{\alpha}.
\]
Here the sum is over all $\alpha$ in the $W$-orbit of $\la$.
In analogy with Macdonald polynomials, the Koornwinder 
polynomials $K_{\la}=K_{\la}(x;q,t;\tees)$ 
are defined as the unique family of polynomials in 
$\Lambda^{\mathrm{BC}_n}:=\Complex[x^{\pm}]^W$ such that \cite{Koornwinder92}
\[
K_{\la}=m^W_{\la}+\sum_{\mu<\la} c_{\la\mu} m^W_{\mu}
\]
and
\begin{equation}\label{Eq_KKnul}
\ip{K_{\la}}{K_{\mu}}_{q,t;\tees}^{(n)}
=0 \qquad\text{if $\la\neq\mu$}.
\end{equation}
From the definition it follows that the $K_{\la}$ are symmetric under 
permutation of the $t_r$. 
The quadratic norm was first evaluated in \cite{vDiejen96} (self-dual case) 
and \cite{Sahi99} (general case). 
For our purposes we only need
\begin{equation}\label{Eq_Gus}
\ip{1}{1}_{q,t;\tees}^{(n)} 
=\prod_{i=1}^n 
\frac{(t,t_0t_1t_2t_3t^{n+i-2};q)_{\infty}}
{(q,t^i;q)_{\infty}\prod_{0\leqslant r<s\leqslant 3}(t_rt_st^{i-1};q)_{\infty}},
\end{equation}
known as Gustafson's integral \cite{Gustafson90}.

The $\mathrm{BC}_n$ analogue of the Cauchy identity \eqref{Eq_Mac-Cauchy-a} 
is given by \cite[Theorem 2.1]{Mimachi01}
\begin{multline}\label{Eq_Mim}
\sum_{\la\subseteq m^n} (-1)^{\abs{\la}} 
K_{m^n-\la}(x;q,t;\tees) K_{\la'}(y;t,q;\tees) \\[-1mm] 
=\prod_{i=1}^n\prod_{j=1}^m \big(x_i+x_i^{-1}-y_j-y_j^{-1}\big)
=\prod_{i=1}^n\prod_{j=1}^m x_i^{-1} \big(1-x_iy_j^{\pm}\big),
\end{multline}
where $y=(y_1,\dots,y_m)$ and $(1-ab^{\pm}):=(1-ab)(1-ab^{-1})$.

\subsection{Lifted and virtual Koornwinder polynomials}
The lifted Koornwinder polynomials 
$\tilde{K}_{\la}=\tilde{K}_{\la}(q,t,T;\tees)=
\tilde{K}_{\la}(x;q,t,T;\tees)$
are a $7$-parameter family of inhomogeneous symmetric functions
\cite{Rains05}.
They are invariant under permutations of the $t_r$ and
form a $\Rat(q,t,T,\tees)$-basis of $\Lambda$.
For example, $\tilde{K}_0=1$ and 
\[
\tilde{K}_1=m_1+\frac{1-T}{(1-t)(1-t_0t_1t_2t_3T^2/t^2)}
\sum_{r=0}^3 \Big(\frac{t_0t_1t_2t_3T}{t_rt}-t_r\Big).
\]
As a function of the $t_r$, the lifted Koornwinder polynomial
$\tilde{K}_{\la}$ has poles at
\begin{equation}\label{Eq_poles}
t_0t_1t_2t_3=
q^{1-\hat{a}(s)} t^{\hat{l}(s)+1}T^{-2}=
q^{2-\la_i-j} t^{i+\la'_j}T^{-2}
\end{equation}
for all $s=(i,j)\in\la$.
Importantly, according to \cite[Theorem 7.1]{Rains05}, 
for generic $q,t,t_0,\dots,t_3$ (so as to avoid potential poles)
\begin{multline}\label{Eq_K_lifted}
\tilde{K}_{\la}(x_1^{\pm},\dots,x_n^{\pm};q,t,t^n;\tees) \\
=\begin{cases}
K_{\la}(x_1,\dots,x_n;q,t;\tees) & \text{if $l(\la)\leqslant n$}, \\[2mm]
0 & \text{otherwise},
\end{cases}
\end{multline}
where, for $f\in\Lambda$ (or $f\in\hat{\Lambda}$, see below), 
$f(x_1,\dots,x_k):=f(x_1,\dots,x_k,0,0,\dots)$.

Let $\hat{\Lambda}$ be the completion of the ring of symmetric 
functions with respect to the natural grading by degree, i.e.,
$\hat{\Lambda}$ is the inverse limit of $\Lambda_n$ relative to
the homomorphism $\rho_{m,n}:\Lambda_m\to\Lambda_n$ $(m\geqslant n$)
which sends $m_{\la}(x_1,\dots,x_m)$ to 
$m_{\la}(x_1,\dots,x_n)$ for $l(\la)\leqslant n$ and to $0$ otherwise.
Then the virtual Koornwinder polynomials 
$\hat{K}_{\la}=\hat{K}_{\la}(q,t,Q;\tees)= 
\hat{K}_{\la}(x;q,t,Q;\tees)$ 
(which are again symmetric in the $t_r$)
form a $\Rat(q,t,Q,\tees)$-basis of $\hat{\Lambda}$, such that
for $\la\subseteq m^n$,
\begin{multline}\label{Eq_virtualK}
\hat{K}_{\la}(x_1,\dots,x_n;q,t,q^m;\tees) \\
=(x_1\cdots x_n)^m K_{m^n-\la}(x_1,\dots,x_n;q,t;\tees).
\end{multline}
When $Q=0$ the virtual Koornwinder polynomials can be expressed in terms 
of Macdonald polynomials as \cite[Corollary 7.21]{Rains05}
\[
\hat{K}_{\la}(x;q,t,0;\tees)=P_{\la}(x;q,t)
\prod_{i\geqslant 1} \frac{\prod_{r=0}^3 (t_r x_i;q)_{\infty}}
{(x_i^2;q)_{\infty}}
\prod_{i<j} \frac{(tx_ix_j;q)_{\infty}}{(x_ix_j;q)_{\infty}},
\]
from which it follows that
\begin{multline}\label{Eq_largem}
\lim_{m\to\infty} (x_1\cdots x_n)^m 
K_{m^n-\la}(x_1,\dots,x_n;q,t;\tees) \\[-1mm]
=P_{\la}(x_1,\dots,x_n;q,t)
\prod_{i=1}^n \frac{\prod_{r=0}^3 (t_r x_i;q)_{\infty}}{(x_i^2;q)_{\infty}}
\prod_{1\leqslant i<j\leqslant n} 
\frac{(tx_ix_j;q)_{\infty}}{(x_ix_j;q)_{\infty}}.
\end{multline}

The lifted and virtual Koornwinder polynomials admit a lift of the
Cauchy identity \eqref{Eq_Mim} to $\hat{\Lambda}_x\otimes\Lambda_y$ 
as follows, see \cite[Theorem 7.14]{Rains05}: 
\begin{multline}\label{Eq_Cauchy714}
\sum_{\la} (-1)^{\abs{\la}}
\hat{K}_{\la}(x;t,q,T;\tees) 
\tilde{K}_{\la'}(y;q,t,T;\tees) \\
=\prod_{i,j\geqslant 1}(1-x_iy_j).
\end{multline}
This may be used to derive the following symmetry relation for
virtual Koornwinder polynomials.

\begin{proposition}\label{Prop_propsymm}
The virtual Koornwinder polynomials satisfy
\begin{multline*}
\hat{K}_{\la}(x;q,t,Q;\tees) \\
=\hat{K}_{\lambda}(x;q,t,Qt_0t_1/q;q/t_0,q/t_1,t_2,t_3)
\prod_{i\geqslant 1}  \frac{(t_0 x_i,t_1 x_i;q)_{\infty}}
{(qx_i/t_0,qx_i/t_1;q)_{\infty}}.
\end{multline*}
\end{proposition}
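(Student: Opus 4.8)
The plan is to derive the symmetry from the Cauchy identity \eqref{Eq_Cauchy714} by exploiting the fact that the lifted Koornwinder polynomials $\tilde{K}_{\la}$ already satisfy an analogous symmetry, and that the right-hand side $\prod_{i,j\ge 1}(1-x_iy_j)$ is manifestly independent of the six $\BC$-parameters. More precisely, I would first recall (or establish) the companion statement for the lifted polynomials: under the substitution $(t_0,t_1,t_2,t_3,T)\mapsto(q/t_0,q/t_1,t_2,t_3,Tt_0t_1/q)$ one has
\[
\tilde{K}_{\la}(y;q,t,T;\tees)
=\tilde{K}_{\la}\big(y;q,t,Tt_0t_1/q;q/t_0,q/t_1,t_2,t_3\big)\cdot
\prod_{j\ge 1}\frac{(q y_j/t_0,q y_j/t_1;q)_{\infty}}{(t_0 y_j,t_1 y_j;q)_{\infty}},
\]
which is exactly the form in which this parameter symmetry appears for Koornwinder polynomials (it is the polynomial shadow of the well-known $t_0\leftrightarrow q/t_0$ symmetry of the Askey--Wilson density, iterated in a second parameter). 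Note that the prefactor here is the \emph{reciprocal} of the one in the Proposition, which is the crucial sign bookkeeping.

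Second, I would write down \eqref{Eq_Cauchy714} twice: once with parameters $(\tees,T)$ and once with the primed parameters $(q/t_0,q/t_1,t_2,t_3,Tt_0t_1/q)$. Since the right-hand side $\prod_{i,j}(1-x_iy_j)$ is the same in both, the two left-hand sides are equal as elements of $\hat\La_x\otimes\La_y$:
\begin{multline*}
\sum_{\la}(-1)^{\abs{\la}}\hat{K}_{\la}(x;t,q,T;\tees)\,\tilde{K}_{\la'}(y;q,t,T;\tees)\\
=\sum_{\la}(-1)^{\abs{\la}}\hat{K}_{\la}\big(x;t,q,Tt_0t_1/q;q/t_0,q/t_1,t_2,t_3\big)\,
\tilde{K}_{\la'}\big(y;q,t,Tt_0t_1/q;q/t_0,q/t_1,t_2,t_3\big).
\end{multline*}
Now substitute the lifted-polynomial symmetry into the right-hand side to replace the primed $\tilde{K}_{\la'}$ by the unprimed one times $\prod_j (q y_j/t_0,q y_j/t_1;q)_{\infty}/(t_0 y_j,t_1 y_j;q)_{\infty}$; the latter infinite product can be pulled out of the sum over $\la$ since it does not depend on $\la$.

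Third, I would invoke the fact that $\{\tilde{K}_{\la'}(y;q,t,Tt_0t_1/q;\cdots)\}_{\la}$ — equivalently, after the primed/unprimed identification, $\{\tilde{K}_{\mu}(y;q,t,T;\tees)\}_{\mu}$ — is a basis of $\La_y$ (over the appropriate field of rational functions), so one may equate coefficients of $\tilde{K}_{\la'}(y;q,t,T;\tees)$ on both sides. This yields
\[
\hat{K}_{\la}(x;t,q,T;\tees)
=\hat{K}_{\la}\big(x;t,q,Tt_0t_1/q;q/t_0,q/t_1,t_2,t_3\big)\prod_{i\ge 1}\frac{(t_0 x_i,t_1 x_i;q)_{\infty}}{(q x_i/t_0,q x_i/t_1;q)_{\infty}},
\]
and finally swapping $q\leftrightarrow t$ and renaming $T=Q$ gives the statement of the Proposition. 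One should check that the substitution does not meet the pole locus \eqref{Eq_poles} for generic parameters, and that equating coefficients is legitimate in the completion $\hat\La$ (it is, since for each fixed degree only finitely many $\la$ contribute).

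The main obstacle I expect is the first step: pinning down and justifying the precise lifted-Koornwinder parameter symmetry, including the exact form of the shift $T\mapsto Tt_0t_1/q$ and the direction of the infinite-product prefactor. This is where the $t_0t_1t_2t_3T^2$-type normalisations in the definition of $\tilde{K}_{\la}$ (visible already in $\tilde{K}_1$) must be tracked carefully; a sign or a $q$-power error here propagates to the whole result. If a clean reference for the lifted symmetry is not available, the fallback is to prove it directly from the defining triangularity-plus-orthogonality characterisation of $\tilde{K}_{\la}$, using that the two parameter sets give densities related by the elementary Askey--Wilson reflection $t_r\mapsto q/t_r$ (which changes the density by an explicit factor), but that route is more laborious. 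Everything after Step~1 is essentially formal manipulation of the Cauchy pairing and a basis argument.
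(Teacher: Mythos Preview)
Your overall strategy---use the Cauchy identity \eqref{Eq_Cauchy714}, feed in a known parameter symmetry on the $\tilde{K}$ side, and extract coefficients---is exactly the paper's approach. The gap is in Step~1: the stated multiplicative symmetry for $\tilde{K}_{\la}$ is false. The lifted Koornwinder polynomial $\tilde{K}_{\la}$ is an element of $\Lambda$ (a genuine symmetric function of bounded degree), so an identity of the form
\[
\tilde{K}_{\la}(y;\dots)=\tilde{K}_{\la}(y;\dots')\cdot\prod_{j\ge 1}\frac{(qy_j/t_0,qy_j/t_1;q)_{\infty}}{(t_0y_j,t_1y_j;q)_{\infty}}
\]
cannot hold: the right-hand side lives in the completion $\hat{\Lambda}$ and has terms of arbitrarily high degree. (There is also a $q$ versus $t$ slip: for $\tilde{K}_{\la}(y;q,t,T;\dots)$ the relevant reflection parameter is $t$, not $q$.) The correct symmetry for $\tilde{K}$ is the \emph{plethystic} one recorded as \eqref{Eq_Ktilde-symm}:
\[
\tilde{K}_{\la}\Big(\Big[y+\tfrac{t_0-t/t_0}{1-t}+\tfrac{t_1-t/t_1}{1-t}\Big];q,t,T;\tees\Big)
=\tilde{K}_{\la}(y;q,t,Tt_0t_1/t;t/t_0,t/t_1,t_2,t_3),
\]
which shifts the alphabet rather than multiplies by a product.

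The paper therefore does not write Cauchy twice and compare; instead it performs the plethystic substitution $y\mapsto y+\tfrac{t_0-t/t_0}{1-t}+\tfrac{t_1-t/t_1}{1-t}$ directly in \eqref{Eq_Cauchy714}. The infinite product you want then appears on the \emph{other} side of the identity, from $\sigma_1[-xy]$ evaluated at the shifted alphabet via \eqref{sigma}. After that one relabels parameters, re-expands $\prod(1-x_iy_j)$ by \eqref{Eq_Cauchy714}, and equates coefficients of $\tilde{K}_{\la'}(y;q,t,T;\tees)$---this last step is your Step~3 and is fine. So your outline becomes correct once Step~1 is replaced by the plethystic symmetry and Step~2 by a single plethystic substitution rather than a comparison of two Cauchy identities.
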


\begin{proof}
We start with \eqref{Eq_Cauchy714} and identify 
the double product on the right as $\sigma_1[-xy]$. 
Carrying out the plethystic substitution 
\[
y\mapsto y+\frac{t_0-t/t_0}{1-t}+\frac{t_1-t/t_1}{1-t}=:y',
\]
and applying the symmetry \cite[Equation (7.2)]{Rains05}
\begin{multline}\label{Eq_Ktilde-symm}
\tilde{K}_{\la}\Big(\Big[y+\frac{t_0-t/t_0}{1-t}+\frac{t_1-t/t_1}{1-t}\Big];
q,t,T;\tees\Big) \\
=\tilde{K}_{\la}(y;q,t,Tt_0t_1/t;t/t_0,t/t_1,t_2,t_3),
\end{multline}
we obtain
\begin{multline}\label{Eq_KK-transformed}
\sum_{\la} (-1)^{\abs{\la}} \hat{K}_{\la}(x;t,q,T;\tees) 
\tilde{K}_{\la'}(y;q,t,Tt_0t_1/t;t/t_0,t/t_1,t_2,t_3) \\
=\prod_{i\geqslant 1} \frac{(t_0 x_i,t_1 x_i;t)_{\infty}}
{(tx_i/t_0,tx_i/t_1;t)_{\infty}} \prod_{i,j\geqslant 1}(1-x_iy_j).
\end{multline}
Here the right-hand side follows from \eqref{Eq_minmin}
and \eqref{Eq_sigma-formulas}:
\begin{align*}
\sigma_1[-xy']&=\sigma_1\Big[{-}x\Big(y+\frac{t_0-t/t_0}{1-t}+
\frac{t_1-t/t_1}{1-t}\Big)\Big] \\
&=\sigma_1[-xy]\sigma_1\Big[x\,\frac{t/t_0-t_0}{1-t}\Big]
\sigma_1\Big[x\,\frac{t/t_1-t_1}{1-t}\Big] \\
&=\prod_{i,j\geqslant 1}(1-x_iy_j)
\prod_{i\geqslant 1} \frac{(t_0 x_i,t_1 x_i;t)_{\infty}}
{(tx_i/t_0,tx_i/t_1;t)_{\infty}}.
\end{align*}
After replacing $(t_0,t_1,T)\mapsto (t/t_0,t/t_1,Tt_0t_1/t)$,
the identity \eqref{Eq_KK-transformed} takes the equivalent 
form
\begin{multline*}
\sum_{\la} (-1)^{\abs{\la}}
\hat{K}_{\la}(x;t,q,Tt_0t_1/t;t/t_0,t/t_1,t_2,t_3) 
\tilde{K}_{\la'}(y;q,t,T;\tees) \\
=\prod_{i\geqslant 1} 
\frac{(tx_i/t_0,tx_i/t_1;t)_{\infty}}{(t_0 x_i,t_1 x_i;t)_{\infty}}
\prod_{i,j\geqslant 1}(1-x_iy_j).
\end{multline*}
Expanding the double product on the right by the Cauchy identity
\eqref{Eq_Cauchy714}, and extracting coefficients of 
$\tilde{K}_{\la'}(y;q,t,T;\tees)$, yields
\begin{multline*}
\hat{K}_{\la}(x;t,q,Tt_0t_1/t;t/t_0,t/t_1,t_2,t_3)  \\
=\hat{K}_{\la}(x;t,q,T;\tees) 
\prod_{i\geqslant 1} 
\frac{(tx_i/t_0,tx_i/t_1;t)_{\infty}}{(t_0 x_i,t_1 x_i;t)_{\infty}}.
\end{multline*}
After the substitution $(t,q,T)\mapsto (q,t,Q)$ we are done.
\end{proof}

\section{Macdonald--Koornwinder polynomials}
\subsection{Macdonald polynomials on root systems}\label{Sec_Mac-RS}
Below we closely follow Macdonald's exposition in \cite{Macdonald00}.
For basic definitions and facts pertaining to root systems we refer
the reader to \cite[Chapter III]{Humphreys72}.

Let $E$ be a Euclidean space with positive-definite symmetric bilinear form
$\ip{\cdot}{\cdot}$ and $R$ a root system spanning $E$. The rank of $R$
is the dimension of $E$.
All root systems will be assumed to be irreducible, but not
necessarily reduced.
The root system dual to $R$, denoted $R^{\vee}$, is given by
\[
R^{\vee}=\{\alpha^{\vee}:~\alpha\in R\},
\]
where $v^{\vee}:=2v/\ip{v}{v}=2v/\|v\|^2$ for $v\in E$.

A pair of root systems $(R,S)$ in $E$ is called admissible if $R$ and $S$
share the same Weyl group, $W$, and $S$ is reduced.
Given such an admissible pair and $\alpha\in R$, 
there exists a unique $u_{\alpha}>0$ such that $u_{\alpha}^{-1} \alpha\in S$. 
Moreover, the map $\alpha\mapsto u_{\alpha}^{-1} \alpha$ from $R$ to $S$
is surjective (injective if $R$ is reduced) and commutes with the action 
of $W$. Hence $u_{\alpha}$ is fixed along Weyl orbits, and 
$u_{2\alpha}=2u_{\alpha}$ if $\alpha,2\alpha$ are both in $R$.
Two admissible pairs $(R,S)$ and $(R',S')$ are said to 
be similar if there exist positive real numbers $a,b$ such that $R'=aR$ and 
$S'=bS$. Then $bu'_{\alpha'}=au_{\alpha}$ for $\alpha'\in R'$ and
$\alpha\in R$ such that $\alpha'=a\alpha$.
Since we only require the classification of admissible pairs of root 
systems up to similarity, and since roots of equal length are
conjugate under the action of the Weyl group and hence
in the same Weyl orbit, we may fix the value of $u_{\alpha}$
for roots of shortest length.
The classification then breaks up into three cases.
\begin{itemize}
\item[(1)] $R$ is reduced and $S=R$ (and hence $u_{\alpha}=1$).
\item[(2)] $R$ is reduced but not simply laced, and $S=R^{\vee}$.
Unlike Macdonald, who normalises the length of short roots in 
non-simply-laced reduced root systems as $\sqrt{2}$, 
we take the length of the short roots to be $1$ when $R=\mathrm{B}_n$ 
and $\sqrt{2}$ in all other cases.
In particular this implies that $\mathrm{B}_n^{\vee}=\mathrm{C}_n$.
Writing $u_{\longroot}$ and $u_{\shortroot}$ for $u_{\alpha}$ 
indexed by long and short roots respectively, we then have:
(i) $u_{\longroot}=1$ and $u_{\shortroot}=1/2$ for 
$(R,S)=(\mathrm{B}_n,\mathrm{C}_n)$, 
(ii) $u_{\shortroot}=1$ and $u_{\longroot}=2$ for 
$(R,S)=(\mathrm{C}_n,\mathrm{B}_n)$ or
$(R,S)=(\mathrm{F}_4,\mathrm{F}_4^{\vee})$,
(iii) $u_{\shortroot}=1$ and $u_{\longroot}=3$ for 
$(R,S)=(\mathrm{G}_4,\mathrm{G}_4^{\vee})$.
\item[(3)] $R$ is the non-reduced root system $\mathrm{BC}_n$ and $S$ is one
of $\mathrm{B}_n, \mathrm{C}_n$. 
In both cases we fix $S\subseteq R$, so that 
$u_{\alpha}\in\{1,2\}$ for $S=\mathrm{B}_n$ and
$u_{\alpha}\in\{1/2,1\}$ for $S=\mathrm{C}_n$.
\end{itemize}
For $(R,S)$ an admissible pair of root systems of rank $r$, we fix a basis
of simple roots $\Delta=\{\alpha_1,\dots,\alpha_r\}$ of $R$ and write 
$\alpha>0$ if $\alpha\in R$ is a positive root with respect to $\Delta$.
The fundamental weights $\fwc_1,\dots,\fwc_r$ of $R$ are 
given by $\ip{\alpha_i^{\vee}}{\fwc_j}=\delta_{ij}$.
As usual, we denote the root, coroot and weight lattices of $R$ by 
$Q$, $Q^{\vee}$ and $P$ respectively. We also write
$Q_{+}=\sum_{i=1}^r \Nat\alpha_i$ 
for the cone in $Q$ spanned by the simple roots,
and $P_{+}=\sum_{i=1}^r \Nat\fwc_i$ for the set of dominant (integral) weights.

Let $A$ be the group algebra over $\Real$ of $P$, with elements $\eup^{\la}$,
and $A^W$ the algebra of $W$-invariant elements of $A$. A basis
of $A^W$ is given by the monomial symmetric functions
\[
m_{\la}^W=\sum_{\mu} \eup^{\mu}, \qquad \la\in P_{+},
\]
with $\mu$ summed over the $W$-orbit of $\la$.

The $(R,S)$ Macdonald polynomials defined below depend on the variables $q$ 
and $t_{\alpha}$, $\alpha\in R$, such that $t_{\alpha}$ is constant 
along Weyl orbits. 
Hence there is only one $t_{\alpha}$ in case (1), two in case (2),
and three in case (3). In each case we write this set of $t_{\alpha}$'s by
$\underline{t}$. The generalised Macdonald density (compare with
\eqref{Eq_Mac-density}) is then
\begin{equation}\label{Eq_PRS-density}
\Delta(q,\underline{t}):=\prod_{\alpha\in R} 
\frac{(t_{2\alpha}^{1/2}\eup^{\alpha};q^{u_{\alpha}})_{\infty}}
{(t_{\alpha} t_{2\alpha}^{1/2}\eup^{\alpha};q^{u_{\alpha}})_{\infty}},
\end{equation}
where $t_{2\alpha}:=1$ if $2\alpha\not\in R$.
Assuming $\abs{q},\abs{t_{\alpha}}<1$, this defines the following
scalar product on $A$:
\[
\ip{f}{g}_{q,\underline{t}}:=
\frac{1}{\abs{W}}\int_T f\,\bar{g}\,\Delta(q,\underline{t})\dup T,
\]
where integration is with respect to Haar measure on the torus 
$T=E/Q^{\vee}$ and, for $f=\sum_{\la\in P} f_{\la} \eup^{\la}$,
$\bar{f}:=\sum_{\la\in P} f_{\la} \eup^{-\la}$.
The $(R,S)$ Macdonald polynomials $P_{\la}(q,\underline{t})$,
indexed by $\la\in P_{+}$, are the unique family of 
$W$-symmetric functions 
\begin{equation}\label{Eq_PRSm}
P_{\la}(q,\underline{t})=m_{\la}^W+
\sum_{\mu<\la} u_{\la\mu}(q,\underline{t}) m_{\mu}^W
\end{equation}
such that 
\[
\ip{P_{\la}}{P_{\mu}}_{q,\underline{t}}=0\qquad \text{if$\quad\la\neq\mu$}.
\]
The sum in \eqref{Eq_PRSm} is with respect to the dominance (partial)
order on $P_{+}$ defined by $\la\geqslant\mu$ if $\la-\mu\in Q_{+}$.
Of course, when dealing with the $P_{\la}(q,\underline{t})$ as polynomials,
the restrictions $\abs{q},\abs{t_{\alpha}}<1$ imposed above may be
dropped, and typically we view $q$ and the $t_{\alpha}$ as indeterminates.

Below we are interested in the generalised Macdonald polynomials for 
$(R,S)$ one of the four admissible pairs 
$(\mathrm{B}_n,\mathrm{B}_n)$, $(\mathrm{B}_n,\mathrm{C}_n)$, 
$(\mathrm{C}_n,\mathrm{B}_n)$ and $(\mathrm{D}_n,\mathrm{D}_n)$.
Moreover, in the Hall--Littlewood limit, $q\to 0$, 
(in which case the $S$-dependence drops out) we also consider 
$R=\mathrm{BC}_n$.
In the following we assume the standard realisation in $\Real^n$
of $\mathrm{B}_n$, $\mathrm{C}_n$ and $\mathrm{D}_n$, consistent with 
our normalisation of short roots in (2):
\begin{subequations}\label{Eq_Delta}
\begin{align}\label{Eq_Delta-B}
\Delta&=\{\alpha_1,\dots,\alpha_n\}
=\big\{\epsilon_1-\epsilon_2,\dots,
\epsilon_{n-1}-\epsilon_n,\epsilon_n\big\},  
& R&=\mathrm{B}_n=\mathrm{C}_n^{\vee}, \\
\Delta&=\{\alpha_1,\dots,\alpha_n\}
=\big\{\epsilon_1-\epsilon_2,\dots,
\epsilon_{n-1}-\epsilon_n,\epsilon_{n-1}+\epsilon_n\big\},
& R&=\mathrm{D}_n.
\end{align}
\end{subequations}
We further parametrise the set of dominant weights $P_{+}$ as 
(see e.g., \cite[page 470]{KT87})
\begin{equation}\label{Eq_dominant-weights}
\la_1\epsilon_1+\cdots+\la_n\epsilon_n,
\end{equation}
where $\la=(\la_1,\dots,\la_n)$ is a partition in the case of
$\mathrm{C}_n$, and a partition or half-partition in the case
of $\mathrm{B}_n$, $\mathrm{D}_n$, with the exception that for
$\mathrm{D}_n$ the part $\la_n$ can be negative: 
\label{page_Dn}
$-\la_{n-1}\leqslant\la_n\leqslant\la_{n-1}$.\footnote{The
map $\la_n\mapsto -\la_n$ corresponds to the Dynkin diagram automorphism 
interchanging $\fwc_{n-1}$ and $\fwc_n$.}
It is not difficult to check that \eqref{Eq_dominant-weights} can be 
rewritten in terms of the fundamental weights as
\begin{subequations}\label{Eq_Pplus}
\begin{align}
&(\la_1-\la_2)\fwc_1+\cdots+(\la_{n-1}-\la_n)\fwc_{n-1}+2\la_n\fwc_n,  
& R&=\mathrm{B}_n, \label{Eq_Pplus-B} \\
&(\la_1-\la_2)\fwc_1+\cdots+(\la_{n-1}-\la_n)\fwc_{n-1}+\la_n\fwc_n,  
& R&=\mathrm{C}_n, \label{Eq_Pplus-C} \\
&(\la_1-\la_2)\fwc_1+\cdots+(\la_{n-1}-\la_n)\fwc_{n-1}+
(\la_{n-1}+\la_n)\fwc_n,
& R&=\mathrm{D}_n.
\end{align}
\end{subequations}
Finally, writing $x_i=\eup^{-\epsilon_i}$ (for $1\leqslant i\leqslant n$),
we will denote the four families of interest by
\begin{align*}
P_{\la}^{(\mathrm{B}_n,\mathrm{B}_n)}(x;q,t,t_2), &&& 
P_{\la}^{(\mathrm{B}_n,\mathrm{C}_n)}(x;q,t,t_2), \\
P_{\la}^{(\mathrm{C}_n,\mathrm{B}_n)}(x;q,t,t_2), &&&
P_{\la}^{(\mathrm{D}_n,\mathrm{D}_n)}(x;q,t),
\end{align*}
where $x=(x_1,\dots,x_n)$ and 
\begin{equation}\label{Eq_tt2}
t=t_{\alpha_1}, \qquad t_2=t_{\alpha_n}.
\end{equation}
There are several relations between these polynomials.
For example \cite[Equation (5.60)]{vDiejen95},
\begin{subequations} \label{Eq_PBBDD}
\begin{equation}
P_{\la}^{(\mathrm{D}_n,\mathrm{D}_n)}(x;q,t)
=P_{\la}^{(\mathrm{B}_n,\mathrm{B}_n)}(x;q,t,1)
\end{equation}
if $l(\la)<n$, and
\begin{align}
P_{\la}^{(\mathrm{D}_n,\mathrm{D}_n)}(x;q,t)+
P_{\bar{\la}}^{(\mathrm{D}_n,\mathrm{D}_n)}(x;q,t)
&=P_{\la}^{(\mathrm{B}_n,\mathrm{B}_n)}(x;q,t,1), \label{Eq_DDDDBB} \\[1mm]
P_{\la}^{(\mathrm{D}_n,\mathrm{D}_n)}(x;q,t)-
P_{\bar{\la}}^{(\mathrm{D}_n,\mathrm{D}_n)}(x;q,t)
&=P_{\la-(\frac{1}{2})^n}^{(\mathrm{B}_n,\mathrm{C}_n)}(x;q,t,q^{1/2}) 
\label{Eq_DDDDBB2} \\[-1pt]
& \qquad \qquad \times
\prod_{i=1}^n \big(x_i^{-1/2}-x_i^{1/2}\big)\notag 
\end{align}
\end{subequations}
if $\la$ is a partition or half-partition of length $n$.
Here $\bar{\la}:=(\la_1,\dots,\la_{n-1},-\la_n)$.
Hence
\begin{equation}\label{Eq_PDDbar}
P_{\bar{\la}}^{(\mathrm{D}_n,\mathrm{D}_n)}(x;q,t)
=P_{\la}^{(\mathrm{D}_n,\mathrm{D}_n)}(\bar{x};q,t),
\qquad \bar{x}:=(x_1,\dots,x_{n-1},x_n^{-1}).
\end{equation}
Similarly, comparing the Koornwinder density \eqref{Eq_Kdensity} with 
\eqref{Eq_PRS-density}, it follows that 
\begin{equation}\label{Eq_PCB-K}
P_{\la}^{(\mathrm{C}_n,\mathrm{B}_n)}(x;q,t,t_2)=
K_{\la}\big(x;q,t;\pm q^{1/2},\pm t_2^{1/2}\big)
\end{equation}
(see also \cite{vDiejen95}). Although the $(\mathrm{B}_n,\mathrm{B}_n)$ 
and $(\mathrm{B}_n,\mathrm{C}_n)$ Macdonald polynomials 
are indexed by partitions or half-partitions, they too
can be related to Koornwinder polynomials \cite{vDiejen95}.
Before describing this relation, we briefly discuss another family of 
polynomials incorporating both $\mathrm{B}_n$ families.

\subsection{The Macdonald--Koornwinder polynomials
\texorpdfstring{$K_{\la}(x;q,t;t_2,t_3)$}{K}}
\label{Sec_Kt2t3}

Our description of Macdonald polynomials attached to root systems is by 
no means the most general and modern setup, see e.g., 
\cite{Cherednik05,Macdonald03,Stokman11}.
Beyond Macdonald's original approach we have already covered the 
Koornwinder polynomials, and in this section we discuss one further 
family of $\mathrm{B}_n$-like polynomials, denoted by 
$K_{\la}(x;q,t;t_2,t_3)$. 
In the notation of \cite[Definition 3.21]{Stokman11} they correspond to 
the Macdonald--Koornwinder polynomials $P_{\la}^{+}$ with initial data
given by the quintuple $D=(\mathrm{B}_n,\Delta,t,P,Q)$. 
Here $\Delta$ is the basis of simple roots of $\mathrm{B}_n$ given in 
\eqref{Eq_Delta-B}, $P$ and $Q$ again denote the weight and root 
lattices of $\mathrm{B}_n$, and $t$ stands for `twisted'.

The polynomials $K_{\la}(x;q,t;t_2,t_3)$, where
$x=(x_1,\dots,x_n)$ and $\la=(\la_1,\dots,\la_n)$ is
a partition or half-partition, are $\mathrm{B}_n$-symmetric functions
in the sense of \eqref{Eq_PRSm} such that
\begin{equation}\label{Eq_KK0}
\ipbig{K_{\la}(x;q,t;t_2,t_3)}
{K_{\mu}(x;q,t;t_2,t_3)}_{q,t;t_2,t_3}^{(n)}=0
\quad\text{for}\quad \la\neq\mu.
\end{equation}
Here, for $f,g\in\Lambda^{\mathrm{BC}_n}$,
\begin{equation}\label{Eq_Bnip}
\ip{f}{g}_{q,t;t_2,t_3}^{(n)}:=
\int_{\mathbb{T}^n} f(x)g(x)\Delta(x;q,t;t_2,t_3)\dup T(x)
\end{equation}
with
\[
\Delta(x;q,t;t_2,t_3):=
\prod_{i=1}^n \frac{(x_i^{\pm};q^{1/2})_{\infty}}
{(t_2 x_i^{\pm},t_3 x_i^{\pm};q)_{\infty}}
\prod_{1\leqslant i<j\leqslant n} 
\frac{(x_i^{\pm}x_j^{\pm};q)_{\infty}}
{(tx_i^{\pm}x_j^{\pm};q)_{\infty}}.
\]
In the notation of the previous section this corresponds to 
$(R,S)=(\mathrm{B}_n,\mathrm{B}_n)$ and
\[
\Delta(q,\underline{t})=
\prod_{\alpha \text{ short}}
\frac{(\eup^{\alpha},q^{1/2} \eup^{\alpha};q)_{\infty}}
{(t_{\alpha}\eup^{\alpha},\bar{t}_{\alpha}\eup^{\alpha};q)_{\infty}}
\prod_{\alpha \text{ long}}
\frac{(\eup^{\alpha};q)_{\infty}}
{(t_{\alpha}\eup^{\alpha};q)_{\infty}},
\]
where $\underline{t}=(t_{\alpha_1},t_{\alpha_n},\bar{t}_{\alpha_n})=
(t,t_2,t_3)$.
It thus follows that
\begin{subequations}\label{Eq_PB-K}
\begin{align}\label{Eq_PBB-K}
P_{\la}^{(\mathrm{B}_n,\mathrm{B}_n)}(x;q,t,t_2),
&=K_{\la}(x;q,t;t_2,q^{1/2}) \\
P_{\la}^{(\mathrm{B}_n,\mathrm{C}_n)}(x;q,t,t_2)
&=K_{\la}(x;q,t;t_2,t_2q^{1/2}).
\label{Eq_PBC-K}
\end{align}
\end{subequations}

The next lemma shows that the $K_{\la}(x;q,t;t_2,t_3)$ can be expressed
in terms of Koornwinder polynomials, allowing us to prove results for
the former using the latter.

\begin{lemma}\label{Lem_KBn}
For $\la$ a partition or half-partition
\begin{multline*}
K_{\la}(x;q,t;t_2,t_3) \\= 
\begin{cases} K_{\la}(x;q,t;-1,-q^{1/2},t_2,t_3) & 
\text{$\la$ a partition}, \\[1mm]
\displaystyle
K_{\la-(\frac{1}{2})^n}(x;q,t;-q,-q^{1/2},t_2,t_3)
\prod_{i=1}^n \big(x_i^{1/2}+x_i^{-1/2}\big) & \text{otherwise}.
\end{cases}
\end{multline*}
\end{lemma}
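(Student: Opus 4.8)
The plan is to verify the claimed identities by checking that the right-hand sides satisfy the two defining properties of $K_{\la}(x;q,t;t_2,t_3)$: the triangularity \eqref{Eq_PRSm} (i.e., $K_\la = m_\la^W + \sum_{\mu<\la} c_{\la\mu} m_\mu^W$ in the appropriate ordering on $P_+$) and the orthogonality \eqref{Eq_KK0} with respect to $\ip{\cdot}{\cdot}_{q,t;t_2,t_3}^{(n)}$. The key computational input is the comparison of Koornwinder densities: I would first compute that the Koornwinder density $\Delta(x;q,t;\tees)$ evaluated at $(t_0,t_1,t_2,t_3)=(-1,-q^{1/2},t_2,t_3)$ reduces, after using $(x_i^{\pm};q)_\infty (-x_i^{\pm};q)_\infty = (x_i^{\pm 2};q^2)_\infty$ and $(-q^{1/2}x_i^{\pm};q)_\infty (x_i^{\pm};q)_\infty$-type manipulations, precisely to $\Delta(x;q,t;t_2,t_3)$ as defined in Section~\ref{Sec_Kt2t3}. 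Concretely, the factor $\prod_{i=1}^n (x_i^{\pm 2};q)_\infty / \prod_{r=0}^3 (t_r x_i^{\pm};q)_\infty$ with $t_0=-1$, $t_1=-q^{1/2}$ should collapse to $\prod_{i=1}^n (x_i^\pm;q^{1/2})_\infty / \big((t_2 x_i^\pm, t_3 x_i^\pm;q)_\infty\big)$, since $(x_i^{\pm 2};q)_\infty = (x_i^\pm;q^{1/2})_\infty (-x_i^\pm;q^{1/2})_\infty$ and $(-x_i^\pm;q)_\infty (-q^{1/2}x_i^\pm;q)_\infty = (-x_i^\pm;q^{1/2})_\infty$; the $i<j$ factors are identical in both densities.

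For the partition case this essentially finishes the argument: since the two densities agree, the scalar products $\ip{\cdot}{\cdot}_{q,t;-1,-q^{1/2},t_2,t_3}^{(n)}$ and $\ip{\cdot}{\cdot}_{q,t;t_2,t_3}^{(n)}$ coincide on $\La^{\BC_n}$, and both $K_\la(x;q,t;-1,-q^{1/2},t_2,t_3)$ and $K_\la(x;q,t;t_2,t_3)$ are the unique $W$-invariant functions with the same leading monomial term and vanishing against all lower $m_\mu^W$; uniqueness of Gram--Schmidt then forces equality. One subtlety to address is that the Koornwinder polynomial $K_\la$ is a priori only defined for $\la$ a partition with $l(\la)\le n$, and one must note the ordering on dominant weights of $\B_n$ used in \eqref{Eq_PRSm} is compatible with the dominance order implicit in the Koornwinder triangularity, which is standard.

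For the half-partition case, write $\la = \mu + (\tfrac12)^n$ with $\mu$ a partition of length $\le n$. The strategy is to show that $\prod_{i=1}^n (x_i^{1/2}+x_i^{-1/2})$ times a $\BC_n$-symmetric polynomial in the $x_i$ is the natural home for half-integer weights: expanding $\prod_i (x_i^{1/2}+x_i^{-1/2}) \cdot m_\mu^W(x)$ produces $m_\la^W(x) + (\text{lower})$, so the right-hand side has the correct leading term once we know $K_{\mu}(x;q,t;-q,-q^{1/2},t_2,t_3)$ has leading term $m_\mu^W$. For orthogonality, observe that conjugating by the prefactor $\prod_i(x_i^{1/2}+x_i^{-1/2})$ turns the pairing $\ip{f}{g}_{q,t;t_2,t_3}^{(n)}$ on half-integer-weight functions into $\int f g \cdot \prod_i(x_i^{1/2}+x_i^{-1/2})^2 \Delta(x;q,t;t_2,t_3)\, \dup T(x)$; since $(x_i^{1/2}+x_i^{-1/2})^2 = x_i + 2 + x_i^{-1}$ and $(x_i^\pm;q^{1/2})_\infty (x_i^\pm;q^{1/2})$-type rearrangement shows this extra factor shifts $t_2$'s role, one checks that $\prod_i(x_i^{1/2}+x_i^{-1/2})^2 \Delta(x;q,t;t_2,t_3)$ equals $\Delta(x;q,t;-q,-q^{1/2},t_2,t_3)$ — the shift from $t_0=-1$ to $t_0=-q$ being exactly the effect of multiplying the $(x_i^\pm;q^{1/2})_\infty/(\ldots)$ factor by $(1+x_i)(1+x_i^{-1})/x_i$, since $(-x_i^\pm;q^{1/2})_\infty = (1+x_i^{\pm 1})(-q^{1/2}x_i^\pm;q^{1/2})_\infty$ and $(-q^{1/2}x_i^\pm;q^{1/2})_\infty = (-q^{1/2}x_i^\pm;q)_\infty (-q x_i^\pm;q)_\infty$. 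This again reduces half-integer-weight orthogonality for $K_\la(x;q,t;t_2,t_3)$ to integer-weight orthogonality for $K_\mu(x;q,t;-q,-q^{1/2},t_2,t_3)$.

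\textbf{Main obstacle.} The routine part is the $q$-shifted-factorial bookkeeping, but the delicate point is keeping careful track of the dominance orderings and the precise correspondence \eqref{Eq_Pplus-B} between partitions/half-partitions $\la$ and dominant weights of $\B_n$, so that "leading term $m_\la^W$" and "triangular with respect to $<$" mean the same thing on both sides of the claimed identity; in particular one must confirm that multiplication by $\prod_i(x_i^{1/2}+x_i^{-1/2})$ does not spoil the triangularity (the extra monomials it produces must all be strictly below $m_\la^W$ in the $\B_n$ dominance order, which is true but needs a line of justification). The density computations themselves, while somewhat involved, are a matter of repeatedly applying $(z;q^{1/2})_\infty = (z;q)_\infty(q^{1/2}z;q)_\infty$ and $(z^2;q^2)_\infty = (z;q)_\infty(-z;q)_\infty$, and I expect no genuine difficulty there.
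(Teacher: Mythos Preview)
Your proposal is correct and follows essentially the same approach as the paper: verify triangularity (which both treatments regard as clear) and orthogonality via the density identities $\Delta(x;q,t;-1,-q^{1/2},t_2,t_3)=\Delta(x;q,t;t_2,t_3)$ and $\prod_i(x_i^{1/2}+x_i^{-1/2})^2\,\Delta(x;q,t;t_2,t_3)=\Delta(x;q,t;-q,-q^{1/2},t_2,t_3)$. The one point you omit that the paper makes explicit is the mixed case---orthogonality between a partition-indexed and a half-partition-indexed polynomial---which the paper handles by viewing \eqref{Eq_Bnip} as a constant-term evaluation (the integrand then involves genuine half-integer powers of the $x_i$, hence has zero constant term).
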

For $t_3=q^{1/2}$ or $ t_3=t_2q^{1/2}$ this is equivalent to
\cite[Equations (5.50) \& (5.51)]{vDiejen95}.

\begin{proof}
The triangularity with respect to the monomial symmetric functions
for $\mathrm{B}_n$ is clear, and in the following we show that 
$K_{\la}(x;q,t;t_2,t_3)$ as given by the lemma satisfies \eqref{Eq_KK0}. 
Viewing the integral on the right of \eqref{Eq_Bnip} as a constant term
evaluation, it follows that \eqref{Eq_KK0} holds when $\la$ is a partition
and $\mu$ a half-partition.
By \eqref{Eq_KKnul} with $\{t_0,t_1\}=\{-1,-q^{1/2}\}$ it is also clear
that it holds when $\la$ and $\mu$ are both partitions.
In the case of two half-partitions
\begin{align*}
&\ipbig{K_{\la}(x;q,t;t_2,t_3)}{K_{\mu}(x;q,t;t_2,t_3)}_{q,t;t_2,t_3}^{(n)}\\
&\quad=\int_{\mathbb{T}^n} 
K_{\nu}(x;q,t;-q,-q^{1/2},t_2,t_3)
K_{\omega}(x;q,t;-q,-q^{1/2},t_2,t_3) \\
& \quad\qquad \times \prod_{i=1}^n \big(x_i^{1/2}+x_i^{-1/2}\big)^2 
\frac{(x_i^{\pm};q^{1/2})_{\infty}}
{(t_2 x_i^{\pm},t_3 x_i^{\pm};q)_{\infty}}
\prod_{1\leqslant i<j\leqslant n} \frac{(x_i^{\pm}x_j^{\pm};q)_{\infty}}
{(tx_i^{\pm}x_j^{\pm};q)_{\infty}}\: \dup T(x), 
\end{align*}
where $\nu:=\la-(\frac{1}{2})^n$ and $\omega:=\mu-(\frac{1}{2})^n$.
Since 
\[
\big(x_i^{1/2}+x_i^{-1/2}\big)^2
=\frac{(-x_i^{\pm};q^{1/2})_{\infty}}
{(-qx_i^{\pm},-q^{1/2}x_i^{\pm};q)_{\infty}}
\]
and
\[
(-x_i^{\pm},x_i^{\pm};q^{1/2})_{\infty}=(x_i^{\pm 2};q)_{\infty},
\]
the second line of the integrand is precisely the Koornwinder density 
\[
\Delta(x;q,t;-q,-q^{1/2},t_2,t_3).
\]
Hence
\begin{multline*}
\ipbig{K_{\la}(x;q,t;t_2,t_3)}{K_{\mu}(x;q,t;t_2,t_3)}_{q,t;t_2,t_3}^{(n)}\\
=\ipbig{K_{\nu}(x;q,t;-q,-q^{1/2},t_2,t_3)}
{K_{\omega}(x;q,t;-q,-q^{1/2},t_2,t_3)}_{q,t;-q,-q^{1/2},t_2,t_3}^{(n)}.
\end{multline*}
By \eqref{Eq_KKnul} this vanishes unless $\nu=\omega$, i.e., unless
$\la=\mu$.
\end{proof}

That
\[
K_{\la-(\frac{1}{2})^n}(x;q,t;-q,-q^{1/2},t_2,t_3)
\prod_i \big(x_i^{1/2}+x_i^{-1/2}\big)
\]
is the natural extension of 
$K_{\la}\big(x;q,t;-1,-q^{1/2},t_2,t_3\big)$ to half-partitions $\la$
may also be understood from the point of view of virtual Koornwinder 
polynomials. Taking
\[
(x;Q,t_0,t_1)=(x_1,\dots,x_n;q^m,-1,-q^{1/2}) 
\]
in Proposition~\eqref{Prop_propsymm} leads to
\begin{multline}\label{Eq_Khat2-xprod}
\hat{K}_{\la}(x;q,t,q^m;-1,-q^{1/2},t_2,t_3)  \\
=\hat{K}_{\la}(x;q,t,q^{m-1/2};-q,-q^{1/2},t_2,t_3)
\prod_{i=1}^n(1+x_i).
\end{multline}
Since the left-hand side of \eqref{Eq_virtualK}
is well-defined for $\la$ a partition and $m$ a half-integer, we can use
that equation to eliminate the virtual Koornwinder polynomials in 
\eqref{Eq_Khat2-xprod}.
Also replacing $m^n-\la$ by $\la$, this results in
\[
K_{\la}(x;q,t;-1,-q^{1/2},t_2,t_3) 
=K_{\la-(\frac{1}{2})^n}(x;q,t;-q,-q^{1/2},t_2,t_3)
\prod_{i=1}^n \big(x_i^{1/2}+x_i^{-1/2}\big).
\]

We conclude this section with the analogue of
\eqref{Eq_largem} for $K_{\la}(x;q,t;t_2,t_3)$.

\begin{lemma}\label{Lem_largem-2}
For $m$ a nonnegative integer or half-integer and $\la$ a partition,
\begin{multline}\label{Eq_largem-2}
\lim_{m\to\infty} (x_1\cdots x_n)^m K_{m^n-\la}(x;q,t;t_2,t_3) \\[-1mm]
=P_{\la}(x;q,t)
\prod_{i=1}^n \frac{(t_2 x_i,t_3 x_i;q)_{\infty}}
{(x_i;q^{1/2})_{\infty}}
\prod_{1\leqslant i<j\leqslant n} \frac{(tx_ix_j;q)_{\infty}}{(x_ix_j;q)_{\infty}}.
\end{multline}
\end{lemma}

\begin{proof}
For $m$ an integer this is just \eqref{Eq_largem} with 
$\{t_0,t_1\}=\{-1,-q^{1/2}\}$.
For $m$ a half-integer it follows from Lemma~\ref{Lem_KBn} that
\begin{multline*}
(x_1\cdots x_n)^m K_{m^n-\la}(x;q,t;t_2,t_3) \\
=(x_1\cdots x_n)^{m-\frac{1}{2}} 
K_{(m-\frac{1}{2})^n-\la}(x;q,t;-q,-q^{1/2},t_2,t_3)
\prod_{i=1}^n (1+x_i).
\end{multline*}
Letting $m$ tend to infinity using \eqref{Eq_largem} with
$\{t_0,t_1\}=\{-q,-q^{1/2}\}$ results in the right-hand side
of \eqref{Eq_largem-2}.
\end{proof}

\section{Hall--Littlewood polynomials}
\subsection{Hall--Littlewood of type $R$}
The ordinary (or $\mathrm{A}_{n-1}$) Hall--Littlewood polynomials are
defined as \cite[page 208]{Macdonald95}
\begin{equation}\label{Eq_HLPA}
P_{\la}(x_1,\dots,x_n;t):=\frac{1}{v_{\la}(t)} 
\sum_{w\in\Symm_n} w\bigg( x_1^{\la_1}\cdots x_n^{\la_n} 
\prod_{1\leqslant i<j\leqslant n} \frac{x_i-tx_j}{x_i-x_j} \bigg),
\end{equation}
where $\la$ is a partition of length at most $n$,
$v_{\la}(t):=\prod_{i\geqslant 0} (t;t)_{m_i(\la)}/(1-t)^{m_i(\la)}$
\label{Page_vla}
and $m_0(\la):=n-l(\la)$. 
They correspond to the Macdonald polynomials for $q=0$, i.e.,
$P_{\la}(x;t)=P_{\la}(x;0,t)$, and for $t=0$
reduce to the Schur functions.

Taking $q=0$ in the $(R,S)$ Macdonald polynomials of Section~\ref{Sec_Mac-RS}
yields the more general Hall--Littlewood polynomials of type $R$.
(The root system $S$ no longer plays a role when $q=0$.)
More simply, however, the Hall--Littlewood polynomials of type $R$ can
be explicitly computed from
\begin{equation}\label{Eq_MacP}
P_{\la}(t)=\frac{1}{W_{\la}(t)}
\sum_{w\in W} w\bigg( \! \eup^{\la}
\prod_{\alpha>0} \frac{1-t_{\alpha} t_{2\alpha}^{1/2} \eup^{-\alpha}}
{1-t_{2\alpha}^{1/2} \eup^{-\alpha}}\bigg),\qquad \la\in P_{+}.
\end{equation}
The normalising factor $W_{\la}(t)$ is the Poincar\'e
polynomial of the stabilizer of $\la$ in $W$.

For our purposes we need to consider \eqref{Eq_MacP} for $R$ one of 
$\mathrm{BC}_n,\mathrm{B}_n,\mathrm{C}_n,\mathrm{D}_n$. 
As in Section~\ref{Sec_Mac-RS}, we express these four families
using variables $x_1,\dots,x_n$ and partitions or half-partitions $\la$, 
rather than roots and dominant weights. Accordingly we write
$P_{\la}^{(R)}(x_1,\dots,x_n;q,t,t_2,\dots,t_r)$, where $r=3$ in the 
case of $\mathrm{BC}_n$, $r=2$ for $\mathrm{B}_n$ and $\mathrm{C}_n$ and
$r=1$ for $\mathrm{D}_n$. In each case we again assume
\eqref{Eq_Delta} and \eqref{Eq_Pplus}, and identify $x_i=\exp(-\epsilon_i)$.
As a basis of simple roots for the root system $\mathrm{BC}_n$ we take the 
$\mathrm{B}_n$ basis \eqref{Eq_Delta-B} (as opposed to a $\mathrm{C}_n$
basis), and identify 
$(t_{\alpha_1},t_{\alpha_n},t^{1/2}_{2\alpha_n})=(t,-t_2/t_3,-t_3)$.
Then \eqref{Eq_MacP} takes the equivalent form
\begin{multline}\label{Eq_BCHL2}
P^{(\mathrm{BC}_n)}_{\la}(x;t,t_2,t_3)
=\frac{1}{(t_2t_3;t)_{n-l(\la)}v_{\la}(t)} \\
\times \sum_{w\in W} w\bigg( x^{-\la} \prod_{i=1}^n
\frac{(1-t_2x_i)(1-t_3x_i)}{1-x_i^2}
\prod_{1\leqslant i<j\leqslant n}
\frac{(tx_i-x_j)(1-tx_ix_j)}{(x_i-x_j)(1-x_ix_j)} \bigg),
\end{multline}
with $W$ the hyperoctahedral group and $\la$ a partition of length at most $n$.
Alternatively (see e.g., \cite{Venkateswaran15}),
\begin{equation}\label{Eq_BCHL}
P^{(\mathrm{BC}_n)}_{\la}(x;t,t_2,t_3)=K_{\la}(x;0,t;0,0,t_2,t_3).
\end{equation}
For $t=0$ this admits the determinantal form
\begin{multline}\label{Eq_detBC}
P^{(\mathrm{BC}_n)}_{\la}(x;0,t_2,t_3)=\frac{1}{\Delta_{\mathrm{C}}(x)} \\
\times \det_{1\leqslant i,j\leqslant n} 
\big(x_i^{-\la_j+j-1}(1-t_2x_i)(1-t_3x_i)-
x_i^{\la_j+2n-j-1}(x_i-t_2)(x_i-t_3)\big),
\end{multline}
where 
\begin{equation}\label{Eq_VdMC}
\Delta_{\mathrm{C}}(x):=\prod_{i=1}^n (1-x_i^2)
\prod_{1\leqslant i<j\leqslant n} (x_i-x_j)(x_ix_j-1) 
=\det_{1\leqslant i,j\leqslant n}\big(x_i^{j-1}-x_i^{2n-j+1}\big)
\end{equation}
is the $\mathrm{C}_n$ Vandermonde product.

\begin{lemma}\label{Lem_littlelemma}
Let
\begin{equation}\label{Eq_Phi}
\Phi(x;t,t_2,t_3):=\prod_{i=1}^n \frac{(1-t_2x_i)(1-t_3x_i)}{1-x_i^2}
\prod_{1\leqslant i<j\leqslant n}\frac{1-tx_ix_j}{1-x_ix_j}.
\end{equation}
For $m$ a positive integer,
\begin{equation}\label{Eq_PBC-Phi}
P^{(\mathrm{BC}_n)}_{m^n}(x;t,t_2,t_3)
=\sum_{\varepsilon\in \{\pm 1\}^n}\Phi(x^{\varepsilon};t,t_2,t_3)
\prod_{i=1}^n x_i^{-\varepsilon_i m}.
\end{equation}
\end{lemma}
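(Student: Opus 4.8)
The plan is to read off $P^{(\BC_n)}_{m^n}$ directly from the Weyl group formula \eqref{Eq_BCHL2} by carrying out the symmetrisation over $W=\Symm_n\ltimes(\Z/2\Z)^n$ in two stages: first over the symmetric subgroup $\Symm_n$, then over the sign subgroup $(\Z/2\Z)^n$. Writing every $w\in W$ uniquely as $\varepsilon\sigma$ with $\varepsilon\in(\Z/2\Z)^n$ and $\sigma\in\Symm_n$, the $W$-symmetrisation of any summand $g$ equals $\sum_{\varepsilon}\varepsilon\bigl(\sum_{\sigma}\sigma(g)\bigr)$, so everything hinges on the inner $\Symm_n$-sum.

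First I would specialise \eqref{Eq_BCHL2} to $\la=m^n$. Since $l(m^n)=n$, the exponent $n-l(\la)$ vanishes, so $(t_2t_3;t)_{n-l(\la)}=1$; moreover $v_{m^n}(t)=v_{0^n}(t)=(t;t)_n/(1-t)^n$, since in each case the only nonzero multiplicity is a single $m_i=n$. Denote by $g(x)$ the summand of \eqref{Eq_BCHL2} at $\la=m^n$. Separating the factor $\prod_{1\leq i<j\leq n}(1-tx_ix_j)/(1-x_ix_j)$ from $\prod_{1\leq i<j\leq n}(tx_i-x_j)/(x_i-x_j)$, we have $g(x)=\Phi_0(x)\prod_{1\leq i<j\leq n}\frac{tx_i-x_j}{x_i-x_j}$, where
\[
\Phi_0(x):=(x_1\cdots x_n)^{-m}\prod_{i=1}^n\frac{(1-t_2x_i)(1-t_3x_i)}{1-x_i^2}\prod_{1\leq i<j\leq n}\frac{1-tx_ix_j}{1-x_ix_j}
\]
collects exactly the factors that are invariant under $\Symm_n$. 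Pulling $\Phi_0(x)$ out of the $\Symm_n$-sum reduces matters to evaluating $\sum_{\sigma\in\Symm_n}\sigma\bigl(\prod_{i<j}\frac{tx_i-x_j}{x_i-x_j}\bigr)$.

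The key step is to recognise this sum as the constant $v_{0^n}(t)$. Taking $\la=0^n$ in the $\A_{n-1}$ Hall--Littlewood definition \eqref{Eq_HLPA} and using $P_{0^n}(x;t)=1$ gives $\sum_{\sigma\in\Symm_n}\sigma\bigl(\prod_{i<j}\frac{x_i-tx_j}{x_i-x_j}\bigr)=v_{0^n}(t)$; reindexing this sum by $\sigma\mapsto\sigma w_0$, with $w_0$ the order-reversing permutation, turns $\prod_{i<j}\frac{x_i-tx_j}{x_i-x_j}$ into $\prod_{i<j}\frac{tx_i-x_j}{x_i-x_j}$ after relabelling pairs and cancelling signs, so the symmetrisation of the latter is also $v_{0^n}(t)$. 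Hence $\sum_{\sigma\in\Symm_n}\sigma(g)=v_{m^n}(t)\,\Phi_0(x)$, which is $\Symm_n$-invariant. Now summing over the sign subgroup and using $\Phi_0(x^\varepsilon)=\Phi(x^\varepsilon;t,t_2,t_3)\prod_{i=1}^n x_i^{-\varepsilon_i m}$, I obtain $\sum_{w\in W}w(g)=v_{m^n}(t)\sum_{\varepsilon\in\{\pm1\}^n}\Phi(x^\varepsilon;t,t_2,t_3)\prod_{i=1}^n x_i^{-\varepsilon_i m}$. Dividing by the prefactor $v_{m^n}(t)$ of \eqref{Eq_BCHL2} yields the lemma.

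The one step that is not pure bookkeeping is the $\Symm_n$-symmetrisation of $\prod_{i<j}(tx_i-x_j)/(x_i-x_j)$, but this is the $\la=0$ instance of \eqref{Eq_HLPA} and the $w_0$-substitution matching $tx_i-x_j$ with $x_i-tx_j$ is a one-line check, so I foresee no genuine obstacle; what remains is careful tracking of the prefactors $v_\la(t)$ and $(t_2t_3;t)_{n-l(\la)}$ and of the semidirect-product structure of $W$.
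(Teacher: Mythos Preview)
Your proof is correct and follows essentially the same approach as the paper's own proof: factor out the $\Symm_n$-invariant part of the summand in \eqref{Eq_BCHL2}, recognise the remaining $\Symm_n$-sum via the $\A_{n-1}$ Hall--Littlewood definition \eqref{Eq_HLPA}, and then sum over the sign group. The only cosmetic difference is that the paper keeps $\la$ general through the $\Symm_n$ step---obtaining $\sum_{\sigma\in\Symm_n}\sigma(g)=v_\la(t)\,\Phi(x;t,t_2,t_3)\,P_\la(x^{-1};t)$ for arbitrary $\la$ via the substitution $x\mapsto x^{-1}$ rather than your $w_0$ reindexing---and only then specialises to $\la=m^n$ using $P_{m^n}(x^{-1};t)=\prod_i x_i^{-m}$.
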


\begin{proof}
By \eqref{Eq_HLPA} and \eqref{Eq_Phi},
\begin{align*}
&\sum_{w\in\Symm_n} w\bigg( x^{-\la} \prod_{i=1}^n
\frac{(1-t_2x_i)(1-t_3x_i)}{1-x_i^2} 
\prod_{1\leqslant i<j\leqslant n}
\frac{(tx_i-x_j)(1-tx_ix_j)}{(x_i-x_j)(1-x_ix_j)} \bigg) \\
&\quad=\Phi(x;t,t_2,t_3) \sum_{w\in\Symm_n} 
w\bigg(x^{-\la} \prod_{1\leqslant i<j\leqslant n}
\frac{tx_i-x_j}{x_i-x_j} \bigg) \\[1mm]
&\quad=v_{\la}(t) \Phi(x;t,t_2,t_3) P_{\la}(x^{-1};t).
\end{align*}
Hence
\[
P^{(\mathrm{BC}_n)}_{\la}(x;t,t_2,t_3)=\frac{1}{(t_2t_3;t)_{n-l(\la)}}
\sum_{\varepsilon\in\{\pm 1\}^n} 
\Phi(x^{\varepsilon};t,t_2,t_3) P_{\la}(x^{-\varepsilon};t).
\]
The claim now follows from $P_{m^n}(x;t)=\prod_{i=1}^n x_i^m$,
and the fact that $n-l(\la)=0$ for $\la=m^n$ with $m\geqslant 1$.
\end{proof}

In the case of $\mathrm{C}_n$ we can be brief. 
The identification of parameters \eqref{Eq_tt2} again applies, 
and from the $q=0$ case of \eqref{Eq_PBC-K}, 
\begin{align}\label{Eq_CHL}
P^{(\mathrm{C}_n)}_{\la}(x;t,t_2)&=
P^{(\mathrm{BC}_n)}_{\la}\big(x;t,\pm t_2^{1/2}\big) \\
&=P^{(\mathrm{C}_n,\mathrm{B}_n)}_{\la}(x;0,t,t_2)=
K_{\la}\big(x;0,t;0,0,\pm t_2^{1/2}\big).\notag
\end{align}
Accordingly, \eqref{Eq_detBC} for $t_3=-t_2$ is a one-parameter
deformation of the symplectic Schur function \cite{Littlewood50}
\begin{equation}\label{Eq_symplectic}
\symp_{2n,\la}(x):=\frac{1}{\Delta_{\mathrm{C}}}\,
\det_{1\leqslant i,j\leqslant n} 
\big(x_i^{-\la_j+j-1}-x_i^{\la_j+2n-j+1}\big).
\end{equation}

The Hall--Littlewood polynomials $P^{(\mathrm{B}_n)}_{\la}(x;t,t_2)$ are
given by the \eqref{Eq_BCHL2} for $t_3=-1$, where now $\la$ is a partition
or half-partition.
In the latter case $v_{\la}(t)$ is as defined on 
page~\pageref{Page_vla} but with $\prod_{i\geqslant 0}$ a product over 
half-integers. By \eqref{Eq_PB-K} we also have 
\begin{equation}\label{Eq_BHL1}
P^{(\mathrm{B}_n)}_{\la}(x;t,t_2)=
P^{(\mathrm{B}_n,\mathrm{B}_n)}_{\la}(x;0,t,t_2)=K_{\la}(x;0,t;t_2,0).
\end{equation}
When $t=t_2=0$ the $\mathrm{B}_n$ Hall--Littlewood polynomials
simplify to the odd orthogonal Schur functions \cite{Littlewood50}
\begin{equation}\label{Eq_odd-orthogonal}
\so_{2n+1,\la}(x):=\frac{1}{\Delta_{\mathrm{B}}(x)}\,
\det_{1\leqslant i,j\leqslant n} 
\big(x_i^{-\la_j+j-1}-x_i^{\la_j+2n-j}\big),
\end{equation}
where 
\begin{equation}\label{Eq_VdMB}
\Delta_{\mathrm{B}}(x):=\prod_{i=1}^n (1-x_i)
\prod_{1\leqslant i<j\leqslant n} (x_i-x_j)(x_ix_j-1) 
=\det_{1\leqslant i,j\leqslant n}(x_i^{j-1}-x_i^{2n-j}).
\end{equation}

The $\mathrm{B}_n$ analogue of Lemma~\ref{Lem_littlelemma} is
given as follows.

\begin{lemma}\label{Lem_littlelemmaB}
For $m$ a positive integer,
\[
P^{(\mathrm{B}_n)}_{\halfm{n}}(x;t,t_2)
=\sum_{\varepsilon\in \{\pm 1\}^n}\Phi(x^{\varepsilon};t,t_2,-1)
\prod_{i=1}^n x_i^{-\varepsilon_i m/2}.
\]
\end{lemma}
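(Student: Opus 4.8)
The plan is to run the proof of Lemma~\ref{Lem_littlelemma} with $t_3=-1$, starting from the $t_3=-1$ instance of \eqref{Eq_BCHL2} (which, by the discussion preceding the lemma, defines $P^{(\B_n)}_{\la}$) with $\la=\halfm{n}$. When $m$ is even, $\halfm{n}$ is a rectangular partition and Lemma~\ref{Lem_littlelemma} applies verbatim; when $m$ is odd, $\halfm{n}$ is a half-partition and I would redo the (identical) argument, with $v_{\la}(t)$ interpreted as on page~\pageref{Page_vla}. In all cases, since $m\geq 1$ we have $l(\halfm{n})=n$, so the prefactor simplifies: $(t_2t_3;t)_{n-l(\la)}=(t_2t_3;t)_0=1$.

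Concretely, I would first decompose the summation over $W=\Symm_n\ltimes(\Z/2\Z)^n$ in \eqref{Eq_BCHL2} into a summation over $\Symm_n$ followed by one over sign changes $\varepsilon\in\{\pm1\}^n$, and carry out the $\Symm_n$-part first. Writing the summand as $x^{-\halfm{n}}\,\Phi(x;t,t_2,-1)\prod_{1\leq i<j\leq n}\frac{tx_i-x_j}{x_i-x_j}$ with $\Phi$ as in \eqref{Eq_Phi}, the factor $\Phi(x;t,t_2,-1)$ is genuinely symmetric in $x_1,\dots,x_n$ and pulls out of the $\Symm_n$-sum, exactly as in the proof of Lemma~\ref{Lem_littlelemma}. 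For the remaining sum I would use that the monomial $x^{-\halfm{n}}=\prod_i x_i^{-m/2}$ (a rectangular shape) is itself $\Symm_n$-invariant, so it too pulls out and what is left is $\sum_{\sigma\in\Symm_n}\sigma\big(\prod_{i<j}\frac{tx_i-x_j}{x_i-x_j}\big)$, which by \eqref{Eq_HLPA} equals $v_0(t)=v_{\halfm{n}}(t)$ (using $n-l(\halfm{n})=0$). This is the $\B_n$ counterpart of the step $P_{m^n}(x;t)=\prod_i x_i^m$ in Lemma~\ref{Lem_littlelemma}, and it neatly sidesteps any need to make sense of a type-$\A$ Hall--Littlewood polynomial indexed by a half-partition.

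It then remains to apply the $(\Z/2\Z)^n$-summation: each $\varepsilon\in\{\pm1\}^n$ substitutes $x_i\mapsto x_i^{\varepsilon_i}$, turning $\Phi(x;t,t_2,-1)\prod_i x_i^{-m/2}$ into $\Phi(x^\varepsilon;t,t_2,-1)\prod_i x_i^{-\varepsilon_i m/2}$. Dividing by the normalising constant $(t_2t_3;t)_{n-l(\la)}v_{\la}(t)=v_{\halfm{n}}(t)$ of \eqref{Eq_BCHL2} then yields
\[
P^{(\B_n)}_{\halfm{n}}(x;t,t_2)=\sum_{\varepsilon\in\{\pm1\}^n}\Phi(x^\varepsilon;t,t_2,-1)\prod_{i=1}^n x_i^{-\varepsilon_i m/2},
\]
which is the asserted identity.

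I do not anticipate any genuine obstacle here; the computation is a routine specialisation of Lemma~\ref{Lem_littlelemma}. The only mild point requiring care is the case of odd $m$, where half-integer exponents appear and $x^{-\halfm{n}}$ lives in the ring of Laurent polynomials in the $x_i^{1/2}$. Since every monomial entering the argument ($x^{-\halfm{n}}$ and its $W$-images) is manipulated purely formally and the only property invoked is its $\Symm_n$-symmetry, the passage to half-partitions does not affect any step.
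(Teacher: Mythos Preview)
The proposal is correct and follows the same approach as the paper: the paper states Lemma~\ref{Lem_littlelemmaB} without proof, presenting it as ``the $\B_n$ analogue of Lemma~\ref{Lem_littlelemma}'', and your argument is precisely that analogue carried out in detail. Your handling of the half-partition case by pulling the $\Symm_n$-symmetric monomial $\prod_i x_i^{-m/2}$ out directly (rather than routing through a type-$\A$ Hall--Littlewood polynomial indexed by a half-partition) is a clean way to sidestep the only potential subtlety, and the identification $v_0(t)=v_{\halfm{n}}(t)$ is correct since both equal $(t;t)_n/(1-t)^n$.
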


Finally, the $\mathrm{D}_n$ Hall--Littlewood polynomials
$P^{(\mathrm{D}_n)}_{\la}(x;t)$ are given by \eqref{Eq_BCHL2} with
$(t_2,t_3)=(-1,1)$, provided we multiply the right-hand side by $2$
when $l(\la)<n$, and $W$ is taken to be the group of even
signed-permutations, i.e., $W=\Symm_n\ltimes (\Z/2\Z)^{n-1}$.
As described on page~\pageref{page_Dn}, $\la$ can be a partition or
half-partition with $\la_n$ an exceptional part that can take on negative
integer or half-integer values as long as $\abs{\la_n}\leqslant\la_{n-1}$.
When $t=0$ this yields the even orthogonal Schur functions \cite{Okada98}
\begin{multline*}
\so_{2n,\la}(x):=\frac{1}{2\Delta_{\mathrm{D}}(x)}
\Big(\det_{1\leqslant i,j\leqslant n}
\big(x_i^{\la_j+2n-j-1}+x_i^{-\la_j+j-1}\big) \\
-\det_{1\leqslant i,j\leqslant n}
\big(x_i^{\la_j+2n-j-1}-x_i^{-\la_j+j-1}\big)\Big),
\end{multline*}
where
\begin{equation}\label{Eq_VdMD}
\Delta_{\mathrm{D}}(x):=
\prod_{1\leqslant i<j\leqslant n} (x_i-x_j)(x_ix_j-1)
=\tfrac{1}{2}\det_{1\leqslant i,j\leqslant n}(x_i^{j-1}+x_i^{2n-j-1}).
\end{equation}

Again we have a simple analogue of Lemma~\ref{Lem_littlelemma}.

\begin{lemma}\label{Lem_littlelemma2}
Let $\varepsilon=(\varepsilon_1,\dots,\varepsilon_n)\in\{-1,1\}^n$
and $\sgn(\varepsilon):=\prod_{i=1}^n \varepsilon_i$. Then,
for $m$ a positive integer,
\[
P^{(\mathrm{D}_n)}_{\halfm{n}}(x;t)
=\sum_{\substack{\varepsilon\in \{\pm 1\}^n \\ \sgn(\varepsilon)=1}}
\Phi(x^{\varepsilon};t,1,-1) \prod_{i=1}^n x_i^{-\varepsilon_i m/2}.
\]
\end{lemma}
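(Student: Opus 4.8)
The plan is to transcribe the proof of Lemma~\ref{Lem_littlelemma} to the $\D_n$ setting. I would begin from the explicit formula \eqref{Eq_BCHL2} in its $\D_n$ specialisation: take $(t_2,t_3)=(-1,1)$, let $W=\Symm_n\ltimes(\Z/2\Z)^{n-1}$ be the group of even signed permutations, and note that, since $\la=\halfm n$ has length $n$ (here $m\geq 1$ is used), no correction factor is needed and $(t_2t_3;t)_{n-l(\la)}=(-1;t)_0=1$, so $P^{(\D_n)}_{\halfm n}(x;t)=v_{\halfm n}(t)^{-1}\sum_{w\in W}w(f)$ with $f$ the summand of \eqref{Eq_BCHL2}. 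The simplification afforded by this choice of parameters is that $(1-t_2x_i)(1-t_3x_i)=(1+x_i)(1-x_i)=1-x_i^2$, so the $\prod_{i=1}^n$ factor in \eqref{Eq_Phi} (and in $f$) is identically $1$; in particular
\[
\Phi(x;t,1,-1)=\prod_{1\leq i<j\leq n}\frac{1-tx_ix_j}{1-x_ix_j}.
\]

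I would then carry out the $\Symm_n$-part of the sum as in the proof of Lemma~\ref{Lem_littlelemma}. The factors $\prod_i x_i^{-m/2}$ and $\prod_{i<j}(1-tx_ix_j)/(1-x_ix_j)=\Phi(x;t,1,-1)$ are $\Symm_n$-invariant and come out of the sum; the classical evaluation $\sum_{w\in\Symm_n}w\big(\prod_{i<j}(tx_i-x_j)/(x_i-x_j)\big)=(t;t)_n/(1-t)^n=v_{\halfm n}(t)$ (used already in the proof of Lemma~\ref{Lem_littlelemma}) then gives $\sum_{w\in\Symm_n}w(f)=v_{\halfm n}(t)\,\Phi(x;t,1,-1)\prod_i x_i^{-m/2}$. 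Decomposing $W=\Symm_n\ltimes N$ with $N=\{\varepsilon\in\{\pm 1\}^n:\sgn(\varepsilon)=1\}$---a normal subgroup, since conjugation by a permutation merely permutes the entries of $\varepsilon$ and hence preserves $\sgn(\varepsilon)$---one has $\sum_{w\in W}w(f)(x)=\sum_{\varepsilon\in N}\big(\sum_{w\in\Symm_n}w(f)\big)(x^\varepsilon)$. Dividing by $v_{\halfm n}(t)$ yields
\[
P^{(\D_n)}_{\halfm n}(x;t)=\sum_{\substack{\varepsilon\in\{\pm 1\}^n\\ \sgn(\varepsilon)=1}}\Phi(x^\varepsilon;t,1,-1)\prod_{i=1}^n x_i^{-\varepsilon_i m/2},
\]
which is the assertion.

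I do not expect a genuine obstacle: this is the $\D_n$ analogue of Lemma~\ref{Lem_littlelemma} (mirroring also Lemma~\ref{Lem_littlelemmaB}), and the only features not present in the $\BC_n$ case---using the even signed permutations instead of the full hyperoctahedral group, and indexing by the half-partition $\halfm n$---are harmless: the first because $N$ is normal in $W$, so the semidirect-product decomposition and the reindexing $\sum_{w\in W}w(f)(x)=\sum_{\varepsilon\in N}\big(\sum_{w\in\Symm_n}w(f)\big)(x^\varepsilon)$ are valid; the second because $\halfm n$ is rectangular, so the relevant weight $\prod_i x_i^{-m/2}$ is already $\Symm_n$-symmetric and $v_{\halfm n}(t)=(t;t)_n/(1-t)^n$. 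If any step warrants care, it is this reindexing identity, which hinges on the normality of $N$.
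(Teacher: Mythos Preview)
Your proposal is correct and is precisely the argument the paper has in mind: the paper does not spell out a proof of this lemma but presents it as a ``simple analogue'' of Lemma~\ref{Lem_littlelemma}, and your write-up is exactly that analogue, with the hyperoctahedral group replaced by the group of even signed permutations and the rectangular weight $m^n$ replaced by the half-partition $\halfm{n}$. One minor remark: the reindexing $\sum_{w\in W}w(f)(x)=\sum_{\varepsilon\in N}\big(\sum_{\sigma\in\Symm_n}\sigma(f)\big)(x^{\varepsilon})$ only needs the coset factorisation $W=N\cdot\Symm_n$ with $N\cap\Symm_n=\{e\}$, not the normality of $N$ per se, so your closing caveat is slightly overstated---but this does not affect the argument.
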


\subsection{Modified Hall--Littlewood polynomials}
The simplest definition of the modified Hall--Littlewood polynomials
is as a plethystically substituted ordinary Hall--Littlewood polynomial
(see e.g.,\cite[page 234]{Macdonald95}):
\begin{equation}\label{Eq_HLPp}
P'_{\la}(x;t):=P_{\la}\Big(\Big[\frac{x}{1-t}\Big];t\Big)\quad\text{and}\quad
Q'_{\la}(x;t):=Q_{\la}\Big(\Big[\frac{x}{1-t}\Big];t\Big),
\end{equation}
where $Q_{\la}(x;t):=b_{\la}(t) P_{\la}(x;t)$ and
\[
b_{\la}(t):=\prod_{i\geqslant 1} (t;t)_{m_i(\la)}.
\]
This definition obscures the important fact that the modified 
Hall--Littlewood polynomials, and hence sums such as
\begin{equation}\label{Eq_Pp-char}
\sum_{\substack{\la \\[1pt] \la_1\leqslant m}}
t^{\abs{\la}/2} P'_{\la}\big(x_1^{\pm},\dots,x_n^{\pm};t\big)
\end{equation}
(see \eqref{Eq_char-A2n2}), are Schur positive.
To exhibit the combinatorial nature of \eqref{Eq_Pp-char}
and similar such expressions for the characters of affine
Lie algebras given in Section~\ref{Sec_char}, 
we need the Lascoux--Sch\"utzenberger 
description of the modified Hall--Littlewood polynomials in terms
of the charge statistic on tableaux~\cite{LS78}.

A filling of the Young diagram of a partition $\la$ with positive integers
such that rows are weakly increasing from left to right and columns are 
strictly increasing from top to bottom is called a semistandard Young 
tableau of shape $\la$, see e.g., \cite{Fulton97,Stanley99,Macdonald95}.
The (weak) composition $\mu=(\mu_1,\mu_2,\dots)$ such that $\mu_i$ counts the
number of squares filled with the number $i$ is called the weight 
(or filling/content/type) of the tableau. 
If we denote the set of semistandard Young tableaux of shape $\la$ and 
weight $\mu$ by $\Tab(\la,\mu)$, then the Schur function $s_{\la}$ may be 
expressed as
\begin{equation}\label{Eq_Schur-T}
s_{\la}(x_1,x_2,\dots)=\sum_{T\in\Tab(\la,\cdot)} x^T.
\end{equation}
Here $x^T$ is shorthand for $x^{\weight(T)}$, so that
$x^T=x^{\mu}=x_1^{\mu_1}x_2^{\mu_2}\cdots$ if $T\in\Tab(\cdot,\mu)$.
Lascoux and Sch\"utzenberger equipped the set of semistandard Young tableaux 
with a statistic, called charge. For our purposes it suffices to
consider tableaux in $\Tab(\cdot,\mu)$ such that $\mu$ is a partition. 
To compute the charge of such a tableau, we first
form the reverse reading word, $w=w(T)$, by consecutively reading the
rows of $T$ from right to left, starting with the top row
and ending with the bottom row.
For example, the reverse reading word of 
\medskip

\begin{center}
\begin{tikzpicture}[scale=0.4,line width=0.3pt]
\draw (0,0)--(6,0);
\draw (0,-1)--(6,-1);
\draw (0,-2)--(3,-2);
\draw (0,-3)--(2,-3);
\draw (0,-4)--(1,-4);
\draw (0,0)--(0,-4);
\draw (1,0)--(1,-4);
\draw (2,0)--(2,-3);
\draw (3,0)--(3,-2);
\draw (4,0)--(4,-1);
\draw (5,0)--(5,-1);
\draw (6,0)--(6,-1);
\draw (0.5,-0.5) node {$1$};
\draw (1.5,-0.5) node {$1$};
\draw (2.5,-0.5) node {$1$};
\draw (3.5,-0.5) node {$2$};
\draw (4.5,-0.5) node {$3$};
\draw (5.5,-0.5) node {$4$};
\draw (0.5,-1.5) node {$2$};
\draw (1.5,-1.5) node {$2$};
\draw (2.5,-1.5) node {$3$};
\draw (0.5,-2.5) node {$4$};
\draw (1.5,-2.5) node {$6$};
\draw (0.5,-3.5) node {$5$};
\end{tikzpicture}
\end{center}
\smallskip
is $432111 322 64 5$. 
In a clockwise manner, wrap the letters of $w=w_1\dots w_k$ 
around a circle, putting a marker between the first letter $w_1$
and last letter $w_k$. Repeatedly going round the circle in
clockwise manner, read the letters of $w$ starting with $w_1$.
Label a total of $\mu'_1$ letters as follows. If a letter $i$ has just
been labelled $k$, then the first letter $i+1$ read after that $i$ is
labelled $k$ if it occurs before the marker and $k+1$ if it occurs after
the marker.
To get started, label the first $1$ that is read by $0$. 
For the reading word in our example this gives

\medskip
\begin{center}
\begin{tikzpicture}[scale=1.2,line width=0.3pt]
\foreach \x in {75} \draw ({cos(\x)},{sin(\x)}) node {$4$};
\foreach \x in {45} \draw ({cos(\x)},{sin(\x)}) node {$3_{\red{1}}$};
\foreach \x in {15} \draw ({cos(\x)},{sin(\x)}) node {$2$};
\foreach \x in {345} \draw ({cos(\x)},{sin(\x)}) node {$1_{\red{0}}$};
\foreach \x in {315} \draw ({cos(\x)},{sin(\x)}) node {$1$};
\foreach \x in {285} \draw ({cos(\x)},{sin(\x)}) node {$1$};
\foreach \x in {255} \draw ({cos(\x)},{sin(\x)}) node {$3$};
\foreach \x in {225} \draw ({cos(\x)},{sin(\x)}) node {$2_{\red{0}}$};
\foreach \x in {195} \draw ({cos(\x)},{sin(\x)}) node {$2$};
\foreach \x in {165} \draw ({cos(\x)},{sin(\x)}) node {$6_{\red{2}}$};
\foreach \x in {135} \draw ({cos(\x)},{sin(\x)}) node {$4_{\red{1}}$};
\foreach \x in {105} \draw ({cos(\x)},{sin(\x)}) node {$5_{\red{1}}$};
\draw (0,0.8)--(0,1.2);
\end{tikzpicture}
\end{center}
\smallskip
Keep repeating the above labelling procedure with the remaining unlabelled 
letters of $w$ until all letter are labelled, in such a way that $\mu'_i$
letter are labelled in the $i$th step.
The completed labelling of the word in our example is
\medskip
\begin{center}
\begin{tikzpicture}[scale=1.2,line width=0.3pt]
\foreach \x in {75} \draw ({cos(\x)},{sin(\x)}) node {$4_{\blue{2}}$};
\foreach \x in {45} \draw ({cos(\x)},{sin(\x)}) node {$3_{\red{1}}$};
\foreach \x in {15} \draw ({cos(\x)},{sin(\x)}) node {$2_{\green{1}}$};
\foreach \x in {345} \draw ({cos(\x)},{sin(\x)}) node {$1_{\red{0}}$};
\foreach \x in {315} \draw ({cos(\x)},{sin(\x)}) node {$1_{\blue{0}}$};
\foreach \x in {285} \draw ({cos(\x)},{sin(\x)}) node {$1_{\green{0}}$};
\foreach \x in {255} \draw ({cos(\x)},{sin(\x)}) node {$3_{\blue{1}}$};
\foreach \x in {225} \draw ({cos(\x)},{sin(\x)}) node {$2_{\red{0}}$};
\foreach \x in {195} \draw ({cos(\x)},{sin(\x)}) node {$2_{\blue{0}}$};
\foreach \x in {165} \draw ({cos(\x)},{sin(\x)}) node {$6_{\red{2}}$};
\foreach \x in {135} \draw ({cos(\x)},{sin(\x)}) node {$4_{\red{1}}$};
\foreach \x in {105} \draw ({cos(\x)},{sin(\x)}) node {$5_{\red{1}}$};
\draw (0,0.8)--(0,1.2);
\end{tikzpicture}
\end{center}
\smallskip
or, in one-line, notation
$4_{\blue{2}} 3_{\red{1}}  2_{\green{1}} 
1_{\red{0}}  1_{\blue{0}} 1_{\green{0}} 
3_{\blue{1}} 2_{\red{0}}  2_{\blue{0}} 
6_{\red{2}}  4_{\red{1}}  5_{\red{1}}$.
The charge, $\charge(T)$, of $T$ is the sum of the 
labels of its reverse reading word. For the tableau in the
example, $\charge(T)=\blue{2}+\red{1}+\green{1}+\red{0}+\blue{0}+\green{0}+
\blue{1}+\red{0}+\blue{0}+\red{2}+\red{1}+\red{1}=9$.

Using the charge statistic, the modified Hall--Littlewood 
polynomial $Q'_{\mu}$ can be expressed as \cite{Butler94,DLT94,LS78}
\[
Q'_{\mu}(x;t)=\sum_{T\in\Tab(\cdot,\mu)} t^{\charge(T)} s_{\shape(T)}(x),
\]
or, equivalently, as
\[
Q'_{\mu}(x;t)=\sum_{\la} K_{\la\mu}(t) s_{\la}(x),
\]
where $K_{\la\mu}(t)$ is the Kostka--Foulkes polynomial
\[
K_{\la\mu}(t):=\sum_{T\in\Tab(\la,\mu)} t^{\charge(T)}. 
\]
The coefficients $K_{\la\mu}(t)/b_{\mu}(t)$
in the Schur expansion of the modified Hall--Little\-wood 
polynomials $P'_{\mu}(x;t)$ are rational functions instead of polynomials,
but, viewed as a formal power series in $t$,
\[
\frac{K_{\la\mu}(t)}{b_{\mu}(t)}=\sum_{k\geqslant 0}a_k t^k,
\quad a_k\in\mathbb{Z}_{\geqslant 0}.
\]

Alternatively, we have the combinatorial expression \cite{Kirillov00,WZ12}
\begin{equation}\label{Eq_Qp-qbinom}
Q'_{\la}(x_1,\dots,x_n;t)=
\sum_{0=\mu^{(n)}\subseteq\cdots
\subseteq\mu^{(1)}\subseteq\mu^{(0)}=\la}
\prod_{i=1}^n g_{\skew{\mu^{(i-1)}}{\mu^{(i)}}}(x_i;t),
\end{equation}
where
\begin{equation}\label{Eq_glamu}
g_{\skew{\la}{\mu}}(z;t):=
Q_{\skew{\la}{\mu}}\Big(\Big[z\, \frac{1-q}{1-t}\Big];t\Big)=
z^{\abs{\skew{\la}{\mu}}}t^{n(\skew{\la}{\mu})}
\prod_{i\geqslant 1} \qbin{\la'_i-\mu'_{i+1}}{\la'_i-\mu'_i}_t
\end{equation}
is the $h$-Pieri coefficient for Hall--Littlewood polynomials
\[
P_{\mu}(x;t)\prod_{i\geqslant 1} \frac{1}{1-zx_i}=
\sum_{\la\supseteq\mu} g_{\mu/\nu}(z;t) P_{\la}(x;t),
\]
see \cite{Kirillov00,KL13,W13}.
Note that for $n=1$ this trivialises to the principal specialisation
formula \cite[page 213]{Macdonald95}
\begin{equation}\label{Eq_Qp-een}
Q'_{\la}(z;t)=z^{\abs{\la}} Q_{\la}(1,q,q^2,\dots;q)=
z^{\abs{\la}}t^{n(\la)}.
\end{equation}
In Section~\ref{Sec_char} we will show how \eqref{Eq_Qp-qbinom} can be used 
to simplify some of our character formulas when restricted to the
basic representation.


\chapter{Virtual Koornwinder integrals}\label{Ch_virtual}
\section{Basic definitions}
For $\{f_{\la}\}$ a basis of $\Lambda_n$, $\Lambda$ or
$\Lambda^{\mathrm{BC}_n}$ and $g$ an arbitrary element of one of these
spaces, we write $[f_{\la}] g$ for the coefficient $c_{\la}$ in
$g=\sum_{\la} c_{\la} f_{\la}$.
Although typically $f_0=1$ we will still write $[f_0]g$ to avoid ambiguity 
as to the choice of basis.

\medskip

Our approach to bounded Littlewood identities relies crucially 
on properties of two linear functionals, denoted $I_K$ and $I_K^{(n)}$
and referred to as virtual Koornwinder integrals,
acting on $\Lambda$ and $\Lambda^{\mathrm{BC}_n}$ respectively. 
Let $f\in\Lambda$. Then the virtual Koornwinder integral 
$I_K$ is defined as \cite[page 110]{Rains05}
\begin{equation}\label{Eq_functional-IK}
I_K\big(f;q,t,T;\tees\big):=\big[\tilde{K}_0(q,t,T;\tees)]f,
\end{equation}
where $\tilde{K}_{\la}$ is the lifted Koornwinder polynomial.
Similarly, for $f\in\Lambda^{\mathrm{BC}_n}$ and $x=(x_1,\dots,x_n)$
\cite[page 95]{Rains05}
\begin{equation}\label{Eq_functional-IKn}
I_K^{(n)}\big(f;q,t;\tees\big):=\big[K_0(x;q,t;\tees)] f.
\end{equation}

By \eqref{Eq_KKnul} and $K_0=1$, it follows that
\begin{align}\label{Eq_IKn-ip}
I_K^{(n)}\big(f;q,t;\tees\big)
&=\frac{\ip{1}{f(x)}}{\ip{1}{1}} \\
&=\frac{1}{\ip{1}{1}}\int_{\mathbb{T}^n} f(x)
\Delta(x;q,t;\tees)\dup T(x). \notag
\end{align}
Also, by \eqref{Eq_K_lifted}, for $f\in\Lambda$ and generic $q,t,\tees$,
\begin{equation}\label{Eq_functional-IK-IKn}
I_K^{(n)}\big(f(x_1^{\pm},\dots,x_n^{\pm};q,t;\tees\big)=
I_K\big(f;q,t,t^n;\tees\big).
\end{equation}

\begin{remark}
Invoking the ring homomorphism
$\varphi: \Lambda_{2n}\to \Lambda^{\mathrm{BC}_n}$ given by 
\[
\varphi\big(m_{\la}(x_1,\dots,x_{2n})\big)=
m_{\la}(x_1^{\pm},\dots,x_n^{\pm 1})
\]
(so that $\ker(\varphi)=\langle e_i-e_{2n-i}:~0\leqslant i<n\rangle$),
it will be convenient to extend $I_K^{(n)}$
to also act on $\Lambda_{2n}$ (or more simply $f\in\Lambda$).
That is, we set
\begin{multline}\label{Eq_functional-IKn-2}
I_K^{(n)}\big(f;q,t;\tees\big):=\big[K_0(x;q,t;\tees)] 
f(x_1^{\pm},\dots,x_n^{\pm}), \\ 
\text{for $f\in\Lambda_{2n}$, or $f\in\Lambda)$}.
\end{multline}
Since for such $f$
\[
I_K^{(n)}\big(f(x_1,\dots,x_{2n});q,t;\tees\big)=
I_K^{(n)}\big(f(x_1^{\pm},\dots,x_n^{\pm});q,t;\tees\big),
\]
we will often not distinguish between \eqref{Eq_functional-IKn} and 
\eqref{Eq_functional-IKn-2} and simply write 
\[
I_K^{(n)}\big(f;q,t;\tees\big),
\]
where $f$ can be either a symmetric or $\mathrm{BC}_n$-symmetric function.
\end{remark}

\medskip

The reader is warned that for specialisations that hit the poles 
\eqref{Eq_poles} of the lifted Koornwinder 
polynomials\footnote{Since $q^{2-\la_i-j}$ is a nonpositive integer 
power of $q$ this can never happen when the product $t_0t_1t_2t_3$ 
contains a positive power of $q$.},  
\eqref{Eq_functional-IK-IKn} should be treated with great caution
as the right-hand side may not be well-defined. \label{page_poles}
In such cases $T$ needs to be specialised before the $t_r$ 
(or at least before some of the $t_r$).
For example, if $T=t^n$ and 
$\{\tees\}=\{1,-1,t^{1/2},-t^{1/2}\}=:\{\pm 1,\pm t^{1/2}\}$,
then the lifted Koornwinder polynomial $\tilde{K}_{\la}$ 
is ill-defined if
\[
l(\lambda)-\tfrac{1}{2}m_1(\lambda)\leqslant n<l(\lambda).
\]
Accordingly, for \eqref{Eq_functional-IK-IKn} to hold
we must first specialise $T=t^n$ before specialising the $t_r$.
For example, since
\[
\tilde{K}_{1^2}(q,t,T;\pm 1,\pm t_2)=
m_{1^2}-\frac{(1-T)(T/t+t)(t-t_2^2)}{(1-t)(1+t)(t-t_2^2(T/t)^2)}
\]
(and $\tilde{K}_0=1$), we have
\[
I_K(m_{1^2};q,t,T;\pm 1,\pm t_2)=
\frac{(1-T)(T/t+t)(t-t_2^2)}{(1-t)(1+t)(t-t_2^2(T/t)^2)}.
\]
This gives $I_K(m_{1^2}(q,t,t;\pm 1,\pm t_2)=1$,
which trivially agrees with
\begin{equation}\label{Eq_IKm11-a}
I_K^{(1)}(m_{1^2};q,t;\pm 1,\pm t_2)=I_K^{(1)}(1;q,t;\pm 1,\pm t_2)=1.
\end{equation}
However,
\begin{equation}\label{Eq_IKm11-b}
I_K(m_{1^2};q,t,T;\pm 1,\pm t^{1/2})=0.
\end{equation}

For specialising in the `wrong' order we can use
\cite[Lemma 5.10]{Rains14}.

\begin{lemma}\label{Lem_wrong-order}
For fixed $n$, let $t_0\cdots t_3 t^{n-2}=t^k$, 
where $k$ is a nonnegative integer. Then
\begin{align*}
\lim_{T\to t^n} & I_K(f;q,t,T;\tees)  \\
&=\tfrac{1}{2} I_K^{(n)}(f;q,t;\tees) \\
&\quad +\tfrac{1}{2} I_K^{(k)}
\Big(f\Big[x_1^{\pm}+\cdots+x_k^{\pm}+\sum_{r=0}^3 \frac{t_r-t/t_r}{1-t}
\Big];q,t;t/t_0,t/t_1,t/t_2,t/t_3\Big).
\end{align*}
\end{lemma}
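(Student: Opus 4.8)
The plan is to reduce the identity to the Cauchy-type pairing \eqref{Eq_Cauchy714} for lifted and virtual Koornwinder polynomials, exploiting the fact that the constraint $t_0t_1t_2t_3t^{n-2}=t^k$ is precisely the condition under which the virtual Koornwinder polynomial $\hat K_\la(x;t,q,t^n;\tees)$ specialises in two different ways (via \eqref{Eq_virtualK}, once as a genuine $n$-variable Koornwinder polynomial and once—after the symmetry of Proposition~\ref{Prop_propsymm}—as a $k$-variable one). Concretely, I would start from \eqref{Eq_Cauchy714}, namely $\sum_\la (-1)^{\abs\la}\hat K_\la(x;t,q,T;\tees)\tilde K_{\la'}(y;q,t,T;\tees)=\prod_{i,j}(1-x_iy_j)$, and view both sides as functions of $T$. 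The left side of the claimed lemma is $I_K(f;q,t,t^n;\tees)=[\tilde K_0]f$ in the limit $T\to t^n$; since $\{\tilde K_\la(q,t,T;\tees)\}$ is a basis of $\hat\Lambda$, it suffices to check the lemma on the basis elements $f=\tilde K_{\mu'}(q,t,t^n;\tees)$, or more conveniently to test both sides against the generating series $\prod_{i\geq 1}(1-x_i z)^{-1}$-type alphabets, i.e. to evaluate $I_K$ on $\sigma_1[xy]$ and match.

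The key steps, in order, would be: (1) Recall that $I_K(\sigma_1[x\cdot\,];q,t,T;\tees)$ applied in the $x$-variables against \eqref{Eq_Cauchy714} picks out $\tilde K_0$ and hence yields a known product; more precisely, the generating function for $I_K$ is computed in \cite{Rains05} and equals a ratio of $q$-shifted factorials in the $t_r$ (this is the ``virtual Koornwinder integral evaluation''). (2) On the right side, $I_K^{(n)}(f;q,t;\tees)$ is by \eqref{Eq_IKn-ip} the normalised Koornwinder integral $\ip{1}{f}/\ip{1}{1}$ with weight $\Delta(x;q,t;\tees)$, whose generating-function evaluation on $\sigma_1$-type inputs is governed by Gustafson's integral \eqref{Eq_Gus}; similarly the third term is the analogous $k$-variable Koornwinder integral with parameters $t/t_r$ and with the alphabet shifted by $\sum_r (t_r-t/t_r)/(1-t)$, which is exactly the shift appearing in the symmetry \eqref{Eq_Ktilde-symm} / Proposition~\ref{Prop_propsymm}. (3) The heart of the matter is a residue/limit computation: as $T\to t^n$ with $t_0t_1t_2t_3t^{n-2}=t^k$, the function $I_K(f;q,t,T;\tees)$ develops the situation described on page~\pageref{page_poles}, where the lifted Koornwinder polynomials acquire poles \eqref{Eq_poles}; the limit is finite but does not commute with specialising the $t_r$, and the discrepancy is captured by the second Koornwinder integral. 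The factor $\tfrac12$ in front of each term reflects that the pole structure contributes with equal weight from the ``regular'' piece and the ``extra'' piece—this is the content of \cite[Lemma 5.10]{Rains14}, which I would invoke directly.

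Indeed, since the statement is quoted verbatim as \cite[Lemma 5.10]{Rains14}, the cleanest route is simply to cite that reference; but if a self-contained argument is wanted, I would: write $I_K(f;q,t,T;\tees)$ as a contour integral (the ``virtual'' analogue, in which $\hat\Lambda$ is realised as a limit of $\Lambda_N$ and the functional is a suitable $N\to\infty$ Koornwinder integral with one parameter sent to $t^{-N}$ or similar), then deform the contour as $T\to t^n$. The poles of the integrand that cross the contour—those arising from the factors $(t_rx_i^{\pm};q)_\infty$ in the denominator of the density colliding with the $x_i^{\pm2}$ numerator when the balancing condition $t_0t_1t_2t_3t^{n-2}=t^k$ holds—produce exactly $k$ ``frozen'' variables $x_i$ pinned at the values forcing the collision, and summing their residues reconstitutes the $k$-variable integral with the dual parameters $t/t_r$ and the prescribed plethystic shift. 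The factor $\tfrac12$ arises because each such pinning can occur at $x_i$ or $x_i^{-1}$ and the symmetric sum double-counts, or equivalently because the principal value at $T=t^n$ sits halfway between the two one-sided limits.

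\textbf{Main obstacle.} The hard part is step (3): making the $T\to t^n$ limit rigorous in the presence of the poles \eqref{Eq_poles}, i.e.\ showing that the naive substitution $I_K^{(n)}$ captures only ``half'' of the limit and identifying the compensating term precisely as a lower-rank Koornwinder integral with the shifted alphabet $\sum_{r=0}^3 (t_r-t/t_r)/(1-t)$ and parameters $t/t_r$. This is a genuine residue computation whose bookkeeping (which poles pinch, with what multiplicity, and why the coefficient is exactly $\tfrac12$) is delicate; it is precisely the technical content isolated in \cite[Lemma 5.10]{Rains14}, so in the interest of brevity I would state the lemma and refer to that paper for the full proof, noting only that the balancing condition $t_0\cdots t_3t^{n-2}=t^k$ with $k\in\Nat$ is exactly what makes the residual integral a bona fide finite-dimensional Koornwinder integral rather than an ill-defined expression.
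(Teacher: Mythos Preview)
The paper does not prove this lemma at all: it is simply stated as a direct quotation of \cite[Lemma~5.10]{Rains14}, with no argument given. Your proposal already identifies this (``the cleanest route is simply to cite that reference''), so in that sense you match the paper exactly. The self-contained sketch you offer---Cauchy pairing \eqref{Eq_Cauchy714}, the symmetry of Proposition~\ref{Prop_propsymm}, and a contour-pinching/residue picture for the $\tfrac12+\tfrac12$ split---goes well beyond what the paper provides and is a reasonable outline of how \cite{Rains14} actually proceeds, though the residue bookkeeping you flag as the ``main obstacle'' is indeed the entire content and is not something the present paper attempts to reproduce.
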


\begin{remark}
To be consistent with our convention that 
\[
f(x_1^{\pm},\dots,x_k^{\pm})=f(x_1,x_1^{-1},\dots,x_k,x_k^{-1})
\]
and 
$f(x_1+\cdots+x_k)=f[x_1+\cdots+x_k]$, we interpret
$f[x_1^{\pm}+\cdots+x_k^{\pm}]$ as
$f[x_1+x_1^{-1}+\cdots+x_k+x_k^{-1}]$.
\end{remark}

Continuing our previous example, for
\begin{equation}\label{Eq_teespmpm}
\{t_0,t_1,t_2,t_3\}=\{\pm 1,\pm t^{1/2}\}=
\{1,\varepsilon,t^{1/2},\varepsilon t^{1/2}\}
\end{equation}
we plethystically have
\begin{equation}\label{Eq_plet1pluseps}
\sum_{r=0}^3 \frac{t_r-t/t_r}{1-t}
=\frac{1-t}{1-t}
+\frac{\varepsilon-\varepsilon t}{1-t}
+\frac{t^{1/2}-t^{1/2}}{1-t}
+\frac{\varepsilon t^{1/2}-\varepsilon t^{1/2}}{1-t}
=1+\varepsilon,
\end{equation}
so that
\begin{align*}
\lim_{T\to t} &I_K\big(f;q,t,T;\pm 1,\pm t^{1/2}\big) \\
&=\tfrac{1}{2} I_K^{(1)}\big(f;q,t,\pm 1,\pm t^{1/2}\big)
+\tfrac{1}{2} I_K^{(0)}\big(f[1+\varepsilon];q,t,\pm t,\pm t^{1/2}\big) \\[1mm]
&=\tfrac{1}{2} I_K^{(1)}\big(f;q,t,\pm 1,\pm t^{1/2}\big)
+\tfrac{1}{2} I_K^{(0)}\big(f(1,-1);q,t,\pm t,\pm t^{1/2}\big).
\end{align*}
If $f=m_{1^2}$ then $f(1,-1)=-1$. By $I_K^{(0)}(f;q,t,\tees)=f$
and \eqref{Eq_IKm11-a}, this yields 
\[
\lim_{T\to t} I_K\big(m_{1^2};q,t,T;\pm 1,\pm t^{1/2}\big),
=\tfrac{1}{2}(1-1)=0,
\]
in accordance with \eqref{Eq_IKm11-b}.

\medskip

\begin{lemma}\label{Lem_IK}
For $\mu$ a partition and generic $q,t,t_2,t_3$,
\begin{equation}\label{Eq_C76-spec}
I_K\big(f[x+\varepsilon];q,t,T;-t,-t^{1/2},t_2,t_3\big) \\
=I_K\big(f;q,t,t^{1/2}T;-1,-t^{1/2},t_2,t_3\big).
\end{equation}
\end{lemma}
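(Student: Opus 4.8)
The plan is to reduce the claim to the plethystic symmetry \eqref{Eq_Ktilde-symm} of the lifted Koornwinder polynomials, specialised at $(t_0,t_1)=(-t,-t^{1/2})$. Here $t/t_0=-1$ and $t/t_1=-t^{1/2}$, so the summand $(t_1-t/t_1)/(1-t)$ in the plethystic shift of \eqref{Eq_Ktilde-symm} is the empty alphabet, while $(t_0-t/t_0)/(1-t)=(-t)(1+t+t^2+\cdots)-(-1)(1+t+t^2+\cdots)$ collapses to $-\varepsilon$, since $(-1)(1+t+t^2+\cdots)$ consists of $(-t)(1+t+t^2+\cdots)$ together with the single extra letter $-1$. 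As moreover $Tt_0t_1/t=t^{1/2}T$, identity \eqref{Eq_Ktilde-symm} specialises to
\[
\tilde{K}_{\la}\big([y-\varepsilon];q,t,T;-t,-t^{1/2},t_2,t_3\big)
=\tilde{K}_{\la}\big(y;q,t,t^{1/2}T;-1,-t^{1/2},t_2,t_3\big)
\]
for every partition $\la$. I expect this bit of plethystic bookkeeping — in particular pinning down the sign of the shift — to be the one delicate point; the remainder is formal.

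Next I would replace $y$ by $y+\varepsilon$, using $[(y+\varepsilon)-\varepsilon]=[y]$, to rewrite the display as
\[
\tilde{K}_{\la}\big([y+\varepsilon];q,t,t^{1/2}T;-1,-t^{1/2},t_2,t_3\big)
=\tilde{K}_{\la}\big(y;q,t,T;-t,-t^{1/2},t_2,t_3\big)
\]
for all partitions $\la$. For generic $q,t,t_2,t_3$ (and $T$) neither of the families $\{\tilde{K}_{\la}(q,t,t^{1/2}T;-1,-t^{1/2},t_2,t_3)\}$ and $\{\tilde{K}_{\la}(q,t,T;-t,-t^{1/2},t_2,t_3)\}$ meets the poles \eqref{Eq_poles}, so both are bases of $\La$ by \cite{Rains05}, and the display asserts exactly that the ring endomorphism $f\mapsto f[x+\varepsilon]$ of $\La$, i.e.\ $f(x_1,x_2,\dots)\mapsto f(-1,x_1,x_2,\dots)$, carries the first basis onto the second partition by partition.

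Finally I would expand $f=\sum_{\la}c_{\la}\,\tilde{K}_{\la}(q,t,t^{1/2}T;-1,-t^{1/2},t_2,t_3)$, so that $c_0=I_K(f;q,t,t^{1/2}T;-1,-t^{1/2},t_2,t_3)$ by the definition \eqref{Eq_functional-IK}; applying $f\mapsto f[x+\varepsilon]$ termwise and using the previous display, $f[x+\varepsilon]=\sum_{\la}c_{\la}\,\tilde{K}_{\la}(q,t,T;-t,-t^{1/2},t_2,t_3)$, whence also $c_0=I_K(f[x+\varepsilon];q,t,T;-t,-t^{1/2},t_2,t_3)$, which is the assertion; since both sides are rational in $T$ the genericity of $T$ may then be dropped. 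An alternative would be to perform the same plethystic substitution directly on the $y$-alphabet of the Cauchy identity \eqref{Eq_Cauchy714} and extract the coefficient of $\tilde{K}_{\la'}(y)$, in the style of the proof of Proposition~\ref{Prop_propsymm}; the basis-expansion argument above is shorter, so I would present that.
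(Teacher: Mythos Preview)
Your proof is correct and follows essentially the same route as the paper: both reduce the claim to the plethystic symmetry \eqref{Eq_Ktilde-symm} of the lifted Koornwinder polynomials. The paper packages this symmetry as a general identity for $I_K$ (their \eqref{Eq_C76}, citing \cite[Eq.~(7.4)]{Rains05}) and then specialises at $\{t_0,t_1\}=\{-1,-t^{1/2}\}$, which gives the shift $+\varepsilon$ directly; you instead specialise \eqref{Eq_Ktilde-symm} at $(t_0,t_1)=(-t,-t^{1/2})$, obtain shift $-\varepsilon$, substitute $y\mapsto y+\varepsilon$, and then spell out the basis-expansion argument that underlies \eqref{Eq_C76}. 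These are the same computation read from opposite ends of the symmetry.
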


\begin{proof}
Let $f\in\Lambda$. From definition \eqref{Eq_functional-IK} of the virtual
Koornwinder integral and the symmetry \eqref{Eq_Ktilde-symm} of the lifted
Koornwinder polynomials, we infer that
(see also \cite[Equation (7.4)]{Rains05}\footnote{In 
\cite[Equation (7.4)]{Rains05} the denominator term $(1-t)$ should
be corrected to $(1-t^k)$.})
\begin{multline}\label{Eq_C76}
I_K\Big(f\Big[x+\frac{t_0-t/t_0}{1-t}+\frac{t_1-t/t_1}{1-t}\Big];q,t,T;
t/t_0,t/t_1,t_2,t_3\Big)\\
=I_K\big(f;q,t,tT/t_0t_1;\tees\big).
\end{multline}
If we specialise 
$\{t_0,t_1\}=\{-1,-t^{1/2}\}=\{\varepsilon,\varepsilon t^{1/2}\}$,
so that
\[
\frac{t_0-t/t_0}{1-t}+\frac{t_1-t/t_1}{1-t}=
\frac{\varepsilon-\varepsilon t}{1-t}+
\frac{\varepsilon t^{1/2}-\varepsilon t^{1/2}}{1-t}=\varepsilon,
\]
equation \eqref{Eq_C76} simplifies to \eqref{Eq_C76-spec}.
\end{proof}

\section{Closed-form evaluations---the Macdonald case}\label{Sec_closed-form}
In this section we consider several closed-form 
evaluations of virtual Koornwinder integrals over Macdonald polynomials. 

\begin{theorem}\label{Thm_Q1}
For $\mu$ a partition,
\begin{multline}\label{Eq_Q1}
I_K\big(P_{\mu}(q,t);q,t,T;\pm t^{1/2},\pm (qt)^{1/2}\big) \\
=\chi(\mu' \text{ even})\,
\frac{(T^2;q,t^2)_{\nu}}{(qT^2/t;q,t^2)_{\nu}}\cdot
\frac{C_{\nu}^{-}(qt;q,t^2)}{C_{\nu}^{-}(t^2;q,t^2)},
\end{multline}
where, for $\mu'$ an even partition,
$\nu:=(\mu'/2)'=(\mu_1,\mu_3,\dots)$.
\end{theorem}
As usual $(\pm t^{1/2},\pm (qt)^{1/2})$ in the above
is shorthand for
\[
\big(t^{1/2},-t^{1/2},(qt)^{1/2},-(qt)^{1/2}\big).
\]
Theorem~\ref{Thm_Q1}, which for $T=t^n$ is known as the 
$\mathrm{U}(2n)/\mathrm{Sp}(2n)$ vanishing integral,
was conjectured in \cite[Conjecture 1]{Rains05} and 
proven in \cite[Theorem 4.1]{RV07}.

\begin{theorem}\label{Thm_Q6}
For $\mu$ a partition,
\begin{multline}\label{Eq_Q6}
I_K\big(P_{\mu}(q,t);q,t,T;-1,-q^{1/2},-t^{1/2},-(qt)^{1/2}\big) \\
=(-1)^{\abs{\mu}}\,
\frac{(T;q^{1/2},t^{1/2})_{\mu}}{(-q^{1/2}T/t^{1/2};q^{1/2},t^{1/2})_{\mu}}
\cdot \frac{C^{-}_{\mu}(-q^{1/2};q^{1/2},t^{1/2})}
{C^{-}_{\mu}(t^{1/2};q^{1/2},t^{1/2})}.
\end{multline}
\end{theorem}
This theorem was first stated (up to a trivial sign-change) 
as the conjectural \cite[Equation (5.79)]{Rains12}.
By \cite[Theorem 8.5]{Rains14}, which implies
\cite[Conjecture Q6]{Rains12}, it now also has been proven.

From a symmetry of the virtual Koornwinder polynomials,
the virtual Koornwinder integral satisfies the duality
\cite[Corollary 7.6]{Rains05}\footnote{In \cite[Corollary 7.6]{Rains05}
$\tilde{\omega}_{q,t}$ should be corrected to 
$\tilde{\omega}_{t,q}$.}
\[
I_K\big(f;q,t,T;\tees\big) 
=I_K\Big(
f\Big[{-}\varepsilon \Big(\frac{t}{q}\Big)^{1/2}\,\frac{1-q}{1-t}\,x\Big];
t,q,1/T;s_0,s_1,s_2,s_3\Big), 
\]
where $s_r=-(qt)^{1/2}/t_r$.
Applying this to \eqref{Eq_Q1}, and then replacing $(q,t,T,\mu)$ by 
$(t,q,1/T,\mu')$ using \eqref{Eq_qtswap}, \eqref{Eq_double_Cmin}, 
\eqref{Eq_omegaqt}, \eqref{Eq_Qdef} and \eqref{Eq_omegaQ}, 
yields the following dual virtual Koornwinder integral.

\begin{corollary}[{\!\!\cite[page 741]{RV07}}]
For $\mu$ a partition,
\begin{equation}\label{Eq_45}
I_K\big(P_{\mu}(q,t);q,t,T;\pm 1,\pm t^{1/2}\big)=
\chi(\mu \text{ even})\,
\frac{(T^2;q^2,t)_{\mu/2}}{(qT^2/t;q^2,t)_{\mu/2}}\cdot
\frac{C^{-}_{\mu/2}(q;q^2,t)}{C^{-}_{\mu/2}(t;q^2,t)}.
\end{equation}
\end{corollary}

We need two variants of this for $I_K^{(n)}$.
For $\la$ a partition of length at most $n$ or a half-partition of length 
$n$, define
\begin{equation}\label{Eq_Adef-a}
A_{\la}^{(n)}(q,t):=\prod_{1\leqslant i<j\leqslant n}
\frac{(qt^{j-i-1},t^{j-i+1};q^2)_{\la_i-\la_j}}
{(qt^{j-i},t^{j-i};q^2)_{\la_i-\la_j}}.
\end{equation}
It is important to note that $A_{\la}^{(n)}(q,t)$ depends on the 
relative differences between the $\la_i$, and that for $\la$ 
a partition
\begin{equation}\label{Eq_Adef}
A_{\la}^{(n)}(q,t)=
\frac{(t^n;q^2,t)_{\la}}{(qt^{n-1};q^2,t)_{\la}}\cdot
\frac{C^{-}_{\la}(q;q^2,t)}{C^{-}_{\la}(t;q^2,t)}.
\end{equation}

\begin{theorem}\label{Thm_BD}
For $\mu$ a partition of length at most $2n$, let
\[
\tilde{\mu}=(\mu_1-\mu_{2n},\dots,\mu_{2n-1}-\mu_{2n},0).
\] 
Then
\begin{align*}
I_K^{(n)}&\big(P_{\mu}(x_1^{\pm},\dots,x_n^{\pm};q,t);
q,t;\pm 1,\pm t^{1/2}\big) \\
&=(-1)^{\mu_{2n}} I_K^{(n-1)}
\big(P_{\mu}(x_1^{\pm},\dots,x_{n-1}^{\pm},1,-1;q,t);q,t;
\pm t,\pm t^{1/2}\big) \\[1mm]
 &\qquad \qquad \qquad =\begin{cases}
A_{\mu/2}^{(2n)}(q,t)
& \text{if $\tilde{\mu}$ is even}, \\[1mm]
0 & \text{otherwise}, 
\end{cases}
\end{align*}
where $\mu/2$ is a partition or half-partition given by
$(\mu_1/2,\dots,\mu_{2n}/2)$.
\end{theorem}

\begin{theorem}\label{Thm_BD-2}
For $\nu$ a partition of length at most $2n+1$, let
\[
\tilde{\nu}=(\nu_1-\nu_{2n+1},\dots,\nu_{2n}-\nu_{2n+1},0).
\] 
Then
\begin{align*}
I_K^{(n)}&\big(P_{\nu}(x_1^{\pm},\dots,x_n^{\pm},1;q,t);
q,t;-1,t,\pm t^{1/2}\big)\\
&=(-1)^{\nu_{2n+1}}
I_K^{(n)}\big(P_{\nu}(x_1^{\pm},\dots,x_n^{\pm},-1;q,t);
q,t;1,-t,\pm t^{1/2}\big)\\[1mm]
&\qquad \qquad \qquad =\begin{cases}
A_{\nu/2}^{(2n+1)}(q,t)
& \text{if $\tilde{\nu}$ is even}, \\[1mm]
0 & \text{otherwise}.
\end{cases}
\end{align*}
\end{theorem}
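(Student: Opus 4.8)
\emph{Overall plan.} The idea is to reduce Theorem~\ref{Thm_BD-2} to the known dual vanishing integral \eqref{Eq_45} by a parameter‑shift symmetry, running the odd‑rank analogue of the argument behind Theorem~\ref{Thm_BD}; the one extra variable is handled by adjoining a single letter $+1$ or $-1$ to the alphabet $x_1^{\pm},\dots,x_n^{\pm}$ rather than a pair. First I would normalise $\nu$: since $l(\nu)\le 2n+1$, iterating \eqref{Eq_add1} gives $P_\nu(x;q,t)=(x_1\cdots x_{2n+1})^{\nu_{2n+1}}P_{\tilde\nu}(x;q,t)$ for any length‑$(2n+1)$ alphabet $x$, and evaluating at $(x_1^{\pm},\dots,x_n^{\pm},\pm 1)$ the prefactor collapses to $(\pm1)^{\nu_{2n+1}}$. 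As $A^{(2n+1)}_\la(q,t)$ depends only on the relative differences of the parts of $\la$ we have $A^{(2n+1)}_{\nu/2}=A^{(2n+1)}_{\tilde\nu/2}$, so the factor $(-1)^{\nu_{2n+1}}$ and the use of $\tilde\nu$ in place of $\nu$ are pure bookkeeping and it suffices to treat $\nu$ with $\nu_{2n+1}=0$; then I must show
\[
I_K^{(n)}\big(P_\nu(x_1^{\pm},\dots,x_n^{\pm},-1;q,t);q,t;1,-t,\pm t^{1/2}\big)=\chi(\nu\textup{ even})\,A^{(2n+1)}_{\nu/2}(q,t),
\]
together with its $(+1)$‑companion with parameters $(-1,t,\pm t^{1/2})$.

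\emph{The two integrals agree up to sign.} From \eqref{Eq_Kdensity} one has $\Delta(x;q,t;\tees)=\Delta(-x;q,t;-t_0,-t_1,-t_2,-t_3)$, while by Gustafson's evaluation \eqref{Eq_Gus} the norm $\ip{1}{1}^{(n)}_{q,t;\tees}$ depends on the $t_r$ only through $t_0t_1t_2t_3$ and the products $t_rt_s$ and is therefore invariant under $\tees\mapsto-\tees$; hence $I_K^{(n)}\big(g(-x);q,t;\tees\big)=I_K^{(n)}\big(g;q,t;-\tees\big)$ for all $g\in\La^{\BC_n}$. Applying this to $g(x)=P_\nu(x_1^{\pm},\dots,x_n^{\pm},-1;q,t)$ and using homogeneity, $P_\nu(-y;q,t)=(-1)^{|\nu|}P_\nu(y;q,t)$, yields
\begin{multline*}
I_K^{(n)}\big(P_\nu(x_1^{\pm},\dots,x_n^{\pm},1;q,t);q,t;-1,t,\pm t^{1/2}\big) \\
=(-1)^{|\nu|}\,I_K^{(n)}\big(P_\nu(x_1^{\pm},\dots,x_n^{\pm},-1;q,t);q,t;1,-t,\pm t^{1/2}\big),
\end{multline*}
which, once one knows both sides vanish unless $\nu$ is even, is precisely the factor $(-1)^{\nu_{2n+1}}$ of the theorem. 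So it is enough to evaluate the $(-1)$‑integral.

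\emph{Reduction to \eqref{Eq_45}.} Adjoining $-1$ is the plethystic map $f\mapsto f[x+\varepsilon]$, so $P_\nu(x_1^{\pm},\dots,x_n^{\pm},-1;q,t)$ is the $\BC_n$‑restriction of $P_\nu[x+\varepsilon]$. Specialising $(t_2,t_3)=(1,t^{1/2})$ in Lemma~\ref{Lem_IK} gives
\[
I_K\big(P_\nu[x+\varepsilon];q,t,T;1,-t,\pm t^{1/2}\big)=I_K\big(P_\nu;q,t,t^{1/2}T;\pm 1,\pm t^{1/2}\big),
\]
whose right‑hand side is evaluated by \eqref{Eq_45}; at $T=t^n$, using $(t^{1/2}t^n)^2=t^{2n+1}$ and comparing with \eqref{Eq_Adef} (with $n$ there replaced by $2n+1$), it equals $\chi(\nu\textup{ even})A^{(2n+1)}_{\nu/2}(q,t)$. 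The $(+1)$‑companion comes the same way from \eqref{Eq_C76} with $(t_0,t_1,t_2,t_3)=(1,-t^{1/2},-1,t^{1/2})$, for which the virtual alphabet $\tfrac{t_0-t/t_0}{1-t}+\tfrac{t_1-t/t_1}{1-t}$ reduces to the single letter $1$, the left‑hand parameters become $(-1,t,\pm t^{1/2})$, and $T$ is rescaled by $-t^{1/2}$; at $T=t^n$ this again gives $\chi(\nu\textup{ even})A^{(2n+1)}_{\nu/2}(q,t)$.

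\emph{The main obstacle: descent to $I_K^{(n)}$.} One cannot simply invoke \eqref{Eq_functional-IK-IKn}: for the sets $(1,-t,\pm t^{1/2})$ and $(-1,t,\pm t^{1/2})$ the product $t_0t_1t_2t_3=t^2$ carries no positive power of $q$, so as warned on page~\pageref{page_poles} the lifted Koornwinder polynomials are singular at $T=t^n$ (e.g.\ $\tilde K_{1^{2n+1}}$, which occurs in the $\tilde K$‑expansion of $P_\nu[x+\varepsilon]$), and $T$ must be specialised before the $t_r$. This is exactly what Lemma~\ref{Lem_wrong-order} supplies, with $k=n$ since $t_0t_1t_2t_3t^{n-2}=t^n$. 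Tracking the correction term one finds, pleasantly, that $(P_\nu[x+\varepsilon])\big[x_1^{\pm}+\dots+x_n^{\pm}+\sum_{r}\tfrac{t_r-t/t_r}{1-t}\big]$ collapses to $P_\nu(x_1^{\pm},\dots,x_n^{\pm},1;q,t)$ and the shifted parameters to $(-1,t,\pm t^{1/2})$, so Lemma~\ref{Lem_wrong-order} reads
\begin{multline*}
\chi(\nu\textup{ even})\,A^{(2n+1)}_{\nu/2}(q,t) \\
=\tfrac12\,I_K^{(n)}\big(P_\nu(x_1^{\pm},\dots,x_n^{\pm},-1);q,t;1,-t,\pm t^{1/2}\big) \\
+\tfrac12\,I_K^{(n)}\big(P_\nu(x_1^{\pm},\dots,x_n^{\pm},1);q,t;-1,t,\pm t^{1/2}\big);
\end{multline*}
that is, the sum of the two quantities in the theorem is $2\chi(\nu\textup{ even})A^{(2n+1)}_{\nu/2}$. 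Combined with the sign relation of the second paragraph this pins both of them down when $|\nu|$ is even, in particular in the essential case $\nu$ even, and the theorem follows. The one point I expect to need extra care is $|\nu|$ odd, where the sign relation degenerates to First $=-$Second and the linear system gives no new information: here both integrals must be shown to vanish, which I would attempt either by running the analogous argument with a companion parameter set (to produce a second independent relation), or by a parity/involution argument on the Koornwinder torus, or by degenerating to the vanishing integral \eqref{Eq_Q1}.
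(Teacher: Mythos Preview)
Your core approach---Lemma~\ref{Lem_IK} to shift parameters, the dual vanishing integral \eqref{Eq_45} to evaluate the lifted side, and Lemma~\ref{Lem_wrong-order} to descend to $I_K^{(n)}$---is exactly the paper's. The sign relation $L_1=(-1)^{|\nu|}L_{-1}$ you derive (via $x\mapsto -x$ on the torus) is correct and is a nice extra observation not made explicitly in the paper, but as you already note it leaves a genuine gap when $|\nu|$ is odd: the two equations $L_1+L_{-1}=0$ and $L_1=-L_{-1}$ are dependent and do not force vanishing.

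The paper closes this gap not by a companion parameter set or a torus involution, but by a much simpler device that you essentially discarded in your first paragraph: it applies the wrong-order identity to \emph{all} $\nu$, not just those with $\nu_{2n+1}=0$. Writing $\nu=\tau+k^{2n+1}$ with $l(\tau)\le 2n$ and using \eqref{Eq_add1}, the summand picks up a factor $z^k$ while the right-hand side becomes $\chi(k\text{ even})\chi(\tau\text{ even})A^{(2n+1)}_{\tau/2}$ (the case $\tau$ odd cannot occur since $\tau_{2n+1}=0$). Taking $k$ even and $k$ odd gives the two independent equations
\[
L_1+L_{-1}=2\chi(\tau\text{ even})A^{(2n+1)}_{\tau/2},\qquad L_1-L_{-1}=0,
\]
and hence $L_1=L_{-1}=\chi(\tau\text{ even})A^{(2n+1)}_{\tau/2}$ for every $\tau$, including those with $|\tau|$ odd. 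So the fix is to postpone the reduction $\nu\mapsto\tilde\nu$ until \emph{after} Lemma~\ref{Lem_wrong-order}, and then vary $k=\nu_{2n+1}$; your sign relation then becomes redundant. Your ``$(+1)$-companion'' computation via \eqref{Eq_C76} is also correct at the lifted level but does not yield an independent relation after descent, since it feeds into the same instance of Lemma~\ref{Lem_wrong-order}.
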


\begin{proof}[Proof of Theorem~\ref{Thm_BD}]
From \eqref{Eq_add1} we have
\begin{align*}
P_{\mu}(x_1^{\pm},\dots,x_n^{\pm};q,t)&=
P_{\tilde{\mu}}(x_1^{\pm},\dots,x_n^{\pm};q,t) 
\intertext{and}
P_{\mu}(x_1^{\pm},\dots,x_{n-1}^{\pm},1,-1;q,t)&=
(-1)^{\mu_{2n}} P_{\tilde{\mu}}(x_1^{\pm},\dots,x_{n-1}^{\pm},1,-1;q,t),
\end{align*}
so that 
\begin{subequations}\label{Eq_getridoflastpart}
\begin{multline}
I_K^{(n)}\big(P_{\mu}(x_1^{\pm},\dots,x_n^{\pm};q,t);q,t;\tees\big) \\
=I_K^{(n)}\big(P_{\tilde{\mu}}(x_1^{\pm},\dots,x_n^{\pm};q,t);q,t;\tees\big)
\end{multline}
and
\begin{multline}
I_K^{(n-1)}\big(P_{\mu}(x_1^{\pm},\dots,x_{n-1}^{\pm},1,-1;q,t);
q,t;\tees\big) \\ 
=(-1)^{\mu_{2n}} 
I_K^{(n-1)}\big(P_{\tilde{\mu}}(x_1^{\pm},\dots,x_{n-1}^{\pm},1,-1;q,t);
q,t;\tees\big).
\end{multline}
\end{subequations}
Since $A_{\mu/2}^{(2n)}(q,t)=A_{\tilde{\mu}/2}^{(2n)}(q,t)$,
it thus suffices to prove that
\begin{align}\label{Eq_omega}
I_K^{(n)}&\big(P_{\omega}(x_1^{\pm},\dots,x_n^{\pm};q,t);
q,t;\pm 1,\pm t^{1/2}\big) \\
&=I_K^{(n-1)}
\big(P_{\omega}(x_1^{\pm},\dots,x_{n-1}^{\pm},1,-1;q,t);
q,t;\pm 1,\pm t^{1/2}\big) \notag \\
&\qquad =\chi(\omega \text{ even}) A_{\omega/2}^{(2n)}(q,t),
\notag
\end{align}
for $\omega$ a partition such that $l(\omega)<2n$. 

We now apply Lemma~\ref{Lem_wrong-order} with 
$\{\tees\}=\{\pm 1,\pm t^{1/2}\}$ and $f=P_{\mu}(q,t)$ for $\mu$
a partition of length at most $2n$.
Using \eqref{Eq_teespmpm} and \eqref{Eq_plet1pluseps}, this yields
\begin{align*}
\lim_{T\to t^n} & I_K\big(P_{\mu}(q,t),q,t,T;\pm 1,\pm t^{1/2}\big)  \\
&=\tfrac{1}{2} I_K^{(n)}\big(P_{\mu}(x_1^{\pm},\dots,x_n^{\pm};q,t);
q,t;\pm 1,\pm t^{1/2}\big) \\[1mm]
&\quad +\tfrac{1}{2} 
I_K^{(n-1)}\big(P_{\mu}(x_1^{\pm},\dots,x_{n-1}^{\pm},1,-1;q,t);
q,t;\pm t,\pm t^{1/2}\big).
\end{align*}
By \eqref{Eq_45} and \eqref{Eq_Adef} the left-hand side is equal to
$\chi(\mu \text{ even})\, A_{\mu/2}^{(2n)}(q,t)$, resulting in
\begin{align*}
&\tfrac{1}{2} I_K^{(n)}\big(P_{\mu}(x_1^{\pm},\dots,x_n^{\pm};q,t);
q,t;\pm 1,\pm t^{1/2}\big) \\[1mm]
&\; +\tfrac{1}{2} I_K^{(n-1)}
\big(P_{\mu}(x_1^{\pm},\dots,x_{n-1}^{\pm},1,-1;q,t);
q,t;\pm t,\pm t^{1/2}\big) \\[1mm]
&\qquad =\chi(\mu \text{ even})\, A_{\mu/2}^{(2n)}(q,t). 
\end{align*}
Again using \eqref{Eq_getridoflastpart} as well as 
\[
\chi(\mu \text{ even})\, A_{\mu/2}^{(2n)}(q,t)
=\chi(\mu_{2n} \text{ even})
\chi(\tilde{\mu} \text{ even})\, A_{\tilde{\mu}/2}^{(2n)}(q,t),
\]
and then renaming $\tilde{\mu}_i=\mu_i-\mu_{2n}$ as $\omega_i$ 
for $1\leqslant i\leqslant 2n-1$, and $\mu_{2n}$ as $k$, it follows that
\begin{align*}
&\tfrac{1}{2} I_K^{(n)}\big(P_{\omega}(x_1^{\pm},\dots,x_n^{\pm};q,t);
q,t;\pm 1,\pm t^{1/2}\big) \\[1mm]
&\; +\tfrac{1}{2} (-1)^k I_K^{(n-1)}
\big(P_{\omega}(x_1^{\pm},\dots,x_{n-1}^{\pm},1,-1;q,t);
q,t;\pm t,\pm t^{1/2}\big) 
\\[1mm] &\qquad = \chi(k \text{ even})
\chi\big(\omega \text{ even}\big)\, A_{\omega/2}^{(2n)}(q,t),
\end{align*}
for $\omega$ a partition of length at most $2n-1$ and $k$ an arbitrary
integer. For odd $k$ this implies the first equality in \eqref{Eq_omega},
so that the second equality follows from even $k$.
\end{proof}

\begin{proof}[Proof of Theorem~\ref{Thm_BD-2}]
The proof if analogous to that of Theorem~\ref{Thm_BD} except
that we now need Lemma~\ref{Lem_IK} on top of Lemma \ref{Lem_wrong-order}.

Let $z\in\{-1,1\}$. Since
\begin{multline}\label{Eq_getridoflastpart-2}
I_K^{(n)}\big(P_{\nu}(x_1^{\pm},\dots,x_n^{\pm},z;q,t);q,t;\tees\big) \\
=z^{\nu_{2n+1}} 
I_K^{(n)}\big(P_{\tilde{\nu}}(x_1^{\pm},\dots,x_n^{\pm},z;q,t);q,t;\tees\big),
\end{multline}
and $A_{\nu/2}^{(2n+1)}(q,t)=A_{\tilde{\nu}/2}^{(2n+1)}(q,t)$,
it is enough to show that
\begin{equation}\label{Eq_enough2}
I_K^{(n)}\big(P_{\tau}(x_1^{\pm},\dots,x_n^{\pm},z;q,t);
q,t;-z,z t,\pm t^{1/2}\big)
=\chi(\tau\text{ even}) A_{\tau/2}^{(2n+1)}(q,t),
\end{equation}
for $\tau$ a partition such that $l(\tau)\leqslant 2n$. 

For $\{\tees\}=\{1,\varepsilon t,t^{1/2},\varepsilon  t^{1/2}\}$, 
we have $\sum_{r=0}^3 \frac{t_r-t/t_r}{1-t}=1-\varepsilon$.
Hence, by Lemma~\ref{Lem_wrong-order} with
$f(x)=P_{\nu}\big([x+\varepsilon];q,t\big)$ and $\nu$ a partition of
length at most $2n+1$,
\begin{align*}
\lim_{T\to t^n} & I_K
\big(P_{\nu}\big([x+\varepsilon];q,t\big),q,t,T;1,-t,\pm t^{1/2}\big)  \\
&=\tfrac{1}{2} I_K^{(n)}\big(P_{\nu}(x_1^{\pm},\dots,x_n^{\pm},-1;q,t);
q,t;1,-t,\pm t^{1/2}\big) \\[1mm]
&\quad+\tfrac{1}{2} I_K^{(n)}\big(P_{\nu}(x_1^{\pm},\dots,x_n^{\pm},1;q,t);
q,t;-1,t,\pm t^{1/2}\big) \\[1mm]
&=\tfrac{1}{2} \sum_{z\in\{-1,1\}} 
I_K^{(n)}\big(P_{\nu}(x_1^{\pm},\dots,x_n^{\pm},z;q,t);
q,t;-z,zt,\pm t^{1/2}\big).
\end{align*}
On the other hand, by Lemma~\ref{Lem_IK},
\begin{align*}
\lim_{T\to t^n} & I_K
\big(P_{\nu}\big([x+\varepsilon];q,t\big),q,t,T;1,-t,\pm t^{1/2}\big)  \\
&=\lim_{T\to t^n} 
I_K\big(P_{\nu}(q,t),q,t,t^{1/2} T;\pm 1,\pm t^{1/2}\big) \\
&=\chi(\nu \text{ even})\, A_{\nu/2}^{(2n+1)}(q,t),
\end{align*}
where the second equality follows from \eqref{Eq_45} and \eqref{Eq_Adef}.
Therefore,
\begin{multline*}
\tfrac{1}{2} \sum_{z\in\{-1,1\}} 
I_K^{(n)}\big(P_{\nu}(x_1^{\pm},\dots,x_n^{\pm},z;q,t);
q,t;-z,zt,\pm t^{1/2}\big)  \\[-1mm]
=\chi(\nu \text{ even})\, A_{\nu/2}^{(2n+1)}(q,t).
\end{multline*}
If we define $\tau_i:=\nu_i-\nu_{2n+1}$ for $1\leqslant i\leqslant 2n$ and
$k:=\nu_{2n+1}$, and then use \eqref{Eq_getridoflastpart-2},
the above can also be written as
\begin{multline*}
\tfrac{1}{2} \sum_{z\in\{-1,1\}} z^k 
I_K^{(n)}\big(P_{\tau}(x_1^{\pm},\dots,x_n^{\pm},z;q,t);
q,t;-z,zt,\pm t^{1/2}\big) \\[-1mm]
=\chi(k \text{ even}) \chi(\tau \text{ even})\, A_{\tau/2}^{(2n+1)}(q,t).
\end{multline*}
As before, by considering odd values of $k$ this yields
\begin{multline*}
I_K^{(n)}\big(P_{\tau}(x_1^{\pm},\dots,x_n^{\pm},1;q,t);
q,t;-1,t,\pm t^{1/2}\big) \\ =
I_K^{(n)}\big(P_{\tau}(x_1^{\pm},\dots,x_n^{\pm},-1;q,t);
q,t;1,-t,\pm t^{1/2}\big).
\end{multline*}
Choosing $k$ to be even completes the proof of \eqref{Eq_enough2}.
\end{proof}

\medskip

As our final evaluation of this section we claim the following.

\begin{theorem}\label{Thm_VKI}
For $\mu$ a partition,
\begin{equation}\label{Eq_IK-new}
I_K\big(P_{\mu}(q,t);q,t,T;-1,q,\pm t^{1/2}\big)
=(-1)^{\abs{\mu}} \, 
\frac{(T^2;q^2,t)_{\ceil{\mu/2}}}
{(qT^2/t;q^2,t)_{\ceil{\mu/2}}}
\cdot \frac{1}{b_{\mu}^{\textup{ea}}(q,t)}.
\end{equation}
\end{theorem}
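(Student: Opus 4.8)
\medskip

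The plan is to reduce Theorem~\ref{Thm_VKI} to the known evaluation \eqref{Eq_45}. Write $\la:=2\ceil{\mu/2}$, the even partition obtained from $\mu$ by adjoining a box to each odd row, so that $\la/2=\ceil{\mu/2}$. Then Lemma~\ref{Lem_psip} gives $1/b_{\mu}^{\textup{ea}}(q,t)=\psi'_{\skew{\la}{\mu}}(q,t)\cdot C^{-}_{\la/2}(q;q^2,t)/C^{-}_{\la/2}(t;q^2,t)$, while \eqref{Eq_45} --- applicable since $\la$ is even --- evaluates $I_K(P_{\la}(q,t);q,t,T;\pm 1,\pm t^{1/2})$ to exactly $(T^2;q^2,t)_{\la/2}/(qT^2/t;q^2,t)_{\la/2}\cdot C^{-}_{\la/2}(q;q^2,t)/C^{-}_{\la/2}(t;q^2,t)$. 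Substituting, one sees that Theorem~\ref{Thm_VKI} is equivalent to the cleaner identity
\[
I_K\big(P_{\mu}(q,t);q,t,T;-1,q,\pm t^{1/2}\big)
=(-1)^{\abs{\mu}}\,\psi'_{\skew{2\ceil{\mu/2}}{\mu}}(q,t)\;
I_K\big(P_{2\ceil{\mu/2}}(q,t);q,t,T;\pm 1,\pm t^{1/2}\big).
\]

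To prove this I would exploit that the parameter set $(-1,q,\pm t^{1/2})$ differs from $(\pm 1,\pm t^{1/2})$ in a single slot. Indeed, from \eqref{Eq_Kdensity} and the elementary identity $(u;q)_{\infty}/(qu;q)_{\infty}=1-u$,
\[
\Delta(x;q,t;-1,q,\pm t^{1/2})=\Delta(x;q,t;\pm 1,\pm t^{1/2})\,\prod_{i=1}^n(1-x_i)(1-x_i^{-1}),
\]
and the factor $\prod_{i=1}^n(1-x_i)(1-x_i^{-1})$ is precisely what the $e$-Pieri rule \eqref{Eq_e-Pieri} produces: specialising \eqref{Eq_e-Pieri} at $a=-1$ to the $2n$-letter alphabet $(x_1^{\pm},\dots,x_n^{\pm})$ reads $P_{\mu}(x_1^{\pm},\dots,x_n^{\pm};q,t)\prod_{i=1}^n(1-x_i)(1-x_i^{-1})=\sum_{\la}(-1)^{\abs{\skew{\la}{\mu}}}\psi'_{\skew{\la}{\mu}}(q,t)\,P_{\la}(x_1^{\pm},\dots,x_n^{\pm};q,t)$, summed over $\la$ with $\skew{\la}{\mu}$ a vertical strip. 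Combining the two and passing to a free spectral parameter $T$ (the lift to lifted Koornwinder polynomials being unobstructed, since $t_0t_1t_2t_3=qt$ contains a positive power of $q$ and so none of the poles \eqref{Eq_poles} can be met) should yield
\[
I_K\big(P_{\mu}(q,t);q,t,T;-1,q,\pm t^{1/2}\big)
=\sum_{\la}(-1)^{\abs{\skew{\la}{\mu}}}\psi'_{\skew{\la}{\mu}}(q,t)\,
I_K\big(P_{\la}(q,t);q,t,T;\pm 1,\pm t^{1/2}\big).
\]

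It remains to apply \eqref{Eq_45} term by term. Its factor $\chi(\la\textup{ even})$ kills every summand except the one with $\la$ even; but among the vertical-strip extensions of $\mu$ the unique even partition is $\la=2\ceil{\mu/2}$, for which $\abs{\skew{\la}{\mu}}=\op(\mu)\equiv\abs{\mu}\pmod 2$. Hence the sum collapses to exactly $(-1)^{\abs{\mu}}\psi'_{\skew{2\ceil{\mu/2}}{\mu}}(q,t)\,I_K(P_{2\ceil{\mu/2}}(q,t);q,t,T;\pm 1,\pm t^{1/2})$, which is the desired identity; the overall constant is $1$, as is already visible from the case $\mu=0$ (both sides equal $1$), or from Gustafson's integral \eqref{Eq_Gus}.

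The main obstacle is that the configuration $(\pm 1,\pm t^{1/2})$ is degenerate: on the torus the denominator factor $(x_i^{\pm};q)_{\infty}$ of $\Delta(x;q,t;\pm 1,\pm t^{1/2})$ vanishes at $x_i=\pm1$, and the lifted Koornwinder polynomials have poles there (see page~\pageref{page_poles}). The manipulation above is morally sound because $\prod_{i=1}^n(1-x_i)(1-x_i^{-1})$ cancels exactly these singularities, but making it rigorous is the real work --- one must either carry the computation through the regularised reading of $I_K(\,\cdot\,;q,t,T;\pm1,\pm t^{1/2})$ that underlies Lemma~\ref{Lem_wrong-order} and Theorem~\ref{Thm_BD} (whose proofs already perform this type of limit), or establish the Pieri-expanded identity at the level of lifted Koornwinder polynomials by combining the Cauchy identity \eqref{Eq_Cauchy714} with Proposition~\ref{Prop_propsymm} in the manner of the latter's proof. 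A less elegant alternative is to prove the identity first at $T=t^n$ via Theorem~\ref{Thm_BD} and then extend by polynomiality in $T$; there the sum over $\la$ no longer collapses, and one is instead left to verify a finite combinatorial identity for $\psi'$ and the quantities $A^{(m)}_{\nu}$.
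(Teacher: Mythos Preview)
Your overall strategy --- relate the two virtual Koornwinder integrals via the $e$-Pieri rule, kill all but the term $\la=2\ceil{\mu/2}$ by a vanishing result, and then invoke Lemma~\ref{Lem_psip} --- is exactly the paper's. The difference lies in where you try to run the argument.

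You want to work directly at the lifted level (generic $T$), write
\[
I_K\big(P_{\mu};q,t,T;-1,q,\pm t^{1/2}\big)
=\sum_{\la}(-1)^{\abs{\skew{\la}{\mu}}}\psi'_{\skew{\la}{\mu}}(q,t)\,
I_K\big(P_{\la};q,t,T;\pm 1,\pm t^{1/2}\big),
\]
and then apply \eqref{Eq_45} so that only the even $\la=2\ceil{\mu/2}$ survives. This would indeed be the cleanest route, but your justification for the lift is misdirected: the parameters you need to worry about are the \emph{target} parameters $\pm 1,\pm t^{1/2}$, whose product is $t$ with no positive power of $q$, so the pole condition \eqref{Eq_poles} is \emph{not} automatically avoided. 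The identity you want is true as an equality of rational functions in $T$ (it is equivalent to the theorem, after all), but you have not supplied an independent proof of it.

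The paper takes precisely the route you call the ``less elegant alternative'': it works at $T=t^n$, where the density manipulation and $e$-Pieri rule are unproblematic, and uses Theorem~\ref{Thm_BD} rather than \eqref{Eq_45}. The price is that Gustafson's integral produces an overall factor $\tfrac{1}{2}$, and Theorem~\ref{Thm_BD} declares the integral nonzero not only for $\la$ even but also for $\la$ odd with $\ell(\la)=2n$; hence \emph{two} vertical-strip extensions contribute, namely $\nu=2\ceil{\mu/2}$ and $\omega=2\floor{\mu/2}+1^{2n}$. The paper then checks the combinatorial identity
\[
\psi'_{\skew{\omega}{\mu}}(q,t)\,A^{(2n)}_{\omega/2}(q,t)
=\psi'_{\skew{\nu}{\mu}}(q,t)\,A^{(2n)}_{\nu/2}(q,t),
\]
which cancels the $\tfrac{1}{2}$ and yields \eqref{Eq_gelijk}; the passage to general $T$ is by rationality. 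So your proposal has the right skeleton, but the step you flag as ``the real work'' is not optional: it is the substance of the proof, and the paper carries it out via the finite-$n$ calculation you relegate to a fallback.
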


\begin{proof}
From \eqref{Eq_IKn-ip}, definition \eqref{Eq_Kdensity} of the Koorn\-winder
den\-sity and Gustaf\-son's integral \eqref{Eq_Gus}, it follows that
\begin{multline*}
I_K^{(n)}\big(f;q,t;q\tees\big) \\
=I_K^{(n)}\Big(f(x_1^{\pm},\dots,x_n^{\pm})
\prod_{i=1}^n (1-t_0 x_i^{\pm});q,t;\tees\Big)
\prod_{i=1}^n \frac{1-t_0t_1t_2t_3t^{n+i-2}}
{\prod_{r=1}^3 (1-t_0t_r t^{i-1})}.
\end{multline*}
For $f=P_{\mu}(q,t)$ we can use the $e$-Pieri rule \eqref{Eq_e-Pieri}
to expand the integrand. Hence
\begin{multline*}
I_K^{(n)}\big(P_{\mu}(q,t);q,t;q\tees\big) 
=\prod_{i=1}^n 
\frac{1-t_0t_1t_2t_3t^{n+i-2}}{\prod_{r=1}^3 (1-t_0t_r t^{i-1})} \\
\times
\sum_{\la\supseteq\mu} (-t_0)^{\abs{\skew{\la}{\mu}}} 
\psi'_{\skew{\la}{\mu}}(q,t) 
I_K^{(n)}\big(P_{\la}(q,t);q,t;\tees\big) .
\end{multline*}
For $(\tees)=(1,-1,t^{1/2},-t^{1/2})$ this yields
\begin{multline*}
I_K^{(n)}\big(P_{\mu}(q,t);q,t;q,-1,\pm t^{1/2}\big)  \\
=\tfrac{1}{2} \sum_{\la\supseteq\mu} (-1)^{\abs{\skew{\la}{\mu}}} 
\psi'_{\skew{\la}{\mu}}(q,t) 
I_K^{(n)}\big(P_{\la}(q,t);q,t;\pm 1,\pm t^{1/2}\big).
\end{multline*}
The integral in the summand evaluates in closed form 
by Theorem~\ref{Thm_BD}. 
In particular it vanishes unless (i) $\la$ is even or (ii)
$\la$ is odd and $l(\la)=2n$. 
Since $\psi'_{\skew{\la}{\mu}}(q,t)$ is zero unless $\skew{\la}{\mu}$
is a vertical strip, this fixes $\la$ as $\la=2\ceil{\mu/2}=:\nu$ in case
(i) and $\la=2\floor{\mu/2}+1^{2n}=:\omega$ in case (ii).
Noting the three congruences
\[
\abs{\nu}\equiv\abs{\omega}\equiv 0 \pmod{2},\quad
\abs{\skew{\nu}{\mu}}=\op(\mu)\equiv \abs{\mu} \pmod{2},
\]
and
\[
\abs{\skew{\omega}{\mu}}=2n-\op(\mu)\equiv \abs{\mu} \pmod{2},
\]
we obtain
\begin{multline*}
I_K^{(n)}\big(P_{\mu}(q,t);q,t;q,-1,\pm t^{1/2}\big)  \\
=\tfrac{1}{2} (-1)^{\abs{\mu}}\Big(
\psi'_{\skew{\nu}{\mu}}(q,t) A_{\nu/2}^{(2n)}(q,t)
+\psi'_{\skew{\omega}{\mu}}(q,t) A_{\omega/2}^{(2n)}(q,t)\Big).
\end{multline*}

We will now show that the two terms on the right are equal, resulting
in
\begin{equation}\label{Eq_gelijk}
I_K^{(n)}\big(P_{\mu}(q,t);q,t;q,-1,\pm t^{1/2}\big)  \\
=(-1)^{\abs{\mu}} \psi'_{\skew{\nu}{\mu}}(q,t) A_{\nu/2}^{(2n)}(q,t).
\end{equation}
First we note that since $\omega/2=\floor{\mu/2}+(\frac{1}{2})^{2n}$
and $A_{\mu}^{(n)}(q,t)$ 
depends on the relative differences of the $\mu_i$, we have
\[
A_{\omega/2}^{(2n)}(q,t)=A_{\floor{\mu/2}}^{(2n)}(q,t).
\]
Moreover, by \eqref{Eq_Adef-a} and $\nu/2=\ceil{\mu/2}$,
\begin{multline*}
A_{\floor{\mu/2}}^{(2n)}(q,t)=
A_{\nu/2}^{(2n)}(q,t)
\prod_{\substack{ 1\leqslant i<j\leqslant 2n \\
\mu_i \text{ odd},~\mu_j \text{ even}}}
\frac{1-q^{\mu_i-\mu_j}t^{j-i}}{1-q^{\mu_i-\mu_j}t^{j-i-1}}\cdot
\frac{1-q^{\mu_i-\mu_j-1}t^{j-i}}{1-q^{\mu_i-\mu_j-1}t^{j-i+1}} \\
\times\prod_{\substack{ 1\leqslant i<j\leqslant 2n \\
\mu_i \text{ even},~\mu_j \text{ odd}}}
\frac{1-q^{\mu_i-\mu_j}t^{j-i-1}}{1-q^{\mu_i-\mu_j}t^{j-i}}\cdot
\frac{1-q^{\mu_i-\mu_j-1}t^{j-i+1}}{1-q^{\mu_i-\mu_j-1}t^{j-i}}.
\end{multline*}
But from \eqref{Eq_psip} it follows that
\[
\psi'_{\skew{\nu}{\mu}}(q,t) = 
\prod_{\substack{ 1\leqslant i<j\leqslant 2n \\
\mu_i \text{ even},~\mu_j \text{ odd}}}
\frac{1-q^{\mu_i-\mu_j}t^{j-i-1}}{1-q^{\mu_i-\mu_j}t^{j-i}}\cdot 
\frac{1-q^{\mu_i-\mu_j-1}t^{j-i+1}}{1-q^{\mu_i-\mu_j-1}t^{j-i}}
\]
and
\[
\psi'_{\skew{\omega}{\mu}}(q,t) = 
\prod_{\substack{ 1\leqslant i<j\leqslant 2n \\
\mu_i \text{ odd},~\mu_j \text{ even}}}
\frac{1-q^{\mu_i-\mu_j}t^{j-i-1}}{1-q^{\mu_i-\mu_j}t^{j-i}}\cdot 
\frac{1-q^{\mu_i-\mu_j-1}t^{j-i+1}}{1-q^{\mu_i-\mu_j-1}t^{j-i}},
\]
so that
\[
\psi'_{\skew{\omega}{\mu}}(q,t)
A_{\omega/2}^{(2n)}(q,t)=
\psi'_{\skew{\nu}{\mu}}(q,t) A_{\nu/2}^{(2n)}(q,t),
\]
establishing \eqref{Eq_gelijk}.

Since $\nu=2\ceil{\mu/2}$ is even, we can use \eqref{Eq_Adef} 
to write the right side of \eqref{Eq_gelijk} as
\[
(-1)^{\abs{\mu}} \psi'_{\skew{\nu}{\mu}}(q,t) \,
\frac{(t^{2n};q^2,t)_{\nu/2}}{(qt^{2n-1};q^2,t)_{\nu/2}}\cdot
\frac{C^{-}_{\nu/2}(q;q^2,t)}{C^{-}_{\nu/2}(t;q^2,t)}.
\]
By Lemma~\ref{Lem_psip} this is also
\[
(-1)^{\abs{\mu}} 
\frac{(t^{2n};q^2,t)_{\nu/2}}{(qt^{2n-1};q^2,t)_{\nu/2}}
\cdot
\frac{1}{b_{\mu}^{\text{ea}}(q,t)}.
\]
Hence
\begin{equation}\label{Eq_IKn-new}
I_K^{(n)}\big(P_{\mu}(q,t);q,t;-1,q,\pm t^{1/2}\big)
=(-1)^{\abs{\mu}} \, \frac{(t^{2n};q^2,t)_{\ceil{\mu/2}}}
{(qt^{2n-1};q^2,t)_{\ceil{\mu/2}}}
\cdot \frac{1}{b_{\mu}^{\textup{ea}}(q,t)}.
\end{equation}
Since both sides vanish if $l(\mu)>2n$ this holds for all partitions $\mu$. 

For fixed $\mu$ 
\[
I_K\big(P_{\mu}(q,t);q,t,T;\tees\big)
\]
is a rational function in $T$. By \eqref{Eq_functional-IK-IKn}
and \eqref{Eq_IKn-new}, equation \eqref{Eq_IK-new} holds
for $T=t^n$ for all nonnegative integers $n$. Hence it holds 
for arbitrary $T$. 
\end{proof}

\section{Closed-form evaluations---the Hall--Littlewood case}
We present one final virtual Koornwinder integral with 
Hall--Littlewood polynomial argument.
It evaluates in terms of the generalised Rogers--Szeg\H{o} polynomials 
\eqref{Eq_RSBCn}, and does not appear to have a simple $t$-analogue
for Macdonald polynomials.

Let
\[
I_K^{(n)}(P_{\mu}(q,0);q,0;\tees)
:=\lim_{t\to 0} I_K^{(n)}(P_{\mu}(q,t);q,t;\tees). 
\]

\begin{theorem}\label{Thm_newvirtual}
For $\mu$ a partition of length at most $2n$,
\begin{equation}\label{Eq_vanish}
I_K^{(n)}(P_{\mu}(q,0);q,0;0,0,t_2,t_3) \\
=h_{\mu'}^{(2n)}(-t_2,-t_3;q). 
\end{equation}
\end{theorem}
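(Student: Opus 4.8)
The plan is to compute all the numbers $F_\mu:=I_K^{(n)}\big(P_\mu(q,0);q,0;0,0,t_2,t_3\big)$ at once by packaging them in a generating function. I would start from the $t=0$ case of the dual Cauchy identity \eqref{Eq_Mac-Cauchy-a}, which after $y\mapsto-y$ and a homogeneity argument reads
\[
\prod_{k=1}^{2n}\prod_j(1+u_ky_j)=\sum_{\la:\,l(\la)\le 2n}P_\la(u_1,\dots,u_{2n};q,0)\,P_{\la'}(y;q),
\]
with $P_{\la'}(\,\cdot\,;q)$ an ordinary Hall--Littlewood polynomial, then put $u=(x_1^{\pm},\dots,x_n^{\pm})$ — so the left side is the $\BC_n$-symmetric function $\prod_{i=1}^n\prod_j(1+x_iy_j)(1+x_i^{-1}y_j)$ — and apply $I_K^{(n)}(\,\cdot\,;q,0;0,0,t_2,t_3)$ termwise. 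By linear independence of the $P_{\la'}(y;q)$ this makes the theorem equivalent to the single identity
\[
I_K^{(n)}\Big(\prod_{i=1}^n\prod_j(1+x_iy_j)(1+x_i^{-1}y_j);q,0;0,0,t_2,t_3\Big)=\sum_{\la:\,l(\la)\le 2n}h^{(2n)}_{\la'}(-t_2,-t_3;q)\,P_{\la'}(y;q).
\]

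Next I would evaluate the left side. Restricting $y$ to a geometric progression $(c,cq,\dots,cq^{m-1})$ (and letting $m\to\infty$ at the end) collapses $\prod_j(1+x_iy_j)(1+x_i^{-1}y_j)$ to the finite $q$-shifted factorial $(-cx_i,-c/x_i;q)_m$, so by \eqref{Eq_IKn-ip} the left side becomes a $\BC_n$ $q$-beta integral of Gustafson/Nassrallah--Rahman type (carrying one numerator parameter). The low-order cases are already forced by Gustafson's integral \eqref{Eq_Gus}: one computes $\ip{1}{1}_{q,0;0,0,t_2,t_3}^{(n)}=1/\big((q;q)_\infty^n(t_2t_3;q)_\infty\big)$ and $\ip{1}{1}_{q,0;a,0,t_2,t_3}^{(n)}=1/\big((q;q)_\infty^n(at_2,at_3,t_2t_3;q)_\infty\big)$, so absorbing $\prod_i(ax_i^{\pm};q)_\infty^{-1}$ into the density gives $I_K^{(n)}\big(f\prod_i(ax_i^{\pm};q)_\infty^{-1};q,0;0,0,t_2,t_3\big)=(at_2,at_3;q)_\infty^{-1}\,I_K^{(n)}\big(f;q,0;a,0,t_2,t_3\big)$; taking $f=1$ and expanding $\prod_i(ax_i^{\pm};q)_\infty^{-1}=\sum_{r\ge0}a^r(q;q)_r^{-1}P_{(r)}(x_1^{\pm},\dots,x_n^{\pm};q,0)$ yields, via the $q$-binomial theorem, $F_{(r)}=\sum_{k=0}^r\qbin{r}{k}_qt_2^kt_3^{r-k}=t_3^rH_r(t_2/t_3;q)$, which is $h^{(2n)}_{(1^r)}(-t_2,-t_3;q)$ by the reciprocity of the Rogers--Szeg\H{o} polynomial. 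A second absorption (using $\ip{1}{1}_{q,0;a,a',t_2,t_3}^{(n)}$) handles two-row partitions, and the general case comes from the full $\BC_n$ Gustafson integral with $2n+2$ parameters, two being $t_2,t_3$ and the other $2n$ being the variables of the $Q_\la(\,\cdot\,;q,0)$.

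The remaining, and hardest, step is to expand the resulting closed form — of the shape $(t_2t_3;q)_\infty\prod_k(a_kt_2,a_kt_3;q)_\infty^{-1}\prod_{k<l}(a_ka_l;q)_\infty^{-1}$ times the Gustafson numerator — in the basis $\{Q_\la(a_1,\dots,a_{2n};q,0)\}$ (equivalently, to match it against $\prod_{k,l}(a_ku_l;q)_\infty^{-1}=\sum_\la P_\la(u;q,0)Q_\la(a;q,0)$) and to identify the coefficient of $Q_\la$ with $h^{(2n)}_{\la'}(-t_2,-t_3;q)$. The Rogers--Szeg\H{o} factors $H_{m_i(\la')}(t_3/t_2;q)$ and $H_{m_i(\la')}(t_2t_3;q)$ of \eqref{Eq_RSBCn} should fall out of $\prod_k(a_kt_2,a_kt_3;q)_\infty^{-1}$ by the $q$-binomial theorem once that factor is combined with the Littlewood-type expansion of $\prod_{k<l}(a_ka_l;q)_\infty^{-1}$, so the work lies in controlling exactly which Rogers--Szeg\H{o} polynomials survive — an instance of a classical finite-variable Hall--Littlewood Littlewood identity. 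An equivalent route that bypasses Gustafson's integral altogether is to start from the $t\to0$ limit of the $\BC_n$ Cauchy identity \eqref{Eq_Mim} with Koornwinder parameters $(0,0,t_2,t_3)$, using \eqref{Eq_BCHL} to recognise the $y$-side as $\BC$ Hall--Littlewood polynomials: applying $I_K^{(n)}$ kills all terms but $\la=m^n$ and leaves one to expand $(y_1\cdots y_m)^nP^{(\BC_m)}_{n^m}(y;q,t_2,t_3)$ — whose closed form is Lemma~\ref{Lem_littlelemma} — in the Hall--Littlewood basis $\{P_{\la'}(y;q)\}$, the Rogers--Szeg\H{o} products again arising from the Hall--Littlewood Pieri rule together with the definition \eqref{Eq_RSBCn} of $h^{(m)}_\la$.
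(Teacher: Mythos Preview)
Your generating-function strategy is sound in outline and your one-row computation is correct, but the proposal has a genuine gap: the ``hardest step'' you identify --- expanding the Gustafson-type product in the basis $Q_\la(a;q,0)$ and showing the coefficients are $h^{(2n)}_{\la'}(-t_2,-t_3;q)$ --- is not carried out. Saying the Rogers--Szeg\H{o} factors ``should fall out'' of the $q$-binomial theorem combined with a Littlewood-type expansion is precisely what has to be proved; it is the content of the theorem, not an input to it. (Also, the integral you would need is not Gustafson's \eqref{Eq_Gus}: with $2n$ extra numerator parameters you are outside the four-parameter Koornwinder framework.) Your alternative route via \eqref{Eq_Mim} runs into the same circularity: applying $I_K^{(n)}$ does leave you with $(y_1\cdots y_m)^n P^{(\BC_m)}_{n^m}(y;q,t_2,t_3)$, but expanding this in Hall--Littlewood polynomials is exactly Theorem~\ref{Thm_bounded6}, which in the paper is \emph{derived from} Theorem~\ref{Thm_newvirtual} via Proposition~\ref{Prop_fIK}. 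You would need an independent proof of that expansion, and Lemma~\ref{Lem_littlelemma} alone does not provide it.

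The paper's argument is more elementary and avoids any multivariate product expansion. It builds up the parameters one at a time. First, the case $t_2=t_3=0$ follows from the $t\to 0$ limit of \eqref{Eq_Q1}, giving $I_K^{(n)}(P_\mu(q,0);q,0;0,0,0,0)=\chi(\mu'\text{ even})$. Second, to insert $t_2$ one absorbs $\prod_i(t_2x_i^{\pm};q)_\infty^{-1}$ into the density (the norm factor is unchanged by \eqref{Eq_Gus}), expands via the $g$-Pieri rule \eqref{Eq_g-Pieri} at $t=0$, and observes that the two constraints ``$\nu'$ even'' and ``$\skew{\nu}{\mu}$ a horizontal strip'' force $\nu$ \emph{uniquely}, namely $\nu_{2i-1}=\nu_{2i}=\mu_{2i-1}$; the Pieri coefficient \eqref{phi} collapses to $1$, so $I_K^{(n)}(P_\mu(q,0);q,0;0,0,t_2,0)=t_2^{\op(\mu')}$. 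Third, to insert $t_3$ one repeats the absorption and Pieri expansion, now summing over horizontal strips $\skew{\nu}{\mu}$ weighted by $t_2^{\op(\nu')}t_3^{\abs{\skew{\nu}{\mu}}}$; writing $\nu_i=\mu_i+k_i$ and using the parity relation \eqref{p822}, the sum factorises over $i$ into independent $q$-binomial sums, and definition \eqref{RS} identifies the result as $h^{(2n)}_{\mu'}(-t_2,-t_3;q)$. The key point you are missing is the uniqueness of $\nu$ in the second step: that is what makes the Rogers--Szeg\H{o} structure emerge in the third step without any further Littlewood identity.
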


\begin{proof}
Let $x=(x_1,\dots,x_n)$.
By \eqref{Eq_functional-IKn}, equation \eqref{Eq_vanish} may also be stated 
as the rational function identity
\[
\big[K_0(x;q,0;0,0,t_2,t_3)]P_{\mu}(x^{\pm},q,0)=h_{\mu'}^{(2n)}(-t_2,-t_3;q).
\]
Without loss of generality we may thus assume that 
$\abs{t_2},\abs{t_3}<1$ in the following.

Noting that 
\[
h_{\la}^{(2n)}(0,0;q)=\chi(\la\text{ even}),
\]
the $t_2=t_3=0$ case of \eqref{Eq_vanish}, viz.\
\begin{equation}\label{nul}
I^{(n)}_K(P_{\mu}(q,0);q,0;0,0,0,0)=\chi(\mu' \text{ even}),
\end{equation}
follows from \eqref{Eq_Q1} (with $T=t^n)$ in the $t\to 0$ limit.

To include the parameter $t_2$ we use that
\[
\ip{1}{1}_{q,0;0,0,t_2,0}^{(n)}=\ip{1}{1}_{q,0;0,0,0,0}^{(n)}
\]
(see \eqref{Eq_Gus}) and
\[
\Delta(x;q,0;0,0,t_2,0)=
\Delta(x;q,0;0,0,0,0)\prod_{i=1}^n \frac{1}{(t_2 x_i^{\pm};q)_{\infty}}.
\]
From \eqref{Eq_IKn-ip} it thus follows that
\begin{align}\label{Eq_faqdef}
f_{\mu}(t_2;q)&:=I^{(n)}_K\big(P_{\mu}(q,0);q,0;0,0,t_2,0\big) \\[1mm]
&\hphantom{:}=
I^{(n)}_K\bigg(P_{\mu}(x_1^{\pm},\dots,x_n^{\pm};q,0)
\prod_{i=1}^{2n} \frac{1}{(t_2x_i^{\pm};q)_{\infty}};
q,0;0,0,0,0\bigg).  \notag
\end{align}
By the $g$-Pieri rule \eqref{Eq_g-Pieri} for $t=0$, this yields
\begin{align*}
f_{\mu}(t_2;q)&=
\sum_{\nu\succ\mu} t_2^{\abs{\skew{\nu}{\mu}}}\varphi_{\nu/\mu}(q,0) 
I^{(n)}_K\big(P_{\nu}(q,0);q,0;0,0,0,0\big) \\
&=\sum_{\substack{\nu\succ\mu \\ \nu' \text{ even} \\[1pt] l(\nu)\leqslant 2n}}
t_2^{\abs{\skew{\nu}{\mu}}}\varphi_{\nu/\mu}(q,0),
\end{align*}
where the second equality follows from \eqref{nul}.
Since $\varphi_{\nu/\mu}(q,0)$ is zero unless $\skew{\nu}{\mu}$ is a 
horizontal strip and since $\nu'$ must be even, this fixes $\nu$ as 
$\nu_{2i-1}=\nu_{2i}=\mu_{2i-1}$ for $1\leqslant i\leqslant n$.
This is equivalent to $\nu'_i=\mu'_i+\chi(\mu'_i \text{ odd})$,
so that $\abs{\nu/\mu}$ is given by the number of
odd parts of $\mu'$, i.e., by $\op(\mu')$.
Hence
\[
f_{\mu}(t_2;q)=t_2^{\op(\mu')} \varphi_{\nu/\mu}(q,0),
\]
with $\nu$ fixed as above.
From the expression for $\varphi_{\la/\mu}(q,t)$ as given in
\eqref{Eq_varphi} it follows that\footnote{Alternatively, 
this follows from the Pieri 
coefficient $\varphi'_{\la/\mu}(t)$ for Hall--Littlewood polynomials, 
thanks to $\varphi_{\nu/\mu}(q,0)=\varphi'_{\nu'/\mu'}(0,q)=
\varphi'_{\nu'/\mu'}(q)$.}
\begin{align}\label{phi}
\varphi_{\nu/\mu}(q,0)=
\prod_{i\geqslant 1} \frac{(q;q)_{\mu_i-\mu_{i+1}}}
{(q;q)_{\nu_i-\mu_i}(q;q)_{\mu_i-\nu_{i+1}}} 
=\frac{1}{(q;q)_{\nu_1-\mu_1}}
\prod_{i\geqslant 1} \qbin{\mu_i-\mu_{i+1}}{\mu_i-\nu_{i+1}}_q.
\end{align}
When $\nu_{2i-1}=\nu_{2i}=\mu_{2i-1}$ this 
simplifies to $\varphi_{\nu/\mu}(q,0)=1$, 
since either $\nu_{i+1}=\mu_i$ or $\nu_{i+1}=\mu_{i+1}$.
Hence
\begin{equation}\label{faq}
f_{\mu}(t_2;q)=t_2^{\op(\mu')}.
\end{equation}

To also include the parameter $t_3$ we proceed in almost identical fashion.
By
\[
\ip{1}{1}_{q,0;0,0,t_2,t_3}^{(n)}=\frac{1}{(t_2t_3;q)_{\infty}}\, 
\ip{1}{1}_{q,0;0,0,t_2,0}^{(n)}
\]
and
\[
\Delta(x;0,0,t_2,t_3;q,0)=
\Delta(x;0,0,t_2,0;q,0)\prod_{i=1}^n \frac{1}{(t_2x_i^{\pm};q)_{\infty}},
\]
and following the previous steps, we obtain
\begin{align*}
f_{\mu}(t_2,t_3;q)&:=I^{(n)}_K\big(P_{\mu}(q,0);q,0;0,0,t_2,t_3\big) \\[1mm]
&\hphantom{:}=(t_2t_3;q)_{\infty}
\sum_{\nu\succ\mu} t_2^{\abs{\skew{\nu}{\mu}}} f_{\nu}(t_2;q) 
\varphi_{\nu/\mu}(q,0) \\
&\hphantom{:}=(t_2t_3;q)_{\infty}
\sum_{\substack{\nu\succ\mu \\[1pt] l(\nu)\leqslant 2n}}
t_2^{\op(\nu')} t_3^{\abs{\skew{\nu}{\mu}}}\varphi_{\nu/\mu}(q,0).
\end{align*}
Here the second line uses the definition of $f_{\nu}(t_2;q)$ as given in
\eqref{Eq_faqdef}, and the third line uses the evaluation \eqref{faq}. 
To complete the proof we write
$\nu_i=\mu_i+k_i$ for $1\leqslant i\leqslant 2n$, and note that
(see \cite[page 822]{W06})
\begin{equation}\label{p822}
\op(\nu')=\op(\mu')+\sum_{i=1}^{2n} (-1)^{i+1} k_i.
\end{equation}
Once again using \eqref{phi}, we get 
\[
f_{\mu}(t_2,t_3;q)= (t_2t_3;q)_{\infty} \, t_2^{\op(\mu')}
\sum_{k_1,\dots,k_{2n}\geqslant 0}
\frac{1}{(q;q)_{k_1}}
\prod_{i\geqslant 1} t_2^{(-1)^{i+1} k_i} t_3^{k_i} 
\qbin{\mu_i-\mu_{i+1}}{k_{i+1}}_q.
\]
Summing over $k_1$ by \cite[Equation (II.1)]{GR04}
\[
\sum_{k\geqslant 0} \frac{z^k}{(q;q)_k}=\frac{1}{(z;q)_{\infty}}
\qquad \text{for $\abs{z}<1$},
\]
and recalling definition \eqref{RS}, we finally obtain
\begin{align*}
f_{\mu}(t_2,t_3;q)&=t_2^{\op(\mu')}
\prod_{\substack{i=1 \\[0.5pt] i \text{ odd}}}^{2n-1} 
H_{m_i(\mu')}(t_3/t_2;q)
\prod_{\substack{i=1 \\[0.5pt] i \text{ even}}}^{2n-1} 
H_{m_i(\mu')}(t_2t_3;q) \\
&=h_{\mu'}^{(2n)}(-t_2,-t_3;q). \qedhere
\end{align*}
\end{proof}

\chapter{Bounded Littlewood identities}\label{Ch_Bounded}
In this section, which is at the heart of the paper, we use
Macdonald--Koornwin\-der theory and virtual Koornwinder
integrals in particular to prove bounded Littlewood identities 
for Macdonald and Hall--Littlewood polynomials.

\section{Statement of results}
\subsection{$q,t$-Identities}\label{Subsec_qt}
There are five known Littlewood identities for Macdonald polynomials.
By introducing an additional parameter $a$, the first four of these may 
easily be combined to form the pair of identities \cite[Proposition 1.3]{W06}
\begin{equation}\label{Eq_Pb}
\sum_{\la} a^{\op(\la)} b_{\la}^{\textup{oa}}(q,t) P_{\la}(x;q,t)=
\prod_{i=1}^n \frac{(1+ax_i)(qtx_i^2;q^2)_{\infty}}
{(x_i^2;q^2)_{\infty}}
\prod_{1\leqslant i<j\leqslant n}\frac{(tx_ix_j;q)_{\infty}}{(x_ix_j;q)_{\infty}}
\end{equation}
and
\begin{equation}\label{Eq_Pc}
\sum_{\la} a^{\op(\la')}
b_{\la}^{\textup{el}}(q,t) P_{\la}(x;q,t)=
\prod_{i=1}^n\frac{(a t x_i;q)_{\infty}}{(a x_i;q)_{\infty}}
\prod_{1\leqslant i<j\leqslant n}\frac{(tx_ix_j;q)_{\infty}}{(x_ix_j;q)_{\infty}}.
\end{equation}
Here
\[
b_{\la}^{\textup{oa}}(q,t):=
\prod_{\substack{s\in\la \\[1pt] a(s) \text{ odd}}} 
b_{\la}(s;q,t)
\quad\text{and}\quad
b_{\la}^{\textup{el}}(q,t):=
\prod_{\substack{s\in\la \\[1pt] l(s) \text{ even}}} 
b_{\la}(s;q,t),
\]
to be compared with \eqref{Eq_b-def} and \eqref{Eq_evenarm}.
The cases $a=0$ and $a=1$ of \eqref{Eq_Pb} and \eqref{Eq_Pc} correspond 
to Macdonald's original four results, see \cite[page 349]{Macdonald95}.
The fifth identity was first conjectured by Kawanaka \cite{Kawanaka99} 
and subsequently proven in \cite{LSW09} (see also \cite{Rains12}):
\begin{equation}\label{Eq_Kawanaka}
\sum_{\la} b_{\la}^{-}(q,t) P_{\la}(x;q^2,t^2)
=\prod_{i=1}^n \frac{(-tx_i;q)_{\infty}}{(x_i;q)_{\infty}}
\prod_{1\leqslant i<j\leqslant n} \frac{(t^2x_ix_j;q^2)_{\infty}}
{(x_ix_j;q^2)_{\infty}},
\end{equation}
where
\[ 
b_{\la}^{-}(q,t):=
\prod_{s\in\la} \frac{1+q^{a(s)}t^{l(s)+1}}{1-q^{a(s)+1}t^{l(s)}}.
\]
In the following we generalise all of \eqref{Eq_Pb}--\eqref{Eq_Kawanaka}.

\medskip

For $m$ a nonnegative integer and $\la$ a partition, let
\[
b_{\la;m}^{\textup{oa}}(q,t):=
b_{\la}^{\textup{oa}}(q,t)
\prod_{\substack{s\in\la \\[1pt] a'(s) \textup{ odd}}}
\frac{1-q^{2m-a'(s)+1}t^{l'(s)}}{1-q^{2m-a'(s)}t^{l'(s)+1}}.
\]
Note that $b_{\la;m}^{\textup{oa}}(q,t)=0$ if $\la_1>2m+1$ and that
for $\la$ an even partition
\begin{equation}\label{Eq_iflaiseven}
b_{\la;m}^{\textup{oa}}(q,t)=
b_{\la}^{\textup{oa}}(q,t)
\prod_{\substack{s\in\la \\[1pt] a'(s) \textup{ even}}}
\frac{1-q^{2m-a'(s)}t^{l'(s)}}{1-q^{2m-a'(s)-1}t^{l'(s)+1}}.
\end{equation}
Our first bounded Littlewood identity generalises \eqref{Eq_Pb}. 

\begin{theorem}\label{THM_BOUNDED1}
For $x=(x_1,\dots,x_n)$ and $m$ a nonnegative integer,
\[
\sum_{\la} a^{\op(\la)} b_{\la;m}^{\textup{oa}}(q,t) P_{\la}(x;q,t)
=\bigg(\prod_{i=1}^n x_i^m (1+ax_i)\bigg)
P^{(\mathrm{C}_n,\mathrm{B}_n)}_{m^n}(x;q,t,qt).
\]
\end{theorem}

Using \eqref{Eq_PCB-K} to identify
$P^{(\mathrm{C}_n,\mathrm{B}_n)}_{m^n}(x;q,t,qt)$
as a Koornwinder polynomial, and then using \eqref{Eq_largem}
for $\la=0$, it follows that the large-$m$ limit of the right-hand side
simplifies to the right-hand side of \eqref{Eq_Pb}.

When $a=0$ the summand on the left vanishes unless $\la$ is even,
so that\footnote{By \eqref{Eq_add1}, 
the same result may be obtained in the $a\to\infty$ limit.}
\begin{equation}\label{Eq_bounded1a0}
\sum_{\la \text{ even}}
b_{\la;m}^{\textup{oa}}(q,t) P_{\la}(x;q,t)
=(x_1\cdots x_n)^m P^{(\mathrm{C}_n,\mathrm{B}_n)}_{m^n}(x;q,t,qt).
\end{equation}
This is a $q,t$-analogue of the D\'esarm\'enien--Proctor--Stembridge 
determinant formula \cite{Desarmenien86,Proctor90,Stembridge90}
\begin{equation}\label{Eq_DPS}
\sum_{\substack{\la \text{ even}\\[1pt] \la_1\leqslant 2m}} s_{\la}(x)
=\frac{\det_{1\leqslant i,j\leqslant n} \big(x_i^{j-1}-x_i^{2m+2n-j+1}\big)}
{\prod_{i=1}^n (1-x_i^2)\prod_{1\leqslant i<j\leqslant n}(x_i-x_j)(x_ix_j-1)},
\end{equation}
which expresses the symplectic Schur function 
$\symp_{2n,m^n}(x)$ (times $(x_1\cdots x_n)^m$)
in terms of Schur functions. Equivalently, \eqref{Eq_DPS} is a 
branching formula for the character of the symplectic group
$\mathrm{Sp}(n,\Complex)$ indexed by $m\fwc_n$ in terms of 
characters of the general linear group $\mathrm{GL}(n,\Complex)$.
As will be discussed in Section~\ref{Sec_Plane-partitions},
like Macdonald's formula \eqref{Eq_MacB}, the determinant 
\eqref{Eq_DPS} is important in the theory of plane partitions.

Another notable special case follows when $q=0$.
For $s\in\la\subseteq (2m)^n$ such that $a'(s)$ is even we must have
$2m-a'(s)\geqslant 2$, which implies that
$b_{\la;m}^{\textup{oa}}(0,t)=1$.
By \eqref{Eq_CHL} 
the $q=0$ specialisation of \eqref{Eq_bounded1a0} is thus 
\begin{equation}\label{Eq_Stem}
\sum_{\substack{\la \text{ even} \\[1pt] \la_1\leqslant 2m}} P_{\la}(x;t)
=(x_1\cdots x_n)^m P^{(\mathrm{C}_n)}_{m^n}(x;t,0).
\end{equation}
For positive $m$ the right-hand side can be expressed in
terms of the function $\Phi(x;t,0,0)$ by
Lemma~\ref{Lem_littlelemma}. 
The resulting $t$-analogue of the 
D\'esarm\'enien--Proctor--Stembridge determinant
is due to Stembridge \cite[Theorem 1.2]{Stembridge90} who
used it to give new proofs of the Rogers--Ramanujan identities.
We will see in Section~\ref{Sec_RR} that Stembridge's method can be
extended so that identities such as \eqref{Eq_Stem}
yield Rogers--Ramanujan identities for certain
affine Lie algebras $X_N^{(r)}$ for arbitrary $N$.

\medskip

For $m$ a nonnegative integer and $\la$ a partition, let
\[
b_{\la;m}^{\textup{ol}}(q,t):=
\prod_{\substack{s\in\la \\[1pt] l'(s) \textup{ odd}}} 
\frac{1-q^{m-a'(s)}t^{l'(s)-1}}{1-q^{m-a'(s)-1}t^{l'(s)}}
\prod_{\substack{s\in\la \\[1pt] l(s) \textup{ odd}}}
\frac{1-q^{a(s)}t^{l(s)}}{1-q^{a(s)+1}t^{l(s)-1}}.
\]
Note that $b_{\la;m}^{\textup{ol}}(q,t)=0$ if $\la_2>m$,
which implies vanishing for $\la_1>m$ when $\la'$ is even.
Our next theorem contains the first of two bounded analogues 
of the $a=0$ case of \eqref{Eq_Pc}.

\begin{theorem}\label{Thm_bounded2}
For $x=(x_1,\dots,x_n)$ and $m$ a nonnegative integer,
\begin{equation}\label{Eq_bounded2}
\sum b_{\la;m}^{\textup{ol}}(q,t) P_{\la}(x;q,t) 
=(x_1\cdots x_n)^{\frac{m}{2}}
P^{(\mathrm{B}_n,\mathrm{B}_n)}_{\halfm{n}}(x;q,t,1), 
\end{equation}
where the sum is over partitions $\la\subseteq m^n$ such that
$m_i(\la)$ is even for all $1\leqslant i\leqslant m-1$.
\end{theorem}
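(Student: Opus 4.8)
The plan is to derive \eqref{Eq_bounded2} from the virtual Koornwinder integral evaluations of Theorems~\ref{Thm_BD} and \ref{Thm_BD-2} by combining the dual Cauchy identities \eqref{Eq_Mac-Cauchy-a} and \eqref{Eq_Mim}. The first step is to rewrite the right-hand side in Koornwinder terms. By \eqref{Eq_PBB-K}, $P^{(\B_n,\B_n)}_{\halfm{n}}(x;q,t,1)=K_{\halfm{n}}(x;q,t;1,q^{1/2})$, and Lemma~\ref{Lem_KBn} turns this into a genuine Koornwinder polynomial: it equals $K_{\ell^n}(x;q,t;-1,-q^{1/2},1,q^{1/2})$ when $m=2\ell$, and $K_{\ell^n}(x;q,t;-q,-q^{1/2},1,q^{1/2})\prod_{i=1}^n(x_i^{1/2}+x_i^{-1/2})$ when $m=2\ell+1$. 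Multiplying by $(x_1\cdots x_n)^{m/2}$, the right-hand side of \eqref{Eq_bounded2} becomes $(x_1\cdots x_n)^{\ell}K_{\ell^n}(x;q,t;\pm1,\pm q^{1/2})$ in the even case, and $(x_1\cdots x_n)^{\ell}K_{\ell^n}(x;q,t;1,-q,\pm q^{1/2})\prod_{i=1}^n(1+x_i)$ in the odd case.

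Next I introduce auxiliary variables. In \eqref{Eq_Mac-Cauchy-a}, with $x=(x_1,\dots,x_n)$, take the $m$ variables on the $y$-side to be $z_1^{\pm},\dots,z_{\ell}^{\pm}$ (when $m$ is even), respectively $z_1^{\pm},\dots,z_{\ell}^{\pm},-1$ (when $m$ is odd). Since $(1-x_iz_j)(1-x_iz_j^{-1})=x_i(x_i+x_i^{-1}-z_j-z_j^{-1})$, the product on the right becomes $(x_1\cdots x_n)^{\ell}\prod_{i,j}(x_i+x_i^{-1}-z_j-z_j^{-1})$, with an extra factor $\prod_i(1+x_i)$ in the odd case. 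Applying the virtual Koornwinder integral $I_K^{(\ell)}$ in the $z$-variables, with parameters $\pm1,\pm q^{1/2}$ (even) or $1,-q,\pm q^{1/2}$ (odd), and using \eqref{Eq_Mim} together with $[K_0]K_{\nu'}=\delta_{\nu',0}$, collapses the right-hand side precisely to the expression obtained in the first step. The same functional applied to the left-hand side of \eqref{Eq_Mac-Cauchy-a} produces $\sum_{\la\subseteq m^n}(-1)^{\abs{\la}}P_\la(x;q,t)\,I_K^{(\ell)}\big(P_{\la'}(z_1^{\pm},\dots,z_{\ell}^{\pm};t,q);t,q;\,\cdot\,\big)$ (with the extra entry $-1$ in the alphabet, and parameters as above, in the odd case).

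The inner integral is exactly the one evaluated in Theorem~\ref{Thm_BD} (even $m$) or Theorem~\ref{Thm_BD-2} (odd $m$), read with $q$ and $t$ interchanged; this is legitimate because those theorems rest only on Lemma~\ref{Lem_wrong-order} and the closed form \eqref{Eq_45}, both of which have the evident $q\leftrightarrow t$ companion. It vanishes unless $\widetilde{\la'}:=(\la'_1-\la'_m,\dots,\la'_{m-1}-\la'_m,0)$ is even, which is precisely the requirement that $m_i(\la)$ be even for $1\le i\le m-1$; otherwise it equals $A^{(m)}_{\la'/2}(t,q)$ up to an explicit sign (a factor $(-1)^{\la'_m}$ in the odd case). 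Comparing the two sides, Theorem~\ref{Thm_bounded2} is thereby reduced to the finite verification that $(-1)^{\abs{\la}}$ times that sign times $A^{(m)}_{\la'/2}(t,q)$ equals $b_{\la;m}^{\textup{ol}}(q,t)$, the constraint $l(\la)\le n$ coming for free from $P_\la(x_1,\dots,x_n;q,t)$ and $\la_1\le m$ from the summation range in \eqref{Eq_Mac-Cauchy-a}.

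The main obstacle is this last verification: one must convert the $q$-shifted-factorial form of $A^{(m)}_{\la'/2}(t,q)$, given by \eqref{Eq_Adef-a} and \eqref{Eq_Adef}, into the product over boxes defining $b_{\la;m}^{\textup{ol}}(q,t)$, using the conjugation and doubling identities \eqref{Eq_qtswap} and \eqref{Eq_double_C0} and keeping careful track of signs; intertwined with this is the need to make the even-$m$ and odd-$m$ computations agree, which is where the half-partition bookkeeping attached to Lemma~\ref{Lem_KBn} does its work. The pole subtleties warned about on page~\pageref{page_poles} require no additional care here, as they are already built into Theorems~\ref{Thm_BD} and \ref{Thm_BD-2}.
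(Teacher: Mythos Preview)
Your proposal is correct and follows essentially the same route as the paper: the paper packages your Cauchy/Mimachi manipulation into Propositions~\ref{Prop_fIK} and~\ref{Prop_fIK-2} (with~\eqref{Eq_generic} for the half-integer case), then applies Theorems~\ref{Thm_BD} and~\ref{Thm_BD-2} with $(q,t)$ swapped exactly as you describe, and finally carries out in full the conversion $A^{(m)}_{\la'/2}(t,q)=b_{\la;m}^{\textup{ol}}(q,t)$ that you leave as a verification. The only cosmetic difference is that you work directly with the finite-rank integral $I_K^{(\ell)}$ from the outset, which sidesteps the pole issue the paper pauses to discuss around~\eqref{Eq_careful}; the sign bookkeeping you flag indeed collapses (since $\abs{\la}\equiv\la'_m\pmod 2$ under the evenness constraint when $m$ is odd, and $\abs{\la}$ is even when $m$ is even), and the final product-over-boxes identity is handled in the paper via the auxiliary partition $\nu=(\la_2,\la_4,\dots)$ and~\eqref{Eq_qtswap}.
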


To see that this generalises \eqref{Eq_Pc} for $a=0$, we first note
that in the large-$m$ limit the right-hand side simplifies to the
right-hand side of \eqref{Eq_Pc} for $a=0$ by \eqref{Eq_PBB-K} 
and the $\la=0$ case of Lemma~\ref{Lem_largem-2}.
Next, to simplify the left-hand side 
we note that there are two types of partitions contributing to the sum.
\begin{description}
\item[Type 1] Partitions $\la$ such that $m_i(\la)$ is even for all 
$1\leqslant i\leqslant m$, i.e., $\la'$ is even.
\item[Type 2] Partitions $\la$ such that $m_i(\la)$ 
is odd for $i=m$ and even for $1\leqslant i<m$, 
i.e., $\la'$ is odd and $\la_1=m$.
\end{description}
Macdonald polynomials indexed by partitions of Type 2 have degree at 
least $m$, so that their contribution vanishes in the large-$m$ limit.
Hence we are left with a sum over partitions of Type 1, for which
\[
\prod_{\substack{s\in\la \\[1pt] l(s) \textup{ odd}}}
\frac{1-q^{a(s)}t^{l(s)}}{1-q^{a(s)+1}t^{l(s)-1}}=
\prod_{\substack{s\in\la \\[1pt] l(s) \textup{ even}}}
\frac{1-q^{a(s)}t^{l(s)+1}}{1-q^{a(s)+1}t^{l(s)}}
=b_{\la}^{\textup{el}}(q,t),
\]
resulting in the $a=0$ case of \eqref{Eq_Pc}.
In fact, \eqref{Eq_PBBDD} can be used to dissect \eqref{Eq_bounded2},
resulting in two bounded Littlewood identities for $\mathrm{D}_n$,
the first of which is our second bounded analogue of \eqref{Eq_Pc}
for $a=0$.

\begin{theorem}\label{Thm_bounded3}
For $x=(x_1,\dots,x_n)$,
$\bar{x}=(x_1,\dots,x_{n-1},x_n^{-1})$ and
$m$ a nonnegative integer,
\begin{subequations}
\begin{align}\label{Eq_bounded3-1}
\sum_{\la' \textup{ even}} b_{\la;m}^{\textup{ol}}(q,t) P_{\la}(x;q,t) 
&=(x_1\cdots x_n)^{\frac{m}{2}}
P^{(\mathrm{D}_n,\mathrm{D}_n)}_{\halfm{n}}(x;q,t) \\[1mm]
\sum_{\substack{\la' \textup{ odd }  \\[1pt] \la_1=m}} 
b_{\la;m}^{\textup{ol}}(q,t) P_{\la}(x;q,t) 
&=(x_1\cdots x_n)^{\frac{m}{2}}
P^{(\mathrm{D}_n,\mathrm{D}_n)}_{\halfm{n}}(\bar{x};q,t).
\end{align}
\end{subequations}
\end{theorem}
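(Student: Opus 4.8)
The plan is to deduce Theorem~\ref{Thm_bounded3} from Theorem~\ref{Thm_bounded2} by dissecting the sum in \eqref{Eq_bounded2} along the Type~1/Type~2 dichotomy described immediately after that theorem and matching the two halves with the $\D_n$ refinement \eqref{Eq_DDDDBB}--\eqref{Eq_DDDDBB2} of $P^{(\B_n,\B_n)}_{\halfm{n}}$. We may assume $m\geq 1$, the case $m=0$ being the trivial $P^{(\D_n,\D_n)}_0=1$ (here $l(\halfm{n})<n$, so \eqref{Eq_DDDDBB} does not split). Set $G_1(x):=\sum_{\la'\textup{ even}}b_{\la;m}^{\textup{ol}}(q,t)P_\la(x;q,t)$ and $G_2(x):=\sum_{\la'\textup{ odd},\,\la_1=m}b_{\la;m}^{\textup{ol}}(q,t)P_\la(x;q,t)$; the classification of the partitions occurring in \eqref{Eq_bounded2} says precisely that $G_1(x)+G_2(x)$ is the left-hand side of \eqref{Eq_bounded2}, so Theorem~\ref{Thm_bounded2} gives $G_1(x)+G_2(x)=(x_1\cdots x_n)^{m/2}P^{(\B_n,\B_n)}_{\halfm{n}}(x;q,t,1)$. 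Since $\halfm{n}$ has length $n$, applying \eqref{Eq_DDDDBB} and then \eqref{Eq_PDDbar} rewrites the right-hand side as $H_1(x)+H_2(x)$, where $H_1(x):=(x_1\cdots x_n)^{m/2}P^{(\D_n,\D_n)}_{\halfm{n}}(x;q,t)$ and $H_2(x):=(x_1\cdots x_n)^{m/2}P^{(\D_n,\D_n)}_{\halfm{n}}(\bar x;q,t)$. Thus \eqref{Eq_bounded3-1} and the second identity of Theorem~\ref{Thm_bounded3} are exactly the assertions $G_1=H_1$ and $G_2=H_2$, of which we already have the sum.

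To separate $G_1$ from $G_2$ I would produce the corresponding difference identity. On the right, \eqref{Eq_DDDDBB2} together with \eqref{Eq_PDDbar} gives
\[
H_1(x)-H_2(x)=\prod_{i=1}^n\bigl(x_i^{(m-1)/2}-x_i^{(m+1)/2}\bigr)\,
P^{(\B_n,\C_n)}_{((m-1)/2)^n}(x;q,t,q^{1/2}).
\]
On the left, for every $\la$ occurring in \eqref{Eq_bounded2} the multiplicity $m_m(\la)=\la'_m$ is even when $\la'$ is even and odd when $\la'$ is odd, so $G_1(x)-G_2(x)=\sum(-1)^{m_m(\la)}b_{\la;m}^{\textup{ol}}(q,t)P_\la(x;q,t)$, the sum being over the same index set as in \eqref{Eq_bounded2}. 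Hence it suffices to establish the companion (signed) bounded Littlewood identity
\[
\sum(-1)^{m_m(\la)}b_{\la;m}^{\textup{ol}}(q,t)P_\la(x;q,t)
=\prod_{i=1}^n\bigl(x_i^{(m-1)/2}-x_i^{(m+1)/2}\bigr)\,
P^{(\B_n,\C_n)}_{((m-1)/2)^n}(x;q,t,q^{1/2}),
\]
the sum being over $\la\subseteq m^n$ with $m_i(\la)$ even for $1\leq i\leq m-1$; adding this to, and subtracting it from, $G_1+G_2=H_1+H_2$ then yields Theorem~\ref{Thm_bounded3}. (As a consistency check one has $H_2(x)=x_n^{m}H_1(\bar x)$ by \eqref{Eq_PDDbar}, and $G_2(x)=x_n^{m}G_1(\bar x)$ then follows a posteriori.)

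The companion identity itself I would prove by the virtual Koornwinder integral method of Section~\ref{Sec_Bounded}, that is, by the same argument that establishes Theorem~\ref{Thm_bounded2} run with one Koornwinder parameter replaced by its negative: one uses \eqref{Eq_PBC-K} (and \eqref{Eq_virtualK}) to identify $P^{(\B_n,\C_n)}_{((m-1)/2)^n}(x;q,t,q^{1/2})$ with a (virtual) Koornwinder polynomial, absorbs the Weyl-denominator-type factor $\prod_i\bigl(x_i^{1/2}-x_i^{-1/2}\bigr)$ exactly as in Lemma~\ref{Lem_KBn}, and expands the integrand by the Pieri rule; the parameter sign is what I expect to turn the weight $b_{\la;m}^{\textup{ol}}(q,t)$ into $(-1)^{m_m(\la)}b_{\la;m}^{\textup{ol}}(q,t)$. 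The main obstacle is precisely this computation: pinning down the correct virtual Koornwinder integral, evaluating it in closed form, and tracking both the combinatorial weight $b_{\la;m}^{\textup{ol}}(q,t)$ and the sign $(-1)^{m_m(\la)}$ through the Pieri expansion. By contrast, the purely formal part above—splitting \eqref{Eq_bounded2} and invoking \eqref{Eq_DDDDBB}--\eqref{Eq_DDDDBB2} and \eqref{Eq_PDDbar}—is routine.
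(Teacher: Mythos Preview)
Your overall plan---split the sum in Theorem~\ref{Thm_bounded2} into the Type~1 and Type~2 pieces $G_1,G_2$, identify the right-hand side with $H_1+H_2$ via \eqref{Eq_DDDDBB} and \eqref{Eq_PDDbar}, and then separate the two halves by establishing the companion difference identity coming from \eqref{Eq_DDDDBB2}---is exactly the route the paper takes. Two points of comparison are worth noting.

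First, you overlook a simplification. When $m$ is odd, the paper observes that the dissection is immediate: Type~1 partitions have even size while Type~2 partitions have size congruent to $m$, and likewise $H_1$ and $H_2$ are polynomials of even and odd degree respectively. So no companion identity is needed for odd $m$.

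Second, for even $m$ the paper carries out precisely the computation you flag as the obstacle, but not quite along the lines you sketch. There is no Pieri expansion and no ``one parameter negated''. Instead the paper replaces $m$ by $m-1$ in the Mimachi--Cauchy identity \eqref{Eq_Mim2}, multiplies both sides by $\prod_i(1-x_i^2)=\sigma_1[-x(1+\varepsilon)]$, absorbs this factor plethystically into the Cauchy kernel, and then specialises $\{\tees\}=\{\pm q,\pm q^{1/2}\}$. After applying Lemma~\ref{Lem_KBn} and \eqref{Eq_PBC-K}, extracting coefficients gives
\[
\big[P_{\la}(x;q,t)\big]\,(x_1\cdots x_n)^{m}\,
P^{(\B_n,\C_n)}_{(m-\frac12)^n}(x;q,t,q^{1/2})
\prod_{i=1}^n\bigl(x_i^{-1/2}-x_i^{1/2}\bigr)
\]
as a virtual Koornwinder integral $I_K^{(m-1)}$ with the shifted parameters $\{\pm q,\pm q^{1/2}\}$ (equation \eqref{Eq_BCcase}). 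Averaging this with the $\{\pm 1,\pm q^{1/2}\}$ integral \eqref{Eq_BBcase} underlying Theorem~\ref{Thm_bounded2} yields the $(\D_n,\D_n)$ coefficient directly; both integrals are evaluated in one stroke by Theorem~\ref{Thm_BD}, and the combination $\tfrac12\bigl(1+(-1)^{\la'_{2m}}\bigr)\chi(\widetilde{\la'}\text{ even})=\chi(\la'\text{ even})$ is what forces the restriction to Type~1 partitions. So your difference identity is not proved separately and then recombined; rather, the sum and difference integrals are averaged \emph{before} evaluation, and Theorem~\ref{Thm_BD} handles them simultaneously.
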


Taking $q=0$ in \eqref{Eq_bounded3-1} yields
\begin{equation}\label{Eq_JZ}
\sum_{\substack{\la' \textup{ even} \\[1pt] \la_1\leqslant m}} 
P_{\la}(x;t) \prod_{i=1}^{m-1} (t;t^2)_{m_i(\la)/2} 
=(x_1\cdots x_n)^{\frac{m}{2}} P^{(\mathrm{D}_n)}_{\halfm{n}}(x;t).
\end{equation}
By Lemma~\ref{Lem_littlelemma2} this is equivalent to 
\cite[Theorem 1; Eq.~(7)]{JZ05} of Jouhet and Zeng,
which itself is a $t$-analogue of 
Okada's determinant \cite[Theorem 2.3 (3)]{Okada98}
\[
\sum_{\substack{\la' \textup{ even} \\[1pt] \la_1\leqslant m}} 
s_{\la}(x) =\frac{\sum_{\varepsilon\in\{\pm 1\}}
\det_{1\leqslant i,j\leqslant n}\big(x_i^{j-1}+\varepsilon \, x_i^{m+2n-j-1}\big)}
{2\prod_{i<j} (x_i-x_j)(x_ix_j-1)}.
\]

\medskip

For $m$ a nonnegative integer and $\la$ a partition, let
\begin{equation}\label{Eq_blamel}
b_{\la;m}^{\textup{el}}(q,t):=
b_{\la}^{\textup{el}}(q,t)
\prod_{\substack{s\in\la \\[1pt] l'(s) \textup{ even}}} 
\frac{1-q^{m-a'(s)}t^{l'(s)}}{1-q^{m-a'(s)-1}t^{l'(s)+1}}.
\end{equation}
We note that $b_{\la;m}^{\textup{el}}(q,t)$ vanishes unless $\la_1\leqslant m$.
The next result is \eqref{Eq_qt_Littlewood} from the introduction, 
which bounds \eqref{Eq_Pc} for $a=1$. 
\begin{theorem}\label{Thm_bounded4}
For $x=(x_1,\dots,x_n)$ and $m$ a nonnegative integer,
\begin{equation}\label{Eq_bounded4}
\sum_{\la} b_{\la;m}^{\textup{el}}(q,t) P_{\la}(x;q,t)
=(x_1\cdots x_n)^{\frac{m}{2}}\, 
P^{(\mathrm{B}_n,\mathrm{B}_n)}_{\halfm{n}}(x;q,t,t).
\end{equation}
\end{theorem}
The $q=0$ and $t=q$ specialisations of \eqref{Eq_bounded4} correspond 
to \eqref{Eq_bounded7} below for $t_2=t$, i.e.,
\[
\sum_{\substack{\la \\[1pt] \la_1\leqslant m}} 
\prod_{i=1}^{m-1} \Big( (t;t^2)_{\ceil{m_i(\la)/2}}\Big) P_{\la}(x;t) 
=(x_1\cdots x_n)^{\frac{m}{2}}\, 
P^{(\mathrm{B}_n)}_{\halfm{n}}(x;t,t),
\]
and Macdonald's determinant \eqref{Eq_MacB} respectively.

\medskip

\begin{remark}
Using \eqref{Eq_bounded3-1} it is not hard to prove an
identity that generalises \eqref{Eq_Pc} in full:
\[
\sum_{\la} a^{\op(\la')} \hat{b}_{\la;m}^{\textup{el}}(q,t) 
P_{\la}(x;q,t) = \bigg( \prod_{i=1}^n x_i^{m/2} 
\frac{(a t x_i;q)_{\infty}}{(a x_i;q)_{\infty}}
\bigg) P^{(\mathrm{D}_n,\mathrm{D}_n)}_{\halfm{n}}(x;q,t),
\]
where 
\[
\hat{b}_{\la;m}^{\textup{el}}(q,t):=
b_{\la}^{\textup{el}}(q,t)
\prod_{\substack{s\in\la \\[1pt] l'(s) \textup{ odd}}} 
\frac{1-q^{m-a'(s)}t^{l'(s)-1}}{1-q^{m-a'(s)-1}t^{l'(s)}}.
\]
The largest part of $\la$ in the sum on the left is not bounded, and
unlike \eqref{Eq_bounded2}, \eqref{Eq_bounded3-1} or \eqref{Eq_bounded4}, 
this is not a polynomial identity.
\end{remark}

\medskip

For $m$ a nonnegative integer and $\la$ a partition such that $\la_1\leqslant m$, 
let
\[
b_{\la;m}^{-}(q,t):=b_{\la}^{-}(q,t)\prod_{s\in\la} 
\frac{1-q^{m-a'(s)}t^{l'(s)}}{1+q^{m-a'(s)-1}t^{l'(s)+1}}.
\]
Our final result for Macdonald polynomials is a
bounded analogue of Kawanaka's conjecture \eqref{Eq_Kawanaka}.

\begin{theorem}\label{THM_BOUNDED5}
For $x=(x_1,\dots,x_n)$ and $m$ a nonnegative integer,
\begin{equation}\label{Eq_bounded5}
\sum_{\la} b_{\la;m}^{-}(q,t) P_{\la}(x;q^2,t^2) 
=(x_1\cdots x_n)^{\frac{m}{2}}\,
P^{(\mathrm{B}_n,\mathrm{C}_n)}_{\halfm{n}}(x;q^2,t^2,-t).
\end{equation}
\end{theorem}
For $t=-q$ this simplifies to \eqref{Eq_MacB} and for
$q=0$ it is \eqref{Eq_bounded7} below with $(t,t_2)\mapsto (t^2,-t)$, viz.\
\begin{equation}\label{Eq_IJZ}
\sum_{\substack{\la \\[1pt] \la_1\leqslant m}}
\prod_{i=1}^{m-1} \Big( (-t;t)_{m_i(\la)}\Big) P_{\la}(x;t^2) 
=(x_1\cdots x_n)^{\frac{m}{2}}\, P^{(\mathrm{B}_n)}_{\halfm{n}}(x;t^2,-t).
\end{equation}
Assuming $m$ is positive and rewriting the right-hand side 
using Lemma~\ref{Lem_littlelemmaB} yields \cite[Theorem 1]{IJZ06}
of Ishikawa et al.

\subsection{$t$-Identities}
Our final two theorems do not appear to have simple analogues for 
Macdonald polynomials.

Recall the generalised Rogers--Szeg\H{o} polynomials \eqref{Eq_RSBCn}.

\begin{theorem}\label{Thm_bounded6}
For $x=(x_1,\dots,x_n)$ and $m$ a nonnegative integer,
\begin{equation}\label{Eq_bounded6}
\sum_{\substack{\la \\[1pt] \la_1\leqslant 2m}} 
h_{\la}^{(2m)}(t_2,t_3;t) P_{\la}(x;t) 
=(x_1\cdots x_n)^m P^{(\mathrm{BC}_n)}_{m^n}(x;t,t_2,t_3).
\end{equation}
\end{theorem}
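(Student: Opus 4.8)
The plan is to realise the left-hand side of \eqref{Eq_bounded6} as a virtual Koornwinder integral, and then to evaluate that integral in closed form using Mimachi's Cauchy identity \eqref{Eq_Mim} and Lemma~\ref{Lem_littlelemma}; this matches \eqref{Eq_bounded6} with Theorem~\ref{Thm_newvirtual}, in the manner advertised at the start of this section. The case $m=0$ is trivial, both sides being $1$, so from now on assume $m\geq 1$.

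\emph{Rewriting the left-hand side.} Specialise the dual Cauchy identity \eqref{Eq_Mac-Cauchy-a} to $q=0$, substitute the doubled alphabet $y\mapsto(y_1^{\pm},\dots,y_m^{\pm})$ (so $l(\la')\leq 2m$, i.e.\ the sum runs over $\la\subseteq(2m)^n$), and replace $x$ by $-x$, which removes the sign $(-1)^{\abs{\la}}$ by homogeneity of the $P_{\la}$. This gives
\[
\sum_{\substack{\la\\[1pt]\la_1\leq 2m}}P_{\la}(x;t)\,P_{\la'}\big(y_1^{\pm},\dots,y_m^{\pm};t,0\big)
=\prod_{i=1}^n\prod_{l=1}^m\big(1+x_iy_l^{\pm}\big),
\]
with $(1+x_iy_l^{\pm}):=(1+x_iy_l)(1+x_iy_l^{-1})$. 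Apply the linear functional $I_K^{(m)}\big(\,\cdot\,;t,0;0,0,-t_2,-t_3\big)$ in the $y$-variables to both sides; the sum is finite, so this is unproblematic, and by \eqref{Eq_functional-IKn-2} together with Theorem~\ref{Thm_newvirtual} applied with $(n,q,t_2,t_3)\mapsto(m,t,-t_2,-t_3)$ and $\mu=\la'$ one gets $I_K^{(m)}\big(P_{\la'}(t,0);t,0;0,0,-t_2,-t_3\big)=h_{\la}^{(2m)}(t_2,t_3;t)$. (No poles \eqref{Eq_poles} are encountered, since $t_0t_1t_2t_3=0$.) Hence the left-hand side of \eqref{Eq_bounded6} equals $I_K^{(m)}\big(\prod_{i,l}(1+x_iy_l^{\pm});t,0;0,0,-t_2,-t_3\big)$, the functional acting in $y$.

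\emph{Evaluating the integral.} Expand the integrand by \eqref{Eq_Mim}, taken with $q=0$, Koornwinder parameters $(0,0,-t_2,-t_3)$, and $n$-variable alphabet $-x$; using \eqref{Eq_BCHL} in the form $K_{m^n-\mu}(-x;0,t;0,0,-t_2,-t_3)=P^{(\BC_n)}_{m^n-\mu}(-x;t,-t_2,-t_3)$, this reads
\[
\prod_{i=1}^n\prod_{l=1}^m\big(1+x_iy_l^{\pm}\big)
=\prod_{i=1}^n(-x_i)^m\sum_{\mu\subseteq m^n}(-1)^{\abs{\mu}}
P^{(\BC_n)}_{m^n-\mu}(-x;t,-t_2,-t_3)\,K_{\mu'}\big(y;t,0;0,0,-t_2,-t_3\big).
\]
Since $I_K^{(m)}$ extracts the coefficient of $K_0=K_{\mu'}|_{\mu=0}$, the integral equals $\prod_i(-x_i)^m P^{(\BC_n)}_{m^n}(-x;t,-t_2,-t_3)$. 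Lemma~\ref{Lem_littlelemma} (here $m\geq 1$), together with the easy identities $\Phi(-z;t,-t_2,-t_3)=\Phi(z;t,t_2,t_3)$ and $(-x)^{\varepsilon}=-x^{\varepsilon}$ (componentwise), gives $P^{(\BC_n)}_{m^n}(-x;t,-t_2,-t_3)=(-1)^{mn}P^{(\BC_n)}_{m^n}(x;t,t_2,t_3)$, whence the integral is $(x_1\cdots x_n)^m P^{(\BC_n)}_{m^n}(x;t,t_2,t_3)$, the right-hand side of \eqref{Eq_bounded6}. Combining the two steps completes the proof.

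\emph{On the main difficulty.} There is no single deep step; the argument is an assembly of ingredients already available. The care required is entirely in bookkeeping: the interchange of $(q,t)$ in the second factor of \eqref{Eq_Mac-Cauchy-a} and \eqref{Eq_Mim}, the three independent sign substitutions ($x\mapsto-x$, $(t_2,t_3)\mapsto(-t_2,-t_3)$, and the $(-a)^{\op(\la)}$ built into $h_{\la}^{(m)}$), the match between the summation range $\la\subseteq(2m)^n$ and the hypothesis $l(\mu)\leq 2m$ of Theorem~\ref{Thm_newvirtual}, and --- as in the proof of that theorem --- the legitimacy of taking the Hall--Littlewood limits $I_K^{(m)}(\,\cdot\,;t,0;\dots)$ directly, which is automatic here because the product of the four Koornwinder parameters vanishes.
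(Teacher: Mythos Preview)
Your proof is correct and follows essentially the same approach as the paper's: both arguments combine the dual Cauchy identity \eqref{Eq_Mac-Cauchy-a}, Mimachi's identity \eqref{Eq_Mim}, and the virtual integral evaluation Theorem~\ref{Thm_newvirtual}. The paper packages the Mimachi/Cauchy step once and for all as Proposition~\ref{Prop_fIK} (giving $f_{\la}^{(m)}(q,t;\tees)=(-1)^{\abs{\la}}I_K^{(m)}(P_{\la'}(t,q);t,q;\tees)$) and then extracts coefficients, whereas you inline that computation as a single generating-function identity; the only other cosmetic difference is that the paper absorbs the signs via the immediate observation $(-1)^{\abs{\la}}h_{\la}^{(2m)}(-t_2,-t_3;t)=h_{\la}^{(2m)}(t_2,t_3;t)$, while you route them through $P^{(\BC_n)}_{m^n}(-x;t,-t_2,-t_3)=(-1)^{mn}P^{(\BC_n)}_{m^n}(x;t,t_2,t_3)$ using Lemma~\ref{Lem_littlelemma}.
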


This bounds \cite[Theorem 1.1]{W06}
\begin{equation}\label{Eq_hab-lim}
\sum_{\la} h_{\la}(t_2,t_3;t) P_{\la}(x;t)
=\prod_{i=1}^n \frac{(1-t_2x_i)(1-t_3x_i)}{1-x_i^2}
\prod_{1\leqslant i<j\leqslant n} \frac{1-tx_ix_j}{1-x_ix_j},
\end{equation}
where $h_{\la}(t_2,t_3;t)$ is the Rogers--Szeg\H{o} polynomial
\eqref{Eq_RSBCn-mlim}.
Moreover, if we replace $(t,t_2,t_3)\mapsto (0,-a,-b)$ and use
\eqref{Eq_detBC} and $H_m(z;0)=1+z+\cdots+z^m$, we obtain
the following two-parameter generalisation 
of the D\'esarm\'enien--Proctor--Stembridge determinant \eqref{Eq_DPS}:
\begin{multline*}
\sum_{\substack{\la \\[1pt] \la_1\leqslant 2m}} s_{\la}(x)
\prod_{\substack{i=1 \\[0.5pt] i \text{ odd}}}^{2m-1} 
\frac{a^{m_i(\la)+1}-b^{m_i(\la)+1}}{a-b}
\prod_{\substack{i=1 \\[0.5pt] i \text{ even}}}^{2m-1} 
\frac{1-(ab)^{m_i(\la)+1}}{1-ab} \\ 
=\frac{\det_{1\leqslant i,j\leqslant n} 
\big(x_i^{j-1}(1+ax_i)(1+bx_i)-x_i^{2m+2n-j-1}(x_i+a)(x_i+b)\big)}
{\prod_{i=1}^n (1-x_i^2)\prod_{1\leqslant i<j\leqslant n}(x_i-x_j)(x_ix_j-1)}.
\end{multline*}

\medskip

Recall \eqref{Eq_RSBn}. 
The $t_3=-1$ case of Theorem~\ref{Thm_bounded6} extends as follows.
\begin{theorem}\label{THM_BOUNDED7}
For $x=(x_1,\dots,x_n)$ and $m$ a nonnegative integer,
\begin{equation}\label{Eq_bounded7}
\sum_{\substack{\la \\[1pt] \la_1\leqslant m}} h_{\la}^{(m)}(t_2;t) 
P_{\la}(x;t) =(x_1\cdots x_n)^{\frac{m}{2}}
P_{\halfm{n}}^{(\mathrm{B}_n)}(x;t,t_2).
\end{equation}
\end{theorem}
This is stated without proof in \cite{W06}.
For $(t,t_2)\mapsto (0,-a)$ it simplifies to 
a one-parameter generalisation of Macdonald's
determinant \eqref{Eq_MacB} from the introduction:
\[
\sum_{\substack{\la \\[1pt] \la_1\leqslant m}} 
s_{\la}(x) \prod_{i=1}^{m-1} \frac{1-a^{m_i(\la)+1}}{1-a} \\
=\frac{\det_{1\leqslant i,j\leqslant n} 
\big(x_i^{j-1}(1+ax_i)-x_i^{m+2n-j-1}(x_i+a)\big)}
{\prod_{i=1}^n(1-x_i)\prod_{1\leqslant i<j\leqslant n}(x_i-x_j)(x_ix_j-1)}.
\]

\section{Proofs of Theorems~\ref{THM_BOUNDED1}--\ref{THM_BOUNDED7}}
\label{Sec_Method}
We begin by outlining the general strategy, which is to transform the
problem of proving bounded Littlewood identities into that of 
evaluating virtual Koornwinder integrals.

Recall that if $g\in\Lambda_n$ and $\{f_{\la}\}$ is a basis of $\Lambda_n$,
then $[f_{\la}]g$ is the coefficient of $f_{\la}$ in the expansion of $g$.
Working in full generality, we would like to find a closed-form expression
for
\begin{equation}\label{Eq_PKf}
f_{\la}^{(m)}(q,t;\tees):=
\big[ P_{\la}(x;q,t) \big] (x_1\cdots x_n)^m 
K_{m^n}(x;q,t;\tees),
\end{equation}
where $m$ is a nonnegative integer.
Since 
\[
(x_1\cdots x_n)^m K_{m^n}(x;q,t;\tees)=\sum_{\la\subseteq (2m)^n} 
u_{\la}\, m_{\la}(x),
\]
it follows that $f_{\la}^{(m)}(q,t;\tees)$ vanishes
unless $\la\subseteq (2m)^n$. 

\begin{proposition}\label{Prop_fIK}
For $m$ a nonnegative integer and $\la\subseteq (2m)^n$,
\begin{equation}\label{Eq_rhs}
f_{\la}^{(m)}(q,t;\tees)
=(-1)^{\abs{\la}} I_K^{(m)}\big(P_{\la'}(t,q);t,q;\tees\big).
\end{equation}
\end{proposition}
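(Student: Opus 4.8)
The plan is to deduce Proposition~\ref{Prop_fIK} by gluing together two Cauchy-type identities: Mimachi's $\BC_n$ Cauchy identity \eqref{Eq_Mim}, which controls the Koornwinder side, and the dual Macdonald Cauchy identity \eqref{Eq_Mac-Cauchy-a}, which controls the type-$\A$ side. The bridge between them is the functional $I_K^{(m)}$.

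First I would rewrite the polynomial $(x_1\cdots x_n)^m K_{m^n}(x;q,t;\tees)$. Multiplying Mimachi's identity \eqref{Eq_Mim} through by $(x_1\cdots x_n)^m$ gives
\[
\sum_{\la\subseteq m^n}(-1)^{\abs{\la}}(x_1\cdots x_n)^m\,K_{m^n-\la}(x;q,t;\tees)\,K_{\la'}(y;t,q;\tees)=\prod_{i=1}^n\prod_{j=1}^m\bigl(1-x_iy_j^{\pm}\bigr),
\]
with $y=(y_1,\dots,y_m)$. For fixed $x$ both sides are $\BC_m$-symmetric Laurent polynomials in $y$, so I may apply the functional $I_K^{(m)}(\cdot\,;t,q;\tees)=\bigl[K_0(y;t,q;\tees)\bigr]$ in the $y$-variables. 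On the left each $K_{\la'}(y;t,q;\tees)$ is a Koornwinder polynomial, so by \eqref{Eq_KKnul} only the term $\la'=0$, i.e.\ $\la=0$, survives; on the right the functional is by definition \eqref{Eq_functional-IKn} applied to a genuine $\BC_m$-symmetric function. This collapses the identity to
\[
(x_1\cdots x_n)^m K_{m^n}(x;q,t;\tees)=I_K^{(m)}\Bigl(\prod_{i=1}^n\prod_{j=1}^m\bigl(1-x_iy_j^{\pm}\bigr);t,q;\tees\Bigr),
\]
the functional still acting on the $y$-variables. (One could also first recognise the left side as $\hat{K}_0(x;q,t,q^m;\tees)$ via \eqref{Eq_virtualK}, but this is not needed.)

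Next I would extract the coefficient of $P_{\la}(x;q,t)$ from this identity. Since $I_K^{(m)}$ acts only on $y$ and the extraction $\bigl[P_{\la}(x;q,t)\bigr]$ only on $x$, the two commute, and recalling definition \eqref{Eq_PKf} we get
\[
f_{\la}^{(m)}(q,t;\tees)=I_K^{(m)}\Bigl(\bigl[P_{\la}(x;q,t)\bigr]\prod_{i=1}^n\prod_{j=1}^m\bigl(1-x_iy_j^{\pm}\bigr);t,q;\tees\Bigr).
\]
Now I would read $\prod_{i=1}^n\prod_{j=1}^m(1-x_iy_j^{\pm})=\prod_{i=1}^n\prod_{k=1}^{2m}(1-x_iz_k)$ as the dual Macdonald Cauchy kernel in the $n$-variable alphabet $x$ and the $2m$-variable alphabet $z=(y_1^{\pm},\dots,y_m^{\pm})$. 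Then \eqref{Eq_Mac-Cauchy-a} gives
\[
\bigl[P_{\la}(x;q,t)\bigr]\prod_{i=1}^n\prod_{j=1}^m\bigl(1-x_iy_j^{\pm}\bigr)=(-1)^{\abs{\la}}P_{\la'}\bigl(y_1^{\pm},\dots,y_m^{\pm};t,q\bigr)
\]
for $\la\subseteq(2m)^n$, and $0$ otherwise, which is consistent with $f_{\la}^{(m)}$ vanishing unless $\la\subseteq(2m)^n$. Substituting and invoking the extended definition \eqref{Eq_functional-IKn-2} of $I_K^{(m)}$ yields
\[
f_{\la}^{(m)}(q,t;\tees)=(-1)^{\abs{\la}}I_K^{(m)}\bigl(P_{\la'}(t,q);t,q;\tees\bigr),
\]
which is \eqref{Eq_rhs}.

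There is no analytic difficulty in this argument: once $x$ is a finite alphabet, every step takes place among Laurent polynomials in finitely many variables. The only points that need care are bookkeeping: one must keep the $q\leftrightarrow t$ interchange between $K_{m^n-\la}(x;q,t;\tees)$ and $K_{\la'}(y;t,q;\tees)$ in Mimachi's identity synchronised with the analogous interchange between $P_{\la}(x;q,t)$ and $P_{\la'}(z;t,q)$ in \eqref{Eq_Mac-Cauchy-a}; and, since $\{K_{\nu}(y;t,q;\tees)\}$ is a basis of $\La^{\BC_m}$ only for generic parameters, the identity is first obtained as an identity of rational functions in $q,t,\tees$ and then holds wherever both sides are defined.
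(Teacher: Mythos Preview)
Your proof is correct and follows essentially the same approach as the paper: both combine Mimachi's Cauchy identity \eqref{Eq_Mim} with the dual Macdonald Cauchy identity \eqref{Eq_Mac-Cauchy-a}, then extract the coefficient of $P_{\la}(x;q,t)K_0(y;t,q;\tees)$. The only cosmetic difference is that the paper first expands the product kernel in Macdonald polynomials and then equates coefficients of $P_{\la}(x)K_0(y)$ in one step, whereas you apply $I_K^{(m)}$ in $y$ first and then extract the $P_{\la}(x)$ coefficient; the two orderings are equivalent.
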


\begin{proof}
Let $x=(x_1,\dots,x_n)$ and $y=(y_1,\dots,y_m)$.
According to the Cauchy identity for Koornwinder polynomials \eqref{Eq_Mim} 
\begin{multline}\label{Eq_Mim2}
\sum_{\la\subseteq m^n} (-1)^{\abs{\la}} 
(x_1\cdots x_n)^m K_{m^n-\la}(x;q,t;\tees) 
K_{\la'}(y;t,q;\tees) \\
=\sigma_1\big[{-}xy^{\pm}\big].
\end{multline}
If we expand the right-hand side in terms of Macdonald polynomials
using the Cauchy identity \eqref{Eq_Mac-Cauchy-a}, this yields
\begin{multline*}
\sum_{\la\subseteq m^n} (-1)^{\abs{\la}} 
(x_1\cdots x_n)^m K_{m^n-\la}(x;q,t;\tees) 
K_{\la'}(y;t,q;\tees) \\
=\sum_{\la\subseteq (2m)^n} (-1)^{\abs{\la}} P_{\la}(x;q,t) 
P_{\la'}(y^{\pm};t,q).
\end{multline*}
Equating coefficients of $P_{\la}(x;q,t)K_0(y;t,q;\tees)$, we find
\begin{multline*}
[P_{\la}(x;q,t)] (x_1\cdots x_n)^m K_{m^n}(x;q,t;\tees) \\
=(-1)^{\abs{\la}} [K_0(y;t,q;\tees)] P_{\la'}(y^{\pm};t,q),
\end{multline*}
for $\la\subseteq (2m)^n$.
Recalling \eqref{Eq_functional-IKn} and \eqref{Eq_PKf} completes the proof.
\end{proof}

\medskip

Next we consider the problem of computing
\begin{equation}\label{Eq_PKf-2}
f_{\la}^{(m)}(q,t;t_2,t_3):=
\big[ P_{\la}(x;q,t) \big] (x_1\cdots x_n)^m 
K_{m^n}(x;q,t;t_2,t_3),
\end{equation}
where $m$ is a nonnegative integer or half-integer
and $K_{\la}(x;q,t;t_2,t_3)$ is the Mac\-don\-ald--Koornwinder polynomial 
of Section~\ref{Sec_Kt2t3}.

\begin{proposition}\label{Prop_fIK-2}
For $m$ a nonnegative integer or half-integer, $\la\subseteq (2m)^n$
and generic $q,t,t_2,t_3$ 
\begin{equation}\label{Eq_fIK-half}
f_{\la}^{(m)}(q,t;t_2,t_3)=(-1)^{\abs{\la}}
I_K\big(P_{\la'}(t,q);t,q,q^m;-1,-q^{1/2},t_2,t_3\big).
\end{equation}
\end{proposition}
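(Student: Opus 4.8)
The plan is to reduce the statement to Proposition~\ref{Prop_fIK} via Lemma~\ref{Lem_KBn}, which rewrites the Macdonald--Koornwinder polynomial $K_{m^n}(x;q,t;t_2,t_3)$ in terms of ordinary Koornwinder polynomials, treating the two parities of $2m$ separately. As in the discussion preceding \eqref{Eq_PKf}, one first records that $f_{\la}^{(m)}(q,t;t_2,t_3)$ vanishes unless $\la\subseteq(2m)^n$, so that restriction is free. If $m$ is a nonnegative integer then $m^n$ is a partition and Lemma~\ref{Lem_KBn} gives $K_{m^n}(x;q,t;t_2,t_3)=K_{m^n}(x;q,t;-1,-q^{1/2},t_2,t_3)$, an honest Koornwinder polynomial; hence the left side of \eqref{Eq_fIK-half} is the quantity $f_{\la}^{(m)}(q,t;\tees)$ of \eqref{Eq_PKf} at $\tees=(-1,-q^{1/2},t_2,t_3)$, and Proposition~\ref{Prop_fIK} followed by \eqref{Eq_functional-IK-IKn}---read with $q$ and $t$ interchanged, so the grading slot of $I_K$ carries $q^m$ rather than $t^m$---yields \eqref{Eq_fIK-half} at once. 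Genericity of $q,t,t_2,t_3$ keeps us away from the poles \eqref{Eq_poles}, so this passage is legitimate.

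If $m$ is a half-integer, then $m^n$ is a half-partition of length $n$; absorbing the factor $(x_1\cdots x_n)^{1/2}$ into the product in the half-partition clause of Lemma~\ref{Lem_KBn} gives the polynomial identity
\[
(x_1\cdots x_n)^m K_{m^n}(x;q,t;t_2,t_3)
=\prod_{i=1}^n(1+x_i)\cdot(x_1\cdots x_n)^{m-1/2}K_{(m-1/2)^n}(x;q,t;-q,-q^{1/2},t_2,t_3).
\]
Since $m-\tfrac{1}{2}$ is a nonnegative integer, the integer case just settled (equivalently Proposition~\ref{Prop_fIK}) expands the rightmost factor as $\sum_{\mu}g_{\mu}P_{\mu}(x;q,t)$ with $g_{\mu}=(-1)^{\abs{\mu}}I_K^{(m-1/2)}\big(P_{\mu'}(t,q);t,q;-q,-q^{1/2},t_2,t_3\big)$. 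Multiplying through by $\prod_{i=1}^n(1+x_i)$ and applying the $e$-Pieri rule \eqref{Eq_e-Pieri} with $a=1$ in $n$ variables (so only $\la$ with $l(\la)\le n$ contribute, matching the vanishing of $f_\la^{(m)}$) turns this into
\[
f_{\la}^{(m)}(q,t;t_2,t_3)=\sum_{\mu\subseteq\la}(-1)^{\abs{\mu}}\psi'_{\skew{\la}{\mu}}(q,t)\,
I_K^{(m-1/2)}\big(P_{\mu'}(t,q);t,q;-q,-q^{1/2},t_2,t_3\big).
\]

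To finish I would rewrite $\psi'_{\skew{\la}{\mu}}(q,t)=\psi_{\skew{\la'}{\mu'}}(t,q)$ using \eqref{Eq_psipsip} and feed the single variable $-1$ into the branching rule \eqref{Eq_BR} for $P_{\la'}(\cdot\,;t,q)$ (then let the number of variables tend to infinity, recalling $f[x+\varepsilon]=f(-1,x_1,x_2,\dots)$); this identifies $\sum_{\mu\subseteq\la}(-1)^{\abs{\mu}}\psi'_{\skew{\la}{\mu}}(q,t)P_{\mu'}(t,q)=(-1)^{\abs{\la}}P_{\la'}\big([x+\varepsilon];t,q\big)$. By linearity of the functional $I_K^{(m-1/2)}$ and then \eqref{Eq_functional-IK-IKn}, the previous display equals $(-1)^{\abs{\la}}I_K\big(P_{\la'}\big([x+\varepsilon];t,q\big);t,q,q^{m-1/2};-q,-q^{1/2},t_2,t_3\big)$, which Lemma~\ref{Lem_IK} (with $q$ and $t$ swapped and $T=q^{m-1/2}$, so that the factor $t^{1/2}T$ there reads $q^{1/2}\cdot q^{m-1/2}=q^m$) collapses to $(-1)^{\abs{\la}}I_K\big(P_{\la'}(t,q);t,q,q^m;-1,-q^{1/2},t_2,t_3\big)$, which is \eqref{Eq_fIK-half}.

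The only genuine work is in the half-integer case, and the single delicate point is the bookkeeping: keeping straight the conjugation $\la\leftrightarrow\la'$, the $q\leftrightarrow t$ interchange that places $q^m$ (not $t^m$) in the grading slot of $I_K$, and the signs $(-1)^{\abs{\mu}}$, while checking that the finite-variable $e$-Pieri expansion and the infinite-variable branching-rule manipulation dovetail---exactly the compatibility behind \eqref{Eq_add1}.
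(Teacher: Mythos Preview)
Your proof is correct and lands on the same intermediate identity as the paper's \eqref{Eq_generic}, after which both arguments finish identically via \eqref{Eq_functional-IK-IKn} and Lemma~\ref{Lem_IK}. The only organisational difference is in the half-integer case: the paper reruns the Mimachi--Cauchy argument underlying Proposition~\ref{Prop_fIK} with the extra factor $\prod_i(1+x_i)$ absorbed into the kernel, so that $P_{\la'}\big([y^{\pm}+\varepsilon];t,q\big)$ appears directly on the $y$-side when coefficients are extracted; you instead invoke Proposition~\ref{Prop_fIK} as a black box for the integer rectangle $k^n$, then handle $\prod_i(1+x_i)$ via the $e$-Pieri rule on the $x$-side and re-collapse via the branching rule on the $y$-side. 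These are equivalent---your $e$-Pieri/branching pair is precisely the coefficient-level shadow of the paper's manipulation of the Cauchy kernel---so the two proofs are essentially the same.
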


\begin{proof}
When $m$ is an integer we simply have
\[
f_{\la}^{(m)}(q,t;t_2,t_3)=f_{\la}^{(m)}(q,t;-1,-q^{1/2},t_2,t_3).
\]
By \eqref{Eq_rhs} this gives
\[
f_{\la}^{(m)}(q,t;t_2,t_3)=(-1)^{\abs{\la}}
I_K^{(m)}\big(P_{\la'}(t,q);t,q;-1,-q^{1/2},t_2,t_3\big),
\]
which may also be written as \eqref{Eq_fIK-half}.

\medskip

To deal with the half-integer case we set $k=m-1/2$ and
replace $m$ by $k$ in \eqref{Eq_Mim2}, so that now 
$y=(y_1,\dots,y_k)$.
Multiplying both sides by 
$\prod_{i=1}^n (1+x_i)=\sigma_1[-\varepsilon x]$,
using that
\[
\sigma_1[-\varepsilon x]\sigma_1[-xy^{\pm}]=
\sigma_1[-\varepsilon x-xy^{\pm}]=
\sigma_1[-x(y^{\pm}+\varepsilon)],
\]
and finally expanding this by the Cauchy identity \eqref{Eq_Mac-Cauchy-a}, 
we obtain
\begin{multline*}
\prod_{i=1}^n (1+x_i)  \\
\times \sum_{\la\subseteq k^n} (-1)^{\abs{\la}} 
(x_1\cdots x_n)^k K_{k^n-\la}(x;q,t;\tees) K_{\la'}(y;t,q;\tees) \\[-1mm]
=\sum_{\la\subseteq (2m)^n} (-1)^{\abs{\la}} P_{\la}(x;q,t) 
P_{\la'}\big([y^{\pm}+\varepsilon];t,q).
\end{multline*}
After specialising $\{t_0,t_1\}=\{-q,-q^{1/2}\}$ we can apply
Lemma~\ref{Lem_KBn} to rewrite this as
\begin{multline*}
\sum_{\la\subseteq k^n} (-1)^{\abs{\la}} 
(x_1\cdots x_n)^m K_{m^n-\la}(x;q,t;t_2,t_3) 
K_{\la'}(y;t,q;-q,-q^{1/2},t_2,t_3) \\[-1mm]
=\sum_{\la\subseteq (2m)^n} (-1)^{\abs{\la}} P_{\la}(x;q,t) 
P_{\la'}\big([y^{\pm}+\varepsilon];t,q).
\end{multline*}
Equating coefficients of $P_{\la}(x;q,t)K_0(y;t,q;-q,-q^{1/2},t_2,t_3)$
yields
\begin{equation}\label{Eq_generic}
f_{\la}^{(m)}(q,t;t_2,t_3)
=(-1)^{\abs{\la}} 
I_K^{(k)}\big(P_{\la'}([y+\varepsilon];t,q);t,q;-q,-q^{1/2},t_2,t_3\big)
\end{equation}
for $\la\subseteq (2m)^n$.
For generic $q,t,t_2,t_3$ we can write the integral on the right as
\[
I_K\big(P_{\la'}([y^{\pm}+\varepsilon];t,q);t,q,q^k;-q,-q^{1/2},t_2,t_3\big),
\]
where now $y=(y_1,y_2,\dots)$.
By Lemma~\ref{Lem_IK} this is also
\[
I_K\big(P_{\la'}(t,q);t,q,q^m;-1,-q^{1/2},t_2,t_3\big). \qedhere
\]
\end{proof}

\medskip

We are now ready to prove Theorems~\ref{THM_BOUNDED1}--\ref{THM_BOUNDED7}.

\begin{proof}[Proof of Theorem~\ref{THM_BOUNDED1}]
\label{proof-THM_BOUNDED1}
We first prove the $a=0$ case, given in \eqref{Eq_bounded1a0}.
By definition \eqref{Eq_PKf} and equation \eqref{Eq_PCB-K},
this is equivalent to proving that for $\la\subseteq (2m)^n$
\[
f_{\la}^{(m)}\big(q,t;{\pm} q^{1/2},\pm (qt)^{1/2}\big)=
\begin{cases}
b_{\la;m}^{\textup{oa}}(q,t) & \text{$\la$ is even}, \\[1mm]
0 & \text{otherwise}.
\end{cases}
\]

From \eqref{Eq_rhs} we have
\[
f_{\la}^{(m)}\big(q,t;\pm q^{1/2},\pm (qt)^{1/2}\big)
=(-1)^{\abs{\la}}
I_K^{(m)}\big(P_{\la'}(t,q);t,q;\pm q^{1/2},\pm (qt)^{1/2}\big).
\]
Taking $(T,\mu)=(t^m,\la')$ in \eqref{Eq_Q1} and then interchanging
$q$ and $t$, it follows that the virtual Koornwinder integral on the right
vanishes unless $\la$ is even. Moreover, for even $\la$ it evaluates in 
closed form to
\[
\frac{(q^{2m};t,q^2)_{(\la/2)'}}{(q^{2m-1}t;t,q^2)_{(\la/2)'}}\cdot
\frac{C_{(\la/2)'}^{-}(qt;t,q^2)}{C_{(\la/2)'}^{-}(q^2;t,q^2)}.
\]
Using \eqref{Eq_qtswap}, we thus find
\[
f_{\la}^{(m)}\big(q,t;\pm q^{1/2},\pm (qt)^{1/2}\big)
=\Big(\frac{q}{t}\Big)^{\abs{\la}/2}
\frac{(q^{-2m};q^2,t)_{\la/2}}{(q^{1-2m}/t;q^2,t)_{\la/2}}\cdot
\frac{C_{\la/2}^{-}(qt;q^2,t)}{C_{\la/2}^{-}(q^2;q^2,t)},
\]
for $\la$ even and zero otherwise. 
By \eqref{Eq_qshift} this can be also be written as
\begin{align}\label{Eq_al-form}
f_{\la}^{(m)}&\big(q,t;\pm q^{1/2},\pm(qt)^{1/2}\big)  \\
&=\prod_{s\in\la/2}\bigg(
\frac{1-q^{2m-2a'(s)}t^{l'(s)}}{1-q^{2m-2a'(s)-1}t^{l'(s)+1}}
\cdot \frac{1-q^{2a(s)+1}t^{l(s)+1}}{1-q^{2a(s)+2}t^{l(s)}}\bigg) \notag \\
&=\prod_{\substack{s\in\la \\[1pt] a(s) \textup{ odd}}}
\bigg(\frac{1-q^{2m-a'(s)}t^{l'(s)}}{1-q^{2m-a'(s)-1}t^{l'(s)+1}}
\cdot \frac{1-q^{a(s)}t^{l(s)+1}}{1-q^{a(s)+1}t^{l(s)}}\bigg). \notag
\end{align}
Since $\la$ is even, odd arms-lengths correspond to even arm-colengths.
The product on the right is thus $b_{\la;m}^{\textup{oa}}(q,t)$ in
the representation given by \eqref{Eq_iflaiseven}, completing the proof 
of \eqref{Eq_bounded1a0}.

\medskip

To obtain the full theorem we multiply both sides of \eqref{Eq_bounded1a0} 
by $\prod_{i=1}^n(1+ax_i)$.
By the $e$-Pieri rule \eqref{Eq_e-Pieri} we must then show that 
\begin{equation}\label{Eq_verticalstrip-even}
b_{\la;m}^{\textup{oa}}(q,t)=
\sum_{\mu \textup{ even}} a^{\abs{\skew{\la}{\mu}}} 
\psi'_{\skew{\la}{\mu}}(q,t) \, b_{\mu;m}^{\textup{oa}}(q,t). 
\end{equation}
Because $\mu$ is even and $\psi'_{\skew{\la}{\mu}}(q,t)$ vanishes
unless $\skew{\la}{\mu}$ is a vertical strip, $\mu$ is fixed as 
\begin{equation}\label{Eq_mufloorla}
\mu=2\floor{\la/2}:=(2\floor{\la_1/2},2\floor{\la_2/2},\dots),
\end{equation}
which implies that $\abs{\skew{\la}{\mu}}=\op(\la)$.
We thus obtain
\[
b_{\la;m}^{\textup{oa}}(q,t)=
\psi'_{\skew{\la}{\mu}}(q,t)\,b_{\mu;m}^{\textup{oa}}(q,t)
\]
with $\mu$ fixed as above.
The $m$-dependent parts on both sides trivially agree since
\[
\prod_{\substack{s\in\la \\[1pt] a'(s) \textup{ odd}}}
f_{a'(s),l'(s)}
=\prod_{\substack{s\in\mu \\[1pt] a'(s) \textup{ odd}}}
f_{a'(s),l'(s)}.
\]
It thus remains to show that
\[
b_{\la}^{\textup{oa}}(q,t)=\psi'_{\skew{\la}{\mu}}(q,t) \,
b_{\mu}^{\textup{oa}}(q,t). 
\]
Replacing $(\la,\mu,q,t)$ by $(\la',\mu',t,q)$,
using 
\[
b_{\nu'}^{\textup{oa}}(t,q)=
\frac{b_{\nu}^{\textup{el}}(q,t)}{b_{\nu}(q,t)}
\]
on both sides, and finally appealing to
\cite[page 341]{Macdonald95}
\[
\psi'_{\la'/\mu'}(t,q)=
\varphi_{\la/\mu}(q,t)\,
\frac{b_{\mu}(q,t)}{b_{\la}(q,t)},
\]
we are left with
\[
b_{\la}^{\textup{el}}(q,t)=\varphi_{\skew{\la}{\mu}}(q,t)
b_{\mu}^{\textup{el}}(q,t)
\]
for $\mu'=2\floor{\la'/2}$.
Since this is \cite[p. 351]{Macdonald95}, we are done.
\end{proof}

Because they are simpler to prove than 
Theorems~\ref{Thm_bounded2} and \ref{Thm_bounded3}, we consider
Theorems~\ref{Thm_bounded4} and \ref{THM_BOUNDED5} first.

\begin{proof}[Proof of Theorem \ref{Thm_bounded4}]
It will be convenient to prove the claim with $m$ replaced by $2m$.
After this change $m$ is a nonnegative integer or half-integer.
It then follows from \eqref{Eq_PKf-2} and \eqref{Eq_PBB-K} that we must 
prove for $\la\subseteq (2m)^n$ that
\[
f_{\la}^{(m)}(q,t;t,q^{1/2})=b_{\la;2m}^{\textup{el}}(q,t).
\]
By Proposition~\ref{Prop_fIK-2},
\[
f_{\la}^{(m)}(q,t;t,q^{1/2})=(-1)^{\abs{\la}}
I_K\big(P_{\la'}(t,q);t,q,q^m;-1,t,\pm q^{1/2}\big).
\]
The integral on the right can be computed by Theorem~\ref{Thm_VKI}
with $(q,t,T,\mu)\mapsto (t,q,q^m,\la')$, resulting in
\[
f_{\la}^{(m)}(q,t;t,q^{1/2})
=\frac{(q^{2m};t^2,q)_{\ceil{\la'/2}}}
{(q^{2m-1}t;t^2,q)_{\ceil{\la'/2}}}
\cdot \frac{1}{b_{\la'}^{\textup{ea}}(t,q)}.
\]
Let $\nu:=\ceil{\la'/2}'=(\la_1,\la_3,\dots)$.
By \eqref{Eq_qtswap_C0} we can write the first factor on the right as
\[
\Big(\frac{q}{t}\Big)^{\abs{\nu}}
\frac{(q^{-2m};q,t^2)_{\nu}}{(q^{1-2m}/t;q,t^2)_{\nu}}.
\]
By \eqref{Eq_qshift} this is also
\[
\prod_{s\in\nu}
\frac{1-q^{2m-a'(s)}t^{2l'(s)}}{1-q^{2m-a'(s)-1}t^{2l'(s)+1}}=
\prod_{\substack{s\in\la \\[1pt] l'(s) \textup{ even}}} 
\frac{1-q^{2m-a'(s)}t^{l'(s)}}{1-q^{2m-a'(s)-1}t^{l'(s)+1}}.
\]
Since under conjugation legs become arms and arms become legs,
we further have
\[
b_{\la'}^{\textup{ea}}(t,q)b_{\la}^{\textup{el}}(q,t)=1.
\]
Hence
\[
f_{\la}^{(m)}(q,t;t,q^{1/2})
=b_{\la}^{\textup{el}}(q,t)
\prod_{\substack{s\in\la \\[1pt] l'(s) \textup{ even}}} 
\frac{1-q^{2m-a'(s)}t^{l'(s)}}{1-q^{2m-a'(s)-1}t^{l'(s)+1}}
=b_{\la;2m}^{\textup{el}}(q,t)
\]
as claimed.
\end{proof}

\begin{proof}[Proof of Theorem \ref{THM_BOUNDED5}]\label{proof-Thm_bounded5}
We closely follow the previous proof and again replace $m$ by $2m$.
This time it follows from \eqref{Eq_PKf-2} and \eqref{Eq_PBC-K} that we must 
prove
\begin{equation}\label{Eq_fThm2}
f_{\la}^{(m)}\big(q^2,t^2;-t,-qt\big)=b_{\la;2m}^{-}(q,t)
\end{equation}
for $\la\subseteq (2m)^n$. By Proposition~\ref{Prop_fIK-2},
\[
f_{\la}^{(m)}\big(q^2,t^2;-t,-qt\big)=
(-1)^{\abs{\la}} I_K\big(P_{\la'}(t^2,q^2);t^2,q^2,q^{2m};-1,-q,-t,-qt\big).
\]
The integral on the right evaluates to
\[
(-1)^{\abs{\la}}\, \frac{(q^{2m};t,q)_{\la'}}{(-q^{2m-1}t;t,q)_{\la'}}
\cdot \frac{C^{-}_{\la'}(-t;t,q)}{C^{-}_{\la'}(q;t,q)}.
\]
by \eqref{Eq_Q6} with $(q,t,T,\mu)\mapsto(t^2,q^2,t^{2m},\la')$.
Also using \eqref{Eq_qtswap}, we find
\[
f_{\la}^{(m)}\big(q^2,t^2;-t,-qt\big)=
\Big({-}\frac{q}{t}\Big)^{\abs{\la}}
\frac{(q^{-2m};q,t)_{\la}}{(-q^{1-2m}/t;q,t)_{\la}}
\cdot \frac{C^{-}_{\la}(-t;q,t)}{C^{-}_{\la}(q;q,t)}.
\]
Equation \eqref{Eq_fThm2} now follows by \eqref{Eq_qshift}.
\end{proof}

\begin{proof}[Proof of Theorem \ref{Thm_bounded2}]
Again we prove the theorem with $m$ replaced by $2m$.
It then follows from \eqref{Eq_PKf-2} and \eqref{Eq_PBB-K} that we must 
prove for $\la\subseteq (2m)^n$ that
$f_{\la}^{(m)}\big(q,t;1,q^{1/2}\big)$ vanishes unless
$m_i(\la)$ is even for all $1\leqslant i\leqslant 2m-1$, in which case
\begin{equation}\label{Eq_tobeprovedD}
f_{\la}^{(m)}\big(q,t;1,q^{1/2}\big)=b_{\la;2m}^{\textup{ol}}(q,t).
\end{equation}
The problem with using Proposition~\ref{Prop_fIK-2} as 
in the proof of Theorem~\ref{THM_BOUNDED5} is that the specialisation
$\{t_2,t_3\}=\{1,q^{1/2}\}$ corresponds to one of the 
non-generic cases discussed on page~\pageref{page_poles}. It would lead to
\begin{equation}\label{Eq_careful}
f_{\la}^{(m)}\big(q,t;1,q^{1/2}\big)
=(-1)^{\abs{\la}} I_K\big(P_{\la'}(t,q);t,q,q^m;\pm 1,\pm q^{1/2}\big),
\end{equation}
where  the integral on the right is not well-defined.
It is still possible to use \eqref{Eq_careful} by interpreting 
the right in an appropriate limiting sense, but instead we proceed
slightly differently.

First, when $m$ is an integer \eqref{Eq_fIK-half} simply says that
\[
f_{\la}^{(m)}(q,t;t_2,t_3)=(-1)^{\abs{\la}}
I_K^{(m)}\big(P_{\la'}(t,q);t,q;-1,-q^{1/2},t_2,t_3\big).
\]
In this equation there is no problem specialising $\{t_2,t_3\}=\{1,q^{1/2}\}$
so that
\[
f_{\la}^{(m)}(q,t;1,q^{1/2})=(-1)^{\abs{\la}}
I_K^{(m)}\big(P_{\la'}(t,q);t,q;\pm 1,\pm q^{1/2}\big).
\]
The right-hand side can be computed by Theorem~\ref{Thm_BD} with 
$(\mu,q,t,n)\mapsto (\la',t,q,m)$ so that
\begin{equation}\label{Eq_integercase}
f_{\la}^{(m)}(q,t;1,q^{1/2})
=\begin{cases}
A_{\la'/2}^{(2m)}(t,q)
& \text{if $\widetilde{\la'}$ is even}, \\[1mm]
0 & \text{otherwise},
\end{cases}
\end{equation}
where we have also used that $\abs{\la}$ is even if 
$\widetilde{\la'}=(\la'_1-\la'_{2m},\dots,\la'_{2m-1}-\la'_{2m},0)$
is even.

When $m=k+1/2$ is a half-integer, we use \eqref{Eq_generic} written as
\begin{equation}\label{Eq_forHLcase}
f_{\la}^{(m)}(q,t;t_2,t_3)
=(-1)^{\abs{\la}} 
I_K^{(k)}\big(P_{\la'}(x_1,\dots,x_k,-1;t,q);t,q;-q,-q^{1/2},t_2,t_3\big)
\end{equation}
instead of \eqref{Eq_fIK-half}. 
Specialising $\{t_2,t_3\}=\{1,q^{1/2}\}$ this gives
\[
f_{\la}^{(m)}(q,t;1,q^{1/2})
=(-1)^{\abs{\la}} 
I_K^{(k)}\big(P_{\la'}(x_1,\dots,x_k,-1;t,q);t,q;1,-q,\pm q^{1/2}\big).
\]
Now the right can be computed by Theorem~\ref{Thm_BD-2} with 
$(\nu,q,t,n)\mapsto (\la',t,q,k)$. 
Since $2k+1=2m$ and $\abs{\la}+\la_{2m}$ is even if 
$\widetilde{\la'}$ is even, this once again results in
\eqref{Eq_integercase}.

To complete the proof we first note that 
\[
A_{\la'/2}^{(2m)}(t,q)=A_{\floor{\la'/2}}^{(2m)}(t,q).
\]
Indeed, either $\la'$ is even, in which case 
$\floor{\la'/2}=\la'/2$ or $\la'$ is odd and $\la_1=2m$,
in which case $\floor{\la'/2}=\la'/2-(1/2)^{2m}$.
Since $A_{\la'/2}^{(2m)}(t,q)$ only depends on the relative
differences between the parts of $\la'/2$ the change is justified.
Denoting $\floor{\la'/2}$ by $\nu'$ we find that 
in the non-vanishing case, that is, when $m_i(\la)$ is even
for all $1\leqslant i\leqslant 2m-1$, 
\begin{align*}
f_{\la}^{(m)}(q,t;1,q^{1/2})
&=A_{\nu'}^{(2m)}(t,q) \\
&=\frac{(q^{2m};t^2,q)_{\nu'}}{(q^{2m-1}t;t^2,q)_{\nu'}}\cdot
\frac{C^{-}_{\nu'}(t;t^2,q)}{C^{-}_{\nu'}(q;t^2,q)} \\
&=\Big(\frac{q}{t}\Big)^{\abs{\nu}}
\frac{(q^{-2m};q,t^2)_{\nu}}{(q^{1-2m}/t;q,t^2)_{\nu}}\cdot
\frac{C^{-}_{\nu}(t;q,t^2)}{C^{-}_{\nu}(q;q,t^2)},
\end{align*}
where the second equality follows from \eqref{Eq_Adef} and
the last equality from \eqref{Eq_qtswap}.
Since $\nu'=\floor{\la'/2}$ we also have
$\nu=\floor{\la'/2}'$, which can be simplified to
$\nu=(\la_2,\la_4,\la_6,\dots)$.
Recalling \eqref{Eq_qshift}, we obtain
\[
f_{\la}^{(m)}(q,t;1,q^{1/2})=
\prod_{s\in\nu} \bigg(\frac{1-q^{2m-a'(s)}t^{2l'(s)}}
{1-q^{2m-a'(s)-1}t^{2l'(s)+1}}\cdot
\frac{1-q^{a(s)}t^{2l(s)+1}}{1-q^{a(s)+1}t^{2l(s)}}\bigg).
\]
To write this without reference to the partition $\nu$ we consider
both factors in the product separately.
The first factor is trivial:
\begin{equation}\label{Eq_fac1}
\prod_{s\in\nu} \frac{1-q^{2m-a'(s)}t^{2l'(s)}}
{1-q^{2m-a'(s)-1}t^{2l'(s)+1}} 
=\prod_{\substack{s\in\la \\[1pt] l'(s) \textup{ odd}}}
\frac{1-q^{2m-a'(s)}t^{l'(s)-1}}
{1-q^{2m-a'(s)-1}t^{l'(s)}}.
\end{equation}
For the second factor we use that for $\la'$ even we must have
$\la_{2i}=\la_{2i-1}$ for all $i$. 
We can therefore redefine $\nu$ as
\[
\nu:=\begin{cases}
(\la_1,\la_3,\dots) & \text{if $\la'$ is even} \\
(\la_2,\la_4,\dots) & \text{if $\la'$ is odd}.
\end{cases}
\]
For such $\nu$, 
\begin{equation}\label{Eq_fac2}
\prod_{s\in\nu} 
\frac{1-q^{a(s)}t^{2l(s)+1}}{1-q^{a(s)+1}t^{2l(s)}} 
=\prod_{s\in\la} 
\frac{1-q^{a(s)}t^{l(s)}}{1-q^{a(s)+1}t^{l(s)}} 
\end{equation}
in both cases. Combining \eqref{Eq_fac1} and \eqref{Eq_fac2}
we obtain \eqref{Eq_tobeprovedD}.
\end{proof}

\begin{proof}[Proof of Theorem \ref{Thm_bounded3}]
When $m$ is odd the result is completely elementary.
By \eqref{Eq_DDDDBB} and \eqref{Eq_PDDbar} we can
write the right-hand side of \eqref{Eq_bounded2} as
\[
(x_1\cdots x_n)^{\frac{m}{2}}
P^{(\mathrm{D}_n,\mathrm{D}_n)}_{\halfm{n}}(x;q,t) + 
(x_1\cdots x_n)^{\frac{m}{2}}
P^{(\mathrm{D}_n,\mathrm{D}_n)}_{\halfm{n}}(\bar{x};q,t).
\]
When $m$ is odd the first term is a polynomial of even degree
whereas the second term is a polynomial of odd degree.
Since partitions $\la$ of Type 1 have even size and partitions of Type 2
have size congruent to $m$ modulo $2$, it follows that for odd $m$ 
we may dissect \eqref{Eq_bounded2} as in Corollary~\ref{Thm_bounded3}.

To prove the theorem for even $m$ we closely follow the proof of
Proposition~\ref{Prop_fIK-2}.
In \eqref{Eq_Mim2} we replace $m$ by $m-1=:k$ (we do not at this point
assume that $m$ is even)
and multiply both sides by 
$\prod_{i=1}^n (1-x_i^2)=\sigma_1[-x(1+\varepsilon)]$. Then 
expanding the right-hand side in terms of Macdonald polynomials
using \eqref{Eq_Mac-Cauchy-a} gives
\begin{multline*}
\prod_{i=1}^n (1-x_i^2)  \\
\times \sum_{\la\subseteq k^n} (-1)^{\abs{\la}} 
(x_1\cdots x_n)^k K_{k^n-\la}(x;q,t;\tees) K_{\la'}(y;t,q;\tees) 
\\[-1mm]
=\sum_{\la\subseteq (2m)^n} (-1)^{\abs{\la}} P_{\la}(x;q,t) 
P_{\la'}\big([y^{\pm}+1+\varepsilon];t,q\big),
\end{multline*}
where $y=(y_1,\dots,y_k)$.
If we specialise $\{\tees\}=\{\pm q,\pm q^{1/2}\}$ and apply 
Lemma~\ref{Lem_KBn} followed by \eqref{Eq_PBC-K}, this leads to
\begin{multline*}
\prod_{i=1}^n \big(x_i^{-1/2}-x_i^{1/2}\big) \\
\times \sum_{\la\subseteq k^n} (-1)^{\abs{\la}} 
(x_1\cdots x_n)^m 
P_{(m-\frac{1}{2})^n-\la}^{(\mathrm{B}_n,\mathrm{C}_n)}(x;q,t,q^{1/2}) 
K_{\la'}(y;t,q;\pm q,\pm q^{1/2}) \\[-1mm]
=\sum_{\la\subseteq (2m)^n} (-1)^{\abs{\la}} P_{\la}(x;q,t) 
P_{\la'}\big([y^{\pm}+1+\varepsilon];t,q).
\end{multline*}
Equating coefficients of $P_{\la}(x;q,t)K_0(y;t,q;\pm q,\pm q^{1/2})$
and then replacing $y$ by $x$ on the right yields
\begin{multline}\label{Eq_BCcase}
\big[P_{\la}(x;q,t)\big] (x_1\cdots x_n)^m 
P_{(m-\frac{1}{2})^n}^{(\mathrm{B}_n,\mathrm{C}_n)}(x;q,t,q^{1/2}) 
\prod_{i=1}^n \big(x_i^{-1/2}-x_i^{1/2}\big)  \\
=(-1)^{\abs{\la}}
I_K^{(m-1)}\big(P_{\la'}
(x_1^{\pm},\dots,x_{m-1}^{\pm},\pm 1;t,q);t,q;\pm q,\pm q^{1/2}\big).
\end{multline}
By the integer-$m$ case of Proposition~\ref{Prop_fIK-2},
\begin{multline*}
\big[P_{\la}(x;q,t)\big] (x_1\cdots x_n)^m K_{m^n}(x;q,t;t_2,t_3) \\
=(-1)^{\abs{\la}} 
I_K^{(m)}\big(P_{\la'}(t,q);t,q;-1,-q^{1/2},t_2,t_3\big).
\end{multline*}
For $\{t_2,t_3\}=\{1,q^{1/2}\}$ this can also be written as
\begin{multline}\label{Eq_BBcase}
\big[P_{\la}(x;q,t)\big] (x_1\cdots x_n)^m
P_{m^n}^{(\mathrm{B}_n,\mathrm{B}_n)}(x;q,t,1) \\
=(-1)^{\abs{\la}}
I_K^{(m)}\big(P_{\la'}(t,q);t,q;\pm 1,\pm q^{1/2}\big)
\end{multline}
thanks to \eqref{Eq_PBB-K}. Taking half the sum of
\eqref{Eq_BCcase} and \eqref{Eq_BBcase} and recalling \eqref{Eq_PBBDD},
it follows that
\begin{align}\label{Eq_DDIK}
\big[P_{\la}&(x;q,t)\big] (x_1\cdots x_n)^m
P_{m^n}^{(\mathrm{D}_n,\mathrm{D}_n)}(x;q,t) 
\\[1mm] &=\tfrac{1}{2}(-1)^{\abs{\la}}
I_K^{(m)}\big(P_{\la'}(x_1^{\pm},\dots,x_m^{\pm};t,q);
t,q;\pm 1,\pm q^{1/2}\big) 
\notag \\[1mm]
& \quad + \tfrac{1}{2}(-1)^{\abs{\la}} I_K^{(m-1)}\big(P_{\la'}
(x_1^{\pm},\dots,x_{m-1}^{\pm},\pm 1;t,q);t,q;\pm q,\pm q^{1/2}\big).
\notag 
\end{align}
Both virtual Koornwinder integrals on the right can be computed by
Theorem~\ref{Thm_BD}. 
Since 
\[
\chi\big(\widetilde{\la'} \text{ even}\big) \big(\tfrac{1}{2}+
\tfrac{1}{2}(-1)^{\la'_{2m}}\big)
=\chi(\la' \text{ even}) 
\] 
for $\widetilde{\la'}=(\la'_1-\la'_{2m},\dots,\la'_{2m-1}-\la'_{2m},0)$,
we find
\[
\big[P_{\la}(x;q,t)\big] (x_1\cdots x_n)^m
P_{m^n}^{(\mathrm{D}_n,\mathrm{D}_n)}(x;q,t)
=\chi(\la' \text{ even}) A_{\la'/2}(t,q).
\]
In the proof of Theorem~\ref{Thm_bounded2} we have already shown that
\[
A_{\la'/2}(t,q)=b_{\la;2m}^{\textup{ol}}(q,t)
\]
for $\la'$ even (or $\la'$ odd and $\la_1=2m$).
Hence
\[
\big[P_{\la}(x;q,t)\big] (x_1\cdots x_n)^m
P_{m^n}^{(\mathrm{D}_n,\mathrm{D}_n)}(x;q,t)
=\chi(\la' \text{ even}) b_{\la;2m}^{\textup{ol}}(q,t).
\]
Replacing $m$ by $m/2$, this proves \eqref{Eq_bounded3-1} for even $m$.

For completeness we remark that the analogue of \eqref{Eq_DDIK}
for half-integer $m$ is easily shown to be
\begin{align*}
\big[P_{\la}&(x;q,t)\big](x_1\cdots x_n)^m
P_{m^n}^{(\mathrm{D}_n,\mathrm{D}_n)}(x;q,t) \\[1mm]
&=\tfrac{1}{2}(-1)^{\abs{\la}}
I_K^{(k)}\big(P_{\la'}(x_1^{\pm},\dots,x_k^{\pm},-1;t,q);
t,q;1,-q,\pm q^{1/2}\big) \\[1mm]
& \quad + \tfrac{1}{2}(-1)^{\abs{\la}} I_K^{(k)}\big(P_{\la'}
(x_1^{\pm},\dots,x_k^{\pm},1;t,q);t,q;-1,q,\pm q^{1/2}\big),
\end{align*}
where $k=m-1/2$. By Theorem~\ref{Thm_BD-2} this again implies that
\[
\big[P_{\la}(x;q,t)\big] (x_1\cdots x_n)^m
P_{m^n}^{(\mathrm{D}_n,\mathrm{D}_n)}(x;q,t)
=\chi(\la' \text{ even}) b_{\la;2m}^{\textup{ol}}(q,t).
\]
Of course, as noted above, this result follows more simply by 
a degree argument.
\end{proof}

\begin{proof}[Proof of Theorem \ref{Thm_bounded6}]
By \eqref{Eq_BCHL} and \eqref{Eq_PKf} we must prove that
\begin{equation}\label{Eq_fab}
f_{\la}^{(m)}(0,t;0,0,t_2,t_3)=h_{\la}^{(2m)}(t_2,t_3;t)
\end{equation}
for $\la\subseteq (2m)^n$.
By \eqref{Eq_rhs}, 
\begin{equation}\label{Eq_fab2}
f_{\la}^{(m)}(0,t;0,0,t_2,t_3)=(-1)^{\abs{\la}}
I_K^{(m)}\big(P_{\la'}(t,0);t,0;0,0,t_2,t_3\big).
\end{equation}
The integral on the right can be evaluated thanks to 
Theorem~\ref{Thm_newvirtual} with $(n,q,\mu)$ replaced by $(m,t,\la')$. 
Hence
\[
f_{\la}^{(m)}(0,t;0,0,t_2,t_3)=(-1)^{\abs{\la}} h_{\la}(-t_2,-t_3;t)
=h_{\la}^{(2m)}(t_2,t_3;t),
\]
where the second equality follows from definition \eqref{Eq_RSBCn} and
\[
(-1)^{\abs{\la}}=
\prod_{\substack{i\geqslant 1\\i \text{ odd}}} (-1)^{m_i(\la)}.
\qedhere
\]
\end{proof}

A proof of \eqref{Eq_hab-lim} (the large-$m$ limit of 
Theorem~\ref{Thm_bounded6}) using virtual Koornwinder integrals 
is due to Venkateswaran \cite{Venkateswaran12}. Her 
approach, however, is not a limiting version of ours. Crucial
difference is that Venkateswaran stays within 
the $t$-world, whereas we have applied the virtual Koornwinder integral 
\eqref{Eq_vanish} over $P_{\la}(0,q)$.

\begin{proof}[Proof of Theorem~\ref{THM_BOUNDED7}]
As in earlier proofs we replace $m$ by $2m$. From \eqref{Eq_BHL1} and 
\eqref{Eq_PKf-2} it follows that we must prove
\begin{equation}\label{Eq_fB}
f_{\la}^{(m)}(0,t;t_2,0)=h_{\la}^{(2m)}(t_2;t)
\end{equation}
for $\la\subseteq (2m)^n$ and $m$ a nonnegative integer or half-integer.
For $m$ an integer, \eqref{Eq_fB} is the $t_3=-1$ case of \eqref{Eq_fab}, 
and in the remainder we assume $m$ is a half-integer.

We will not apply Proposition~\ref{Prop_fIK-2} as it is not suitable
for taking the $q\to 0$ limit.
Instead we take that limit in \eqref{Eq_forHLcase}. Then
\[
f_{\la}^{(m)}(0,t;t_2,t_3)=(-1)^{\abs{\la}} 
I_K^{(k)}\big(P_{\la'}(x_1^{\pm},\dots,x_k^{\pm},-1;t,0);t,0;0,0,t_2,t_3\big),
\]
where $k=m-1/2$.
By the branching rule \eqref{Eq_BR} and relation
\eqref{Eq_psipsip} this becomes
\[
f_{\la}^{(m)}(0,t;t_2,t_3)=
\sum_{\mu\subseteq\la} 
(-1)^{\abs{\mu}} \psi'_{\skew{\la}{\mu}}(0,t)
I_K^{(k)}\big(P_{\mu'}(t,0);t,0;0,0,t_2,t_3\big).
\]
The virtual Koornwinder integral on the right can be computed
by Theorem~\ref{Thm_newvirtual} with $(n,q,\mu)$ replaced by $(k,t,\mu')$.
Hence
\[
f_{\la}^{(m)}(0,t;t_2,t_3)=
\sum_{\substack{\mu\subseteq\la \\[1pt] \mu_1\leqslant 2k}} 
(-1)^{\abs{\mu}} 
\psi'_{\skew{\la}{\mu}}(0,t) \, h_{\mu}^{(2k)}(-t_2,-t_3;t).
\]
We do not know how to evaluate this in closed form for arbitrary $t_3$,
but for $t_3=0$ it follows from \eqref{Eq_RSBCn} that 
\[
h_{\mu}^{(2k)}(-t_2,0;t)=t_2^{\op(\mu)}.
\]
Also using that $(-1)^{\abs{\mu}}=(-1)^{\op(\mu)}$, we find
\[
f_{\la}^{(m)}(0,t;t_2,0)=
\sum_{\substack{\mu\subseteq\la \\[1pt] \mu_1\leqslant 2m-1}} 
(-t_2)^{\op(\mu)} \psi'_{\skew{\la}{\mu}}(0,t).
\]
Since $\psi'_{\skew{\la}{\mu}}(0,t)$ is the $e$-Pieri coefficient
for ordinary Hall--Littlewood polynomials, we have
\cite[page 215]{Macdonald95}
\[
\psi'_{\la/\mu}(0,t)=
\prod_{i\geqslant 1} \qbin{\la'_i-\la'_{i+1}}{\la'_i-\mu'_i}_t.
\]
Therefore
\[
f_{\la}^{(m)}(0,t;t_2,0)=
\sum_{\substack{\mu\subseteq\la \\[1pt] \mu_1\leqslant 2m-1}} 
(-t_2)^{\op(\mu)} 
\prod_{i=1}^{2m-1} \qbin{\la'_i-\la'_{i+1}}{\la'_i-\mu'_i}_t.
\]
Writing $\mu'_i=\la'_i-k_i$ and using 
\eqref{p822} with $(\mu,\nu,n)\mapsto(\mu',\la',m)$, we finally obtain
\begin{align*}
f_{\la}^{(m)}(0,t;t_2,0)&=
(-t_2)^{\op(\la)} \prod_{i=1}^{2m-1} \sum_{k_i=0}^{m_i(\la)} 
(-t_2)^{(-1)^i k_i} \qbin{m_i(\la)}{k_i}_t \\
&=(-t_2)^{\op(\la)} 
\prod_{\substack{i=1 \\[0.5pt] i \text{ odd}}}^{2m-1} 
H_{m_i(\la)}(-1/t_2;t)
\prod_{\substack{i=1 \\[0.5pt] i \text{ even}}}^{2m-1} H_{m_i(\la)}(-t_2;t) \\
&=h_{\la}^{(2m)}(t_2,-1;t)=h_{\la}^{(2m)}(t_2;t). \qedhere
\end{align*}
\end{proof}


\chapter{Applications}\label{Ch_Apps}

\section{Plane partitions}\label{Sec_Plane-partitions}
In this section we will show that our approach to bounded Littlewood 
identities provides an intimate connection between symmetric plane
partitions and the theory of Gelfand pairs.

A plane partition $\pi$ of shape $\la$ is a filling of the
Young diagram of $\la$ with positive integers, called the parts
of $\pi$, such that the resulting tableau is weakly decreasing along rows 
and columns, see e.g., 
\cite{Bressoud99,Krattenthaler15,Stanley86,Stanley89} and references therein.
The size of the plane partition $\pi$, denoted $\abs{\pi}$,
is the sum of the parts, that is
\[
\abs{\pi}=\sum_{(i,j)\in \la} \pi_{ij},
\]
where the part $\pi_{ij}$ is the filling of the square $(i,j)\in\la$.
For example,
\begin{subequations}\label{Eq_PP}
\begin{equation}\label{Eq_PP1}
\raisebox{-10mm}{
\begin{tikzpicture}[scale=0.4,line width=0.3pt]
\draw (0,0)--(5,0);
\draw (0,-1)--(5,-1);
\draw (0,-2)--(3,-2);
\draw (0,-3)--(2,-3);
\draw (0,-4)--(1,-4);
\draw (0,-5)--(1,-5);
\draw (0,0)--(0,-5);
\draw (1,0)--(1,-5);
\draw (2,0)--(2,-3);
\draw (3,0)--(3,-2);
\draw (4,0)--(4,-1);
\draw (5,0)--(5,-1);
\draw (0.5,-0.5) node {$4$};
\draw (1.5,-0.5) node {$3$};
\draw (2.5,-0.5) node {$3$};
\draw (3.5,-0.5) node {$2$};
\draw (4.5,-0.5) node {$1$};
\draw (0.5,-1.5) node {$3$};
\draw (1.5,-1.5) node {$2$};
\draw (2.5,-1.5) node {$1$};
\draw (0.5,-2.5) node {$3$};
\draw (1.5,-2.5) node {$1$};
\draw (0.5,-3.5) node {$2$};
\draw (0.5,-4.5) node {$1$};
\end{tikzpicture}
}
\end{equation}
\smallskip
is a plane partition of shape $(5,3,2,1,1)$ and size $26$.

A part of size $r$ may be thought of as the stacking of $r$ unit cubes, 
providing a geometric interpretation of plane partitions. 
Thus, the plane partition \eqref{Eq_PP1} corresponds to
\begin{equation}
\raisebox{-12mm}{\begin{tikzpicture}[scale=0.3]
\draw[fill=blue] (0,0)--(0.866,-0.5)--(0.866,0.5)--(0,1)--cycle;
\draw[fill=red] (0.866,-0.5)--(1.732,0)--(1.732,1)--(0.866,0.5)--cycle;
\draw[fill=red] (1.732,0)--(2.598,0.5)--(2.598,1.5)--(1.732,1)--cycle;
\draw[fill=blue] (2.598,0.5)--(3.464,0)--(3.464,1)--(2.598,1.5)--cycle;
\draw[fill=red] (3.464,0)--(4.330,0.5)--(4.330,1.5)--(3.464,1)--cycle;
\draw[fill=blue] (4.330,0.5)--(5.196,0)--(5.196,1)--(4.330,1.5)--cycle;
\draw[fill=red] (5.196,0)--(6.062,0.5)--(6.062,1.5)--(5.196,1)--cycle;
\draw[fill=blue] (6.062,0.5)--(6.928,0)--(6.928,1)--(6.062,1.5)--cycle;
\draw[fill=blue] (6.928,0)--(7.794,-0.5)--(7.794,0.5)--(6.928,1)--cycle;
\draw[fill=red] (7.794,-0.5)--(8.660,0)--(8.660,1)--(7.794,0.5)--cycle;
\draw[fill=green] (0,1)--(0.866,0.5)--(1.732,1)--(0.866,1.5)--cycle;
\draw[fill=green] (6.928,1)--(7.794,0.5)--(8.660,1)--(7.794,1.5)--cycle;
\draw[fill=green] (2.598,1.5)--(3.464,1)--(4.330,1.5)--(3.464,2)--cycle;
\draw[fill=green] (4.330,1.5)--(5.196,1)--(6.062,1.5)--(5.196,2)--cycle;
\draw[fill=blue] (0.866,1.5)--(1.732,1)--(1.732,2)--(0.866,2.5)--cycle;
\draw[fill=red] (1.732,1)--(2.598,1.5)--(2.598,2.5)--(1.732,2)--cycle;
\draw[fill=red] (2.598,1.5)--(3.464,2)--(3.464,3)--(2.598,2.5)--cycle;
\draw[fill=blue] (3.464,2)--(4.330,1.5)--(4.330,2.5)--(3.464,3)--cycle;
\draw[fill=red] (4.330,1.5)--(5.196,2)--(5.196,3)--(4.330,2.5)--cycle;
\draw[fill=blue] (5.196,2)--(6.062,1.5)--(6.062,2.5)--(5.196,3)--cycle;
\draw[fill=blue] (6.062,1.5)--(6.928,1)--(6.928,2)--(6.062,2.5)--cycle;
\draw[fill=red] (6.928,1)--(7.794,1.5)--(7.794,2.5)--(6.928,2)--cycle;
\draw[fill=green] (0.866,2.5)--(1.732,2)--(2.598,2.5)--(1.732,3)--cycle;
\draw[fill=green] (3.464,3)--(4.330,2.5)--(5.196,3)--(4.330,3.5)--cycle;
\draw[fill=green] (6.062,2.5)--(6.928,2)--(7.794,2.5)--(6.928,3)--cycle;
\draw[fill=blue] (1.732,3)--(2.598,2.5)--(2.598,3.5)--(1.732,4)--cycle;
\draw[fill=red] (2.598,2.5)--(3.464,3)--(3.464,4)--(2.598,3.5)--cycle;
\draw[fill=red] (3.464,3)--(4.330,3.5)--(4.330,4.5)--(3.464,4)--cycle;
\draw[fill=blue] (4.330,3.5)--(5.196,3)--(5.196,4)--(4.330,4.5)--cycle;
\draw[fill=blue] (5.196,3)--(6.062,2.5)--(6.062,3.5)--(5.196,4)--cycle;
\draw[fill=red] (6.062,2.5)--(6.928,3)--(6.928,4)--(6.062,3.5)--cycle;
\draw[fill=green] (1.732,4)--(2.598,3.5)--(3.464,4)--(2.598,4.5)--cycle;
\draw[fill=green] (2.598,4.5)--(3.464,4)--(4.330,4.5)--(3.464,5)--cycle;
\draw[fill=green] (4.330,4.5)--(5.196,4)--(6.062,4.5)--(5.196,5)--cycle;
\draw[fill=green] (5.196,4)--(6.062,3.5)--(6.928,4)--(6.062,4.5)--cycle;
\draw[fill=blue] (3.464,5)--(4.330,4.5)--(4.330,5.5)--(3.464,6)--cycle;
\draw[fill=red] (4.330,4.5)--(5.196,5)--(5.196,6)--(4.330,5.5)--cycle;
\draw[fill=green] (3.464,6)--(4.330,5.5)--(5.196,6)--(4.330,6.5)--cycle;
\end{tikzpicture}}
\end{equation}
\end{subequations}
\smallskip

A plane partition $\pi$ of shape $\la$ is symmetric if $\la=\la'$
and $\pi_{ij}=\pi_{ji}$ for all $(i,j)\in\la$.
Clearly, the plane partition \eqref{Eq_PP} is symmetric. 

We write $\pi\subseteq \mathrm{B}(n,p,m)$ if the plane partition $\pi$ 
fits in a box $\mathrm{B}(n,p,m)$ of size $n\times p\times m$, i.e., if
the shape of $\pi$ is contained in $p^n$ and no part of $\pi$ exceeds $m$. 
For symmetric plane partitions we may without loss of generality
assume that $p=n$.

MacMahon's famous conjecture \cite{MacMahon98,MacMahon16}
for the generating function of symmetric plane partitions in 
$\mathrm{B}(n,n,m)$ is given by equation \eqref{Eq_MacMahon} of the 
introduction.
The conjecture was proven, independently and almost simultaneously,
by Andrews \cite{Andrews77,Andrews78} and Macdonald \cite{Macdonald79},
almost 80 years after it was first posed in 1898.
Andrew's proof relied on earlier work of Bender and Knuth \cite{BK72},
who obtained two expressions for the generating function \eqref{Eq_GF-pp}
as a determinant over $q$-binomial coefficients, one for even $m$
and one for odd $m$.
Using basic hypergeometric series and clever manipulations of determinants,
Andrews evaluated both determinants, thereby confirming the
conjecture. 
As already mentioned in the introduction, Macdonald's proof \cite{Macdonald79}
(see also \cite{Proctor86} by Proctor for similar ideas) relied first of all 
on the fact that the generating function for symmetric plane partitions can be 
expressed as a sum over specialised Schur functions as 
in \eqref{Eq_GF-pp-Schur}.
This was first noted by Gordon \cite{Gordon71}, who observed the 
equivalent fact that the generating function for symmetric plane partitions
contained in $\mathrm{B}(n,n,m)$ is equal to the generating function for 
column-strict plane partitions contained in $B(\infty,m,2n-1)$, 
all of whose parts are odd. By the description \eqref{Eq_Schur-T}
of the Schur function in terms of semistandard Young tableaux, this 
immediately implies \eqref{Eq_GF-pp-Schur}.
As shown by Okounkov and Reshetikhin, Gordon's observation is 
a simple consequence of the bijective correspondence between symmetric
plane partitions of size $k$ contained in $\mathrm{B}(n,n,m)$ and 
sequences of interlacing partitions
\[
\mu^{(n-1)}\prec\cdots\prec\mu^{(1)}\prec\mu^{(0)}=:\la
\]
such that $\la_1\leqslant m$ and
$\abs{\la}+2\abs{\mu^{(1)}}+\cdots+2\abs{\mu^{(n-1)}}=k$.
Here the `$i$th diagonal slice', $\mu^{(i)}$, is simply given by
$\mu^{(i)}=(\pi_{1,i+1},\pi_{2,i+2},\pi_{3,i+3},\dots)$. Stacking
the slices with $\la$ as base, and all other slices repeated once,
results in a column-strict plane partition of shape $\la$ all of
whose parts are odd and at most $2n-1$, see \cite{OR03} for details.
The next step in Macdonald's proof was to recognise that
\[
\sum_{\substack{\la\\[1pt]\la_1\leqslant m}}s_{\la}(q,q^3,\dots,q^{2n-1})
=\prod_{i=1}^n \frac{1-q^{m+2i-1}}{1-q^{2i-1}}
\prod_{1\leqslant i<j\leqslant n} \frac{1-q^{2(m+i+j-1)}}{1-q^{2(i+j-1)}}
\]
is a specialisation of the bounded Littlewood identity \eqref{Eq_MacB}.
To prove the latter, he developed a method based on partial fraction 
expansions \cite[pages 232--234]{Macdonald95}, which he used to prove
the more general\footnote{By Lemma~\ref{Lem_littlelemmaB}, this is equivalent
to the $t_2=0$ case of Theorem~\ref{THM_BOUNDED7}.}
\begin{equation}\label{Eq_Macdonald-HL}
\sum_{\substack{\la \\[1pt] \la_1\leqslant m}} P_{\la}(x;t)
=\sum_{\varepsilon\in \{\pm 1\}^n}\Phi(x^{\varepsilon};t,0,-1)
\prod_{i=1}^n x_i^{(1-\varepsilon_i)m/2}.
\end{equation}

What we will show in the remainder of this section is that the method 
developed in Section~\ref{Sec_Method}
implies that \eqref{Eq_MacB} and hence MacMahon's formula for symmetric
plane partitions in a box is a consequence of the fact that
$\big(\mathrm{GL}(n,\mathbb{R}),\mathrm{O}(n)\big)$ is a Gelfand pair.
Because it is somewhat simpler to handle, we will first discuss a closely
related theorem for symmetric plane partitions, due to 
Proctor \cite[Theorem 1, (CYH)]{Proctor90} and 
Stembridge \cite[Corollary 4.3, (b)]{Stembridge90}.

\begin{theorem}\label{Thm_PS90}
The generating function for symmetric plane partitions 
$\pi\subseteq \mathrm{B}(n,n,2m)$ such that the parts 
$\pi_{ii}$ \textup{(}$i\geqslant 1$\textup{)} along the main diagonal 
are even is given by
\begin{equation}\label{Eq_PS90}
\sum_{\substack{\pi\subseteq\mathrm{B}(n,n,2m) \\[1pt] \pi \text{ symmetric} 
\\[1.5pt] \pi_{ii} \text{ even}}} 
q^{\abs{\pi}}=
\prod_{i=1}^n \frac{1-q^{2m+2i}}{1-q^{2i}}
\prod_{1\leqslant i<j\leqslant n} \frac{1-q^{2(2m+i+j)}}{1-q^{2(i+j)}}.
\end{equation}
\end{theorem}
The correspondence between symmetric plane partitions and interlacing 
partitions still holds, but now the parts of the zeroth slice, $\la$,
must be even.
Also, because $m$ has been replaced by $2m$, $\la_1\leqslant 2m$.
It thus follows that the generating function on the left-hand side
of \eqref{Eq_PS90} is given by
\[
\sum_{\substack{\la \text{ even} \\[1pt] \la_1\leqslant 2m}}
s_{\la}(q,q^3,\dots,q^{2n-1}),
\]
so that it remains to prove that
\[
\sum_{\substack{\la \text{ even} \\[1pt] \la_1\leqslant 2m}}
s_{\la}(q,q^3,\dots,q^{2n-1})=
\prod_{i=1}^n \frac{1-q^{2m+2i}}{1-q^{2i}}
\prod_{1\leqslant i<j\leqslant n} \frac{1-q^{2(2m+i+j)}}{1-q^{2(i+j)}}.
\]
Again there is a lift to a Littlewood identity, which in this case is 
the D\'esarm\'enien--Proctor--Stembridge determinant formula 
\eqref{Eq_DPS}. For our purposes we write this in terms of 
symplectic Schur functions as
\begin{equation}\label{Eq_Schur-Schursymplectic}
\sum_{\substack{\la \text{ even}\\[1pt] \la_1\leqslant 2m}} s_{\la}(x)
=(x_1\cdots x_n)^m \symp_{2n,m^n}(x).
\end{equation}

If $G$ is a Lie group and $K$ a compact subgroup
such that, for all (continuous and locally convex) irreducible 
representation $\rho$ of $G$, the $K$-invariant subspace $\rho^K$ 
has dimension at most $1$, then $(G,K)$ is called a Gelfand 
pair~\cite{Bump04,Macdonald95}.
Hence, for $(G,K)$ a Gelfand pair and $\chi$ 
a character of an irreducible representation of $G$,
\begin{equation}\label{Eq_GP}
\int_K \chi(g) \dup g =0 \text{ or } 1,
\end{equation}
where the integration is with respect to normalised Haar 
measure on $K$.
The Gelfand pair relevant to Theorem~\ref{Thm_PS90} is 
$G=\mathrm{GL}\big(n,\mathbb{H})$ and $K=U(n,\mathbb{H})$,
the group of invertible $n\times n$ matrices over the 
division ring of quaternions and the quaternionic unitary
group respectively, see \cite[pages 446--456]{Macdonald95}. 
The group $U(n,\mathbb{H})$ is isomorphic to the compact 
symplectic group, $\mathrm{Sp}(n)$, and \eqref{Eq_GP} becomes
\cite[page 451]{Macdonald95}
\begin{equation}\label{Eq_Weyl}
\int_{\mathrm{Sp}(n)} s_{\la}(g) \dup g=
\begin{cases}
1 & \text{if $\la'$ is even}, \\
0 & \text{otherwise},
\end{cases}
\end{equation}
where $\la$ is a partition of length at most $2n$.
By Weyl's integration formula \cite[page 443]{FH91}, 
for $G$ a compact Lie group and $f$ a class function,
\begin{equation}\label{Eq_GtoT}
\int_G f(g) \dup g=\frac{1}{\abs{W}} \int_T f(t) \, 
\abs{\Delta(t)}^2 \, \dup t,
\end{equation}
where $T$ is a maximal torus in $G$, the integration on the right is
with respect to normalised Haar measure on $T$ and 
$\Delta(t)$ is the $G$-Vandermonde determinant
\[
\Delta(\eup^h)=\prod_{\alpha>0} \big(\eup^{\frac{1}{2}\alpha(h)}
-\eup^{-\frac{1}{2}\alpha(h)}\big).
\]
Applying this to \eqref{Eq_Weyl} yields (see e.g., \cite{RV07})
\begin{multline}\label{Eq_Vanishingintegral1}
\int_{\mathbb{T}^n} s_{\la}(x_1^{\pm},\dots,x_n^{\pm})
\prod_{i=1}^n \bigabs{x_i-x_i^{-1}}^2 \\ \times
\prod_{1\leqslant i<j\leqslant n} \bigabs{x_i+x_i^{-1}-x_j-x_j^{-1}}^2
\dup T(x)=\chi(\la' \text{ even}),
\end{multline}
with $\dup T(x)$ as \eqref{Eq_measure}. This may also be written as
\begin{equation}\label{Eq_Vanishingintegral2}
\int_{\mathbb{T}^n} s_{\la}(x_1^{\pm},\dots,x_n^{\pm})
\prod_{i=1}^n (1-x_i^{\pm 2})
\prod_{1\leqslant i<j\leqslant n} (1-x_i^{\pm} x_j^{\pm}) 
\dup T(x)=\chi(\la' \text{ even}),
\end{equation}
which is precisely the $q=t$ case of \eqref{Eq_Q1} for $T^n$, i.e., 
the $q=t$ case of the $\mathrm{U}(2n)/\mathrm{Sp}(2n)$ 
vanishing integral.\footnote{The $\mathrm{Sp}(n)$ appearing in 
\eqref{Eq_Weyl} versus the $\mathrm{Sp}(2n)$ used in the naming of
the vanishing integral is due to a difference in conventions 
between \cite{Macdonald95} and \cite{Rains05,RV07}.}
Since \eqref{Eq_Schur-Schursymplectic} is the $q=t$ case of
Theorem~\ref{THM_BOUNDED1} for $a=0$, and since we showed in 
Section~\ref{Sec_Method} that the latter is a consequence of the full 
$\mathrm{U}(2n)/\mathrm{Sp}(2n)$ vanishing integral, it follows that 
\eqref{Eq_Vanishingintegral2} implies \eqref{Eq_Schur-Schursymplectic}.

\medskip
To use Gelfand pairs to prove \eqref{Eq_MacB}, and hence MacMahon's
conjecture, is slightly more involved.
This time the prerequisite pair is 
$\big(\mathrm{GL}(n,\mathbb{R}),\mathrm{O}(n)\big)$, for which
\eqref{Eq_GP} takes the form
\begin{equation}\label{Eq_Weyl2}
\int_{\mathrm{O}(n)} s_{\la}(g) \dup g=
\begin{cases}
1 & \text{if $\la$ is even}, \\
0 & \text{otherwise},
\end{cases}
\end{equation}
where $l(\la)\leqslant n$, see \cite[pages 414--424]{Macdonald95}.
Again one can use \eqref{Eq_GtoT} to integrate over
a torus and obtain explicit integrals \`{a} la
\eqref{Eq_Vanishingintegral2}. Because of the distinction
between $\mathrm{O}(2n)$ and $\mathrm{O}(2n+1)$, this leads to
four integrals, see \cite[Equations~(1.1)--(1.4)]{RV07}.
Of these four we require
\begin{multline}\label{Eq_RV13}
\int_{\mathbb{T}^n} s_{\la}(x_1^{\pm},\dots,x_n^{\pm},1)
\prod_{i=1}^n \abs{1-x_i}^2 \\ \times
\prod_{1\leqslant i<j\leqslant n} \bigabs{x_i+x_i^{-1}-x_j-x_j^{-1}}^2
\dup T(x)=\chi(\tilde{\la} \text{ even}),
\end{multline}
where $l(\la)\leqslant 2n+1$ and
$\tilde{\la}:=(\la_1-\la_{2n+1},\dots,\la_{2n}-\la_{2n+1},0)$,
as well as
\begin{multline}\label{Eq_RV132}
\int_{\mathbb{T}^{n-1}} s_{\mu}(x_1^{\pm},\dots,x_{n-1}^{\pm},\pm 1)
\prod_{i=1}^{n-1} \abs{x_i-x_i^{-1}}^2 \\ \times
\prod_{1\leqslant i<j\leqslant n-1} \bigabs{x_i+x_i^{-1}-x_j-x_j^{-1}}^2
\dup T(x)=\chi(\tilde{\mu} \text{ even}),
\end{multline}
where $l(\mu)\leqslant 2n$ and
$\tilde{\mu}:=(\mu_1-\mu_{2n},\dots,\mu_{2n}-\mu_{2n},0)$.

It is not difficult to show that \eqref{Eq_RV13} and \eqref{Eq_RV132} 
imply the following closely related integrals.

\begin{lemma}
For $\mu$ a partition of length at most $2n$,
\begin{equation}\label{Eq_Vanishingintegral3}
\int_{\mathbb{T}^n} s_{\mu}(x_1^{\pm},\dots,x_n^{\pm})
\prod_{i=1}^n (1-x_i^{\pm}) 
\prod_{1\leqslant i<j\leqslant n} (1-x_i^{\pm}x_j^{\pm})
\dup T(x)=(-1)^{\abs{\mu}},
\end{equation}
and for $\mu$ a partition of length at most $2n-1$,
\begin{multline}\label{Eq_Vanishingintegral4}
\int_{\mathbb{T}^{n-1}} s_{\mu}(x_1^{\pm},\dots,x_{n-1}^{\pm},-1)
\prod_{i=1}^{n-1} (1-x_i^{\pm 2})  \\ \times
\prod_{1\leqslant i<j\leqslant n-1} (1-x_i^{\pm}x_j^{\pm})
\dup T(x)=(-1)^{\abs{\mu}}.
\end{multline}
\end{lemma}

\begin{proof}
Because the proofs are identical we only consider 
\eqref{Eq_Vanishingintegral3}, and leave \eqref{Eq_Vanishingintegral4}
to the reader.

Denote the integral in \eqref{Eq_Vanishingintegral3} by $I_{\mu}^{(n)}$.
From specialising $x_{n+1}=1$ in the inverse of the branching rule for 
Schur functions (see e.g., \cite[Equation (5.49)]{BR01})
\[
s_{\mu}(x_1,\dots,x_n)=\sum_{\nu'\prec\mu'} (-x_{n+1})^{\abs{\mu/\nu}}
s_{\nu}(x_1,\dots,x_{n+1}),
\]
it follows that 
\begin{multline*}
I_{\mu}{(n)}=\sum_{\nu'\prec\mu'} (-1)^{\abs{\mu/\nu}}
\int_{\mathbb{T}^n} s_{\nu}(x_1^{\pm},\dots,x_n^{\pm},1)
\prod_{i=1}^n (1-x_i^{\pm})  \\ \times
\prod_{1\leqslant i<j\leqslant n} (1-x_i^{\pm}x_j^{\pm})
\dup T(x)
\end{multline*}
The left-hand side of \eqref{Eq_RV13} can be written as
\[
\int_{\mathbb{T}^n} s_{\la}(x_1^{\pm},\dots,x_n^{\pm},1)
\prod_{i=1}^n (1-x_i^{\pm}) 
\prod_{1\leqslant i<j\leqslant n} (1-x_i^{\pm}x_j^{\pm})
\dup T(x),
\]
so that we can compute each of the integrals in the sum.
Thus
\[
I_{\mu}{(n)}=\sum_{\nu'\prec\mu'} (-1)^{\abs{\mu/\nu}}
\chi(\tilde{\nu} \text{ even}).
\]
Since $\nu\subseteq\mu$ and $l(\mu)\leqslant 2n$, we have
$\nu_{2n+1}=0$, resulting in
\[
I_{\mu}{(n)}=\sum_{\nu'\prec\mu'} (-1)^{\abs{\mu/\nu}}
\chi(\nu \text{ even}).
\]
By a repeat of the argument following \eqref{Eq_verticalstrip-even},
it follows that $\nu$ is completely fixed as $\nu_i=2\floor{\mu_i/2}$
(compare with \eqref{Eq_mufloorla}), completing the proof.
\end{proof}

To finally show that this implies the bounded Littlewood identity 
\eqref{Eq_MacB}, and hence MacMahon's conjecture, we note 
that \eqref{Eq_MacB} is the $q=t$ specialisation of 
Theorem~\ref{Thm_bounded4}.
This theorem was proved in Section~\ref{Sec_Method} using the 
virtual Koornwinder integral \eqref{Eq_IK-new} with $T=t^m$,
where $m$ is an integer or half-integer. Setting $q=t$ and assuming
that $m$ is an integer, say $n$, this is the integral
\[
I_K^{(n)}(s_{\mu};q,q;-1,q,\pm q^{1/2})=(-1)^{\abs{\mu}},
\]
which is nothing but \eqref{Eq_Vanishingintegral3} in disguise.
On the other hand, setting $q=t$ and assuming that $m=n-1/2$ 
is a half-integer, we may use Lemma~\ref{Lem_IK} to write this
as the integral
\[
I_K^{(n-1)}(s_{\mu}[x+\varepsilon];q,q;\pm q^{1/2},\pm q)=(-1)^{\abs{\mu}}.
\]
This time this may be may be recognised as \eqref{Eq_Vanishingintegral4}.

\begin{remark}
There are many parallels between the material presented in this section 
and the work of Baik and the first author on algebraic aspects of
increasing subsequences \cite{BR01}.
In particular, \cite[Theorem 5.2]{BR01} contains the two 
Littlewood-type identities
\begin{align*}
\sum_{\substack{\la \text{ even} \\[1pt] l(\la)\leqslant n}}
s_{\la}(x)&=
\int_{\textrm{O}(n)} \det\big(\sigma_g(x)\big) \dup g  \\
\sum_{\substack{\la' \text{ even} \\[1pt] l(\la)\leqslant 2n}}
s_{\la}(x)&=
\int_{\textrm{Sp}(n)} \det\big(\sigma_g(x)\big) \dup g,
\end{align*}
where $x=(x_1,x_2,\dots)$ and
$\sigma_z(x)$ is defined in \eqref{Eq_sigmaz}, so that
\[
\det\big(\sigma_g(x)\big)=\exp\bigg(\sum_{r\geqslant 1}
\frac{p_r(x)}{r} \Trace(g^r) \bigg).
\]
\end{remark}

\section{Character identities for affine Lie algebras}\label{Sec_char}

\subsection{Main results}\label{Sec_char-results}
We will only define a minimum of notation needed to state our results, 
and for a more comprehensive introduction to the representation theory of 
affine Lie algebras we refer the reader to \cite{Carter05,Kac90}.

\begin{figure}[t]\label{Fig}
\begin{center}
\begin{tikzpicture}[scale=0.6]
\draw (-1,2.2) node {$\mathrm{B}_n^{(1)}$};
\draw (1,2.2)--(1,3.2);
\draw (0,2.2)--(5,2.2);
\draw (5,2.27)--(6,2.27);
\draw (5,2.13)--(6,2.13);
\draw (5.4,2.4)--(5.6,2.2)--(5.4,2);
\foreach \x in {0,...,6} \draw[fill=blue] (\x,2.2) circle (0.08cm);
\draw[fill=blue] (1,3.2) circle (0.08cm);
\draw (0,1.8) node {$\sc \alpha_1$};
\draw (1,3.6) node {$\sc \alpha_0$};
\draw (1,1.8) node {$\sc \alpha_2$};
\draw (6,1.8) node {$\sc \alpha_n$};
\draw (0.75,3.2) node {$\sc 1$};
\draw (0,2.6) node {$\sc 1$};
\draw (0.75,2.6) node {$\sc 2$};
\draw (2,2.6) node {$\sc 2$};
\draw (3,2.6) node {$\sc 2$};
\draw (4,2.6) node {$\sc 2$};
\draw (5,2.6) node {$\sc 2$};
\draw (6,2.6) node {$\sc 2$};
\draw (7.8,2.2) node {$\mathrm{A}_{2n-1}^{(2)}$};
\draw (10,2.2)--(10,3.2);
\draw (9,2.2)--(14,2.2);
\draw (14.6,2.4)--(14.4,2.2)--(14.6,2);
\draw (14,2.27)--(15,2.27);
\draw (14,2.13)--(15,2.13);
\draw[fill=blue] (10,3.2) circle (0.08cm);
\foreach \x in {9,...,15} \draw[fill=blue] (\x,2.2) circle (0.08cm);
\draw (9,1.8) node {$\sc \alpha_1$};
\draw (10,3.6) node {$\sc \alpha_0$};
\draw (10,1.8) node {$\sc \alpha_2$};
\draw (15,1.8) node {$\sc \alpha_n$};
\draw (9.75,3.2) node {$\sc 1$};
\draw (9,2.6) node {$\sc 1$};
\draw (9.75,2.6) node {$\sc 2$};
\draw (11,2.6) node {$\sc 2$};
\draw (12,2.6) node {$\sc 2$};
\draw (13,2.6) node {$\sc 2$};
\draw (14,2.6) node {$\sc 2$};
\draw (15,2.6) node {$\sc 1$};
\draw (-1,0) node {$\mathrm{B}_n^{(1)\dagger}$};
\draw (5,0)--(5,1);
\draw (1,0)--(6,0);
\draw (0,0.07)--(1,0.07);
\draw (0,-0.07)--(1,-0.07);
\draw (0.6,0.2)--(0.4,0)--(0.6,-0.2);
\foreach \x in {0,...,6} \draw[fill=blue] (\x,0) circle (0.08cm);
\draw[fill=blue] (5,1) circle (0.08cm);
\draw (0,-0.4) node {$\sc \alpha_0$};
\draw (1,-0.4) node {$\sc \alpha_1$};
\draw (5,1.4) node {$\sc \alpha_n$};
\draw (6,-0.4) node {$\sc \alpha_{n-1}$};
\draw (0,0.4) node {$\sc 2$};
\draw (1,0.4) node {$\sc 2$};
\draw (2,0.4) node {$\sc 2$};
\draw (3,0.4) node {$\sc 2$};
\draw (4,0.4) node {$\sc 2$};
\draw (6,0.4) node {$\sc 1$};
\draw (4.75,1) node {$\sc 1$};
\draw (4.75,0.4) node {$\sc 2$};
\draw (7.7,0) node {$\mathrm{A}_{2n-1}^{(2)\dagger}$};
\draw (14,0)--(14,1);
\draw (10,0)--(15,0);
\draw (9.4,0.2)--(9.6,0)--(9.4,-0.2);
\draw (9,-0.07)--(10,-0.07);
\draw (9,0.07)--(10,0.07);
\draw[fill=blue] (14,1) circle (0.08cm);
\foreach \x in {9,...,15} \draw[fill=blue] (\x,0) circle (0.08cm);
\draw (9,-0.4) node {$\sc \alpha_0$};
\draw (10,-0.4) node {$\sc \alpha_1$};
\draw (14,1.4) node {$\sc \alpha_n$};
\draw (15,-0.4) node {$\sc \alpha_{n-1}$};
\draw (9,0.4) node {$\sc 1$};
\draw (10,0.4) node {$\sc 2$};
\draw (11,0.4) node {$\sc 2$};
\draw (12,0.4) node {$\sc 2$};
\draw (13,0.4) node {$\sc 2$};
\draw (13.75,1) node {$\sc 1$};
\draw (13.75,0.4) node {$\sc 2$};
\draw (15,0.4) node {$\sc 1$};
\draw (-1,-2) node {$\mathrm{C}_n^{(1)}$};
\draw (0,-1.93)--(1,-1.93);
\draw (0,-2.07)--(1,-2.07);
\draw (0.4,-1.8)--(0.6,-2)--(0.4,-2.2);
\draw (1,-2)--(5,-2);
\draw (5.6,-1.8)--(5.4,-2)--(5.6,-2.2);
\draw (5,-1.93)--(6,-1.93);
\draw (5,-2.07)--(6,-2.07);
\foreach \x in {0,...,6} \draw[fill=blue] (\x,-2) circle (0.08cm);
\draw (0,-2.4) node {$\sc \alpha_0$};
\draw (1,-2.4) node {$\sc \alpha_1$};
\draw (6,-2.4) node {$\sc \alpha_n$};
\draw (0,-1.6) node {$\sc 1$};
\draw (1,-1.6) node {$\sc 2$};
\draw (2,-1.6) node {$\sc 2$};
\draw (3,-1.6) node {$\sc 2$};
\draw (4,-1.6) node {$\sc 2$};
\draw (5,-1.6) node {$\sc 2$};
\draw (6,-1.6) node {$\sc 1$};
\draw (7.8,-2) node {$\mathrm{D}_{n+1}^{(2)}$};
\draw (9,-1.93)--(10,-1.93);
\draw (9,-2.07)--(10,-2.07);
\draw (9.6,-1.8)--(9.4,-2)--(9.6,-2.2);
\draw (10,-2)--(14,-2);
\draw (14.4,-1.8)--(14.6,-2)--(14.4,-2.2);
\draw (14,-1.93)--(15,-1.93);
\draw (14,-2.07)--(15,-2.07);
\foreach \x in {9,...,15} \draw[fill=blue] (\x,-2) circle (0.08cm);
\draw (9,-2.4) node {$\sc \alpha_0$};
\draw (10,-2.4) node {$\sc \alpha_1$};
\draw (15,-2.4) node {$\sc \alpha_n$};
\draw (9,-1.6) node {$\sc 1$};
\draw (10,-1.6) node {$\sc 1$};
\draw (11,-1.6) node {$\sc 1$};
\draw (12,-1.6) node {$\sc 1$};
\draw (13,-1.6) node {$\sc 1$};
\draw (14,-1.6) node {$\sc 1$};
\draw (15,-1.6) node {$\sc 1$};
\draw (-1,-4) node {$\mathrm{A}_2^{(2)}$};
\draw (0,-3.9)--(1,-3.9);
\draw (0,-3.966)--(1,-3.966);
\draw (0,-4.033)--(1,-4.033);
\draw (0,-4.1)--(1,-4.1);
\draw (0.6,-3.8)--(0.4,-4)--(0.6,-4.2);
\foreach \x in {0,...,1} \draw[fill=blue] (\x,-4) circle (0.08cm);
\draw (0,-4.4) node {$\sc \alpha_0$};
\draw (1,-4.4) node {$\sc \alpha_1$};
\draw (0,-3.6) node {$\sc 2$};
\draw (1,-3.6) node {$\sc 1$};
\draw (8,-4) node {$\mathrm{A}_2^{(2)\dagger}$};
\draw (9,-3.9)--(10,-3.9);
\draw (9,-3.966)--(10,-3.966);
\draw (9,-4.033)--(10,-4.033);
\draw (9,-4.1)--(10,-4.1);
\draw (9.4,-4.2)--(9.6,-4)--(9.4,-3.8);
\foreach \x in {9,...,10} \draw[fill=blue] (\x,-4) circle (0.08cm);
\draw (9,-4.4) node {$\sc \alpha_0$};
\draw (10,-4.4) node {$\sc \alpha_1$};
\draw (9,-3.6) node {$\sc 1$};
\draw (10,-3.6) node {$\sc 2$};
\draw (-1,-6) node {$\mathrm{A}_{2n}^{(2)}$};
\draw (0,-5.93)--(1,-5.93);
\draw (0,-6.07)--(1,-6.07);
\draw (0.6,-5.8)--(0.4,-6)--(0.6,-6.2);
\draw (1,-6)--(5,-6);
\draw (5.6,-5.8)--(5.4,-6)--(5.6,-6.2);
\draw (5,-5.93)--(6,-5.93);
\draw (5,-6.07)--(6,-6.07);
\foreach \x in {0,...,6} \draw[fill=blue] (\x,-6) circle (0.08cm);
\draw (0,-6.4) node {$\sc \alpha_0$};
\draw (1,-6.4) node {$\sc \alpha_1$};
\draw (6,-6.4) node {$\sc \alpha_n$};
\draw (0,-5.6) node {$\sc 2$};
\draw (1,-5.6) node {$\sc 2$};
\draw (2,-5.6) node {$\sc 2$};
\draw (3,-5.6) node {$\sc 2$};
\draw (4,-5.6) node {$\sc 2$};
\draw (5,-5.6) node {$\sc 2$};
\draw (6,-5.6) node {$\sc 1$};
\draw (8,-6) node {$\mathrm{A}_{2n}^{(2)\dagger}$};
\draw (9,-5.93)--(10,-5.93);
\draw (9,-6.07)--(10,-6.07);
\draw (9.4,-5.8)--(9.6,-6)--(9.4,-6.2);
\draw (10,-6)--(14,-6);
\draw (14.4,-5.8)--(14.6,-6)--(14.4,-6.2);
\draw (14,-5.93)--(15,-5.93);
\draw (14,-6.07)--(15,-6.07);
\foreach \x in {9,...,15} \draw[fill=blue] (\x,-6) circle (0.08cm);
\draw (9,-6.4) node {$\sc \alpha_0$};
\draw (10,-6.4) node {$\sc \alpha_1$};
\draw (15,-6.4) node {$\sc \alpha_n$};
\draw (9,-5.6) node {$\sc 1$};
\draw (10,-5.6) node {$\sc 2$};
\draw (11,-5.6) node {$\sc 2$};
\draw (12,-5.6) node {$\sc 2$};
\draw (13,-5.6) node {$\sc 2$};
\draw (14,-5.6) node {$\sc 2$};
\draw (15,-5.6) node {$\sc 2$};
\end{tikzpicture}
\end{center}
\caption{The Dynkin diagrams of the ``$\mathrm{BC}_n$-type'' affine
Lie algebras with labelling of vertices by simple roots
$\alpha_0,\dots,\alpha_n$ and marks $a_0,\dots,a_n$.}
\end{figure}
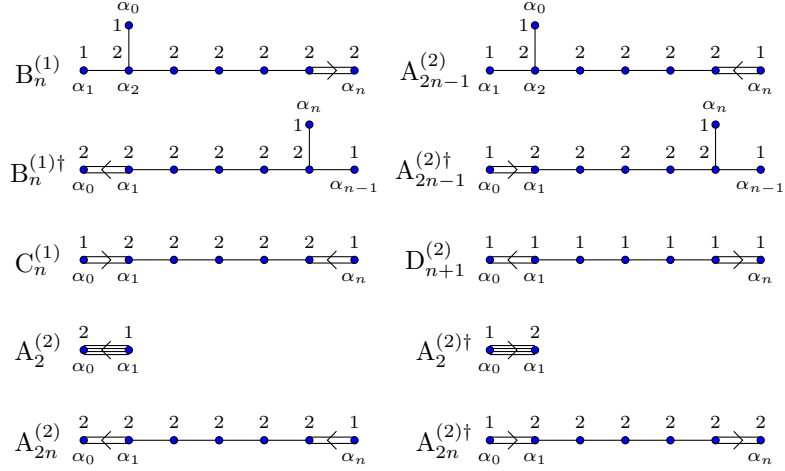
We will be concerned with affine Lie algebras $\mathfrak{g}$ of 
``$\mathrm{BC}_n$ type'',
that is, $\mathfrak{g}=X_N^{(r)}$ with $X_N^{(r)}$ one of
$\mathrm{B}_n^{(1)}$, $\mathrm{C}_n^{(1)}$, $\mathrm{A}_{2n-1}^{(2)}$,
$\mathrm{A}_{2n}^{(2)}$ and $\mathrm{D}_{n+1}^{(2)}$.
Using standard labelling, for these affine Lie algebras
the classical part is either $\mathrm{B}_n$ or $\mathrm{C}_n$.
The relevant Dynkin diagrams are shown in Figure~\ref{Fig}.
For $\mathrm{B}_n^{(1)}$, $\mathrm{A}_{2n-1}^{(2)}$ and
$\mathrm{A}_{2n}^{(2)}$ we also use the nonstandard labelling of
simple roots, indicated by the customary $\dagger$,
obtained by mapping $\alpha_i\mapsto \alpha_{n-i}$ for 
$0\leqslant i\leqslant n$.
Apart from the simple roots $\alpha_0,\dots,\alpha_n$ and fundamental weights
$\fwa_0,\dots,\fwa_n$ we need the null root $\delta$
given by $\delta=\sum_{i=0}^n a_i \alpha_i$, with the $a_i$ the
marks of $\mathfrak{g}$, see Figure~\ref{Fig}.
We are interested in representations of $\mathfrak{g}$ known as 
integrable highest-weight modules. If $P_{+}$ is the set of dominant
integral weights $P_{+}=\sum_{i=0}^n \Nat\fwa_i$
then these modules are indexed by $\Lambda\in P_{+}$, and will be denoted by 
$V(\Lambda)$ in the following.
The character of $V(\Lambda)$ can be computed in closed form by the
Weyl--Kac formula:
\begin{equation}\label{Eq_Weyl-Kac}
\ch V(\Lambda)=
\frac{\sum_{w\in W}\sgn(w) \eup^{w(\Lambda+\rho)-\rho}}
{\prod_{\alpha>0}(1-\eup^{-\alpha})^{\mult(\alpha)}}.
\end{equation}
Here $W$ is the Weyl group of $\gfrak$, $\sgn(w)$ the signature of
$w\in W$, $\rho=\fwa_0+\cdots+\fwa_n$ the Weyl vector,
and $\mult(\alpha)$ the multiplicity of $\alpha$. In the denominator,
the product runs over the positive roots of $\gfrak$.

Below we prove combinatorial character formulas for 
\begin{equation}\label{Eq_chim}
\chi_m(\mathfrak{g}):=\eup^{-m\fwa_0} \ch V(m\fwa_0)
\end{equation}
for $\mathfrak{g}$ one of 
$\mathrm{B}_n^{(1)},\mathrm{B}_n^{(1)\dagger},\mathrm{C}_n^{(1)},
\mathrm{A}_{2n-1}^{(2)},\mathrm{A}_{2n-1}^{(2)\dagger},\mathrm{A}_{2n}^{(2)},
\mathrm{A}_{2n}^{(2)\dagger},\mathrm{D}_{n+1}^{(2)}$.
Since the diagrams of $\mathrm{C}_n^{(1)}$ and $\mathrm{D}_{n+1}^{(2)}$ are
the same when read from left to right as from right to left,
these two algebras occur only once in the above list.
For $\mathrm{A}_{2n-1}^{(2)\dagger},\mathrm{A}_{2n}^{(2)\dagger}$ and
$\mathrm{D}_{n+1}^{(2)}$, however, we obtain two distinct formulas,
making a total of eleven character formulas.

\medskip

Recall that $P_{\la}^{(R)}$ denotes a Hall--Littlewood polynomial of 
type $R$.
Also recall the definition of $X$ in \eqref{Eq_X}, which may 
be written plethystically as
\[
X=(x_1+x_1^{-1}+\dots+x_n+x_n^{-1}) \,\frac{1-t^N}{1-t}.
\]
We complement this with 
\begin{align}\label{Eq_barX}
\bar{X}&:=(x_1^{\pm},tx_1^{\pm},\dots,t^{N-1}x_1^{\pm},\dots\dots,
x_{n-1}^{\pm},tx_{n-1}^{\pm},\dots,t^{N-1}x_{n-1}^{\pm},1,t,\dots,t^{N-1}) \\
&\hphantom{:}=(x_1+x_1^{-1}+\dots+x_{n-1}+x_{n-1}^{-1}+1)
\,\frac{1-t^N}{1-t}. \notag
\end{align}
Finally, in each of the formulas below $m_0(\la):=\infty$.
\begin{theorem}\label{Thm_een}
Let 
\begin{equation}\label{Eq_xit}
x_i:=\eup^{-\alpha_i-\cdots-\alpha_{n-1}-\alpha_n/2}\quad
(1\leqslant i\leqslant n), \qquad\quad t:=\eup^{-\delta},
\end{equation}
and let $m$ and $n$ be positive integers. Then
\begin{align}\label{Eq_Cn-character}
\chi_m\big(\mathrm{C}_n^{(1)}\big)&=\lim_{N\to\infty} t^{mnN^2} 
P^{(\mathrm{C}_{2nN})}_{m^{2nN}}(t^{1/2}X;t,0) \\[1pt] 
&=\sum_{\substack{\la \textup{ even} \\[1pt] \la_1\leqslant 2m}}
t^{\abs{\la}/2} P'_{\la}(x_1^{\pm},\dots,x_n^{\pm};t) \notag  \\[2mm]
\label{Eq_A2n12-character}
\chi_m\big(\mathrm{A}_{2n-1}^{(2)}\big)&=
(t;t^2)_{\infty} \lim_{N\to\infty} t^{\frac{1}{2}mnN^2} 
P^{(\mathrm{D}_{2nN})}_{\halfm{2nN}}(t^{1/2}X;t)  \\
&=\sum_{\substack{\la' \textup{ even} \\[1pt] \la_1\leqslant m}} 
t^{\abs{\la}/2} P'_{\la}(x_1^{\pm},\dots,x_n^{\pm};t)
\prod_{i=0}^{m-1} (t;t^2)_{m_i(\la)/2} \notag 
\intertext{and}
\label{Eq_A2n2-character}
\chi_m\big(\mathrm{A}_{2n}^{(2)}\big)&=\lim_{N\to\infty} t^{\frac{1}{2}mnN^2} 
P^{(\mathrm{B}_{2nN})}_{\halfm{2nN}}(t^{1/2}X;t,0)  \\[1mm]
&=\sum_{\substack{\la\\[1pt] \la_1\leqslant m}}
t^{\abs{\la}/2} P'_{\la}(x_1^{\pm},\dots,x_n^{\pm};t). \notag
\end{align}
\end{theorem}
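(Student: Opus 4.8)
The plan is to establish, for each of the three algebras, the two displayed equalities in turn: the equality between the combinatorial sum over modified Hall--Littlewood polynomials and the $N\to\infty$ limit of a classical Hall--Littlewood polynomial will follow from the $q=0$ cases of the bounded Littlewood identities of Section~\ref{Sec_Bounded}, and the equality between that limit and the affine character $\chi_m(\gfrak)$ will follow from the Weyl--Kac formula \eqref{Eq_Weyl-Kac}.

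\textbf{From the bounded Littlewood identities to the limit.} For $\C_n^{(1)}$ I would start from \eqref{Eq_Stem} (the $q=0$ case of Theorem~\ref{Thm_bounded1}), for $\A_{2n-1}^{(2)}$ from \eqref{Eq_JZ} (the $q=0$ case of Theorem~\ref{Thm_bounded3}), and for $\A_{2n}^{(2)}$ from the $t_2=0$ case of \eqref{Eq_bounded7} (Theorem~\ref{Thm_bounded7}), noting that $h^{(m)}_\la(0;t)=\prod_{i=1}^{m-1}H_{m_i(\la)}(0;t)=1$ by \eqref{Eq_spec}. In each identity I replace the number of variables $n$ by $2nN$ and the alphabet $(x_1,\dots,x_{2nN})$ by $t^{1/2}X$, where $X=(x_1^{\pm}+\dots+x_n^{\pm})(1-t^N)/(1-t)$. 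Since the $x_j$ and $x_j^{-1}$ cancel in pairs, the product of the $2nN$ entries of $t^{1/2}X$ equals $t^{nN^2}$, so the prefactor $(x_1\cdots x_{2nN})^{m}$ (resp.\ $(x_1\cdots x_{2nN})^{m/2}$) on the right side becomes $t^{mnN^2}$ (resp.\ $t^{mnN^2/2}$), matching the first line of the displayed formula. On the left side I use that $p_r[t^{1/2}X]=t^{r/2}\,\tfrac{1-t^{Nr}}{1-t^r}\,p_r(x_1^{\pm},\dots,x_n^{\pm})$ converges coefficientwise in $\Rat[[t]]$ to $t^{r/2}p_r(x_1^{\pm},\dots,x_n^{\pm})/(1-t^r)$ as $N\to\infty$; hence $P_\la(t^{1/2}X;t)\to P_\la\bigl[t^{1/2}(x_1^{\pm}+\dots+x_n^{\pm})/(1-t);t\bigr]=t^{\abs{\la}/2}P'_\la(x_1^{\pm},\dots,x_n^{\pm};t)$ by \eqref{Eq_HLPp} and homogeneity of $P'_\la$. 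The $N$-independent $t$-factors $(t;t^2)_{m_i(\la)/2}$ are inherited unchanged, and termwise convergence of the sum is immediate as only finitely many $\la$ contribute in each degree. In the $\A_{2n-1}^{(2)}$ case the prefactor $(t;t^2)_{\infty}$ arises exactly by extracting from the product the $i=0$ factor $(t;t^2)_{m_0(\la)/2}$ with the convention $m_0(\la)=\infty$, which reconciles \eqref{Eq_JZ} with the stated identity.

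\textbf{From the limit to the affine character.} Here I would insert the substitution \eqref{Eq_xit} into the Weyl--Kac formula \eqref{Eq_Weyl-Kac} for $\chi_m(\gfrak)=\eup^{-m\Fun_0}\ch V(m\Fun_0)$. Decomposing the affine Weyl group as a semidirect product of the finite Weyl group $W$ with the appropriate translation lattice $M$, the Weyl--Kac numerator reorganises as a finite $W$-alternating sum in the $x_i$ (a shifted theta-function determinant of the relevant classical $\C_n$, $\B_n$ or $\D_n$ shape) times theta functions in $t=\eup^{-\delta}$, while the denominator $\prod_{\alpha>0}(1-\eup^{-\alpha})^{\mult(\alpha)}$ splits as the classical Vandermonde $\Delta_R(x)$ times an infinite product in $t$, the imaginary roots contributing a factor $\prod_{k\ge1}(1-t^k)^n$ up to the twist. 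Independently, starting from the Weyl-group-sum presentation \eqref{Eq_MacP}/\eqref{Eq_BCHL2} of the classical Hall--Littlewood polynomial $P^{(R_{2nN})}_{\mathrm{rect}}(t^{1/2}X;t,\dots)$ indexed by a rectangle, the $N$ geometric copies of each $t^kx_j^{\pm}$ are summed by geometric-series manipulations of principal-specialisation type (as in \eqref{Eq_Pspec}), so that as $N\to\infty$ the finite $t$-products converge to the affine-side infinite $t$-product and the surviving sum to the theta determinant; the prefactor power of $t$ absorbs the shift between the rectangle $(m)^{2nN}$ or $(m/2)^{2nN}$ and the true dominant weight. Matching the two quotients term by term over $M$ then gives $\chi_m(\gfrak)=\lim_{N\to\infty}t^{\dots}P^{(R_{2nN})}_{\mathrm{rect}}(t^{1/2}X;t,\dots)$, the remaining bookkeeping---the precise lattice $M$, the marks $a_i$, the $t^{1/2}$ shift, and the factor-$2$/even-signed-permutation subtlety for $\D_n$---being supplied by the appendix lemmas on the Weyl--Kac formula.

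The first step is essentially formal once the correct bounded Littlewood identity has been matched to each algebra. The main obstacle is the second step: the Weyl--Kac comparison has to be carried out uniformly for all the ``$\BC_n$-type'' algebras (and, for the companion results, their daggered labellings), and identifying the theta-function determinants and $\eta$-type products coming from the affine side with the large-rank limit of the classical Weyl-sum (equivalently determinantal) form of the Hall--Littlewood polynomial, while keeping exact track of every normalisation constant, is where the substantive work and the need for the appendix lie.
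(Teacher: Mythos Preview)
Your proposal is essentially correct and follows the same route as the paper. The first step---substituting $t^{1/2}X$ into the appropriate $q=0$ bounded Littlewood identity and letting $N\to\infty$---is exactly what the paper does (though the paper unifies the three cases via the master identity \eqref{Eq_bounded6} with parameters $t_2,t_3$ before specialising, whereas you go straight to the individual identities \eqref{Eq_Stem}, \eqref{Eq_JZ}, \eqref{Eq_bounded7}; this is a cosmetic difference). For the second step the paper does precisely what you sketch: expand $P^{(R)}_{\text{rect}}$ via its Weyl-group sum (Lemma~\ref{Lem_littlelemma} and its variants), observe that after the substitution $t^{1/2}X$ most signed permutations make the summand vanish so that only sequences parametrised by $(r_1,\dots,r_n)\in\Z_{\ge0}^n$ survive (Proposition~\ref{Prop_MPS}), pass to the $N\to\infty$ limit to obtain an explicit $\C_n$-type hypergeometric series \eqref{Eq_id56a}, and finally match that series with a rewriting of the Weyl--Kac formula (the lemmas from \cite{BW13} and the appendix). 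Your description ``geometric-series manipulations of principal-specialisation type (as in \eqref{Eq_Pspec})'' is slightly misleading: the mechanism is not a closed-form evaluation but a vanishing argument on the factor $\prod_{i<j}(1-tx_i^{\varepsilon_i}x_j^{\varepsilon_j})$ in $\Phi$, which kills all but the ``staircase'' sign patterns. Apart from that wording, your outline matches the paper's proof.
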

\begin{theorem}\label{Thm_vier}
Let
\[
x_i:=\eup^{-\alpha_i-\cdots-\alpha_{n-1}+(\alpha_{n-1}-\alpha_n)/2}
\quad (1\leqslant i\leqslant n), \qquad\quad t:=\eup^{-\delta},
\]
and let $m$ and $n$ be positive integers. Then
\begin{align}\label{Eq_A2n12dagger-character} 
\chi_m\big(\mathrm{A}_{2n-1}^{(2)\dagger}\big)
&=(t;t^2)_{\infty} \lim_{N\to\infty} t^{mnN^2} 
P^{(\mathrm{C}_{2nN})}_{m^{2nN}}(t^{1/2} X;t,t) \\
&=\sideset{}{'}\sum_{\substack{\la \\[1pt] \la_1\leqslant 2m}} 
t^{(\abs{\la}+\op(\la))/2}
P'_{\la}(x_1^{\pm},\dots,x_n^{\pm};t) 
\prod_{i=0}^{2m-1} (t;t^2)_{\ceil{m_i(\la)/2}}, \notag
\end{align}
where the prime in the sum over $\la$ denotes the 
restriction that parts of odd size must have even multiplicity.
\end{theorem}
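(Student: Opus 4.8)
The plan is to establish the two displayed equalities separately: the second one (identifying the renormalised $\C_{2nN}$ Hall--Littlewood limit with the combinatorial sum) is deduced from the bounded Littlewood identity of Theorem~\ref{Thm_bounded6}, while the first one (identifying $\chi_m(\A_{2n-1}^{(2)\dagger})$ with that limit) is obtained exactly as in the proofs of Theorems~\ref{Thm_een} and~\ref{Thm_twistedA}, i.e.\ from the Weyl--Kac formula \eqref{Eq_Weyl-Kac} together with the technical lemmas of the appendix.

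For the second equality I would start from Theorem~\ref{Thm_bounded6} and specialise $(t_2,t_3)=(t^{1/2},-t^{1/2})$, so that by \eqref{Eq_CHL} the right-hand side becomes $(x_1\cdots x_n)^m P^{(\C_n)}_{m^n}(x;t,t)$, giving
\[
\sum_{\substack{\la\\[1pt]\la_1\le 2m}} h_\la^{(2m)}\big(t^{1/2},-t^{1/2};t\big)\,P_\la(x;t)=(x_1\cdots x_n)^m P^{(\C_n)}_{m^n}(x;t,t).
\]
Taking the number of variables to be $2nN$ and substituting the alphabet $x\mapsto t^{1/2}X$ with $X=X_N(x;t)$ as in \eqref{Eq_X}, homogeneity of $P_\la$ gives $P_\la(t^{1/2}X;t)=t^{\abs{\la}/2}P_\la(X;t)$, while the product of the $2nN$ letters of $t^{1/2}X$ equals $t^{nN^2}$, producing the prefactor $t^{mnN^2}$. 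Letting $N\to\infty$ and using $P_\la(X_N;t)\to P'_\la(x_1^\pm,\dots,x_n^\pm;t)$ (convergence in the $t$-adic topology, which also justifies interchanging the limit with the sum over $\la$, since only finitely many $\la$ contribute to each power of $t$), one obtains
\[
\lim_{N\to\infty} t^{mnN^2}P^{(\C_{2nN})}_{m^{2nN}}\big(t^{1/2}X;t,t\big)=\sum_{\substack{\la\\[1pt]\la_1\le 2m}} h_\la^{(2m)}\big(t^{1/2},-t^{1/2};t\big)\,t^{\abs{\la}/2}\,P'_\la(x_1^\pm,\dots,x_n^\pm;t).
\]
It then remains to simplify $h_\la^{(2m)}(t^{1/2},-t^{1/2};t)$ via \eqref{Eq_RSBCn}: here $b/a=-1$, $ab=-t$, so by \eqref{Eq_spec2} the coefficient vanishes unless $m_i(\la)$ is even for every odd $i\in\{1,\dots,2m-1\}$ — which is precisely the restriction indicated by the prime on the sum — and in that case $\op(\la)=\sum_{i\text{ odd}}m_i(\la)$ is even, so $(-t^{1/2})^{\op(\la)}=t^{\op(\la)/2}$; using \eqref{Eq_spec2} and \eqref{Eq_spec3} the surviving product equals $\prod_{i=1}^{2m-1}(t;t^2)_{\ceil{m_i(\la)/2}}$. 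Multiplying through by $(t;t^2)_\infty=(t;t^2)_{\ceil{m_0(\la)/2}}$ (recall $m_0(\la):=\infty$) absorbs the constant into the product over $i=0,\dots,2m-1$, yielding exactly the right-hand side of the theorem.

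For the first equality I would follow the $q=0$, type-$\C$ specialisation of the argument in Theorems~\ref{Thm_een} and~\ref{Thm_twistedA}: write $\ch V(m\Fun_0)$ for $\gfrak=\A_{2n-1}^{(2)\dagger}$ by the Weyl--Kac formula \eqref{Eq_Weyl-Kac}, substitute $t=\eup^{-\delta}$ and $x_i=\eup^{-\alpha_i-\cdots-\alpha_{n-1}+(\alpha_{n-1}-\alpha_n)/2}$, and use the appendix lemmas on the Weyl--Kac denominator and on reorganising the affine Weyl-group sum to recognise that, at level $m$, the numerator and denominator arise as the $N\to\infty$ limit of the finite $\C_{2nN}$ Weyl-group sum defining $(x_1\cdots x_{2nN})^m P^{(\C_{2nN})}_{m^{2nN}}(t^{1/2}X;t,t)$, the constant $(t;t^2)_\infty$ being the short-root contribution to the affine denominator invisible at finite $N$. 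The main obstacle is precisely this first equality: the combinatorial half is a routine specialisation of Theorem~\ref{Thm_bounded6}, but matching the affine character to the Hall--Littlewood limit requires the careful bookkeeping of the appendix — pinning down the normalisation $(t;t^2)_\infty$ and the exact power of $t$, and checking that the $\A_{2n-1}^{(2)\dagger}$ affine Weyl-group sum (in the $\dagger$ labelling) limits to the $\C_{2nN}$ data rather than the $\D_{2nN}$ or $\BC_{2nN}$ data that appear for its siblings in Theorems~\ref{Thm_een} and~\ref{Thm_twistedA}; a secondary point is the limit--sum interchange, which is handled by the $t$-adic (equivalently, degree) grading as noted above.
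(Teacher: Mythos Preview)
Your treatment of the second equality is correct and coincides with the paper's: specialise $(t_2,t_3)=(t^{1/2},-t^{1/2})$ in Theorem~\ref{Thm_bounded6} (so that the right side becomes $P^{(\C_n)}_{m^n}(x;t,t)$ via \eqref{Eq_CHL}), substitute $x\mapsto t^{1/2}X$ with $2nN$ variables, take $N\to\infty$, and evaluate $h_\la^{(2m)}(t^{1/2},-t^{1/2};t)$ using \eqref{Eq_spec2}--\eqref{Eq_spec3}; the paper does precisely this, routed through the intermediate identity \eqref{Eq_id56a-new}.

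For the first equality your sketch is in the right spirit but misses the essential bridging step. The $\A_{2n-1}^{(2)\dagger}$ character in Lemma~\ref{Lem_Bn-WK-2} (equation \eqref{Eq_Adag}) is a lattice sum built from $\Delta_{\D}(xt^r)$, whereas the large-$N$ limit of $P^{(\C_{2nN})}_{m^{2nN}}(t^{1/2}X;t,t)$, computed through Lemma~\ref{Lem_littlelemma} and the mechanism of Proposition~\ref{Prop_MPS}, is a sum built from $\Delta_{\C}(xt^r)$. These two expressions are not directly equal, and the affine Weyl-group sum does \emph{not} ``limit to the $\C_{2nN}$ data'' as your final paragraph anticipates. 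The paper connects them via the $\Delta$-relations \eqref{Eq_Delta-rel}: setting $t_3=-t^{1/2}$ in \eqref{Eq_id56a} converts $\Delta_{\C}$ to $\Delta_{\B}$ (this is \eqref{Eq_id56a-new}), and then setting $t_2=t^{1/2}$ and applying \eqref{Delta-relBD} with $x\mapsto -x$ converts $\Delta_{\B}$ to $\Delta_{\D}$, after which Lemma~\ref{Lem_Bn-WK-2} identifies the result as $(-t;t)_{\infty}\,\chi_m(\A_{2n-1}^{(2)\dagger})$. The prefactor $(t;t^2)_{\infty}$ in the theorem then arises from $(-t;t)_{\infty}(t;t^2)_{\infty}=1$, not as an independent ``short-root contribution''. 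Without this $\Delta_{\C}\to\Delta_{\B}\to\Delta_{\D}$ passage your argument for the first equality is incomplete.
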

\begin{theorem}\label{Thm_drie}
Let 
\begin{equation}\label{Eq_xitD}
x_i:=\eup^{-\alpha_i-\cdots-\alpha_n}\quad
(1\leqslant i\leqslant n), 
\end{equation}
and let $m$ and $n$ be positive integers. Then
\begin{align}\label{Eq_A2n2-new}
\chi_m\big(\mathrm{A}_{2n}^{(2)\dagger}\big)&=
\lim_{N\to\infty} t^{mnN^2} 
P^{(\mathrm{BC}_{2nN})}_{m^{2nN}}(t^{1/2} X;t,0,-t^{1/2}) \\[2mm]
&=\sum_{\substack{\la \\[1pt] \la_1\leqslant 2m}} t^{(\abs{\la}+\op(\la))/2}
P'_{\la}(x_1^{\pm},\dots,x_n^{\pm};t), \notag
\end{align}
where $t:=\eup^{-\delta}$, and
\begin{align}\label{Eq_Dn-char2}
\chi_m\big(\mathrm{D}_{n+1}^{(2)}\big)&=(-t^{1/2};t^{1/2})_{\infty}
\lim_{N\to\infty} t^{\frac{1}{2}mnN^2} 
P^{(\mathrm{B}_{2nN})}_{\halfm{2nN}}(t^{1/2}X;t,-t^{1/2})  \\[1mm]
&=\sum_{\substack{\la\\[1pt] \la_1\leqslant m}} t^{\abs{\la}/2} 
P'_{\la}(x_1^{\pm},\dots,x_n^{\pm};t)
\prod_{i=0}^{m-1} (-t^{1/2};t^{1/2})_{m_i(\la)}, \notag
\end{align}
where $t^{1/2}:=\eup^{-\delta}$.
\end{theorem}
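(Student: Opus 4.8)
The plan is to obtain both character formulas by degenerating a bounded Littlewood identity of Section~\ref{Sec_Bounded} in a large-rank limit, following the same strategy used for Theorem~\ref{Thm_een}. For $\A_{2n}^{(2)\dagger}$ the input is Theorem~\ref{Thm_bounded6} with $(t_2,t_3)=(0,-t^{1/2})$, and for $\D_{n+1}^{(2)}$ it is Theorem~\ref{Thm_bounded7} with $t_2=-t^{1/2}$. In each case I would first specialise the $2nN$ Hall--Littlewood variables to the letters of $t^{1/2}X$, with $X$ the alphabet \eqref{Eq_X}. Since $X=\big(x_1+x_1^{-1}+\cdots+x_n+x_n^{-1}\big)(1-t^N)/(1-t)$ and $P_\la$ is homogeneous of degree $\abs{\la}$, the term indexed by $\la$ on the left becomes $t^{\abs{\la}/2}P_\la(X;t)$ times its Rogers--Szeg\H{o} coefficient, and $P_\la(X;t)\to P_\la\big(\big[(x_1+x_1^{-1}+\cdots+x_n+x_n^{-1})/(1-t)\big];t\big)=P'_\la(x_1^{\pm},\dots,x_n^{\pm};t)$ as $N\to\infty$ by \eqref{Eq_HLPp}, the restriction $l(\la)\leq 2nN$ dropping out in the limit.

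Next comes the coefficient bookkeeping. Regarding $h_\la^{(2m)}$ and $h_\la^{(m)}$ as polynomials in their parameters, \eqref{Eq_RSBCn} and \eqref{Eq_spec} give $h_\la^{(2m)}(0,-t^{1/2};t)=\prod_{i\textup{ odd}}(t^{1/2})^{m_i(\la)}=t^{\op(\la)/2}$, which manufactures the weight $t^{(\abs{\la}+\op(\la))/2}$ on the right of \eqref{Eq_A2n2-new}; while \eqref{Eq_RSBn} and \eqref{Eq_spec4} give $h_\la^{(m)}(-t^{1/2};t)=\prod_{i=1}^{m-1}(-t^{1/2};t^{1/2})_{m_i(\la)}$, and since $m_0(\la):=\infty$ the extra $i=0$ factor $(-t^{1/2};t^{1/2})_\infty$ accounts for the prefactor in \eqref{Eq_Dn-char2}. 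On the right-hand side of the bounded identity the prefactor $(x_1\cdots x_n)^{m}$ (resp.\ $(x_1\cdots x_n)^{m/2}$), evaluated on the $2nN$ letters $t^{1/2+j}x_i^{\pm}$ with $0\leq j\leq N-1$ and $1\leq i\leq n$, collapses to $t^{mnN^2}$ (resp.\ $t^{\frac12 mnN^2}$) because $\sum_{j=0}^{N-1}(2j+1)=N^2$. Putting these together yields the \emph{second} equality in both \eqref{Eq_A2n2-new} and \eqref{Eq_Dn-char2}.

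The substantive step is the \emph{first} equality: identifying $\lim_{N\to\infty}t^{mnN^2}P^{(\BC_{2nN})}_{m^{2nN}}(t^{1/2}X;t,0,-t^{1/2})$, respectively $(-t^{1/2};t^{1/2})_\infty\lim_{N\to\infty}t^{\frac12 mnN^2}P^{(\B_{2nN})}_{\halfm{2nN}}(t^{1/2}X;t,-t^{1/2})$, with $\eup^{-m\Fun_0}\ch V(m\Fun_0)$ as given by the Weyl--Kac formula \eqref{Eq_Weyl-Kac}. For this I would rewrite the Hall--Littlewood polynomial via Lemma~\ref{Lem_littlelemma} (resp.\ Lemma~\ref{Lem_littlelemmaB}) as a sum over sign patterns $\varepsilon$ of $\Phi(x^{\varepsilon};t,t_2,t_3)\prod_{i:\,\varepsilon_i=-1}x_i^{2m}$ (resp.\ $x_i^{m}$), substitute the $X$-alphabet, and take $N\to\infty$: the $q$-shifted factorials inside $\Phi$ should reassemble, via the affine (Macdonald) denominator identity, into $\prod_{\alpha>0}(1-\eup^{-\alpha})^{\mult(\alpha)}$ for the relevant $\BC_n$-type algebra, while the sum over sign patterns together with the permutation part of $W=\Symm_{2nN}\ltimes(\Z/2\Z)^{2nN}$ degenerates to the alternating sum $\sum_w\sgn(w)\eup^{w(m\Fun_0+\rho)-\rho}$ over the affine Weyl group. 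The parameter choices $(t_2,t_3)=(0,-t^{1/2})$ and $t_2=-t^{1/2}$ are exactly what pin down the mark patterns of $\A_{2n}^{(2)\dagger}$ and $\D_{n+1}^{(2)}$ (cf.\ Figure~\ref{Fig}); the uniform control of these limits over the $\BC_n$-type algebras is the job of the Weyl--Kac lemmas in the appendix.

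I expect this last identification to be the main obstacle. The delicate points are tracking the $N\to\infty$ asymptotics of the Weyl-group (determinantal) sum against the Weyl vector $\rho$ and the null root $\delta$, so that the normalisation $t^{mnN^2}$ (resp.\ $t^{\frac12 mnN^2}$) and the auxiliary factor $(-t^{1/2};t^{1/2})_\infty$ emerge with exactly the right exponents, and then matching the resulting positive-root product with the correct root multiplicities, bearing in mind that for $\D_{n+1}^{(2)}$ one has $t^{1/2}=\eup^{-\delta}$ rather than $t=\eup^{-\delta}$. By contrast, once Theorems~\ref{Thm_bounded6} and~\ref{Thm_bounded7} and the Rogers--Szeg\H{o} specialisations are in hand, the combinatorial (second) equalities are immediate.
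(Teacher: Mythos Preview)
Your proposal is correct and follows essentially the same route as the paper. The paper likewise starts from Theorems~\ref{Thm_bounded6} and~\ref{Thm_bounded7} at the specialisations $(t_2,t_3)=(0,-t^{1/2})$ and $t_2=-t^{1/2}$, plugs in the alphabet $t^{1/2}X$, and takes $N\to\infty$; the only substantive difference is that for the character identification the paper does not argue directly with the Weyl--Kac numerator but first passes (via Lemma~\ref{Lem_littlelemma}/Proposition~\ref{Prop_MPS}) through an explicit $\C_n$-type bilateral hypergeometric series \eqref{Eq_id56a}, simplifies it using the $\Delta$-relations \eqref{Eq_Delta-rel}, and then quotes \cite[Lemmas~2.3 and~2.4]{BW13} which are precisely the rewritings of the Weyl--Kac formula for $\A_{2n}^{(2)\dagger}$ and $\D_{n+1}^{(2)}$ in that bilateral form.
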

The identity \eqref{Eq_Cn-character}, without the limiting expression in 
the middle, was first obtained in \cite[Theorem 1.1; (1.4a)]{BW13}.
Equation \eqref{Eq_A2n2-character}, which is \eqref{Eq_char-A2n2} from the 
introduction, extends \cite[Theorem 1.1; (1.4b)]{BW13} from integer to 
half-integer values of $m$. In the same manner, \eqref{Eq_Dn-char2} 
extends \cite[Theorem 5.4]{BW13}. The identity~\ref{Eq_A2n2-new},
again without the limiting expression on the right, 
is~\cite[Theorem 5.3]{BW13}.

In each of the remaining formulas $\eup^{-\alpha_n}$ is specialised.
\begin{theorem}
Let 
\begin{equation}\label{Eq_xit-2}
x_i:=-\eup^{-\alpha_i-\cdots-\alpha_n}\quad
(1\leqslant i\leqslant n), \qquad\quad t:=\eup^{-\delta},
\end{equation}
and specialise $\eup^{-\alpha_n}\mapsto -1$.
Then, for $m$ and $n$ positive integers,
\begin{align}\label{Eq_A2n2dagger-character}
\chi_m\big(\mathrm{A}_{2n}^{(2)\dagger}\big)&=\lim_{N\to\infty} 
t^{\frac{1}{2}m(2n-1)N^2} 
P^{(\mathrm{C}_{(2n-1)N})}_{m^{(2n-1)N}}(t^{1/2}\bar{X};t,0)  \\[1mm]
&=\sum_{\substack{\la \textup{ even} \\[1pt] \la_1\leqslant 2m}}
t^{\abs{\la}/2} P'_{\la}(x_1^{\pm},\dots,x_{n-1}^{\pm},1;t) \notag \\
\intertext{and} 
\label{Eq_Bn1-character}
\chi_m\big(\mathrm{B}_n^{(1)}\big)
&=(t;t^2)_{\infty} \lim_{N\to\infty} t^{\frac{1}{4}m(2n-1)N^2} 
P^{(\mathrm{D}_{(2n-1)N})}_{\halfm{(2n-1)N}}(t^{1/2}\bar{X};t)  \\
&=\sum_{\substack{\la' \textup{ even}\\[1pt] \la_1\leqslant m}}
t^{\abs{\la}/2} P'_{\la}(x_1^{\pm},\dots,x_{n-1}^{\pm},1;t)
\prod_{i=0}^{m-1} (t;t^2)_{m_i(\la)/2}. \notag
\end{align}
\end{theorem}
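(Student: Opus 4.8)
The plan is to follow the three-step template already used for Theorems~\ref{Thm_een}, \ref{Thm_vier} and \ref{Thm_drie}: (i) start from one of the $q=0$ bounded Littlewood identities of Section~\ref{Sec_Bounded}; (ii) perform a plethystic substitution and let a parameter $N\to\infty$, so as to produce the rightmost (modified Hall--Littlewood) side together with a finite-rank Hall--Littlewood polynomial on the middle side; (iii) identify that limit with the affine character $\chi_m(\gfrak)=\eup^{-m\Fun_0}\ch V(m\Fun_0)$ via the Weyl--Kac formula \eqref{Eq_Weyl-Kac} and the lemmas of the appendix. This will in particular provide a \emph{second} realisation of $\chi_m(\A_{2n}^{(2)\dagger})$, complementing \eqref{Eq_A2n2-new}.

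For the $\A_{2n}^{(2)\dagger}$ identity I would begin with the $q=0$, $a=0$ case \eqref{Eq_Stem} of Theorem~\ref{Thm_bounded1} (Stembridge's $t$-analogue of the D\'esarm\'enien--Proctor--Stembridge determinant),
\[
\sum_{\substack{\la\textup{ even}\\ \la_1\leq 2m}} P_{\la}(y;t)
=\Big(\prod_j y_j\Big)^m P^{(\C_M)}_{m^M}(y;t,0),\qquad y=(y_1,\dots,y_M),
\]
and substitute $y\mapsto t^{1/2}\bar{X}$, where $\bar{X}=\bar{X}_N$ is the alphabet \eqref{Eq_barX} on $M=(2n-1)N$ letters. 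A short computation gives $\prod_j(t^{1/2}\bar{X})_j=t^{(2n-1)N^2/2}$, so the right side becomes $t^{\frac{1}{2}m(2n-1)N^2}P^{(\C_{(2n-1)N})}_{m^{(2n-1)N}}(t^{1/2}\bar{X};t,0)$, while on the left homogeneity gives $P_{\la}(t^{1/2}\bar{X};t)=t^{\abs{\la}/2}P_{\la}(\bar{X};t)$ and, since $\bar{X}=(x_1+x_1^{-1}+\dots+x_{n-1}+x_{n-1}^{-1}+1)\frac{1-t^N}{1-t}$, letting $N\to\infty$ and recalling \eqref{Eq_HLPp} gives $P_{\la}(\bar{X};t)\to P'_{\la}(x_1^{\pm},\dots,x_{n-1}^{\pm},1;t)$ for each fixed $\la$. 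Summing yields the rightmost equality. The $\B_n^{(1)}$ identity is treated identically, now starting from the $q=0$ case \eqref{Eq_JZ} of \eqref{Eq_bounded3-1} (the Jouhet--Zeng identity): here $\halfm{M}$ replaces $m^M$, so the prefactor drops to $t^{\frac{1}{4}m(2n-1)N^2}$, the sum runs over $\la$ with $\la'$ even and $\la_1\leq m$, and the weights $\prod_{i\geq1}(t;t^2)_{m_i(\la)/2}$ survive the limit unchanged, the remaining global factor $(t;t^2)_{\infty}$ being exactly the $i=0$ term $(t;t^2)_{m_0(\la)/2}$ under the convention $m_0(\la):=\infty$.

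It then remains to prove the leftmost equalities, i.e.\ that under $x_i=-\eup^{-\alpha_i-\cdots-\alpha_n}$ $(1\leq i\leq n-1)$, $\eup^{-\alpha_n}\mapsto-1$ and $t=\eup^{-\delta}$,
\[
\chi_m\big(\A_{2n}^{(2)\dagger}\big)=\lim_{N\to\infty}t^{\frac{1}{2}m(2n-1)N^2}P^{(\C_{(2n-1)N})}_{m^{(2n-1)N}}\big(t^{1/2}\bar{X};t,0\big),
\]
and the analogous statement for $\B_n^{(1)}$ up to the factor $(t;t^2)_{\infty}$, exactly as in \eqref{Eq_A2n12-character}. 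For this I would write $P^{(\C_M)}_{m^M}$ (resp.\ $P^{(\D_M)}_{\halfm{M}}$) as the finite Weyl-group sum \eqref{Eq_MacP} over the hyperoctahedral group (resp.\ its even-signed-permutation subgroup) of rank $M=(2n-1)N$, carry out the specialisation, and let $N\to\infty$. The sign-twisted variables $x_i=-\eup^{-\alpha_i-\cdots-\alpha_n}$ are chosen precisely so that $\eup^{-\alpha_n}\mapsto-1$ turns the ``$n$-th'' twisted exponential $-\eup^{-\alpha_n}$ into $+1$ --- this accounts for the extra ``$+1$'' letter in $\bar{X}$ of \eqref{Eq_barX} and for the argument $1$ in $P'_{\la}(x_1^{\pm},\dots,x_{n-1}^{\pm},1;t)$ --- without hitting a pole of the Hall--Littlewood expansion. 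As $N$ grows the rank-$M$ finite Weyl groups exhaust the affine Weyl group of $\A_{2n}^{(2)\dagger}$ (resp.\ $\B_n^{(1)}$), and because the index $m^M$ (resp.\ $\halfm{M}$) is rectangular, the stabiliser normalisation $W_{\la}(t)$ in \eqref{Eq_MacP} cancels the redundant part of the summation, leaving the Weyl--Kac quotient \eqref{Eq_Weyl-Kac} for $V(m\Fun_0)$. This identification, together with the convergence and pole estimates and the $\D_M$-versus-$\B_n^{(1)}$ denominator bookkeeping that produces the constant $(t;t^2)_{\infty}$, is what the appendix supplies, and I would simply cite the relevant lemma there.

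I expect this last step to be the main obstacle: showing that the specialised, rectangularly-indexed finite Hall--Littlewood polynomials converge as $N\to\infty$ to a well-defined formal power series in $\eup^{-\delta}$ --- with no pole created by the specialisation $\eup^{-\alpha_n}\mapsto-1$ --- and then matching the surviving numerator and denominator to \eqref{Eq_Weyl-Kac}, including pinning down the normalising constant $(t;t^2)_{\infty}$ in the $\B_n^{(1)}$ case. The bounded Littlewood inputs, the prefactor bookkeeping, and the plethystic passage to the modified Hall--Littlewood polynomials are all routine once the machinery of the preceding sections is in place.
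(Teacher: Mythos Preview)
Your template and choice of inputs are exactly right: the paper also obtains the middle and right expressions in \eqref{Eq_A2n2dagger-character} and \eqref{Eq_Bn1-character} by substituting $y\mapsto t^{1/2}\bar X$ (with $\bar X$ the odd alphabet \eqref{Eq_barX}) into the $q=0$ bounded Littlewood identities of Theorems~\ref{Thm_bounded6}--\ref{Thm_bounded7}, and your observation that \eqref{Eq_Stem} and \eqref{Eq_JZ} are the relevant specialisations is correct. Likewise your bookkeeping for the prefactors $t^{\frac12 m(2n-1)N^2}$ and $t^{\frac14 m(2n-1)N^2}$ and the identification of the $i=0$ factor with $(t;t^2)_\infty$ is fine.

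The gap is in step (iii). The appendix lemmas do \emph{not} assert that the large-$N$ limit of a rectangular finite-type Hall--Littlewood polynomial equals the affine character; what Lemmas~\ref{Lem_Bn-WK} and~\ref{Lem_A2n12-WK} (and \cite[Lemmas~2.1--2.4]{BW13}) do is rewrite the Weyl--Kac numerator for $\chi_m(\gfrak)$ as an explicit $\Z^n$-sum with a ratio $\Delta_R(xt^r)/\Delta_R(x)$ in the summand. To connect this to your middle expression the paper does not use a ``finite Weyl groups exhaust the affine Weyl group'' argument. Instead it passes through Proposition~\ref{Prop_MPS}: using Lemma~\ref{Lem_littlelemma} one writes the rectangular $\BC$/$\C$/$\B$/$\D$ Hall--Littlewood polynomial as a sum over $\varepsilon\in\{\pm1\}^M$; under the substitution $y\mapsto t^{1/2}x\,\frac{1-t^N}{1-t}$ almost all sign patterns vanish, collapsing the sum to a $\C_n$-type basic hypergeometric series in $r_1,\dots,r_n\geq 0$. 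One then replaces $x\mapsto(x_1,y_1,\dots,x_{n-1},y_{n-1},x_n)$ as in \eqref{Eq_oddchange} and takes the delicate limit $y_i\to x_i^{-1}$, $x_n\to 1$ via \cite[Proposition~5.1]{BW13}, which bilateralises the sum to one over $\Z^n$; this is \eqref{Eq_id56b}. Letting $t_2,t_3\to 0$ (for $\A_{2n}^{(2)\dagger}$, identifying the resulting $\Z^n$-sum with \cite[Lemma~2.3]{BW13}) or specialising $t_3=-1$, $t_2=1$ and using the parity restriction forced by \eqref{Eq_JZ} together with Lemma~\ref{Lem_Bn-WK} (for $\B_n^{(1)}$) then gives the characters. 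In short, the missing ingredient in your outline is precisely this hypergeometric intermediate form and the \cite[Proposition~5.1]{BW13} limiting procedure; without them the appendix lemmas cannot be applied and the claimed identification remains unproved.
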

\begin{theorem}\label{Thm_Dn2-char}
Let
\begin{equation}\label{Eq_xithalf}
x_i:=-\eup^{-\alpha_i-\cdots-\alpha_n}\quad
(1\leqslant i\leqslant n), \qquad\quad t^{1/2}:=-\eup^{-\delta},
\end{equation}
and specialise $\eup^{-\alpha_n}\mapsto -1$.
Then, for $m$ and $n$ positive integers,
\begin{align}\label{Eq_Dn1-character}
\chi_m\big(\mathrm{D}_{n+1}^{(2)}\big)
&=\lim_{N\to\infty} t^{\frac{1}{4}m(2n-1)N^2} 
P^{(\mathrm{B}_{(2n-1)N})}_{\halfm{(2n-1)N}}(t^{1/2}\bar{X};t,0)  \\[1mm]
&=\sum_{\substack{\la \\[1pt] \la_1\leqslant m}}
t^{\abs{\la}/2} P'_{\la}(x_1^{\pm},\dots,x_{n-1}^{\pm},1;t). \notag
\end{align}
\end{theorem}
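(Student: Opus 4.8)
The plan is to prove the two displayed equalities separately: first the identity between the limiting $\B_N$-type Hall--Littlewood polynomial and the sum of modified Hall--Littlewood polynomials, and then the Weyl--Kac identification of that sum with $\chi_m(\D_{n+1}^{(2)})$.

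\emph{The combinatorial identity.} First I would apply Theorem~\ref{Thm_bounded7} in $(2n-1)N$ variables with $t_2=0$. Since $H_m(0;t)=1$ by \eqref{Eq_spec}, we have $h_\la^{(m)}(0;t)=1$, so Theorem~\ref{Thm_bounded7} reduces to
\[
\sum_{\substack{\la\\[1pt]\la_1\leq m}} P_\la(y_1,\dots,y_{(2n-1)N};t)
=(y_1\cdots y_{(2n-1)N})^{m/2}\,P^{(\B_{(2n-1)N})}_{\halfm{(2n-1)N}}(y;t,0).
\]
Specialising $y$ to the $(2n-1)N$-letter alphabet $t^{1/2}\bar{X}$, the reciprocal pairs $x_j,x_j^{-1}$ cancel in the product on the right and only powers of $t$ survive, giving $y_1\cdots y_{(2n-1)N}=t^{(2n-1)N^2/2}$ and hence exactly the normalising factor $t^{\frac14 m(2n-1)N^2}$ of the theorem. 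Writing $\bar{X}=(x_1^{\pm}+\cdots+x_{n-1}^{\pm}+1)\tfrac{1-t^N}{1-t}$ plethystically, letting $N\to\infty$, and using homogeneity of $P_\la$ together with the definition \eqref{Eq_HLPp} of $P'_\la$, each summand $P_\la(t^{1/2}\bar{X};t)=t^{\abs\la/2}P_\la(\bar{X};t)$ tends to $t^{\abs\la/2}P'_\la(x_1^{\pm},\dots,x_{n-1}^{\pm},1;t)$. Since $P_\la(t^{1/2}\bar{X};t)$ has $t$-order at least $\abs\la/2$, only finitely many $\la$ contribute to each power of $t$, so the sum is $t$-adically summable and may be interchanged with the limit; this gives the middle-equals-right equality.

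\emph{The Weyl--Kac identity.} Under the specialisation \eqref{Eq_xithalf}, namely $x_i=-\eup^{-\alpha_i-\cdots-\alpha_n}$ for $1\le i\le n-1$, $t^{1/2}=-\eup^{-\delta}$ and $\eup^{-\alpha_n}\mapsto-1$, I would expand $P^{(\B_{(2n-1)N})}_{\halfm{(2n-1)N}}(t^{1/2}\bar{X};t,0)$ by Lemma~\ref{Lem_littlelemmaB} into a signed sum over $\varepsilon\in\{\pm1\}^{(2n-1)N}$ of $\Phi(\,\cdot\,;t,0,-1)$ times monomials. Because $t^{1/2}\bar{X}$ is built from $n-1$ reciprocal-pair blocks and one single block, each a base letter scaled by $1,t,\dots,t^{N-1}$, the factor $\Phi$ factors geometrically along the blocks, and as $N\to\infty$ only the sign vectors constant on each block survive, the others having unbounded $t$-order. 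In the $t$-adic limit the product of $\Phi$-factors reconstitutes the inverse Weyl--Kac denominator $\prod_{\alpha>0}(1-\eup^{-\alpha})^{-\mult(\alpha)}$ for $\D_{n+1}^{(2)}$, while the surviving signed sum reconstitutes the numerator $\sum_{w\in W}\sgn(w)\,\eup^{w(m\Fun_0+\rho)-\rho}$; by \eqref{Eq_Weyl-Kac} their product is $\ch V(m\Fun_0)$, so multiplying by $\eup^{-m\Fun_0}$ yields $\chi_m(\D_{n+1}^{(2)})$. This runs entirely parallel to the treatment of the companion formula \eqref{Eq_Dn-char2} for the same algebra (and of \eqref{Eq_Bn1-character}), and the reorganisation of the affine root data is supplied by the technical lemmas on the Weyl--Kac formula collected in the appendix.

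The main obstacle will be precisely this $N\to\infty$ collapse in the second step: showing that the $2^{(2n-1)N}$-term signed sum and the product of $\Phi$-factors genuinely converge, $t$-adically, onto the affine Weyl-group sum and the inverse affine denominator of $\D_{n+1}^{(2)}$ at highest weight $m\Fun_0$. The delicate part is the sign and variable bookkeeping---$t^{1/2}=-\eup^{-\delta}$ rather than $\eup^{-\delta}$, the minus signs in the $x_i$, and the specialisation $\eup^{-\alpha_n}\mapsto-1$, which merges a would-be reciprocal-pair block into the single ``$1$''-block and removes the $(-t^{1/2};t^{1/2})_\infty$ prefactor present in \eqref{Eq_Dn-char2}---all of which must line up for the two sides to agree.
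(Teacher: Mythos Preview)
Your combinatorial step (the middle-equals-right equality) is correct and is precisely what the paper does: specialise Theorem~\ref{Thm_bounded7} at $t_2=0$, plug in the alphabet $t^{1/2}\bar X$, and send $N\to\infty$.

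The Weyl--Kac step, however, has a genuine gap. Your claimed mechanism---that in the $\varepsilon$-expansion of Lemma~\ref{Lem_littlelemmaB} ``only the sign vectors constant on each block survive''---is not what happens. At finite $N$ the surviving $\varepsilon$'s are those that are \emph{monotone} on each length-$N$ block (i.e.\ $r$ leading $-1$'s followed by $+1$'s, for some $0\leq r\leq N$), not just the two constant choices; this is exactly the computation in the proof of Proposition~\ref{Prop_MPS}. So there are $(N{+}1)^{2n-1}$ surviving terms, not $2^{2n-1}$, and they do not collapse $t$-adically to a finite set. In particular you cannot obtain the \emph{infinite} affine Weyl numerator $\sum_{w\in W}\sgn(w)\eup^{w(m\Fun_0+\rho)-\rho}$ from finitely many constant sign vectors; your sketch is internally inconsistent on this point.

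The paper's route is different and is what you are missing: it first rewrites the finite-$N$ $\varepsilon$-sum as a $\C_n$-type basic hypergeometric series in summation variables $r_1,\dots,r_{2n-1}\geq 0$ (Proposition~\ref{Prop_MPS}), then performs the limit $y_i\to x_i^{-1}$, $x_n\to 1$ via \cite[Proposition~5.1]{BW13} to turn the unilateral sum into a bilateral sum over $r\in\Z^n$ (this is where pairs of $r$-variables merge and the sum becomes genuinely infinite). Only after that does one let $N\to\infty$ in \eqref{Eq_id56b-2} with $t_2\to0$, and the resulting bilateral series is identified with $\chi_m(\D_{n+1}^{(2)})$ by the explicit rewriting of the Weyl--Kac formula in \cite[Lemma~2.4]{BW13}. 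The appendix lemmas you allude to play exactly this last role, but they take as input the bilateral hypergeometric form, not a direct $\varepsilon$-expansion, so your deferral to them does not close the gap.
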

\begin{theorem}\label{Thm_negen}
Let
\begin{equation}\label{Eq_xit-Bdag} 
x_i:=\eup^{-\alpha_i-\cdots-\alpha_{n-1}+(\alpha_{n-1}-\alpha_n)/2}
\quad (1\leqslant i\leqslant n), \qquad\quad t:=\eup^{-\delta},
\end{equation}
and specialise $\eup^{-\alpha_n}\mapsto \eup^{-\alpha_{n-1}}$.
Then, for $m$ and $n$ positive integers,
\begin{align}\label{Eq_Bndagger-character}
\chi_m\big(\mathrm{B}_n^{(1)\dagger}\big)
&=(-t^{1/2};t^{1/2})_{\infty} \lim_{N\to\infty} t^{\frac{1}{4}m(2n-1)N^2} 
P^{(\mathrm{B}_{(2n-1)N})}_{\halfm{(2n-1)N}}(t^{1/2}\bar{X};t,-t^{1/2})  \\
&=\sum_{\substack{\la\\[1pt] \la_1\leqslant m}} t^{\abs{\la}/2} 
P'_{\la}(x_1^{\pm},\dots,x_{n-1}^{\pm},1;t) 
\prod_{i=0}^{m-1} (-t^{1/2};t^{1/2})_{m_i(\la)} \notag
\end{align}
and
\begin{align}\label{Eq_char-new2}
\chi_m\big(\mathrm{A}_{2n-1}^{(2)\dagger}\big)
&=\lim_{N\to\infty} t^{m(n-1/2)N^2} P^{(\mathrm{BC}_{(2n-1)N})}_{m^{(2n-1)N}} 
(t^{1/2}\bar{X};t,0,-t^{1/2}) \\[1.5pt]
&=\sum_{\substack{\la \\[1pt] \la_1\leqslant 2m}} t^{(\abs{\la}+\op(\la))/2}
P'_{\la}(x_1^{\pm},\dots,x_{n-1}^{\pm},1;t). \notag
\end{align}
\end{theorem}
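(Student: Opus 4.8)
Both identities in Theorem~\ref{Thm_negen} are the $\bar X$-counterparts of the $\D_{n+1}^{(2)}$ and $\A_{2n}^{(2)\dagger}$ formulas of Theorem~\ref{Thm_drie}, and the plan is to prove them by the same two-step scheme used for Theorems~\ref{Thm_een}--\ref{Thm_Dn2-char}: first match the combinatorial sum on the right with the limiting Hall--Littlewood expression in the middle by specialising a bounded Littlewood identity from Section~\ref{Sec_Bounded}, and then identify that limit with $\chi_m(\gfrak)$ via the Weyl--Kac formula \eqref{Eq_Weyl-Kac}. The only new ingredient compared with the earlier theorems is the extra specialisation $\eup^{-\alpha_n}\mapsto\eup^{-\alpha_{n-1}}$, which sends the $n$-th variable to $1$ and hence replaces the alphabet $X$ by $\bar{X}$ of \eqref{Eq_barX}.

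For the right-hand equality in \eqref{Eq_Bndagger-character} I would apply Theorem~\ref{Thm_bounded7} with $n$ replaced by $(2n-1)N$, the $x$-alphabet equal to $t^{1/2}\bar{X}$, and $t_2=-t^{1/2}$. By homogeneity of $P_\la(\,\cdot\,;t)$ the normalising monomial $(x_1\cdots x_{(2n-1)N})^{m/2}$ evaluated at $t^{1/2}\bar{X}$ equals $t^{\frac14 m(2n-1)N^2}$, which reproduces the prefactor in the middle expression, while \eqref{Eq_spec4} gives $h_\la^{(m)}(-t^{1/2};t)=\prod_{i=1}^{m-1}(-t^{1/2};t^{1/2})_{m_i(\la)}$; the auxiliary factor $(-t^{1/2};t^{1/2})_\infty$ appearing in the middle promotes this to $\prod_{i=0}^{m-1}(-t^{1/2};t^{1/2})_{m_i(\la)}$ under the convention $m_0(\la)=\infty$. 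Since $\bar{X}\to\big(x_1+x_1^{-1}+\cdots+x_{n-1}+x_{n-1}^{-1}+1\big)/(1-t)$ as $N\to\infty$, definition \eqref{Eq_HLPp} turns $\lim_N P_\la(\bar{X};t)$ into $P'_\la(x_1^{\pm},\dots,x_{n-1}^{\pm},1;t)$, yielding the desired sum. The identity \eqref{Eq_char-new2} is proved identically, starting from Theorem~\ref{Thm_bounded6} with $(t_2,t_3)=(0,-t^{1/2})$: letting $a\to 0$ in \eqref{Eq_RSBCn} and using \eqref{Eq_spec} gives $h_\la^{(2m)}(0,-t^{1/2};t)=t^{\op(\la)/2}$, the monomial over $t^{1/2}\bar{X}$ is $t^{m(n-1/2)N^2}$, and no auxiliary prefactor is needed.

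For the middle equality I would expand $P^{(\B_{(2n-1)N})}_{\halfm{(2n-1)N}}$ and $P^{(\BC_{(2n-1)N})}_{m^{(2n-1)N}}$ as sums over sign patterns of the product $\Phi$ by Lemmas~\ref{Lem_littlelemmaB} and \ref{Lem_littlelemma}, substitute $x_i$ and $t$ as in \eqref{Eq_xit-Bdag}, and impose $\eup^{-\alpha_n}\mapsto\eup^{-\alpha_{n-1}}$. The multiset of letters $\{t^j x_i^{\pm 1},\,t^j\}$ then limits, as $N\to\infty$, onto the positive real roots of $\B_n^{(1)\dagger}$ (resp.\ $\A_{2n-1}^{(2)\dagger}$) with the correct multiplicities; the quadratic prefactors absorb the order-$N^2$ growth of the exponents so that the limit exists in $\hat\Lambda$; the $\BC_{(2n-1)N}$ Weyl-group average collapses onto the affine Weyl-group sum in the numerator of \eqref{Eq_Weyl-Kac}, with the marks $a_0,\dots,a_n$ of Figure~\ref{Fig} controlling the powers of $t=\eup^{-\delta}$; and the appendix lemmas on the Weyl--Kac formula supply the matching denominator identity.

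The main obstacle is precisely this last identification: one must verify that, after the relabelling signalled by the $\dagger$ and the degeneration $\eup^{-\alpha_n}\mapsto\eup^{-\alpha_{n-1}}$, the finite $\B_{(2n-1)N}$ (resp.\ $\BC_{(2n-1)N}$) root data limit correctly onto the affine root data of $\B_n^{(1)\dagger}$ (resp.\ $\A_{2n-1}^{(2)\dagger}$), i.e.\ that the specialisation \eqref{Eq_xit-Bdag} together with $\eup^{-\alpha_n}\mapsto\eup^{-\alpha_{n-1}}$ is compatible with the relevant folding of the affine Dynkin diagram. Once this compatibility is checked, the remaining steps are routine bookkeeping, entirely parallel to the proofs of Theorems~\ref{Thm_drie} and \ref{Thm_Dn2-char}.
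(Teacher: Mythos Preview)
Your proposal is correct and follows essentially the same strategy as the paper. For the equality between the middle and right-hand expressions you specialise Theorems~\ref{Thm_bounded6} and~\ref{Thm_bounded7} at the alphabet $t^{1/2}\bar X$ with $(t_2,t_3)=(0,-t^{1/2})$ and $t_2=-t^{1/2}$ respectively, which is exactly what the paper does (via \eqref{Eq_id56b-new} and \eqref{Eq_id56b-2}); your computations of $h_{\la}^{(2m)}(0,-t^{1/2};t)=t^{\op(\la)/2}$ and $h_{\la}^{(m)}(-t^{1/2};t)$ via \eqref{Eq_spec4}, and of the monomial prefactors, are the same as the paper's.

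For the equality with $\chi_m(\gfrak)$ the paper is slightly more concrete than your sketch. Rather than directly matching the sign-pattern expansion of the Hall--Littlewood polynomial with the Weyl--Kac numerator, the paper first passes through an explicit $\C_n$-type hypergeometric representation: Proposition~\ref{Prop_MPS} (whose proof is precisely the expansion via Lemma~\ref{Lem_littlelemma} that you propose) and the substitution \eqref{Eq_oddchange} followed by $y_i\to x_i^{-1}$, $x_n\to 1$ yield the closed-form bilateral sums on the right of \eqref{Eq_id56b-new} and \eqref{Eq_id56b-2}. After applying the determinant identity \eqref{Eq_Delta-rel} to convert $\Delta_{\B}(-xt^r)/\Delta_{\B}(-x)$ into $\Delta_{\D}(xt^r)/\Delta_{\D}(x)$, these sums are recognised as $\chi_m(\B_n^{(1)\dagger})$ and $\chi_m(\A_{2n-1}^{(2)\dagger})$ by Lemma~\ref{Lem_Bn-WK-2} of the appendix. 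This intermediate hypergeometric step is what you call ``routine bookkeeping''; the paper makes it explicit so that the identification with the $\D_n$-type Weyl--Kac sum is purely a matter of comparing two written-out formulas, and the compatibility of the specialisation $\eup^{-\alpha_n}\mapsto\eup^{-\alpha_{n-1}}$ with \eqref{Eq_qxi-3} is verified inside that lemma rather than being left as an obstacle.
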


\smallskip

Thanks to the Macdonald identities, the characters of certain
one-parameter subfamilies of representations admit product forms.
For example, by Corollary~\ref{Cor_Bdagger} of Appendix~\ref{App_A},
it follows that for
$\mathfrak{g}=\mathrm{B}_n^{(1)\dagger}$ and weights
\begin{equation}\label{Eq_La-rho}
\Lambda=(k-1)\rho+k\omega_0, \qquad k\in\mathbb{Z}_{\geqslant 1},
\end{equation}
we have
\[
\eup^{-\Lambda} \ch V(\Lambda)=
\frac{(t^k;t^k)_{\infty}^{n-1}(t^{2k};t^{2k})_{\infty}}{(t;t)_{\infty}^n}
\prod_{i=1}^n \frac{\theta(t^k x_i^{2k};t^{2k})}{\theta(t^{1/2}x_i;t)}
\prod_{1\leqslant i<j\leqslant n} 
\frac{\theta(x_i^kx_j^{\pm k};t^k)}{\theta(x_ix_j^{\pm};t)},
\]
where $x_1,\dots,x_n$ and $t$ are as in \eqref{Eq_xit-Bdag}.
In much the same way, it follows from the Macdonald identity 
for $\mathrm{C}_n^{(1)}$ that 
for $\mathrm{A}_{2n}^{(2)}$ and weights \eqref{Eq_La-rho},
\[
\eup^{-\Lambda} \ch V(\Lambda)=
\frac{(t^k;t^k)_{\infty}^n}{(t;t)_{\infty}^n}
\prod_{i=1}^n \frac{\theta(x_i^{2k};t^k)}
{\theta(t^{1/2}x_i;t)\theta(x_i^2;t^2)}
\prod_{1\leqslant i<j\leqslant n} 
\frac{\theta(x_i^kx_j^{\pm k};t^k)}{\theta(x_ix_j^{\pm};t)},
\]
where $x_1,\dots,x_n$ and $t$ are given by \eqref{Eq_xit}.
Similarly, by the Macdonald identity for $\mathrm{A}_{2n}^{(2)\dagger}$ 
we have the $\mathrm{D}_{n+1}^{(2)}$ identity
\begin{multline*}
\eup^{-\Lambda} \ch V(\Lambda)=
\frac{(t^k;t^k)_{\infty}^n}{(t;t)_{\infty}^{n-1}(t^{1/2};t^{1/2})_{\infty}} \\
\times \prod_{i=1}^n 
\frac{\theta(x_i^k;t^k)\theta(t^kx_i^{2k};t^{2k})}{\theta(x_i;t^{1/2})}
\prod_{1\leqslant i<j\leqslant n} 
\frac{\theta(x_i^kx_j^{\pm k};t^k)}{\theta(x_ix_j^{\pm};t)},
\end{multline*}
where $\Lambda$ is again given by \eqref{Eq_La-rho},
and $x_1,\dots,x_n$ and $t$ are as in \eqref{Eq_xitD}.

All three families include the basic representation, $V(\fwa_0)$, obtained
by taking $k=1$:
\begin{equation}\label{Eq_basic-rep}
\eup^{-\fwa_0} \ch V(\fwa_0)=\kappa_{\mathfrak{g}}(t)
\prod_{i=1}^n \theta(-t^{1/2}x_i;t),
\end{equation}
where 
\[
\kappa_{\mathfrak{g}}(t):=
\begin{cases} 
(-t;t)_{\infty}
& \text{for $\mathfrak{g}=\mathrm{B}_n^{(1)\dagger}$}, \\[1mm]
1 & \text{for $\mathfrak{g}=\mathrm{A}_{2n}^{(2)}$}, \\[1mm]
(-t^{1/2};t^{1/2})_{\infty}
& \text{for $\mathfrak{g}=\mathrm{D}_{n+1}^{(2)}$}. 
\end{cases}
\]

Using the representation \eqref{Eq_Qp-qbinom} for the modified 
Hall--Littlewood polynomials, each of the character formulas
in Theorems~\ref{Thm_een}--\ref{Thm_drie} can be written as a multiple 
basic hypergeometric series. 
For $\mathrm{B}_n^{(1)\dagger}, \mathrm{A}_{2n}^{(2)}$ and 
$\mathrm{D}_{n+1}^{(2)}$ we can then restrict to the basic 
representation and, as a consistency check,
compare with \eqref{Eq_basic-rep}.
To this end we define $f$ by
\begin{align*}
f(x_1,\dots,x_n;t)&:=
\sum_{\substack{\la\\[1pt] \la_1\leqslant 1}}
t^{\abs{\la}/2} P'_{\la}(x_1,\dots,x_n;t) \\
&\hphantom{:}=\sum_{r=0}^{\infty} \frac{t^{r/2}}{(t;t)_r}\,
Q'_{1^r}(x_1,\dots,x_n;t).
\end{align*}
Replacing $r$ by $r_1$, and using \eqref{Eq_Qp-qbinom} and
\eqref{Eq_glamu}, this yields
\[
f(x_1,\dots,x_n;t)=
\sum_{r_1\geqslant\cdots\geqslant r_n\geqslant 0} 
\frac{t^{r_1/2}}{(t;t)_{r_1}}\,
\prod_{i=1}^n x_i^{r_i-r_{i+1}} t^{\binom{r_i-r_{i+1}}{2}}
\qbin{r_i}{r_{i+1}}_t,
\]
where $r_{n+1}:=0$. Introducing new summation indices
$k_1,\dots,k_n$ by $k_i=r_i-r_{i+1}$, the $n$-fold sum factors 
as\footnote{This may also be proved by taking $m=1$ and $q=0$ in 
\eqref{Eq_Mac-Cauchy-a}, and by carrying out the plethystic substitution
$x\mapsto x/(1-t)$.}
\begin{equation}\label{Eq_f-product}
f(x_1,\dots,x_n;t)=\prod_{i=1}^n\bigg(\sum_{k_i=0}^{\infty}
\frac{(t^{1/2}x_i)^{k_i} t^{\binom{k_i}{2}}}{(t;t)_{k_i}}\bigg)
=\prod_{i=1}^n (-t^{1/2}x_i;t)_{\infty},
\end{equation}
where the second equality follows from \cite[Equation (II.2)]{GR04}
\[
\sum_{k\geqslant 0} \frac{z^k q^{\binom{k}{2}}}{(q;q)_k}=
(-z;q)_{\infty}.
\]
We now observe that for $m=1$ the identities \eqref{Eq_A2n2-character},
\eqref{Eq_Dn-char2} and \eqref{Eq_Bndagger-character} simplify to
\[
\eup^{-\fwa_0} \ch V(\fwa_0)=
\begin{cases}
(-t^{1/2};t^{1/2})_{\infty}
f(x_1^{\pm},\dots,x_{n-1}^{\pm},1;t) & 
\text{for $\mathfrak{g}=\mathrm{B}_n^{(1)\dagger}$} \\[1mm]
f(x_1^{\pm},\dots,x_n^{\pm};t) & 
\text{for $\mathfrak{g}=\mathrm{A}_{2n}^{(2)}$}, \\[1mm]
(-t^{1/2};t^{1/2})_{\infty}
f(x_1^{\pm},\dots,x_n^{\pm};t) & 
\text{for $\mathfrak{g}=\mathrm{D}_{n+1}^{(2)}$}.
\end{cases}
\]
By \eqref{Eq_f-product} this indeed gives \eqref{Eq_basic-rep}
for $\mathrm{A}_{2n}^{(2)}$ and $\mathrm{D}_{n+1}^{(2)}$.
For $\mathrm{B}_n^{(1)\dagger}$ it leads to
\begin{align*}
\eup^{-\fwa_0} \ch V(\fwa_0)
|_{\eup^{-\alpha_n}\mapsto \eup^{-\alpha_{n-1}}}&=
(-t^{1/2};t^{1/2})_{\infty}(-t^{1/2};t)_{\infty} 
\prod_{i=1}^{n-1} \theta(-t^{1/2}x_i;t) \\
&=(-t;t)_{\infty} \prod_{i=1}^n \theta(-t^{1/2}x_i;t)|_{x_n=1},
\end{align*}
again in agreement with \eqref{Eq_basic-rep}.
To show that the $m=1$ case of the second $\mathrm{D}_{n+1}^{(2)}$ identity
\eqref{Eq_Dn1-character} is also in accordance with \eqref{Eq_basic-rep}, 
we replace $x_i\mapsto -x_i$ and $t^{1/2}\mapsto -t^{1/2}$ on the right-hand 
side of \eqref{Eq_basic-rep} and then specialise $x_n=1$.
This yields the expression
\begin{align*}
(-t^{1/2};t)_{\infty} \prod_{i=1}^{n-1} \theta(-t^{1/2}x_i;t)
&=f(x_1^{\pm},\dots,x_{n-1}^{\pm},1;t) \\ 
&=\sum_{\substack{\la \\[1pt] \la_1\leqslant 1}}
t^{\abs{\la}/2} P'_{\la}(x_1^{\pm},\dots,x_{n-1}^{\pm},1;t),
\end{align*}
as required.

\subsection{Proof of Theorems~\ref{Thm_een}--\ref{Thm_negen}}
Below we present proofs of the eleven character formulas of 
the previous section.

Recall the Vandermonde determinants of type $\mathrm{B}_n, \mathrm{C}_n$
and $\mathrm{D}_n$ given in \eqref{Eq_VdMC}, \eqref{Eq_VdMB} and
\eqref{Eq_VdMD}.

\begin{proposition}\label{Prop_MPS}
For $x=(x_1,\dots,x_n)$, $m$ a positive integer and $N$ a
nonnegative integer,
\begin{align}\label{Eq_MPS}
&\sum_{\substack{\la \\[1pt] \la_1\leqslant 2m}}  t^{\abs{\la}/2}
h_{\la}^{(2m)}(t_2,t_3;t) 
P_{\la}\bigg(\bigg[x\, \frac{1-t^N}{1-t}\bigg];t\bigg)  \\
&\quad =(x_1\cdots x_n)^{mN} t^{mnN^2/2}
P^{(\mathrm{BC}_{nN})}_{m^{nN}}
\bigg(\bigg[t^{1/2}x \, \frac{1-t^N}{1-t}\bigg];t,t_2,t_3\bigg) \notag \\
&\quad=\frac{\prod_{i=1}^n (t^{1/2}t_2x_i,t^{1/2}t_3x_i;t)_N}
{\prod_{i,j=1}^n (tx_ix_j;t)_N}
\prod_{1\leqslant i<j\leqslant n} (tx_ix_j;t)_{2N} \notag \\
&\quad\quad \times
\sum_{r_1,\dots,r_n\geqslant 0}\,
\frac{\Delta_{\mathrm{C}}(xt^r)}{\Delta_{\mathrm{C}}(x)}
\prod_{i=1}^n 
\frac{(t^{1/2}t_2^{-1}x_i,t^{1/2}t_3^{-1}x_i;t)_{r_i}}
{(t^{1/2}t_2x_i,t^{1/2}t_3x_i;t)_{r_i}} \,
(t_2t_3)^{r_i} (x_i^2 t^{r_i})^{mr_i} \notag \\
&\quad\qquad\qquad\qquad\times 
\prod_{i,j=1}^n \frac{(t^{-N}x_i/x_j,x_ix_j;t)_{r_i}}
{(tx_i/x_j,t^{N+1}x_ix_j;t)_{r_i}}\, t^{N r_i}. \notag
\end{align}
\end{proposition}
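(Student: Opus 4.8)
The plan is to establish the two equalities separately. The first one is an immediate specialisation of Theorem~\ref{Thm_bounded6}: I would apply that theorem with $n$ replaced by $nN$ and the $n$-letter alphabet $(x_1,\dots,x_n)$ replaced by the $nN$-letter alphabet $t^{1/2}x\,(1-t^N)/(1-t)=\{t^{1/2+k}x_i:1\leq i\leq n,\ 0\leq k\leq N-1\}$. Since $P_\la(\,\cdot\,;t)$ is homogeneous of degree $\abs{\la}$, pulling the scalar $t^{1/2}$ out of the alphabet produces the factor $t^{\abs{\la}/2}$ in front of $P_\la([x(1-t^N)/(1-t)];t)$ on the left-hand side, while on the right the prefactor $\prod_{i,k}(t^{1/2+k}x_i)^m=\big((x_1\cdots x_n)^N t^{nN^2/2}\big)^m=(x_1\cdots x_n)^{mN}t^{mnN^2/2}$ comes out exactly as stated. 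This gives the equality of the first and second displayed expressions.

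For the equality of the second and third expressions, the idea is to render $P^{(\BC_{nN})}_{m^{nN}}$ at this special alphabet completely explicit. Writing $y$ for the alphabet above and applying Lemma~\ref{Lem_littlelemma} with $n\mapsto nN$, one has $P^{(\BC_{nN})}_{m^{nN}}(y;t,t_2,t_3)=\sum_{\varepsilon\in\{\pm1\}^{nN}}\Phi(y^\varepsilon;t,t_2,t_3)\prod_j y_j^{-\varepsilon_j m}$. The crucial point is that $y$ splits into $n$ geometric blocks, block $i$ being $t^{1/2}x_i,t^{3/2}x_i,\dots,t^{N-1/2}x_i$, and that both ingredients of $\Phi$---the single-variable product $\prod_j (1-t_2y_j)(1-t_3y_j)/(1-y_j^2)$ and the pair product $\prod_{j<j'}(1-ty_jy_{j'})/(1-y_jy_{j'})$---telescope along such a block. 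Hence the $2^N$-fold sign sum over block $i$ collapses to a single sum over an integer $r_i$ recording the number of sign changes within the block, and the telescoped within-block contribution is precisely $\frac{(t^{1/2}t_2^{-1}x_i,t^{1/2}t_3^{-1}x_i;t)_{r_i}}{(t^{1/2}t_2x_i,t^{1/2}t_3x_i;t)_{r_i}}(t_2t_3)^{r_i}(x_i^2t^{r_i})^{mr_i}$ up to the $\varepsilon$-independent factor $(t^{1/2}t_2x_i,t^{1/2}t_3x_i;t)_N$; the analogous telescoping of the cross-block part of the pair product supplies $\prod_{i,j}\frac{(t^{-N}x_i/x_j,x_ix_j;t)_{r_i}}{(tx_i/x_j,t^{N+1}x_ix_j;t)_{r_i}}t^{Nr_i}$ together with the constant $\prod_{i<j}(tx_ix_j;t)_{2N}/\prod_{i,j}(tx_ix_j;t)_N$, and the ratio $\Delta_\C(xt^r)/\Delta_\C(x)$ emerges from the monomials $\prod_j y_j^{-\varepsilon_j m}$ and the factors of $\Phi$ that mix the overall block signs. (The formally unbounded range $r_i\geq0$ on the right causes no harm, since $(t^{-N}x_i/x_i;t)_{r_i}=(t^{-N};t)_{r_i}$ vanishes for $r_i>N$.) Collecting all of this yields the claimed multiple sum.

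The main obstacle is this second step: the within-block and cross-block telescopings, and matching each resulting factor against the stated $q$-shifted factorials, form a lengthy though elementary computation, and some care is needed with the interplay between the ``overall sign'' of a block and its parameter $r_i$. A cleaner alternative, bypassing Lemma~\ref{Lem_littlelemma}, would be to recognise the second equality as a specialisation of a known $\C_n$ (or $\BC_n$) basic hypergeometric summation of Gustafson type, in which case the proof reduces to a direct substitution. Either route, together with the first equality, completes the proof.
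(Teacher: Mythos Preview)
Your handling of the first equality is exactly what the paper does: specialise Theorem~\ref{Thm_bounded6} at the $nN$-letter alphabet $t^{1/2}x(1-t^N)/(1-t)$ and pull out the homogeneity factor and the monomial prefactor.

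For the second equality you also start correctly, by invoking Lemma~\ref{Lem_littlelemma} to write $P^{(\BC_{nN})}_{m^{nN}}$ as a sum over $\varepsilon\in\{\pm1\}^{nN}$. But the mechanism by which this $2^{nN}$-term sum collapses to an $n$-fold sum is not the one you describe. It is not telescoping: the point is that $\Phi(y^{\varepsilon};t,t_2,t_3)$ contains the factor $\prod_{j<j'}(1-ty_j^{\varepsilon_j}y_{j'}^{\varepsilon_{j'}})$, and if two consecutive letters in block $i$ are $y_j=t^{1/2+k}x_i$, $y_{j+1}=t^{3/2+k}x_i$ with $(\varepsilon_j,\varepsilon_{j+1})=(1,-1)$, then $t\,y_j^{\varepsilon_j}y_{j+1}^{\varepsilon_{j+1}}=1$ and the summand \emph{vanishes}. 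Hence the only surviving sign patterns on block $i$ are $(-1,\dots,-1,+1,\dots,+1)$, and $r_i$ is the number of leading $-1$'s in that block, ranging over $0,1,\dots,N$; it is not ``the number of sign changes'' (which would be $0$ or $1$). Once the surviving $\varepsilon$'s are parametrised this way, the products in $\Phi$ simplify (this is where routine $q$-factorial manipulations, including some genuine telescoping, occur) to the stated summand; the paper leaves that simplification to the reader.

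So your outline is along the right lines but misidentifies the key step. Replace the telescoping narrative by the vanishing argument above, redefine $r_i$ accordingly, and then carry out the (lengthy but elementary) bookkeeping. The alternative route via a Gustafson-type summation is not what the paper does and would need independent justification.
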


\begin{remark}
A more general hypergeometric identity than \eqref{Eq_MPS} holds,
obtained by replacing 
\[
x\mapsto t^{1/2}\bigg( 
x_1\frac{1-t^{N_1}}{1-t}+x_2\frac{1-t^{N_2}}{1-t}
+\cdots+x_n\frac{1-t^{N_n}}{1-t}\bigg)
\]
in \eqref{Eq_bounded6}. From a hypergeometric point of view this more general
identity, which on the right features the $\mathrm{C}_n$ hypergeometric series
\begin{multline*}
\sum_{r_1,\dots,r_n\geqslant 0}\,
\frac{\Delta_{\mathrm{C}}(xt^r)}{\Delta_{\mathrm{C}}(x)}
\prod_{i=1}^n 
\frac{(t^{1/2}t_2^{-1}x_i,t^{1/2}t_3^{-1}x_i;t)_{r_i}}
{(t^{1/2}t_2x_i,t^{1/2}t_3x_i;t)_{r_i}} \,
(t_2t_3)^{r_i} (x_i^2 t^{r_i}\big)^{mr_i} \\
\times \prod_{i,j=1}^n \frac{(t^{-N_j}x_i/x_j,x_ix_j;t)_{r_i}}
{(tx_i/x_j,t^{N_j+1}x_ix_j;t)_{r_i}}\, t^{N_j r_i},
\end{multline*}
is more natural. 
For our purposes, however, we do not require this greater degree of generality.
\end{remark}

\begin{proof}
Identity \eqref{Eq_MPS} follows from \eqref{Eq_bounded6} by the substitution 
\begin{align}\label{Eq_x-change}
x &\mapsto 
t^{1/2}(x_1,x_1t,\dots,x_1t^{N-1},\dots\dots,x_n,x_nt,\dots,x_nt^{N-1}) \\
& \qquad\quad=t^{1/2}x \, \frac{1-t^N}{1-t} \notag
\end{align}
(so that, implicitly, $n\mapsto nN$).
The two left-most expressions immediately follow from
\eqref{Eq_MPS}, but to show equality with the hypergeometric 
sum on the right some work is required.

First we use Lemma~\ref{Lem_littlelemma} to trade the right-hand side of 
\eqref{Eq_bounded6} for 
\[
\sum_{\varepsilon\in \{\pm 1\}^n}\Phi(x^{\varepsilon};t_2,t_3;t)
\prod_{i=1}^n x_i^{(1-\varepsilon_i)m}.
\]
Next we observe that
$\Phi(x^{\varepsilon};t_2,t_3;t)$ contains the factor 
\[
\prod_{1\leqslant i<j\leqslant n}(1-t x_i^{\varepsilon_i}x_j^{\varepsilon_j})
\]
which vanishes if there exists an $i$ ($1\leqslant i\leqslant n-1$) such that
\[
tx_i^{\varepsilon_i} x_{i+1}^{\varepsilon_{i+1}}=1.
\]
Therefore, by the substitution \eqref{Eq_x-change},
the summand vanishes if for some $i,u,p$, 
\[
(\varepsilon_i,\varepsilon_{i+1})=(1,-1) \quad\text{and}\quad
(x_i,x_{i+1})\mapsto (x_u t^p,x_u t^{p+1}).
\] 
Consequently, the only sequences $\varepsilon$ that yield a
non-vanishing summand are of the form
\[
\varepsilon=\big(
\underbrace{-1,\dots,-1}_{\text{$r_1$ times}},
\underbrace{1,\dots,1}_{\text{$N-r_1$ times}},
\underbrace{-1,\dots,-1}_{\text{$r_2$ times}},
\underbrace{1,\dots,1}_{\text{$N-r_2$ times}},
\dots,
\underbrace{-1,\dots,-1}_{\text{$r_n$ times}},
\underbrace{1,\dots,1}_{\text{$N-r_n$ times}}\!\!\!\!\big).
\]
The $r_i$ are exactly the summation indices of \eqref{Eq_MPS}.
The rest of the proof is tedious but elementary and left to the reader.
\end{proof}

Replacing
\begin{equation}\label{Eq_subxN}
x\mapsto (x_1,y_1,\dots,x_n,y_n)
\end{equation}
in \eqref{Eq_MPS} (so that $n\mapsto 2n$), and then using 
\cite[Proposition 5.1]{BW13} to take the limit $y_i\mapsto x_i^{-1}$ 
for all $1\leqslant i\leqslant n$, we obtain the following corollary
of Proposition~\ref{Prop_MPS}.
\begin{corollary}
Let $m$ a positive integer, $N$ a nonnegative integer and $X$ 
the alphabet \eqref{Eq_X}. Then
\begin{align*}
\sum_{\substack{\la \\[1pt] \la_1\leqslant 2m}} & t^{\abs{\la}/2} 
h_{\la}^{(2m)}(t_2,t_3;t) P_{\la}(X;t) \\
&=t^{mnN^2} P^{(\mathrm{BC}_{2nN})}_{m^{2nN}}(t^{1/2}X;t,t_2,t_3) \\[2mm]
&=\prod_{i=1}^n \frac{(t^{1/2}t_2x_i^{\pm},t^{1/2}t_3x_i^{\pm};t)_N}
{(tx_i^{\pm 2};t)_N}\,\qbin{2N}{N}_t 
\prod_{1\leqslant i<j\leqslant n} 
\frac{(tx_i^{\pm}x_j^{\pm};t)_{2N}}{(tx_i^{\pm} x_j^{\pm};t)_N^2} \\
&\quad \times
\sum_{r\in\Z^n} \frac{\Delta_{\mathrm{C}}(xt^r)}{\Delta_{\mathrm{C}}(x)}
\prod_{i=1}^n \frac{(t^{1/2}t_2^{-1}x_i,t^{1/2}t_3^{-1}x_i;t)_{r_i}}
{(t^{1/2}t_2x_i,t^{1/2}t_3x_i;t)_{r_i}} \,
(t_2t_3)^{r_i} (x_i^2 t^{r_i})^{mr_i} \\
&\quad\quad\qquad\times 
\prod_{i,j=1}^n \frac{(t^{-N}x_ix_j^{\pm};t)_{r_i}}
{(t^{N+1}x_ix_j^{\pm};t)_{r_i}}\, t^{2Nr_i}. 
\end{align*}
\end{corollary}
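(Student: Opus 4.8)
The plan is to obtain the corollary directly from Proposition~\ref{Prop_MPS} by a variable-doubling followed by a confluent limit, as indicated in the remark preceding the statement. First I would apply the substitution \eqref{Eq_subxN}, that is, replace the $n$ base variables $x_1,\dots,x_n$ of \eqref{Eq_MPS} by the $2n$ variables $x_1,y_1,\dots,x_n,y_n$ (so that simultaneously $n\mapsto 2n$, with $N$ unchanged). This yields a three-term identity in which the alphabet of \eqref{Eq_x-change} becomes $t^{1/2}\bigl(x_1,x_1t,\dots,x_1t^{N-1},y_1,\dots,y_1t^{N-1},\dots,x_n,\dots,x_nt^{N-1},y_n,\dots,y_nt^{N-1}\bigr)$, the middle expression is $(x_1y_1\cdots x_ny_n)^{mN}\,t^{mnN^2}\,P^{(\BC_{2nN})}_{m^{2nN}}$ evaluated at that alphabet, and the right-hand side is a $2n$-fold $\C$-type basic hypergeometric series in $x_1,y_1,\dots,x_n,y_n$. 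I would then let $y_i\to x_i^{-1}$ for all $i$, appealing to \cite[Proposition~5.1]{BW13} to justify and evaluate this limit.

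On the left and in the middle the limit is essentially immediate. Plethystically the alphabet $(x_1+y_1+\cdots+x_n+y_n)(1-t^N)/(1-t)$ tends to $(x_1+x_1^{-1}+\cdots+x_n+x_n^{-1})(1-t^N)/(1-t)=X$, so the left-hand side becomes $\sum_{\la_1\leq 2m}t^{\abs{\la}/2}\,h_{\la}^{(2m)}(t_2,t_3;t)\,P_{\la}(X;t)$; here $h_{\la}^{(2m)}$ does not depend on the alphabet, so no care is needed. In the middle expression $(x_1y_1\cdots x_ny_n)^{mN}\to 1$, leaving $t^{mnN^2}\,P^{(\BC_{2nN})}_{m^{2nN}}(t^{1/2}X;t,t_2,t_3)$, which is precisely the second expression in the corollary.

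The substance is all on the right-hand side. Writing $z=(x_1,y_1,\dots,x_n,y_n)$, the rational prefactor appearing in \eqref{Eq_MPS}, now taken over the $2n$ variables $z$, splits according to whether a pair $(x_i,y_i)$ is paired with itself or with another pair $(x_j,y_j)$, $i<j$. As $y_i\to x_i^{-1}$ the diagonal piece $(tx_iy_i;t)_{2N}/\bigl((tx_i^2;t)_N(ty_i^2;t)_N(tx_iy_i;t)_N^2\bigr)$ tends to $(t;t)_{2N}/\bigl((tx_i^{\pm 2};t)_N(t;t)_N^2\bigr)=\qbin{2N}{N}_t/(tx_i^{\pm 2};t)_N$, the numerator products $\prod_i(t^{1/2}t_2y_i,t^{1/2}t_3y_i;t)_N$ combine with $\prod_i(t^{1/2}t_2x_i,t^{1/2}t_3x_i;t)_N$ into $\prod_i(t^{1/2}t_2x_i^{\pm},t^{1/2}t_3x_i^{\pm};t)_N$, and the off-diagonal pieces become $(tx_i^{\pm}x_j^{\pm};t)_{2N}/(tx_i^{\pm}x_j^{\pm};t)_N^2$; together these reproduce the prefactor displayed in the corollary. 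Simultaneously the $2n$-fold sum over $r_1,\dots,r_{2n}\geq 0$ folds into an $n$-fold sum over $r\in\Z^n$, with $r_i\geq 0$ recording the power of $t$ attached to $x_i$ and $r_i<0$ the power attached to $y_i\to x_i^{-1}$, while the ratio $\Delta_{\C}(zt^r)/\Delta_{\C}(z)$ over the $2n$ variables collapses to the $\C_n$ ratio $\Delta_{\C}(xt^r)/\Delta_{\C}(x)$; this folding is exactly the content of \cite[Proposition~5.1]{BW13}.

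I expect the only real obstacle to be this last step: the naive termwise limit $y_i\to x_i^{-1}$ is singular, as factors such as $(tx_iy_i;t)_k$ in denominators and $x_iy_j-1$ inside $\Delta_{\C}$ in numerators vanish, so the $2n$-fold series together with its prefactor must be reorganised before the limit can be taken. Since this reorganisation is carried out once and for all in \cite[Proposition~5.1]{BW13}, what remains is bookkeeping: matching the shifted factorials on the two sides, tracking the powers of $t$, and checking that the weights $(x_i^2t^{r_i})^{mr_i}$ and $(t_2t_3)^{r_i}$ survive the folding. As signalled in the remark, I would leave this verification to the reader.
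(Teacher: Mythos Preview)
Your proposal is correct and follows exactly the approach the paper takes: the corollary is derived from Proposition~\ref{Prop_MPS} by the substitution \eqref{Eq_subxN} with $n\mapsto 2n$, followed by the confluent limit $y_i\to x_i^{-1}$ justified via \cite[Proposition~5.1]{BW13}. The paper gives no further detail beyond naming this substitution and this limit, so your account of how the prefactor and the $\Z^n$-sum emerge is in fact more explicit than what appears there.
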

Since 
\[
\lim_{N\to\infty} P_{\la}(X;t)=
P_{\la}\Big(\Big[\frac{x_1^{\pm}+\cdots+x_n^{\pm}}{1-t}\Big];t\Big)
=P'_{\la}(x_1^{\pm},\cdots x_n^{\pm};t),
\]
the above corollary is a bounded analogue of 
\cite[Theorem 5.2; (5.6a)]{BW13}, which states (without the
second line) that
\begin{align}\label{Eq_id56a}
&\sum_{\substack{\la \\[1pt] \la_1\leqslant 2m}} t^{\abs{\la}/2}
h_{\la}^{(2m)}(t_2,t_3;t) 
P'_{\la}(x_1^{\pm},\dots,x_n^{\pm};t) \\
&\quad =\lim_{N\to\infty} t^{mnN^2} P^{(\mathrm{BC}_{2nN})}_{m^{2nN}}
(t^{1/2} X;t,t_2,t_3) \notag \\
&\quad=\frac{1}{(t;t)_{\infty}^n}
\prod_{i=1}^n \frac{(t^{1/2}t_2x_i^{\pm},t^{1/2}t_3x_i^{\pm};t)_{\infty}}
{(tx_i^{\pm 2};t)_{\infty}} 
\prod_{1\leqslant i<j\leqslant n} \frac{1}{(tx_i^{\pm}x_j^{\pm};t)_{\infty}} 
\notag \\ &\quad\quad \times
\sum_{r\in\Z^n} \frac{\Delta_{\mathrm{C}}(xt^r)}{\Delta_{\mathrm{C}}(x)}
\prod_{i=1}^n \frac{(t^{1/2}t_2^{-1}x_i,t^{1/2}t_3^{-1}x_i;t)_{r_i}}
{(t^{1/2}t_2x_i,t^{1/2}t_3x_i;t)_{r_i}} \,
(t_2t_3t^{-n})^{r_i} (x_i^2t^{r_i})^{(m+n)r_i}, \notag
\end{align}
for $m$ a positive integer.

If instead of \eqref{Eq_subxN} we make the substitution 
\begin{equation}\label{Eq_oddchange}
x\mapsto (x_1,y_1,\dots,x_{n-1},y_{n-1},x_n)
\end{equation}
in \eqref{Eq_MPS} (so that $n\mapsto 2n-1$) and then take the limit
$y_i\mapsto x_i^{-1}$ for all $1\leqslant i\leqslant n-1$ and $x_n\to 1$
using \cite[Proposition 5.1]{BW13}, we obtain a bounded version of
\cite[Theorem 5.2; (5.6b)]{BW13}\footnote{Taking $x_n\to t^{1/2}$
instead of $x_n\to 1$ yields additional character identities to those 
of Theorems~\ref{Thm_een} and \ref{Thm_negen}.}
\begin{align}\label{Eq_id56b}
&\sum_{\substack{\la \\[1pt] \la_1\leqslant 2m}} t^{\abs{\la}/2}
h_{\la}^{(2m)}(t_2,t_3;t) 
P'_{\la}(x_1^{\pm},\dots,x_{n-1}^{\pm},1;t) \\
&\quad =\lim_{N\to\infty} t^{m(n-1/2)N^2} 
P^{(\mathrm{BC}_{(2n-1)N})}_{m^{(2n-1)N}} 
(t^{1/2} \bar{X};t,t_2,t_3) \notag \\
&\quad=\frac{1}{(t;t)_{\infty}^n(t^{1/2}t_2,t^{1/2}t_3;t)_{\infty}} 
\prod_{i=1}^n \frac{(t^{1/2}t_2x_i^{\pm},t^{1/2}t_3x_i^{\pm};t)_{\infty}}
{(-tx_i^{\pm};t)_{\infty}(tx_i^{\pm 2};t^2)_{\infty}}  \notag \\
& \qquad\;\; \times 
\prod_{1\leqslant i<j\leqslant n} \frac{1}{(tx_i^{\pm}x_j^{\pm};t)_{\infty}} 
\sum_{r\in\Z^n} \bigg(\frac{\Delta_{\mathrm{B}}(-xt^r)}
{\Delta_{\mathrm{B}}(-x)} \notag \\ &\quad \qquad\times
\prod_{i=1}^n \frac{(t^{1/2}t_2^{-1}x_i,t^{1/2}t_3^{-1}x_i;t)_{r_i}}
{(t^{1/2}t_2x_i,t^{1/2}t_3x_i;t)_{r_i}} \big({-}t_2t_3t^{1/2-n}\big)^{r_i} 
(x_i^2t^{r_i})^{(m+n-1/2)r_i} \bigg), \notag  
\end{align}
where $m$ is a positive integer and $x_n:=1$.

We are now ready to prove Theorems~\ref{Thm_een}--\ref{Thm_negen}.
Our first two character formulas follow from \eqref{Eq_id56a} 
and \eqref{Eq_id56b} by letting $t_2$ and $t_3$ tend to zero. 
The hypergeometric sums on the right can then be identified with
$\chi_m(\mathrm{C}_n^{(1)})$ and $\chi_m(\mathrm{A}_{2n}^{(2)\dagger})$
respectively, by \cite[Lemmas 2.1 \& 2.3]{BW13} and \cite[Lemma 2.3]{BW13}
(which are simple rewritings of the Weyl--Kac formula
for $\mathrm{C}_n^{(1)}$ and $\mathrm{A}_{2n}^{(2)\dagger}$).
In the latter case this identification requires the specialisation
$\eup^{-\alpha_n}\mapsto -1$, corresponding to the condition
$x_n:=1$ in \eqref{Eq_id56b}.
In the $t_2,t_3\to 0$ limit the left-hand sides simplify since 
$h_{\la}^{(2m)}(0,0;t)=\chi(\la \text{ even})$.
We thus obtain \eqref{Eq_Cn-character} and \eqref{Eq_A2n2dagger-character}.

Next we specialise $t_3=-t^{1/2}$ in \eqref{Eq_id56a} and 
\eqref{Eq_id56b}. Using 
\begin{subequations}\label{Eq_Delta-rel}
\begin{align}
\frac{\Delta_{\mathrm{C}}(xt^r)}{\Delta_{\mathrm{C}}(x)}
\prod_{i=1}^n \frac{(-x_i;t)_{r_i}}{(-tx_i;t)_{r_i}}
&=\frac{\Delta_{\mathrm{B}}(xt^r)}{\Delta_{\mathrm{B}}(x)} \\
\frac{\Delta_{\mathrm{B}}(-xt^r)}{\Delta_{\mathrm{B}}(-x)}
\prod_{i=1}^n \frac{(-x_i;t)_{r_i}}{(-tx_i;t)_{r_i}} 
&=\frac{\Delta_{\mathrm{D}}(xt^r)}{\Delta_{\mathrm{D}}(x)} 
\label{Delta-relBD}
\end{align}
\end{subequations}
respectively, this yields
\begin{align}\label{Eq_id56a-new}
&\sum_{\substack{\la \\[1pt] \la_1\leqslant 2m}} t^{\abs{\la}/2}
h_{\la}^{(2m)}(t_2,-t^{1/2};t) 
P'_{\la}(x_1^{\pm},\dots,x_n^{\pm};t) \\
&\quad =\lim_{N\to\infty} t^{mnN^2} P^{(\mathrm{BC}_{2nN})}_{m^{2nN}}
(t^{1/2} X;t,t_2,-t^{1/2}) \notag \\
&\quad=\frac{1}{(t;t)_{\infty}^n}
\prod_{i=1}^n \frac{(t^{1/2}t_2x_i^{\pm};t)_{\infty}}
{(tx_i^{\pm};t)_{\infty}(tx_i^{\pm 2};t^2)_{\infty}} 
\prod_{1\leqslant i<j\leqslant n} \frac{1}{(tx_i^{\pm}x_j^{\pm};t)_{\infty}} 
\notag \\ &\quad\quad \times
\sum_{r\in\Z^n} \frac{\Delta_{\mathrm{B}}(xt^r)}{\Delta_{\mathrm{B}}(x)}
\prod_{i=1}^n \frac{(t^{1/2}t_2^{-1}x_i;t)_{r_i}}
{(t^{1/2}t_2x_i;t)_{r_i}} \,
(-t_2t^{1/2-n})^{r_i} (x_i^2t^{r_i})^{(m+n)r_i} \notag
\end{align}
and
\begin{align}\label{Eq_id56b-new}
&\sum_{\substack{\la \\[1pt] \la_1\leqslant 2m}} t^{\abs{\la}/2}
h_{\la}^{(2m)}(t_2,-t^{1/2};t) 
P'_{\la}(x_1^{\pm},\dots,x_{n-1}^{\pm},1;t) \\
&\quad =\lim_{N\to\infty} t^{m(n-1/2)N^2}
P^{(\mathrm{BC}_{(2n-1)N})}_{m^{(2n-1)N}} 
(t^{1/2} \bar{X};t,t_2,-t^{1/2}) \notag \\
&\quad=\frac{1}{(t;t)_{\infty}^{n-1}(t^2;t^2)_{\infty}(t^{1/2}t_2;t)_{\infty}} 
\prod_{i=1}^n \frac{(t^{1/2}t_2x_i^{\pm};t)_{\infty}}
{(tx_i^{\pm 2};t^2)_{\infty}}  
\prod_{1\leqslant i<j\leqslant n} \frac{1}{(tx_i^{\pm}x_j^{\pm};t)_{\infty}} 
\notag \\ &\quad\quad \times
\sum_{r\in\Z^n} \bigg(\frac{\Delta_{\mathrm{D}}(xt^r)}{\Delta_{\mathrm{D}}(x)}
\prod_{i=1}^n \frac{(t^{1/2}t_2^{-1}x_i;t)_{r_i}}
{(t^{1/2}t_2x_i;t)_{r_i}} \big(t_2t^{1-n}\big)^{r_i} 
(x_i^2t^{r_i})^{(m+n-1/2)r_i} \bigg), \notag  
\end{align}
where $x_n:=1$ in the second identity.
Taking the $t_2\to 0$ limit using
\[
h_{\la}^{(m)}(0,b;t)=(-b)^{\op(\la)},
\]
and identifying the respective right-hand sides as 
$\chi_m(\mathrm{A}_{2n}^{(2)\dagger})$ and
$\chi_m(\mathrm{A}_{2n-1}^{(2)\dagger})$
by \cite[Lemma 2.3]{BW13} and Lemma~\ref{Lem_Bn-WK-2},
results in \eqref{Eq_A2n2-new} and \eqref{Eq_char-new2}.
In particular we note that $x_n$ being equal to $1$ in \eqref{Eq_id56b-new}
implies the specialisation $\eup^{-\alpha_n}\mapsto\eup^{-\alpha_{n-1}}$,
see \eqref{Eq_qxi-3}.
In similar manner we specialise $t_2=t^{1/2}$ in \eqref{Eq_id56a-new}
(considering \eqref{Eq_id56b-new} does not lead to a character identity).
Using \eqref{Delta-relBD} with $x\mapsto -x$,
it follows from Lemma~\ref{Lem_Bn-WK-2} that the right-hand side
simplifies to 
\[
(-t;t)_{\infty} \, \chi_m\big(\mathrm{A}_{2n-1}^{(2)\dagger}\big).
\]
By \eqref{Eq_RSBCn}, the Rogers--Szeg\H{o} polynomial in the summand on the 
left becomes
\[
h_{\la}^{(2m)}(-t^{1/2},t^{1/2};t)=t^{\op(\la)/2} 
\prod_{\substack{i=1 \\[0.5pt] i \text{ odd}}}^{m-1} 
H_{m_i(\la)}(-1;t)
\prod_{\substack{i=1 \\[0.5pt] i \text{ even}}}^{m-1} H_{m_i(\la)}(-t;t).
\]
Using \eqref{Eq_spec2} and \eqref{Eq_spec3} this yields
\[
h_{\la}^{(2m)}(-t^{1/2},t^{1/2};t)=
\prod_{i=1}^{2m-1} (t;t^2)_{\ceil{m/2}}
\]
if $m_i(\la)$ is even for all $i=1,3,\dots,2m-1$, and zero otherwise.
Finally noting that $(-t;t)_{\infty}(t;t^2)_{\infty}=1$, 
Theorem~\ref{Thm_vier} follows.
There is one further specialisation of \eqref{Eq_id56a-new} and
\eqref{Eq_id56b-new} that leads to character identities, namely
$t_2=-1$. We will consider this case as part of a 
more general treatment of \eqref{Eq_id56a} and \eqref{Eq_id56b} 
for $t_3=-1$.

Recalling that Theorem~\ref{THM_BOUNDED7} extends the $t_3=-1$ case of
Theorem~\ref{Thm_bounded6} to half-integer values of $m$, the 
$t_3=-1$ specialisations of \eqref{Eq_id56a} and \eqref{Eq_id56b}
lead to
\begin{align}\label{Eq_id56a-2}
&\sum_{\substack{\la \\[1pt] \la_1\leqslant m}} t^{\abs{\la}/2}
h_{\la}^{(m)}(t_2;t) P'_{\la}(x_1^{\pm},\dots,x_n^{\pm};t) \\[-1mm]
&\quad =\lim_{N\to\infty} t^{mnN^2/2} P^{(\mathrm{B}_{2nN})}_{(m/2)^{2nN}}
(t^{1/2} X;t,t_2) \notag \\
&\quad=\frac{1}{(t;t)_{\infty}^n}
\prod_{i=1}^n \frac{(t^{1/2}t_2x_i^{\pm};t)_{\infty}}
{(t^{1/2}x_i^{\pm};t)_{\infty}(t^2x_i^{\pm 2};t^2)_{\infty}} 
\prod_{1\leqslant i<j\leqslant n} \frac{1}{(tx_i^{\pm}x_j^{\pm};t)_{\infty}} 
\notag \\ &\quad\quad \times
\sum_{r\in\Z^n} \frac{\Delta_{\mathrm{C}}(xt^r)}{\Delta_{\mathrm{C}}(x)}
\prod_{i=1}^n \frac{(t^{1/2}t_2^{-1}x_i;t)_{r_i}}{(t^{1/2}t_2x_i;t)_{r_i}} \,
(-t_2t^{-n})^{r_i} (x_i^2t^{r_i})^{(m+2n)r_i/2} \notag
\end{align}
and
\begin{align}\label{Eq_id56b-2}
&\sum_{\substack{\la \\[1pt] \la_1\leqslant m}} t^{\abs{\la}/2}
h_{\la}^{(m)}(t_2;t) P'_{\la}(x_1^{\pm},\dots,x_{n-1}^{\pm},1;t) \\[-1mm]
&\quad =\lim_{N\to\infty} t^{m(2n-1)N^2/4} 
P^{(\mathrm{B}_{(2n-1)N})}_{(m/2)^{(2n-1)N}}
(t^{1/2} \bar{X};t,t_2) \notag \\
&\quad=\frac{1}{(t;t)_{\infty}^n(t^{1/2}t_2,-t^{1/2};t)_{\infty}} 
\prod_{i=1}^n \frac{(t^{1/2}t_2x_i^{\pm};t)_{\infty}}
{(t^{1/2}x_i^{\pm},-tx_i^{\pm};t)_{\infty}}
\prod_{1\leqslant i<j\leqslant n} \frac{1}{(tx_i^{\pm}x_j^{\pm};t)_{\infty}} 
\notag \\ 
&\quad\quad\times
\sum_{r\in\Z^n} \frac{\Delta_{\mathrm{B}}(-xt^r)}{\Delta_{\mathrm{B}}(-x)}
\prod_{i=1}^n \frac{(t^{1/2}t_2^{-1}x_i;t)_{r_i}}{(t^{1/2}t_2x_i;t)_{r_i}} \,
(t_2t^{1/2-n})^{r_i} (x_i^2t^{r_i})^{(m+2n-1)r_i/2}, \notag  
\end{align}
where $x_n:=1$.
If we now let $t_2$ tend to zero, use that $h_{\la}^{(m)}(0;t)=1$, 
and further use \cite[Lemmas 2.2 \& 2.4]{BW13}
(see also \eqref{Eq_L22} for the former) to identify the right-hand sides as
$\chi_m(\mathrm{A}_{2n}^{(2)})$ and $\chi_m(\mathrm{D}_{n+1}^{(2)})$, 
we obtain \eqref{Eq_A2n2-character} and \eqref{Eq_Dn1-character}.
We again note that the condition $x_n:=1$ in 
\eqref{Eq_id56b-2} implies that in the $\mathrm{D}_{n+1}^{(2)}$ case we 
must specialise $\eup^{-\alpha_n}\mapsto -1$.
Two further cases, already mentioned in relation with
\eqref{Eq_id56a-new} and \eqref{Eq_id56b-new},
arise from \eqref{Eq_id56a-2} and \eqref{Eq_id56b-2} 
by specialising $t_2=-t^{1/2}$.
On the right we can once again use \eqref{Eq_Delta-rel}
as well as \cite[Lemma 2.4]{BW13} and Lemma~\ref{Lem_Bn-WK-2}
to recognise the hypergeometric sums as
\[
(-t^{1/2};t^{1/2})_{\infty} \, \chi_m(\mathfrak{g})
\]
for $\mathfrak{g}=\mathrm{D}_{n+1}^{(2)}$ and $\mathrm{B}_n^{(1)\dagger}$
respectively. 
In the latter case we must assume the specialisation
$\eup^{-\alpha_n}\mapsto \eup^{-\alpha_{n-1}}$.
On the left we use \eqref{Eq_RSBn} and \eqref{Eq_spec4} to find
\[
h_{\la}^{(m)}(t^{1/2};t)=\prod_{i=1}^{m-1} 
(-t^{1/2};t^{1/2})_{m_i(\la)},
\]
completing the proofs of \eqref{Eq_Dn-char2} and 
\eqref{Eq_Bndagger-character}.
To prove our final two results we consider \eqref{Eq_id56a-2} and 
\eqref{Eq_id56b-2} for $t_2=1$. Then, by \eqref{Eq_JZ},
we can add the additional restriction ``$\la'$ is even''
to the sum on the left and $\abs{r}\equiv 0 \pmod{2}$ to the
sum on the right.
Using Lemmas~\ref{Lem_A2n12-WK} and \ref{Lem_Bn-WK}
this proves \eqref{Eq_A2n12-character} and \eqref{Eq_Bn1-character}.

\section{Rogers--Ramanujan identities}\label{Sec_RR}
Starting with the
pioneering series of papers \cite{LM78a,LM78b,LW81a,LW81b,LW82,LW84},
the link between affine Lie algebras and vertex operator algebras on 
the one hand and Rogers--Ramanujan identities on the other is by now 
well established, see also
\cite{ASW99,BCFK14,Capparelli96,CF13,CKLMQRS14,FF93,FM12,KLRS14,MP87,WZ12}.
Nonetheless, examples of $q$-series identities (as opposed to combinatorial
identities) that lift classical Rogers--Ramanujan-type identities to affine
Lie algebra at arbitrary rank and level while still
permitting a product form, are rare.
Recently Griffin et al.~\cite{GOW14} showed how to
use combinatorial character formulas of the type proven in 
Section~\ref{Sec_char} to obtain doubly-infinite families of 
Rogers--Ramanujan identities, including a generalisation of the 
Rogers--Ramanujan \cite{Rogers94} and Andrews--Gordon
\cite{Andrews74,Gordon61} identities to the affine Lie 
algebra $\mathrm{A}_{2n}^{(2)}$. Following the approach of \cite{GOW14},
we prove several new doubly-infinite families of Rogers--Ramanujan 
identities, including a $\mathrm{B}_n^{(1)}$ generalisation of Bressoud's
Rogers--Ramanujan identities for even moduli \cite{Bressoud80a,Bressoud80b}.

\medskip

\begin{theorem}[$\mathrm{A}_{2n}^{(2)}$ Rogers--Ramanujan identities]
Let $m,n$ be positive integers. Then
\begin{multline}\label{Eq_RR1}
\sum_{\substack{\la \\[1pt] \la_1\leqslant m}}
q^{\abs{\la}/2} P_{\la}(1,q,q^2,\dots;q^{2n}) \\[-1mm]
=\frac{(q^{\kappa};q^{\kappa})_{\infty}^{n-1}
(q^{\kappa/2};q^{\kappa/2})_{\infty}}
{(q;q)_{\infty}^{n-1}(q^{1/2};q^{1/2})_{\infty}}
\prod_{i=1}^n \theta(q^i;q^{\kappa/2})
\prod_{1\leqslant i<j\leqslant n} \theta(q^{j-i},q^{i+j};q^{\kappa})
\end{multline}
for $\kappa:=m+2n+1$, and
\begin{multline}\label{Eq_RR1-new}
\sum_{\substack{\la \\[1pt] \la_1\leqslant 2m}} 
t^{\abs{\la}/2+n\op(\la)}
P_{\la}(1,q,q^2,\dots;q^{2n}) \\[-1mm]
=\frac{(q^{\kappa};q^{\kappa})_{\infty}^{n-1}
(q^{\kappa/2};q^{\kappa/2})_{\infty}}
{(q;q)_{\infty}^{n-1}(q^{1/2};q^{1/2})_{\infty}}
\prod_{i=1}^n \theta(q^{i+m};q^{\kappa/2})
\prod_{1\leqslant i<j\leqslant n} \theta(q^{j-i},q^{i+j-1};q^{\kappa})
\end{multline}
for $\kappa:=2m+2n+1$.
\end{theorem}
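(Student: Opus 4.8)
Following the approach of Griffin--Ono--Warnaar \cite{GOW14}, the plan is to specialise the two combinatorial character identities of Section~\ref{Sec_char}: \eqref{Eq_RR1} will come from \eqref{Eq_A2n2-character} (the character of $V(m\Fun_0)$ for $\A_{2n}^{(2)}$) and \eqref{Eq_RR1-new} from \eqref{Eq_A2n2-new} (the analogous identity for the relabelled algebra $\A_{2n}^{(2)\dagger}$). In each case one imposes a further numerical specialisation of the formal exponentials $\eup^{-\alpha_i}$ chosen so that the left-hand side collapses to the bounded Hall--Littlewood sum in question, and then evaluates the resulting specialisation of $\eup^{-m\Fun_0}\ch V(m\Fun_0)$ in closed product form using the Weyl--Kac formula \eqref{Eq_Weyl-Kac} together with the classical ($\C_n$-type) Weyl/Macdonald denominator identity.

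\textbf{The left-hand side.} For \eqref{Eq_RR1} I would set $\eup^{-\alpha_i}\mapsto q$ for $1\le i\le n$ and $\eup^{-\alpha_0}\mapsto q^{1/2}$, so that $t=\eup^{-\delta}\mapsto q^{2n}$ and $x_i\mapsto q^{\,n-i+1/2}$. A short computation then gives $x_1^{\pm}+\dots+x_n^{\pm}\mapsto q^{1/2-n}(1-q^{2n})/(1-q)$, so that by the definition \eqref{Eq_HLPp} and the homogeneity of $P_{\la}$,
\[
t^{\abs{\la}/2}\,P'_{\la}(x_1^{\pm},\dots,x_n^{\pm};t)\ \longmapsto\ q^{n\abs{\la}}\cdot q^{(1/2-n)\abs{\la}}\,P_{\la}(1,q,q^2,\dots;q^{2n})=q^{\abs{\la}/2}\,P_{\la}(1,q,q^2,\dots;q^{2n}),
\]
and the specialised character equals the left side of \eqref{Eq_RR1}. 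For \eqref{Eq_RR1-new} the corresponding choice is $\eup^{-\alpha_i}\mapsto q$ for $0\le i\le n-1$ and $\eup^{-\alpha_n}\mapsto q^{1/2}$, which again yields $t\mapsto q^{2n}$ and $x_i\mapsto q^{\,n-i+1/2}$; here the explicit prefactor $t^{(\abs{\la}+\op(\la))/2}$ of \eqref{Eq_A2n2-new} contributes an additional $q^{n\op(\la)}$, so the left side becomes $\sum_{\la_1\le 2m}q^{\abs{\la}/2+n\op(\la)}P_{\la}(1,q,q^2,\dots;q^{2n})$, matching \eqref{Eq_RR1-new}.

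\textbf{The right-hand side.} The work then lies in showing that, under these specialisations, $\eup^{-m\Fun_0}\ch V(m\Fun_0)$ equals the claimed product. Applying \eqref{Eq_Weyl-Kac}: the denominator $\prod_{\alpha>0}(1-\eup^{-\alpha})^{\mult(\alpha)}$ specialises to an infinite product which, by the $\A_{2n}^{(2)}$ Macdonald (denominator) identity, contributes the factor $(q;q)_{\infty}^{\,n-1}(q^{1/2};q^{1/2})_{\infty}$ in the denominator of \eqref{Eq_RR1}, while the numerator $\sum_{w\in W}\sgn(w)\,\eup^{w(m\Fun_0+\rho)-\rho}$ specialises to a finite $\C_n$-type alternating sum over the classical Weyl group, which the Weyl denominator formula factors into $\prod_{i=1}^n\theta(q^i;q^{\kappa/2})\prod_{1\le i<j\le n}\theta(q^{j-i},q^{i+j};q^{\kappa})$ together with the ``$\kappa$-level'' normalisation $(q^{\kappa};q^{\kappa})_{\infty}^{\,n-1}(q^{\kappa/2};q^{\kappa/2})_{\infty}$, $\kappa=m+2n+1$. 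The identical computation for $\A_{2n}^{(2)\dagger}$---where $\Fun_0$ and $\rho$ occupy different positions on the reversed Dynkin diagram and $\kappa=2m+2n+1$---produces the shifted theta arguments $q^{i+m}$ and $q^{i+j-1}$ of \eqref{Eq_RR1-new}. All the bookkeeping of $\rho$-shifts and mark-weighted exponents is handled by the Weyl--Kac lemmas collected in the appendix; this step is precisely the numerator evaluation that \cite{GOW14} carry out for vacuum modules, specialised here to our variables.

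\textbf{Main difficulty.} The delicate part is this last product evaluation: matching the specialised Weyl--Kac numerator to the classical $\C_n$ denominator identity and correctly pinning down every shift in terms of $\kappa$---the appearance of $q^{i+m}$ rather than $q^i$, and of $q^{i+j-1}$ rather than $q^{i+j}$, in passing from \eqref{Eq_RR1} to \eqref{Eq_RR1-new} is exactly where the arithmetic must be done with care. By comparison, the left-hand side reduction and the infinite-product denominator are routine once \eqref{Eq_A2n2-character}, \eqref{Eq_A2n2-new}, \eqref{Eq_HLPp}, and the appendix lemmas are in hand.
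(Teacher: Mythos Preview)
Your overall strategy matches the paper's exactly: both identities arise by applying precisely the specialisations you describe to \eqref{Eq_A2n2-character} and \eqref{Eq_A2n2-new}, and your reduction of the left-hand sides is correct.

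The gap is in your account of the right-hand side. The Weyl group in \eqref{Eq_Weyl-Kac} is the \emph{affine} Weyl group, which is infinite; the numerator does not reduce to a finite $\C_n$-type alternant, and a classical Weyl denominator formula cannot by itself produce the theta functions in the answer. What the paper actually does is first rewrite the Weyl--Kac formula in the lattice-sum form \eqref{Eq_L22} (a sum over $r\in\Z^n$ with summand $\widetilde{\symp}_{2n,\la}(xt^r)$ times monomials), apply $F$, manipulate the resulting determinant into the form \eqref{Eq_intermediate}, and then recognise the remaining $\Z^n$-sum as one side of the affine $\D_{n+1}^{(2)}$ Macdonald identity (in the form of \cite[Corollary~6.2]{RS06}). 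It is this \emph{affine} Macdonald identity---not a finite classical denominator formula---that converts the infinite sum into the theta-product times $(q^{\kappa};q^{\kappa})_{\infty}^{n-1}(q^{\kappa/2};q^{\kappa/2})_{\infty}$. For \eqref{Eq_RR1-new} the paper avoids redoing this computation: since $F^{\dagger}$ on $\A_{2n}^{(2)\dagger}$ corresponds under the relabelling $\alpha_i\mapsto\alpha_{n-i}$ to $F$ on $\A_{2n}^{(2)}$, one has $F^{\dagger}\big(\chi_m(\A_{2n}^{(2)\dagger})\big)=F\big(\eup^{-m\Fun_n}\ch V(m\Fun_n)\big)\big|_{\A_{2n}^{(2)}}$, and the product is read off from the general specialisation formula \eqref{Eq_A2n2-spec} at $(c_0,\la)=(0,m^n)$.
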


\begin{proof}
To prove \eqref{Eq_RR1} we apply the specialisation
\[
F:~\Complex[[\eup^{-\alpha_0},\dots,\eup^{-\alpha_n}]]\to
\Complex[[q^{1/2}]]
\]
given by
\begin{equation}\label{Eq_F}
F(\eup^{-\alpha_0})=q^{1/2}\quad\text{and}\quad
F(\eup^{-\alpha_i})=q \quad \text{for $1\leqslant i\leqslant n$}
\end{equation}
to the $\mathrm{A}_{2n}^{(2)}$ character identity \eqref{Eq_A2n2-character}.
Since the the null root for $\mathrm{A}_{2n}^{(2)}$ is
given by $\delta=2\alpha_0+\cdots+2\alpha_{n-1}+\alpha_n$,
it follows from \eqref{Eq_xit} that
\[
F(x_i)=q^{n-i+1/2} \quad (1\leqslant i\leqslant n)\quad\text{and}\quad
F(t)=q^{2n}.
\]
Hence
\begin{align*}
F\Bigg(\;\sum_{\substack{\la \\[1pt] \la_1\leqslant m}} & t^{\abs{\la}/2} 
P'_{\la}(x_1^{\pm},\dots,x_n^{\pm};t) \Bigg)\\
&=\sum_{\substack{\la \\[1pt] \la_1\leqslant m}} q^{n\abs{\la}} 
P'_{\la}(q^{-n+1/2},q^{-n+3/2},\dots,q^{n-1/2};q^{2n}) \\
&=\sum_{\substack{\la \\[1pt] \la_1\leqslant m}} q^{\abs{\la}/2} 
P'_{\la}(1,q,\dots,q^{2n-1};q^{2n}) \\
&=\sum_{\substack{\la \\[1pt] \la_1\leqslant m}} q^{\abs{\la}/2} 
P_{\la}(1,q,q^2,\dots;q^{2n}).
\end{align*}
Here the second equality uses the homogeneity of the modified 
Hall--Littlewood polynomial and the the third equality follows from
\eqref{Eq_HLPp} and
\[
f\Big[\frac{1+q+\cdots+q^{2n-1}}{1-q^{2n}}\Big]=
f\Big[\frac{1}{1-q}\Big]=
f[1+q+q^2+\cdots]
\quad \text{ for $f\in\Lambda$}.
\]
To apply $F$ to the left-hand side of \eqref{Eq_A2n2-character} we use
that for arbitrary $\Lambda\in P_{+}$ parametrised as 
(compare with \eqref{Eq_Pplus-C})
\[
\Lambda=c_0\fwa_0+(\la_1-\la_2)\fwa_1+\cdots+
(\la_{n-1}-\la_n)\fwa_{n-1}+\la_n\fwa_n
\]
with $\la=(\la_1,\dots,\la_n)$ a partition and $c_0$ a nonnegative
integer, we can rewrite the Weyl--Kac formula for $\mathrm{A}_{2n}^{(2)}$ as
\cite[Lemma 2.2]{BW13},
\begin{multline}\label{Eq_L22} 
\eup^{-\Lambda} \ch V(\Lambda)=
\frac{1}{(t;t)_{\infty}^n
\prod_{i=1}^n \theta(t^{1/2}x_i;t)\theta(x_i^2;t^2)
\prod_{1\leqslant i<j\leqslant n} x_j \theta(x_i/x_j,x_ix_j;t)} \\[1mm]
\times \sum_{r\in\Z^n} \widetilde{\symp}_{2n,\la}(xt^r)
\prod_{i=1}^n x_i^{\kappa r_i+\la_i} t^{\kappa r_i^2/2-nr_i}.
\end{multline}
Here $\kappa:=2n+c_0+2\la_1+1$, $\widetilde{\symp}_{2n,\la}$ is the normalised
symplectic Schur function \eqref{Eq_soandsp} and $x_1,\dots,x_n$ and $t$ 
are defined by \eqref{Eq_xit}. Using 
\begin{multline*}
F\bigg((t;t)_{\infty}^n
\prod_{i=1}^n \theta(t^{1/2}x_i;t) \theta(x_i^2;t^2)
\prod_{1\leqslant i<j\leqslant n} x_j \theta(x_i/x_j,x_ix_j;t) \bigg) \\
=(q;q)_{\infty}^{n-1} (q^{1/2};q^{1/2})_{\infty}\,
q^{\sum_{i<j}(n-j+1/2)}
\end{multline*}
as well as the equations \eqref{Eq_symplectic}, \eqref{Eq_soandsp}, 
and appealing to multilinearity, yields
\begin{align*}
F&\big(\eup^{-\Lambda} \ch V(\Lambda)\big)=
\frac{q^{-\sum_{i<j} (n-j+1/2)}}{(q;q)_{\infty}^{n-1}
(q^{1/2};q^{1/2})_{\infty}} \\
& \quad \times
\det_{1\leqslant i,j\leqslant n} \bigg( \sum_{r\in\Z}
q^{(\kappa r+\la_i-\la_j+j-1)(n-i+1/2)+n\kappa r^2-2nr(\la_j+n-j+1)}\\
& \qquad \qquad \qquad\quad 
-\sum_{r\in\Z}
q^{(\kappa r+\la_i+\la_j+2n-j+1)(n-i+1/2)+n\kappa r^2+2nr(\la_j+n-j+1)}
\bigg).
\end{align*}
Replacing $(i,j)\mapsto (n-j+1,n-i+1)$ in the determinant
and then changing $r\mapsto -r-1$ in the second sum, we get
\begin{multline}\label{Eq_intermediate}
F\big(\eup^{-\Lambda} \ch V(\Lambda)\big)=
\frac{1}{(q;q)_{\infty}^{n-1}(q^{1/2};q^{1/2})_{\infty}} \\
\times \det_{1\leqslant i,j\leqslant n} \bigg( \sum_{r\in\Z}
x_i^{2nr-i+1}q^{2n\kappa\binom{r}{2}+\kappa r/2}
\Big( (x_iq^{\kappa r})^{j-1} - (x_iq^{\kappa r})^{2n-j} \Big)\bigg),
\end{multline}
where $x_i:=q^{\kappa/2-i-\la_{n-i+1}}$.
Again using multilinearity and recalling the $\mathrm{B}_n$
Vandermonde determinant
\eqref{Eq_VdMB}, this may be written as
\begin{multline*}
F\big(\eup^{-\Lambda} \ch V(\Lambda)\big)=
\frac{1}{(q;q)_{\infty}^{n-1}(q^{1/2};q^{1/2})_{\infty}} \\
\times \sum_{r\in\Z^n} \Delta_{\mathrm{B}}(xq^{\kappa r})
\prod_{i=1}^n x_i^{2nr_i-i+1}q^{2n\kappa\binom{r_i}{2}+\kappa r_i/2}.
\end{multline*}
By the $\mathrm{D}_{n+1}^{(2)}$ Macdonald identity \cite{Macdonald72}
in the form given by \cite[Corollary 6.2]{RS06}, i.e.,
\begin{multline*}
\sum_{r\in\Z^n} \Delta_{\mathrm{B}}(xq^r)
\prod_{i=1}^n x_i^{2nr_i-i+1}q^{2n\binom{r_i}{2}+r_i/2} \\
=(q;q)_{\infty}^{n-1}(q^{1/2};q^{1/2})_{\infty}
\prod_{i=1}^n \theta(x_i;q^{1/2})
\prod_{1\leqslant i<j\leqslant n} \theta(x_ix_j^{\pm};q),
\end{multline*}
we obtain the product formula
\begin{multline*}
F\big(\eup^{-\Lambda} \ch V(\Lambda)\big)=
\frac{(q^{\kappa};q^{\kappa})_{\infty}^{n-1}
(q^{\kappa/2};q^{\kappa/2})_{\infty}}
{(q;q)_{\infty}^{n-1}(q^{1/2};q^{1/2})_{\infty}} 
\prod_{i=1}^n \theta(q^{\kappa/2-i-\la_{n-i+1}};q^{\kappa/2}) \\
\times
\prod_{1\leqslant i<j\leqslant n} \theta(q^{\la_{n-j+1}-\la_{n-i+1}+j-i},
q^{\kappa-i-j-\la_{n-i+1}-\la_{n-j+1}};q^{\kappa}).
\end{multline*}
By $\theta(x;q)=\theta(q/x;q)$ and a reversal of the products,
this simplifies to
\begin{multline}\label{Eq_A2n2-spec}
F\big(\eup^{-\Lambda} \ch V(\Lambda)\big)=
\frac{(q^{\kappa};q^{\kappa})_{\infty}^{n-1}
(q^{\kappa/2};q^{\kappa/2})_{\infty}}
{(q;q)_{\infty}^{n-1}(q^{1/2};q^{1/2})_{\infty}} 
\prod_{i=1}^n \theta(q^{\la_i+n-i+1};q^{\kappa/2}) \\
\times
\prod_{1\leqslant i<j\leqslant n} \theta(q^{\la_i-\la_j-i+j},
q^{\la_i+\la_j+2n-i-j+2};q^{\kappa}).
\end{multline}
For $\la=0$ and $c_0=m$ this gives the claimed right-hand side of
\eqref{Eq_RR1}.

\medskip

To prove \eqref{Eq_RR1-new} we apply the specialisation 
\[
F^{\dagger}:~\Complex[[\eup^{-\alpha_0},\dots,\eup^{-\alpha_n}]]\to
\Complex[[q^{1/2}]]
\]
given by
\begin{equation}\label{Eq_Fdag}
F^{\dagger}(\eup^{-\alpha_n})=q^{1/2}\quad\text{and}\quad
F^{\dagger}(\eup^{-\alpha_i})=q \quad \text{for $0\leqslant i\leqslant n-1$}
\end{equation}
to the $\mathrm{A}_{2n}^{(2)\dagger}$ character identity \eqref{Eq_A2n2-new}.
This implies the same specialisation of $x_1,\dots,x_n$ and $q$ as before, 
i.e.,
\[
F^{\dagger}(x_i)=q^{n-i+1/2} \quad (1\leqslant i\leqslant n)
\quad\text{and}\quad F^{\dagger}(t)=q^{2n},
\]
so that
\begin{multline*}
F^{\dagger}\Bigg(\;\sum_{\substack{\la \\[1pt] \la_1\leqslant 2m}}
t^{(\abs{\la}+\op(\la))/2} 
P'_{\la}(x_1^{\pm},\dots,x_n^{\pm};t) \Bigg) \\
=\sum_{\substack{\la \\[1pt] \la_1\leqslant 2m}} q^{\abs{\la}/2+n\op(\la)} 
P_{\la}(1,q,q^2,\dots;q^{2n}).
\end{multline*}
Moreover, since \eqref{Eq_F} and \eqref{Eq_Fdag} are compatible with the
map from $\mathrm{A}_{2n}^{(2)}$ to $\mathrm{A}_{2n}^{(2)\dagger}$ 
(corresponding to a reversal of the labelling of simple roots) 
we can again use \eqref{Eq_A2n2-spec}:
\begin{align*}
F^{\dagger}\big(&\eup^{-m\fwa_0} \ch V(m\fwa_0)\big)
\big|_{\mathfrak{g}=\mathrm{A}_{2n}^{(2)\dagger}} \\[1mm]
&=F\big(\eup^{-m\fwa_n} 
\ch V(m\fwa_n)\big)\big|_{\mathfrak{g}=\mathrm{A}_{2n}^{(2)}} \\
&=\frac{(q^{\kappa};q^{\kappa})_{\infty}^{n-1}
(q^{\kappa/2};q^{\kappa/2})_{\infty}}
{(q;q)_{\infty}^{n-1}(q^{1/2};q^{1/2})_{\infty}}
\prod_{i=1}^n \theta(q^{i+m};q^{\kappa/2})
\prod_{1\leqslant i<j\leqslant n} \theta(q^{j-i},q^{i+j-1};q^{\kappa}),
\end{align*}
where $\kappa=2m+2n+1$.
\end{proof}

\begin{theorem}[$\mathrm{D}_{n+1}^{(2)}$ Rogers--Ramanujan identities]
For $m,n$ positive integers and $\kappa:=m+2n$,
\begin{multline}\label{Eq_RR2}
\sum_{\substack{\la \\[1pt] \la_1\leqslant m}}
q^{\abs{\la}/2} P_{\la}(1,q,q^2,\dots;q^{2n-1}) \\[-1mm]
=\frac{(q^{\kappa};q^{\kappa})_{\infty}^n} 
{(q;q)_{\infty}^{n-1} (q^{1/2};q)_{\infty}(q^2;q^2)_{\infty}}
\prod_{i=1}^n \theta(q^{i+(m-1)/2};q^{\kappa})
\prod_{1\leqslant i<j\leqslant n}\theta(q^{j-i},q^{i+j-1};q^{\kappa}),
\end{multline}
and
\begin{multline}\label{Eq_RR3}
\sum_{\substack{\la \\[1pt] \la_1\leqslant m}}
q^{\abs{\la}/2} 
\bigg( \prod_{i=1}^{m-1} (-q^n;q^n)_{
m_i(\la)} \bigg)
P_{\la}(1,q,q^2,\dots;q^{2n}) \\
=\frac{(q^{\kappa};q^{\kappa})_{\infty}^{n-1} 
(q^{\kappa/2};q^{\kappa/2})_{\infty}}
{(q;q)_{\infty}^{n-1} (q^{1/2};q)_{\infty}^2 (q^2;q^2)_{\infty}}
\prod_{i=1}^n \theta(q^{i-1/2};q^{\kappa/2})
\prod_{1\leqslant i<j\leqslant n}\theta(q^{j-i},q^{i+j-1};q^{\kappa}).
\end{multline}
\end{theorem}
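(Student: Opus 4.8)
The plan is to prove \eqref{Eq_RR2} and \eqref{Eq_RR3} by the exact mechanism used for the $\A_{2n}^{(2)}$ identities \eqref{Eq_RR1}--\eqref{Eq_RR1-new}: apply a principal-type specialisation $F$ to one of the $\D_{n+1}^{(2)}$ combinatorial character identities of Section~\ref{Sec_char}, evaluate the left-hand side using the homogeneity of modified Hall--Littlewood polynomials, and evaluate the right-hand side by running the Weyl--Kac formula through a Macdonald identity to produce a theta product. Concretely, \eqref{Eq_RR2} should come from \eqref{Eq_Dn1-character} and \eqref{Eq_RR3} from \eqref{Eq_Dn-char2}; in both cases the classical part is $\B_n$, the dual Coxeter number is $2n$ and the level is $m$, which is what produces the modulus $\kappa=m+2n$.

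For the sum side I would first pin down the specialisation. Since $\delta=\alpha_0+\cdots+\alpha_n$ for $\D_{n+1}^{(2)}$, for \eqref{Eq_RR2} one takes $F(\eup^{-\alpha_0})=q^{1/2}$, $F(\eup^{-\alpha_j})=q$ for $1\le j\le n-1$ and $F(\eup^{-\alpha_n})=-1$ (the last matching the hypothesis of \eqref{Eq_Dn1-character}), so that $F(t)=q^{2n-1}$ and $F(x_i)=q^{n-i}$, whence $x_1^{\pm},\dots,x_{n-1}^{\pm},1$ specialise to $q^{-(n-1)},\dots,q^{n-1}$; for \eqref{Eq_RR3} one takes $F(\eup^{-\alpha_0})=F(\eup^{-\alpha_n})=q^{1/2}$ and $F(\eup^{-\alpha_j})=q$ for $1\le j\le n-1$, so that $F(t)=q^{2n}$ and $F(x_i)=q^{n-i+1/2}$, whence $x_1^{\pm},\dots,x_n^{\pm}$ specialise to $q^{\pm1/2},\dots,q^{\pm(n-1/2)}$. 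Using the homogeneity $P'_{\la}(cz;t)=c^{\abs{\la}}P'_{\la}(z;t)$ together with the plethystic identity $f\big[(1+q+\cdots+q^{N-1})/(1-q^N)\big]=f[1+q+q^2+\cdots]$, exactly as in the proof of \eqref{Eq_RR1}, the monomial $F(t^{\abs{\la}/2})$ absorbs the overall scaling and the left-hand side collapses to $\sum_{\la_1\le m}q^{\abs{\la}/2}P_{\la}(1,q,q^2,\dots;q^{2n-1})$, respectively to the sum side of \eqref{Eq_RR3}, with the factor $\prod_i(-t^{1/2};t^{1/2})_{m_i(\la)}$ passing to $\prod_i(-q^n;q^n)_{m_i(\la)}$ and the $\la$-independent $i=0$ term $(-q^n;q^n)_{\infty}$ being moved into the constant prefactor on the right.

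For the product side I would evaluate $F\big(\eup^{-m\Fun_0}\ch V(m\Fun_0)\big)$ for $\mathfrak{g}=\D_{n+1}^{(2)}$ using the Weyl--Kac formula in the form of \cite[Lemma 2.4]{BW13} (the $\D_{n+1}^{(2)}$ analogue of \eqref{Eq_L22}), i.e.\ as a product denominator times an $n$-fold sum over $\Z^n$ of a normalised $\B_n$ orthogonal character multiplied by a theta-type monomial. Applying $F$, replacing the orthogonal character by its bialternant determinant (with $\B_n$ Vandermonde $\Delta_{\B}$), and interchanging the $\Z^n$-sum with the determinant by multilinearity, one is left with an $n$-fold sum of the shape $\sum_{r\in\Z^n}\Delta_{\B}(xq^{\kappa r})\prod_i x_i^{c_1r_i+c_2}q^{2n\kappa\binom{r_i}{2}+\kappa r_i/2}$ (or its $\Delta_{\D}$ counterpart), after a shift $r\mapsto -r-1$ folds the two determinants of the bialternant into one sum, just as in the passage from \eqref{Eq_intermediate} onward. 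This sum is evaluated in closed form by the appropriate Macdonald--Weyl denominator identity (as recorded in \cite[Corollary 6.2]{RS06}, cf.\ \cite{Macdonald72}; for \eqref{Eq_RR2} the $\B_n^{(1)}$-type identity with all moduli equal to $\kappa$, for \eqref{Eq_RR3} the $\D_{n+1}^{(2)}$-type identity mixing moduli $\kappa$ and $\kappa/2$), giving the products $\prod_i\theta(\,\cdot\,;q^{\kappa})$ or $\prod_i\theta(\,\cdot\,;q^{\kappa/2})$ and $\prod_{i<j}\theta(\,\cdot\,,\,\cdot\,;q^{\kappa})$; the specialisation of the Weyl--Kac denominator to $(q;q)_{\infty}^{n-1}(q^{1/2};q)_{\infty}(q^2;q^2)_{\infty}$, respectively $(q;q)_{\infty}^{n-1}(q^{1/2};q)_{\infty}^2(q^2;q^2)_{\infty}$, is handled by the Weyl--Kac lemmas in the appendix. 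Taking the highest weight to be $m\Fun_0$ (so that the classical label is $\la=0$ and $\kappa=m+2n$) then yields the stated right-hand sides.

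The main obstacle is computational rather than conceptual: tracking, through the principal specialisation, the half-integer exponents and the sign coming from $F(\eup^{-\alpha_n})=-1$; deciding which $q$-shifted factorials in the specialised Weyl--Kac denominator carry modulus $q$, $q^{1/2}$ or $q^2$; and --- most delicately --- matching the arguments $q^{i+(m-1)/2}$, $q^{j-i}$, $q^{i+j-1}$ of the theta functions produced by the Macdonald identity after the $r\mapsto -r-1$ fold and a reversal of the products, together with verifying that \eqref{Eq_RR2} really issues from \eqref{Eq_Dn1-character} and \eqref{Eq_RR3} from \eqref{Eq_Dn-char2} (and not the reverse). Since all the genuinely new input --- the bounded Littlewood identities and the resulting character formulas --- is already established, I expect this final theorem to follow by a careful but routine adaptation of the $\A_{2n}^{(2)}$ argument.
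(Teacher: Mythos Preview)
Your proposal is correct and matches the paper's own proof essentially step for step: \eqref{Eq_RR2} is obtained by specialising \eqref{Eq_Dn1-character} via $F(\eup^{-\alpha_0})=q^{1/2}$, $F(\eup^{-\alpha_n})=-1$, $F(\eup^{-\alpha_i})=q$ ($1\le i\le n-1$) and evaluating the product side with the $\B_n^{(1)}$ Macdonald identity, while \eqref{Eq_RR3} comes from \eqref{Eq_Dn-char2} with $F(\eup^{-\alpha_0})=F(\eup^{-\alpha_n})=q^{1/2}$, $F(\eup^{-\alpha_i})=q$ and the $\D_{n+1}^{(2)}$ Macdonald identity. Your handling of the $i=0$ factor $(-q^n;q^n)_{\infty}$ and the half-integer exponents is also in line with the paper.
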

For $n=1$ the first of these identities is the second equation 
on page 235 of \cite{W03}.

\begin{proof}[Sketch of the proof]
In the character identity \eqref{Eq_Dn1-character} we carry out the 
specialisation 
\[
F:~\Complex[[\eup^{-\alpha_0},\dots,\eup^{-\alpha_n}]]\to \Complex[[q^{1/2}]]
\]
given by
\[
F(\eup^{-\alpha_0})=q^{1/2},\quad
F(\eup^{-\alpha_n})=-1
\quad\text{and}\quad
F(\eup^{-\alpha_i})=q \quad \text{for $1\leqslant i\leqslant n-1$}.
\]
Noting that $F$ applied to \eqref{Eq_xithalf} yields
\[
F(x_i)=q^{n-i}\quad (1\leqslant i\leqslant n-1),
\qquad\quad F(t^{1/2})=q^{n-1/2},
\]
and following the proof of \eqref{Eq_RR1}, it follows that the
right-hand side of \eqref{Eq_Dn1-character} maps to the left-hand side
of \eqref{Eq_RR2}.
If we parametrise $\Lambda\in P_{+}$ as 
(compare with \eqref{Eq_Pplus-B})
\begin{equation}\label{Eq_La-par}
\Lambda=c_0\fwa_0+(\la_1-\la_2)\fwa_1+\cdots+
(\la_{n-1}-\la_n)\fwa_{n-1}+2\la_n\fwa_n,
\end{equation}
with $\la=(\la_1,\dots,\la_n)$ a partition or half-partition
and $c_0$ a nonnegative, and again follow the previous proof, we find
\begin{multline*}
F\big(\eup^{-\Lambda} \ch V(\Lambda)\big)
=\frac{(q^{\kappa};q^{\kappa})_{\infty}^n} 
{(q;q)_{\infty}^{n-1} (q^{1/2};q)_{\infty}(q^2;q^2)_{\infty}}
\prod_{i=1}^n \theta(q^{\la_i+n-i+(\kappa+1)/2};q^{\kappa}) \\
\times\prod_{1\leqslant i<j\leqslant n}\theta(q^{\la_i-\la_j-i+j},
q^{\la_i+\la_j+2n-i-j+1};q^{\kappa}),
\end{multline*}
where $\kappa:=2n+c_0+2\la_1$. The only change compared to the
proof of \eqref{Eq_RR1} is that we have used the $\mathrm{B}_n^{(1)}$
instead of $\mathrm{D}_{n+1}^{(2)}$ Macdonald identity.
For $\Lambda=m\fwa_0$, i.e., $\la=0$ and $c_0=m$ the above product
gives the right-hand side of \eqref{Eq_RR2}, completing the proof.

Similarly, to prove \eqref{Eq_RR3} we apply the 
specialisation $F$ to \eqref{Eq_Dn-char2}, 
where this time
\begin{gather*}
F:~\Complex[[\eup^{-\alpha_0},\dots,\eup^{-\alpha_n}]]\to
\Complex[[q^{1/2}]] \\
F(\eup^{-\alpha_0})=F(\eup^{-\alpha_n})=q^{1/2}
\quad\text{and}\quad
F(\eup^{-\alpha_i})=q \quad \text{for $1\leqslant i\leqslant n-1$}.
\end{gather*}
Applied to \eqref{Eq_xitD} this gives
\[
F(x_i)=q^{n-i+1/2} \quad (1\leqslant i\leqslant n), \qquad\quad 
F(t^{1/2})=q^n,
\]
so that the left side of \eqref{Eq_Dn-char2} specialises to the
the right side of \eqref{Eq_RR3}.
With the same parametrisation of $\Lambda$ as in \eqref{Eq_La-par} 
and once more using the $\mathrm{D}_{n+1}^{(2)}$ Macdonald identity,
it follows that
\begin{multline*}
F\big(\eup^{-\Lambda} \ch V(\Lambda)\big)
=\frac{(q^{\kappa};q^{\kappa})_{\infty}^{n-1} 
(q^{\kappa/2};q^{\kappa/2})_{\infty}(-q^n;q^n)_{\infty}}
{(q;q)_{\infty}^{n-1} (q^{1/2};q)_{\infty}^2 (q^2;q^2)_{\infty}}
\prod_{i=1}^n \theta(q^{\la_i+n-i+1/2};q^{\kappa/2}) \\
\times \prod_{1\leqslant i<j\leqslant n} \theta(q^{\la_i-\la_j-i+j},
q^{\la_i+\la_j+2n-i-j+1};q^{\kappa})
\end{multline*}
with $\kappa$ as before.
For $\Lambda=m\fwa_0$, i.e., $\la=0$ and $c_0=m$ this gives the 
right-hand side of \eqref{Eq_RR3}.
\end{proof}

\begin{theorem}[$\mathrm{B}_n^{(1)}$ Rogers--Ramanujan identity]
\label{Thm_RR-B}
Let $m,n$ be a positive integers and $\kappa:=m+2n-1$. Then
\begin{multline}\label{Eq_RR4}
\sum_{\substack{\la \\[1pt] \la_1\leqslant m}} q^{\abs{\la}/2} 
\bigg( \prod_{i=1}^{m-1} (-q^{n-1/2};q^{n-1/2})_{m_i(\la)} \bigg)
P_{\la}(1,q,q^2,\dots;q^{2n-1}) \\[-1mm]
=\frac{(q^{\kappa};q^{\kappa})_{\infty}^n}{(q;q)_{\infty}^{n-1}
(q^{1/2};q^{1/2})_{\infty}}
\prod_{i=1}^n \theta(q^{i+m/2-1/2};q^{\kappa}) 
\prod_{1\leqslant i<j\leqslant n} \theta(q^{j-i},q^{i+j-2};q^{\kappa}).
\end{multline}
\end{theorem}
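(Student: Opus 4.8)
The plan is to follow the template established in the proofs of \eqref{Eq_RR1}--\eqref{Eq_RR3}: specialise a combinatorial character identity of ``$\B_n$ type'' and then evaluate the resulting character in product form by a Macdonald identity. The appropriate input is the $\B_n^{(1)\dagger}$ character formula \eqref{Eq_Bndagger-character}, since it is the only one of the ``$\B_n$-type'' character identities of Section~\ref{Sec_char} whose combinatorial side carries the factors $(-t^{1/2};t^{1/2})_{m_i(\la)}$ that appear in \eqref{Eq_RR4}. First I would apply to \eqref{Eq_Bndagger-character} (in which the specialisation $\eup^{-\alpha_n}\mapsto\eup^{-\alpha_{n-1}}$ is already in force) the map
\[
F\colon\Complex[[\eup^{-\alpha_0},\dots,\eup^{-\alpha_n}]]\to\Complex[[q^{1/2}]],
\qquad F(\eup^{-\alpha_0})=q^{1/2},\quad F(\eup^{-\alpha_i})=q\ \ (1\le i\le n).
\]
Since the null root of $\B_n^{(1)\dagger}$ is $\delta=2\alpha_0+\dots+2\alpha_{n-2}+\alpha_{n-1}+\alpha_n$, this gives $F(t)=q^{2n-1}$, $F(t^{1/2})=q^{n-1/2}$, and from \eqref{Eq_xit-Bdag} that $F(x_i)=q^{n-i}$ for $1\le i\le n-1$, exactly as in the proof of \eqref{Eq_RR2}.

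The easy half is to recover the left-hand side of \eqref{Eq_RR4}. Applying $F$ to the combinatorial sum on the right of \eqref{Eq_Bndagger-character}, the factor $(-t^{1/2};t^{1/2})_{m_i(\la)}$ becomes $(-q^{n-1/2};q^{n-1/2})_{m_i(\la)}$, while by homogeneity of the modified Hall--Littlewood polynomial and the stable principal specialisation
\[
f\Big[\tfrac{1+q+\cdots+q^{d-1}}{1-q^d}\Big]=f[1+q+q^2+\cdots],\qquad d=2n-1,
\]
one finds $F\big(P'_{\la}(x_1^{\pm},\dots,x_{n-1}^{\pm},1;t)\big)=q^{-(n-1)\abs{\la}}P_{\la}(1,q,q^2,\dots;q^{2n-1})$; combined with $F(t^{\abs{\la}/2})=q^{(2n-1)\abs{\la}/2}$ the two powers of $q$ collapse to $q^{\abs{\la}/2}$. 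After accounting for the $i=0$ term of the product, which under the convention $m_0(\la)=\infty$ supplies the $(-t^{1/2};t^{1/2})_{\infty}$ prefactor visible on the character side of \eqref{Eq_Bndagger-character} (so that this prefactor cancels out of the final identity), the right side of \eqref{Eq_Bndagger-character} is mapped to the left side of \eqref{Eq_RR4}.

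The substantive half is to compute $F\big(\chi_m(\B_n^{(1)\dagger})\big)=F\big(\eup^{-m\Fun_0}\ch V(m\Fun_0)\big)$ and to show it equals the product on the right of \eqref{Eq_RR4}. Here I would closely follow the proof of \eqref{Eq_RR2}: parametrise $\Lambda$ as in \eqref{Eq_Pplus-B}, rewrite the Weyl--Kac formula \eqref{Eq_Weyl-Kac} for $\B_n^{(1)\dagger}$ as a sum over $r\in\Z^n$ whose summand is a normalised classical Schur function evaluated at $xt^r$ times a Gaussian weight in $r$ (using the $\B_n^{(1)\dagger}$ analogue of \cite[Lemma~2.2]{BW13} provided by the appendix), apply $F$, pull the $\Z^n$-sum through the determinant by multilinearity, reindex the determinant by $(i,j)\mapsto(n-j+1,n-i+1)$ and replace $r\mapsto-r-1$ in one of the two sums, and then recognise the outcome as the left-hand side of the $\B_n^{(1)}$ Macdonald identity \cite{Macdonald72} in the form used in the proof of \eqref{Eq_RR2} (analogous to \cite[Corollary~6.2]{RS06}). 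Invoking that identity together with $\theta(x;q)=\theta(q/x;q)$ and a reversal of the products yields the product formula, and specialising $\Lambda=m\Fun_0$, i.e.\ $\la=0$ and $c_0=m$ so that $\kappa=2n-1+c_0+2\la_1=m+2n-1$, gives precisely the right-hand side of \eqref{Eq_RR4}.

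The main obstacle is the bookkeeping in the last step: one must bring the Weyl--Kac rewriting into exactly the normal form on which the chosen Macdonald identity acts, and then track the scalar prefactors and the half-integer shifts $q^{i+m/2-1/2}$ in the arguments of the single-variable theta functions, which are what distinguish \eqref{Eq_RR4} from \eqref{Eq_RR1}--\eqref{Eq_RR3}. Once the $\B_n^{(1)\dagger}$ analogue of \cite[Lemma~2.2]{BW13} has been established, every remaining computation is a routine repetition of those already carried out for the preceding Rogers--Ramanujan theorems.
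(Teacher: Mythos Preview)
Your proposal is correct and follows essentially the same route as the paper: specialise \eqref{Eq_Bndagger-character} via $F(\eup^{-\alpha_0})=q^{1/2}$, $F(\eup^{-\alpha_i})=q$ to obtain the sum side, and evaluate the character side using the $\B_n^{(1)}$ Macdonald identity. The only organisational difference is that, rather than developing a general-$\Lambda$ formula in the $\B_n^{(1)\dagger}$ labelling and then setting $c_0=m$, $\la=0$, the paper uses the compatibility of $F^{\dagger}$ with the reversed specialisation on $\B_n^{(1)}$ to rewrite $F^{\dagger}\big(\chi_m(\B_n^{(1)\dagger})\big)=F\big(\eup^{-m\Fun_n}\ch V(m\Fun_n)\big)\big|_{\B_n^{(1)}}$ and then plugs $c_0=0$, $\la=(m/2)^n$ into the already-available $\B_n^{(1)}$ specialisation formula of Lemma~\ref{Lem_Bn-WK}; both give $\kappa=m+2n-1$ and the same product.
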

By \eqref{Eq_Qp-een},
\[
P_{\la}(1,q,q^2,\dots;q)=\frac{q^{n(\la)}}{b_{\la}(q)}
\,\stackrel{\la_1\leqslant m}{=}\,
\prod_{i=1}^m \frac{q^{\binom{\la'_i}{2}}}{(q;q)_{m_i(\la)}}.
\]
This shows that if we replace $q\mapsto q^2$, $\la'_i\mapsto N_i$
and $m_i(\la)\mapsto n_i$ (so that $N_i=n_i+\cdots+n_m$) 
in the $n=1$ case of Theorem~\ref{Thm_RR-B} we obtain
Bressoud's even modulus identity \cite{Bressoud80a,Bressoud80b}
\[
\sum_{n_1,\cdots,n_m\geqslant 0}
\frac{q^{N_1^2+\cdots+N_m^2}}
{(q;q)_{n_1}\cdots(q;q)_{n_{m-1}}(q^2;q^2)_{n_m}}
=\frac{(q^{m+1},q^{m+1},q^{2m+2};q^{2m+2})_{\infty}}{(q;q)_{\infty}}.
\]

\begin{remark}\label{Remark_Bressoud}
If after the substitution \eqref{Eq_oddchange} we let $x_n$ tend to $t^{1/2}$
instead of $1$, and then specialise $(a,b)=(-t^{1/2},-1)$, we obtain
an identity for $\chi_m(\mathrm{B}_n^{(1)})$ not included in 
Section~\ref{Sec_char}. Upon specialisation this yields a companion to 
\eqref{Eq_RR4} as follows:
\begin{multline*}
\sum_{\substack{\la \\[1pt] \la_1\leqslant m}} q^{\abs{\la}} 
\bigg( \prod_{i=1}^{m-1} (-q^{n-1/2};q^{n-1/2})_{m_i(\la)} \bigg)
P_{\la}(1,q,q^2,\dots;q^{2n-1}) \\[-1mm]
=\frac{(q^{\kappa};q^{\kappa})_{\infty}^n}{(q;q)_{\infty}^{n-1}
(q^{1/2};q^{1/2})_{\infty}}
\prod_{i=1}^n \theta(q^{i-1/2};q^{\kappa}) 
\prod_{1\leqslant i<j\leqslant n} \theta(q^{j-i},q^{i+j-1};q^{\kappa}).
\end{multline*}
For $n=1$ this corresponds to 
\[
\sum_{n_1,\cdots,n_m\geqslant 0}
\frac{q^{N_1^2+\cdots+N_m^2+N_1+\cdots+N_m}}
{(q;q)_{n_1}\cdots(q;q)_{n_{m-1}}(q^2;q^2)_{n_m}}
=\frac{(q,q^{2m+1},q^{2m+2};q^{2m+2})_{\infty}}{(q;q)_{\infty}},
\]
again due to Bressoud \cite{Bressoud80a,Bressoud80b}.
\end{remark}

\begin{proof}[Sketch of the proof of Theorem~\ref{Thm_RR-B}]
We start with the $\mathrm{B}_n^{(1)\dagger}$ formula
\eqref{Eq_Bndagger-character} and make the specialisation
\begin{subequations}
\begin{gather}
F^{\dagger}:~\Complex[[\eup^{-\alpha_0},\dots,\eup^{-\alpha_n}]]\to 
\Complex[[q^{1/2}]] \\
\label{Eq_FB}
F^{\dagger}(\eup^{-\alpha_0})=q^{1/2} \quad\text{and}\quad
F^{\dagger}(\eup^{-\alpha_i})=q \quad \text{for $1\leqslant i\leqslant n$}.
\end{gather}
\end{subequations}
Applied to \eqref{Eq_xit-Bdag} this yields
\[
F^{\dagger}(x_i)=q^{n-i} \quad (1\leqslant i\leqslant n-1), 
\qquad\quad F^{\dagger}(t)=q^{2n-1},
\]
so that, up to a factor $(-q^{n-1/2};q^{n-1/2})_{\infty}$, the 
left-hand side of \eqref{Eq_RR4} follows by application of $F^{\dagger}$.

To obtain the product-form on the right we first consider the more
general $\mathrm{B}_n^{(1)}$ specialisation formula 
\begin{multline*}
F\big(\eup^{-\Lambda} \ch V(\Lambda)\big)=
\frac{(q^{\kappa};q^{\kappa})_{\infty}^n
(-q^{n-1/2};q^{n-1/2})_{\infty}}
{(q;q)_{\infty}^{n-1}(q^{1/2};q^{1/2})_{\infty}} 
\prod_{i=1}^n \theta(q^{\la_i+n-i+1/2};q^{\kappa}) \\ \times
\prod_{1\leqslant i<j\leqslant n} 
\theta(q^{\la_i-\la_j-i+j},q^{\la_i+\la_j+2n-i-j+1};q^{\kappa}),
\end{multline*}
where $\Lambda$ and $\kappa$ are as in Lemma~\ref{Lem_Bn-WK}, and where
$F$ is the specialisation
\[
F(\eup^{-\alpha_n})=q^{1/2} \quad\text{and}\quad
F(\eup^{-\alpha_i})=q \quad \text{for $0\leqslant i\leqslant n-1$}
\]
of $\mathrm{B}_n^{(1)}$.
Proof of this result follows from the $\mathrm{B}_n^{(1)}$ Macdonald identity.
Again $F^{\dagger}$ and $F$ are compatible so that
\begin{align*}
F^{\dagger}\big(&\eup^{-m\fwa_0} \ch V(m\fwa_0)\big)
\big|_{\mathfrak{g}=\mathrm{B}_n^{(1)\dagger}} \\[1mm]
&=F\big(\eup^{-m\fwa_n} 
\ch V(m\fwa_n)\big)\big|_{\mathfrak{g}=\mathrm{B}_n^{(1)}} \\
&=\frac{(q^{\kappa};q^{\kappa})_{\infty}^n(-q^{n-1/2};q^{n-1/2})_{\infty}}
{(q;q)_{\infty}^n (q^{1/2};q)_{\infty}} 
\prod_{i=1}^n \theta(q^{i+m/2-1/2};q^{\kappa})  \\ 
& \qquad\qquad\qquad\qquad\qquad \times 
\prod_{1\leqslant i<j\leqslant n} \theta(q^{j-i},q^{i+j-2};q^{\kappa}),
\end{align*}
where $\kappa=m+2n-1$.
Up to the factor $(-q^{n-1/2};q^{n-1/2})_{\infty}$ this is the right-hand
side of \eqref{Eq_RR4}.
\end{proof}

\begin{theorem}[$\mathrm{A}_{2n-1}^{(2)}$ Rogers--Ramanujan identities]
Let $m,n$ be a positive integers and $\kappa:=2m+2n$. Then
\begin{multline}\label{Eq_RR5-pre}
\sum_{\substack{\la \\[1pt] \la_1\leqslant 2m}} q^{\abs{\la}/2+(n-1/2)\op(\la)}
P_{\la}(1,q,q^2,\dots;q^{2n-1}) \\
=\frac{(q^{\kappa};q^{\kappa})_{\infty}^n}
{(q;q)_{\infty}^n(q;q^2)_{\infty}} 
\prod_{i=1}^n \theta(q^{i+\kappa/2-1};q^{\kappa})
\prod_{1\leqslant i<j\leqslant n} \theta(q^{j-i},q^{i+j-2};q^{\kappa})
\end{multline}
and
\begin{align}\label{Eq_RR5}
\sideset{}{'}\sum_{\substack{\la \\[1pt] \la_1\leqslant 2m}} &
q^{\abs{\la}/2+n\op(\la)} 
\bigg( \prod_{i=1}^{2m-1} (q^{2n};q^{4n})_{\ceil{m_i(\la)/2}} \bigg)
P_{\la}(1,q,q^2,\dots;q^{2n}) \\
&=\frac{(q^{\kappa};q^{\kappa})_{\infty}^n(-q^{\kappa/2};q^{\kappa})_{\infty}}
{2(q;q)_{\infty}^n} \notag \\ 
& \quad \times
\prod_{i=1}^n \theta(-q^{i-1},q^{i+\kappa/2-1};q^{\kappa})
\prod_{1\leqslant i<j\leqslant n}\theta(q^{j-i},q^{i+j-2};q^{\kappa}), \notag
\end{align}
where the prime denotes the restriction $m_i(\la)\equiv 0 \pmod{2}$ 
for $i=1,3,\dots,2m-1$.
\end{theorem}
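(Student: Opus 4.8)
The plan is to deduce \eqref{Eq_RR5-pre} and \eqref{Eq_RR5} from the two $\A_{2n-1}^{(2)\dagger}$ character identities \eqref{Eq_char-new2} and \eqref{Eq_A2n12dagger-character} respectively, by a principal-type specialisation $F^{\dagger}\colon\Complex[[\eup^{-\alpha_0},\dots,\eup^{-\alpha_n}]]\to\Complex[[q^{1/2}]]$, in exactly the manner in which the $\A_{2n}^{(2)}$ identities \eqref{Eq_RR1}, \eqref{Eq_RR1-new} are obtained from \eqref{Eq_A2n2-character} and \eqref{Eq_A2n2-new}. For \eqref{Eq_RR5-pre} I would take $F^{\dagger}(\eup^{-\alpha_n})=q^{1/2}$ and $F^{\dagger}(\eup^{-\alpha_i})=q$ for $0\leq i\leq n-1$, applied to \eqref{Eq_char-new2}; then $\eup^{-\delta}\mapsto q^{2n-1}$ and the variables $x_i$ of Theorem~\ref{Thm_negen} go to consecutive powers of $q$, so that homogeneity of the modified Hall--Littlewood polynomials together with $f\bigl[\frac{1+q+\cdots+q^{N-1}}{1-q^N}\bigr]=f[1+q+q^2+\cdots]$ collapses the sum side to $\sum_{\la,\,\la_1\leq 2m}q^{\abs{\la}/2+(n-1/2)\op(\la)}P_{\la}(1,q,q^2,\dots;q^{2n-1})$. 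For \eqref{Eq_RR5} one applies the analogous specialisation with $\eup^{-\delta}\mapsto q^{2n}$ to \eqref{Eq_A2n12dagger-character}; the factor $\prod_{i=0}^{2m-1}(t;t^2)_{\ceil{m_i(\la)/2}}$ then becomes $(q^{2n};q^{4n})_{\infty}\prod_{i=1}^{2m-1}(q^{2n};q^{4n})_{\ceil{m_i(\la)/2}}$ (the $i=0$ term using $m_0(\la):=\infty$), and the parity restriction ``$m_i(\la)$ even for odd $i$'' is inherited verbatim from the primed sum.

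For the product side I would apply $F^{\dagger}$ to $\eup^{-m\Fun_0}\ch V(m\Fun_0)$ as given by the Weyl--Kac formula \eqref{Eq_Weyl-Kac}. Using the appendix rewriting of the Weyl--Kac formula for $\A_{2n-1}^{(2)}$ (Lemma~\ref{Lem_A2n12-WK}, and its $\dagger$-companion Lemma~\ref{Lem_Bn-WK-2}) one writes $\eup^{-\La}\ch V(\La)$, for a general dominant weight $\La=c_0\Fun_0+(\la_1-\la_2)\Fun_1+\cdots$, as a denominator product times a $\Z^n$-sum over $r$ of $\widetilde{\symp}_{2n,\la}(xt^r)$ against a Gaussian-type factor in the $x_i$ and $t$, with $\kappa=2n+c_0+2\la_1$, in complete analogy with \eqref{Eq_L22}. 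Applying $F^{\dagger}$, expanding $\widetilde{\symp}_{2n,\la}$ as the alternant \eqref{Eq_symplectic}, interchanging the $n\times n$ determinant with the sum over $\Z$, and then performing the substitutions $(i,j)\mapsto(n-j+1,n-i+1)$ in the determinant and $r\mapsto-r-1$ in the ``negative half'' of each inner $\Z$-sum --- exactly as in the passage from \eqref{Eq_L22} to \eqref{Eq_intermediate} --- one arrives at a multiple sum $\sum_{r\in\Z^n}\Delta_{\C}(xq^{\kappa r})\prod_i(\cdots)$, i.e.\ the left-hand side of a classical Macdonald identity of type $\A_{2n-1}^{(2)}$ (resp.\ $\C_n^{(1)}$) in the form given by \cite{RS06}. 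Summing this product and specialising $\la=0$, $c_0=m$ (so $\kappa=2m+2n$) produces the right-hand sides of \eqref{Eq_RR5-pre} and \eqref{Eq_RR5}; in the second case the doubling inherent in the relevant Macdonald identity supplies the extra factor $(-q^{\kappa/2};q^{\kappa})_{\infty}/2$ and the $\prod_{i=1}^n\theta(-q^{i-1};q^{\kappa})$ and absorbs the stray $(q^{2n};q^{4n})_{\infty}$ left over from the sum side. Where a generalised Rogers--Szeg\H{o} factor appears it is degenerated via \eqref{Eq_RSBn}--\eqref{Eq_spec}, just as in Section~\ref{Sec_char}.

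I expect the genuine obstacle to be the product side rather than the sum side. Three points require care: first, writing down the correct Weyl--Kac rewriting for $\A_{2n-1}^{(2)}$ in \emph{both} labellings and checking that $F^{\dagger}$ and the corresponding standard-labelling map $F$ are compatible with the relabelling $\alpha_i\mapsto\alpha_{n-i}$, so that, as in the proof of \eqref{Eq_RR1-new}, $F^{\dagger}\bigl(\eup^{-m\Fun_0}\ch V(m\Fun_0)\bigr)\big|_{\A_{2n-1}^{(2)\dagger}}=F\bigl(\eup^{-m\Fun_n}\ch V(m\Fun_n)\bigr)\big|_{\A_{2n-1}^{(2)}}$; second, identifying the resulting $\Z^n$-sum with the precise affine Macdonald identity needed, including the correct modulus and shift, and in particular choosing correctly between a type $\C$ and a type $\D$ Vandermonde (hence between the $\C_n^{(1)}$, $\A_{2n-1}^{(2)}$ and $\D_{n+1}^{(2)}$ Macdonald identities); third, tracking the half-integral powers of $q$ and the $(q;q^2)_{\infty}$-, $(-q^{\kappa/2};q^{\kappa})_{\infty}$- and $(q^{2n};q^{4n})_{\infty}$-type prefactors so that everything cancels to the stated product form. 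The sum-side manipulations are routine, and the $n=1$ case of \eqref{Eq_RR5-pre} against \cite{W03} furnishes a useful consistency check.
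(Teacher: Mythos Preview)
Your overall plan --- specialise the two $\A_{2n-1}^{(2)\dagger}$ character identities and read off the product side from the Weyl--Kac formula via a Macdonald identity --- is the paper's plan too. But the concrete choices you make for the specialisation, the weight, and the Macdonald identity are all off in ways that prevent the argument from going through.

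For \eqref{Eq_RR5-pre} your $F^{\dagger}$ cannot be applied to \eqref{Eq_char-new2}: Theorem~\ref{Thm_negen} is stated \emph{after} the specialisation $\eup^{-\alpha_n}\mapsto\eup^{-\alpha_{n-1}}$, so any further map must respect this constraint, and your choice $F^{\dagger}(\eup^{-\alpha_{n-1}})=q$, $F^{\dagger}(\eup^{-\alpha_n})=q^{1/2}$ does not. (In fact with the $\A_{2n-1}^{(2)\dagger}$ null root $\delta=\alpha_0+2\alpha_1+\cdots+2\alpha_{n-2}+\alpha_{n-1}+\alpha_n$ your $F^{\dagger}$ gives $\eup^{-\delta}\mapsto q^{2n-3/2}$, not $q^{2n-1}$.) The paper uses the \emph{principal} specialisation $F(\eup^{-\alpha_i})=q$ for all $i$, which is compatible with the constraint, sends $t\mapsto q^{2n-1}$ and $x_i\mapsto q^{n-i}$, and hence collapses the sum side exactly as you describe. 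A second and related error is your choice $\la=0$, $c_0=m$: under the relabelling $\alpha_i\mapsto\alpha_{n-i}$ the weight $m\Fun_0$ of $\A_{2n-1}^{(2)\dagger}$ becomes $m\Fun_n$ of $\A_{2n-1}^{(2)}$, which in the parametrisation of Lemma~\ref{Lem_A2n12-WK} is $c_0=0$, $\la=m^n$. Since that lemma has $\kappa=2n+c_0+\la_1+\la_2$ (not $2n+c_0+2\la_1$), your $\la=0$, $c_0=m$ would yield $\kappa=m+2n$, the wrong modulus; the correct $c_0=0$, $\la=m^n$ gives $\kappa=2m+2n$. With these corrections the product side is immediate from the Kac--Lepowsky principal specialisation formula for $\A_{2n-1}^{(2)}$ --- no Macdonald identity is needed.

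For \eqref{Eq_RR5} the specialisation that sends $\eup^{-\delta}\mapsto q^{2n}$ and the $x_i$ of Theorem~\ref{Thm_vier} to a length-$2n$ geometric progression is $F(\eup^{-\alpha_n})=q^2$, $F(\eup^{-\alpha_i})=q$ for $0\leq i\leq n-1$. The product side is computed from the $\dagger$-form \eqref{Eq_Adag} of Lemma~\ref{Lem_Bn-WK-2}, which involves $\Delta_{\D}$; after the determinant manipulations one lands (up to $q^{\kappa/2}\mapsto q^{-\kappa/2}$) on the same determinant as in \eqref{Eq_intermediate}, and it is again the $\D_{n+1}^{(2)}$ Macdonald identity that sums it. Your expectation of a $\Delta_{\C}$ sum and an $\A_{2n-1}^{(2)}$ or $\C_n^{(1)}$ Macdonald identity would not produce the factors $(-q^{\kappa/2};q^{\kappa})_{\infty}/2$ and $\prod_i\theta(-q^{i-1};q^{\kappa})$ on the right of \eqref{Eq_RR5}.
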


In the rank-$1$ case \eqref{Eq_RR5-pre} can also be written as
\[
\sum_{n_1\geqslant\cdots\geqslant n_{2m}\geqslant 0}
\frac{q^{\frac{1}{2}(N_1^2+\cdots+N_{2m}^2+n_1+n_3+\cdots+n_{2m-1})}}
{(q;q)_{n_1}\cdots(q;q)_{n_{2m}}}
=\frac{(q^{m+1},q^{m+1},q^{2m+2};q^{2m+2})_{\infty}}
{(q;q)_{\infty}(q;q^2)_{\infty}},
\]
where $N_i=n_i+\cdots+n_{2m}$.
Since \cite[Lemma A.1]{W06}
\begin{multline*}
\sum_{n_1,\dots,n_{2m}\geqslant 0} 
\frac{a_1^{n_1}a_2^{n_2}\cdots a_{2m}^{n_{2m}}
q^{\frac{1}{2}(N_1^2+\cdots+N_{2m}^2)}}
{(q;q)_{n_1}\cdots(q;q)_{n_{2m}}} \\
=\sum_{n_1,\dots,n_m\geqslant 0} 
\frac{a_2^{n_1}a_4^{n_2}\cdots a_{2m}^{n_m}
q^{N_1^2+\cdots+N_m^2}(-q^{1/2-N_1}a_1/a_2;q)_{N_1}}
{(q;q)_{n_1}\cdots(q;q)_{n_m}},
\end{multline*}
provided that $a_{2i}/a_{2i-1}=a_2/a_1$ for all $2\leqslant i\leqslant m$,
this may also be written as
\[
\sum_{n_1,\dots,n_m\geqslant 0} 
\frac{q^{N_1^2+\cdots+N_m^2}(-q^{1-N_1};q)_{N_1}}
{(q;q)_{n_1}\cdots(q;q)_{n_m}}
=\frac{(q^{m+1},q^{m+1},q^{2m+2};q^{2m+2})_{\infty}}
{(q;q)_{\infty}(q;q^2)_{\infty}}.
\]
For $m=1$ this is identity (12) in Slater's list of
Rogers--Ramanujan-type identities \cite{Slater52}.

\begin{proof}[Sketch of the proof]
The first result follows from the principal specialisation 
\cite{Lepowsky79,Lepowsky82,Kac78}
\begin{gather*}
F:~\Complex[[\eup^{-\alpha_0},\dots,\eup^{-\alpha_n}]]\to \Complex[[q]] \\
F(\eup^{-\alpha_i})=q \quad \text{for $0\leqslant i\leqslant n$}
\end{gather*}
applied to the $\mathrm{A}_{2n-1}^{(2)\dagger}$ formula \eqref{Eq_char-new2}.
Since this specialisation does not distinguish between
$\mathrm{A}_{2n-1}^{(2)\dagger}$ and $\mathrm{A}_{2n-1}^{(2)}$, we can use
the general $\mathrm{A}_{2n-1}^{(2)}$ principal specialisation formula
\cite{Kac78,Lepowsky82}
\begin{multline*}
F\big(\eup^{-\Lambda} \ch V(\Lambda)\big)=
\frac{(q^{\kappa};q^{\kappa})_{\infty}^n}
{(q;q)_{\infty}^n(q;q^2)_{\infty}} 
\prod_{i=1}^n \theta(q^{\la_i+n-i+1};q^{\kappa}) \\
\times
\prod_{1\leqslant i<j\leqslant n} \theta(q^{\la_i-\la_j-i+j},
q^{\la_i+\la_j+2n-i-j+2};q^{\kappa}),
\end{multline*}
where
$\kappa=2n+c_0+\la_1+\la_2$ and
\[
\Lambda=c_0\fwa_0+(\la_1-\la_2)\fwa_1+\cdots+
(\la_{n-1}-\la_n)\fwa_{n-1}+\la_n\fwa_n
\]
for $c_0$ a nonnegative integer and $\la=(\la_1,\dots,\la_n)$ a partition.
Taking $c_0=0$ and $\la=m^n$ gives the right-hand side of \eqref{Eq_RR5-pre}.
The left-hand side follows in the usual way, noting that
\[
F(x_i)=q^{n-i+1} \quad (1\leqslant i\leqslant n-1)
\quad\text{and}\quad F(t)=q^{2n-1}.
\]

\medskip

The identity \eqref{Eq_RR5} follows from the specialisation $F$ applied to
the $\mathrm{A}_{2n-1}^{(2)\dagger}$ formula \eqref{Eq_A2n12dagger-character}, 
where now $F$ stands for
\begin{subequations}\label{Eq_FA2nm12}
\begin{gather}
F:~\Complex[[\eup^{-\alpha_0},\dots,\eup^{-\alpha_n}]]\to \Complex[[q]] \\
F(\eup^{-\alpha_i})=q \quad \text{for $0\leqslant i\leqslant n-1$}
\quad\text{and}\quad
F(\eup^{-\alpha_n})=q^2.
\end{gather}
\end{subequations}
According to \eqref{Eq_qxi-3} this yields
\begin{equation}\label{Eq_FAdag}
F(x_i)=q^{n-i+1/2} \quad (1\leqslant i\leqslant n)
\quad\text{and}\quad F(t)=q^{2n},
\end{equation}
which we should apply to the $\mathrm{A}_{2n-1}^{(2)\dagger}$
character given in \eqref{Eq_Bdag-Adag}.
Unlike the previous cases, the steps required to obtain the 
product form are slightly different to those in the proof of \eqref{Eq_RR1}, 
and below we outline the key steps in the derivation.

From \eqref{Eq_FAdag}, the $\mathrm{D}_n$ Vandermonde determinant
and multilinearity, it follows that 
\begin{align*}
F\bigg(&2\sum_{r\in\Z^n} \Delta_{\mathrm{D}}(xt^r)
\prod_{i=1}^n (-1)^{r_i} x_i^{\kappa r_i-i+1}
t^{\frac{1}{2}\kappa r_i^2-(n-1)r_i} \bigg) \\
&=\det_{1\leqslant i,j\leqslant n} \bigg( \sum_{r\in\Z} (-1)^r 
q^{(\kappa r-i+j)(n-i+1/2)+n\kappa r^2-2nr(n-j)} \\
&\qquad \qquad \quad +\sum_{r\in\Z} (-1)^r 
q^{(\kappa r-i-j+2n)(n-i+1/2)+n\kappa r^2+2nr(n-j)}
\bigg).
\end{align*}
After interchanging $i$ and $j$ and negating $r$ in the
first sum, the right-hand side becomes
\begin{align*}
&\det_{1\leqslant i,j\leqslant n} \bigg( \sum_{r\in\Z} (-1)^r 
q^{2n\kappa \binom{r}{2}+\kappa r/2+(2nr-i+1)(n-i)} \\
&\qquad \qquad \qquad \qquad \qquad \qquad 
\times \Big(q^{(\kappa r+n-i)(j-1)}+q^{(\kappa r+n-i)(2n-j)}\Big)
\bigg) \\[1mm]
&\quad =
\det_{1\leqslant i,j\leqslant n} \bigg( \sum_{r\in\Z} (-1)^r 
x_i^{2nr-i+1} q^{2n\kappa \binom{r}{2}+\kappa r/2} 
\Big( (x_iq^{\kappa r})^{j-1}- (x_i q^{\kappa r})^{2n-j}\Big) \bigg),
\end{align*}
where $x_i:=-q^{n-i}$. Up to the change
$q^{\kappa/2}\mapsto q^{-\kappa/2}$, the above determinant
is the same as the one on the right of \eqref{Eq_intermediate}.
From here on we can thus follow the previous computations to find
\begin{multline*}
F\bigg(2\sum_{r\in\Z^n} \Delta_{\mathrm{D}}(xt^r)
\prod_{i=1}^n (-1)^{r_i} x_i^{\kappa r_i-i+1}
t^{\frac{1}{2}\kappa r_i^2-(n-1)r_i} \bigg) \\
=(q^{\kappa};q^{\kappa})_{\infty}^n(-q^{\kappa/2};q^{\kappa})_{\infty}
\prod_{i=1}^n \theta(-q^{i-1},q^{i+\kappa/2-1};q^{\kappa}) \\
\times \prod_{1\leqslant i<j\leqslant n} \theta(q^{j-i},q^{i+j-2};q^{\kappa}).
\qedhere
\end{multline*}
\end{proof}

We conclude this section with two remarks. First of all, we have not
considered the specialisations of \eqref{Eq_Cn-character} and 
\eqref{Eq_A2n2dagger-character} as the resulting
$\mathrm{C}_n^{(1)}$ and $\mathrm{A}_{2n}^{(2)}$ identities 
were already obtained in \cite{GOW14}, the $\mathrm{A}_{2n}^{(2)}$ case
corresponding to a generalisation of the Rogers--Ramanujan and 
Andrews--Gordon identities for odd moduli.
We have also omitted the specialisation of 
\eqref{Eq_A2n12-character} and \eqref{Eq_Bn1-character}, but for
different reasons.
The most natural substitutions on the combinatorial sides
would be $x_i\mapsto q^{n-i+1/2}$, $(1\leqslant i\leqslant n)$,
$t\mapsto t^{2n}$ and $x_i\mapsto q^{n-i+1}$ $(1\leqslant i\leqslant n-1)$,
$t\mapsto t^{2n-1}$ respectively.
This corresponds to the specialisations
\[
F(\eup^{-\alpha_0})=q^2 \quad\text{and}\quad
F(\eup^{-\alpha_i})=q \quad \text{for $1\leqslant i\leqslant n$}
\]
for $\mathrm{A}_{2n-1}^{(2)}$, and 
\[
F(\eup^{-\alpha_i})=q \quad \text{for $1\leqslant i\leqslant n-1$}
\quad\text{and}\quad
F(\eup^{-\alpha_0})=q^2,~F(\eup^{-\alpha_n})=-1
\]
for $\mathrm{B}_n^{(2)}$. However, $F(\eup^{-m\fwa_0}\ch V(m\fwa_0))$ 
does not factor for such $F$.

\section[Quadratic transformations]{Quadratic transformations for Kaneko--Macdonald-type 
basic hypergeometric series}\label{Sec_KM}

\subsection{Kaneko--Macdonald-type basic hypergeometric series}
Basic hypergeometric series of Kaneko--Macdonald type, which were
first introduced in \cite{Kaneko96,Macdonald13}, are an important 
generalisation of ordinary basic hypergeometric series to multiple
series with Macdonald polynomial argument.
They have been extensively studied in
\cite{BF99,Kaneko96,Kaneko98,LRW09,LW11,Macdonald13,Rains05,Rains06} and
applied to problems in enumerative and algebraic combinatorics,
such as the enumeration of Lozenge tilings \cite{Rosengren15}, 
the computation of the major index generating function of standard Young 
tableaux \cite{KS14} and the evaluation of Selberg integrals 
and Dyson-like constant terms identities
\cite{Kaneko96,Kaneko97,Kaneko98,W05,W08,W09,W10}. 

For $x=(x_1,\dots,x_n)$, the Kaneko--Macdonald basic hypergeometric
series $\qhyp{r}{s}{}$ is defined as
\begin{multline*}
\qhyp{r}{s}{}
\bigg[\genfrac{}{}{0pt}{}{a_1,\dots,a_r}{b_1,\dots,b_s};q,t;x\bigg] \\
:=\sum_{\la} \frac{(a_1,\dots,a_r;q,t)_{\la}}{(b_1,\dots,b_s;q,t)_{\la}} 
\Big((-1)^{\abs{\la}} q^{n(\la')} t^{-n(\la)}\Big)^{s-r+1} \,
\frac{t^{n(\la)}P_{\la}(x;q,t)}{C^{-}_{\la}(q;q,t)}.
\end{multline*}
For our purposes it suffices to consider the principal specialisation
\begin{align}\label{Eq_KM-PS}
&\qhyp{r}{s}{(n)}
\bigg[\genfrac{}{}{0pt}{}{a_1,\dots,a_r}{b_1,\dots,b_s};q,t;z\bigg] \\
&\quad:=\qhyp{r}{s}{}\bigg[\genfrac{}{}{0pt}{}
{a_1,\dots,a_r}{b_1,\dots,b_s};q,t;z(1,t,\dots,t^{n-1})\bigg] \notag \\
&\quad\hphantom{:}=\sum_{\substack{\la \\[1pt] l(\la)\leqslant n}} 
\frac{(t^n,a_1,\dots,a_r;q,t)_{\la}}
{(b_1,\dots,b_s;q,t)_{\la}}
\Big((-1)^{\abs{\la}} q^{n(\la')} t^{-n(\la)}\Big)^{s-r+1}\,
\frac{z^{\abs{\la}} t^{2n(\la)}}{C^{-}_{\la}(q,t;q,t)}. \notag
\end{align}
Since $C^{-}_r(q,t;q,t)=(q,t;q)_r$ we have
$\qhyp{r}{s}{(1)}=\qhypc{r}{s}$, with on the right an ordinary
basic hypergeometric series, see \cite{GR04}.
Due to the factor $(t^n;q,t)_{\la}$, the summand vanishes unless 
$l(\la)\leqslant n$. For generic $b_1,\dots,b_r$ the restriction 
in the sum over $\la$ may thus be dropped. 
If $a_r=q^{-m}$ the series terminates, with support given 
by $\la\subseteq m^n$. A $\qhyp{r+1}{r}{(n)}$ series is said
to be balanced if
\[
t^{n-1}a_1\cdots a_{r+1}=b_1\cdots b_r\quad\text{and}\quad z=q.
\]

Assume that $r\geqslant s$.
Replacing $\la$ by its complement with respect to $m^n$,
and using \eqref{Eq_Gdual} as well as
\begin{equation}\label{Eq_amn}
(a;q,t)_{m^n}=(-a)^{mn} q^{n\binom{m}{2}} t^{-m\binom{n}{2}}
(q^{1-m}t^{n-1}/a;q,t)_{m^n},
\end{equation}
gives
\begin{align}\label{Eq_reverse}
&\qhyp{r+1}{s}{(n)}\bigg[\genfrac{}{}{0pt}{}
{a_1,\dots,a_r,q^{-m}}{b_1,\dots,b_s};q,t;z\bigg] \\
&\;=\Big(\frac{z}{q}\Big)^{mn} 
\Big((-1)^{mn} q^{n\binom{m}{2}} t^{-m\binom{n}{2}}\Big)^{s-r-1}\,
\frac{(a_1,\dots,a_r;q,t)_{m^n}}{(b_1,\dots,b_s;q,t)_{m^n}} 
\notag \\[1mm] & \;\quad\times
\qhyp{r+1}{r}{(n)}\bigg[\genfrac{}{}{0pt}{}
{0^{r-s},q^{1-m}t^{n-1}/b_1,\dots,q^{1-m}t^{n-1}/b_s,q^{-m}}{
q^{1-m}t^{n-1}/a_1,\dots,q^{1-m}t^{n-1}/a_r};q,t;
\frac{b_1\cdots b_sq^{m+1}}{a_1\cdots a_rzt^{n-1}}\bigg].  \notag 
\end{align}
Here $0^{r-s}$ represents $r-s$ numerator parameters equal to $0$.
Similarly, replacing $\la$ by its conjugate and applying
\eqref{Eq_qtswap} yields the duality relation
\begin{multline}\label{Eq_duality}
\qhyp{r+1}{s}{(n)}\bigg[\genfrac{}{}{0pt}{}
{a_1,\dots,a_r,q^{-m}}{b_1,\dots,b_s};q,t;z\bigg] \\[1mm]
=\qhyp{r+1}{r}{(m)}\bigg[\genfrac{}{}{0pt}{}
{1/a_1,\dots,1/a_r,t^{-n}}{0^{r-s},1/b_1,\dots,1/b_s};t,q;
\frac{a_1\cdots a_rzt^n}{b_1\cdots b_sq^m}\bigg],
\end{multline}
where it is again assumed that $r\geqslant s$.

The expression of the (monic) Askey--Wilson polynomials as a 
balanced $\qhypc{4}{3}$ series \cite{AW85} 
has an analogue for principally specialised
Koornwinder polynomials indexed by the rectangular partition $m^n$.
\begin{lemma}\label{Lem_Ksquare}
For $m$ a nonnegative integer,
\begin{align}\label{Eq_AWn}
&K_{m^n}\big(z(1,t,\dots,t^{n-1});q,t;\tees\big) \\
&\qquad = t_0^{-mn} t^{-m\binom{n}{2}}\,
\frac{(t_0t_1t^{n-1},t_0t_2t^{n-1},t_0t_3t^{n-1};q,t)_{m^n}}
{(t_0t_1t_2t_3q^{m-1}t^{n-1};q,t)_{m^n}} \notag \\[1mm] 
& \qquad \qquad \times
\qhyp{4}{3}{(n)}\bigg[\genfrac{}{}{0pt}{}
{zt_0t^{n-1},t_0/z,t_0t_1t_2t_3q^{m-1}t^{n-1},q^{-m}}
{t_0t_1t^{n-1},t_0t_2t^{n-1},t_0t_3t^{n-1}};q,t;q\bigg]. \notag
\end{align}
\end{lemma}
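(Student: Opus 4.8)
The plan is to transform \eqref{Eq_AWn} into the evaluation of a single Koornwinder integral and then to identify that integral with the stated $\qhyp{4}{3}{(n)}$.

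First I would invoke Mimachi's dual Cauchy identity \eqref{Eq_Mim}. With $x=(x_1,\dots,x_n)$ the first alphabet and $y=(y_1,\dots,y_m)$ the second, I apply the functional $I_K^{(m)}(\,\cdot\,;t,q;\tees)$ in the variables $y$ to both sides of \eqref{Eq_Mim}; since $I_K^{(m)}(K_\mu(y;t,q;\tees);t,q;\tees)=\delta_{\mu 0}$ and $K_0=1$, only the $\la=0$ term on the left survives, so that
\[
K_{m^n}(x;q,t;\tees)=I_K^{(m)}\Big(\prod_{i=1}^n\prod_{j=1}^m x_i^{-1}\big(1-x_iy_j^{\pm}\big);\,t,q;\tees\Big).
\]
Specialising $x_i=zt^{i-1}$ collapses the inner products: $\prod_{i=1}^n(1-zt^{i-1}y_j)=(zy_j;t)_n$, $\prod_{i=1}^n(1-zt^{i-1}y_j^{-1})=(zy_j^{-1};t)_n$, and $\prod_{i,j}x_i^{-1}=z^{-mn}t^{-m\binom n2}$. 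Hence \eqref{Eq_AWn} is equivalent to the single $\BC_m$ evaluation
\[
I_K^{(m)}\Big(\prod_{j=1}^m(zy_j;t)_n(zy_j^{-1};t)_n;\,t,q;\tees\Big)=z^{mn}t^{m\binom n2}\cdot\bigl(\text{right-hand side of \eqref{Eq_AWn}}\bigr).
\]

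To evaluate the left side I would expand both products $\prod_j(zy_j;t)_n$ and $\prod_j(zy_j^{-1};t)_n$ by the Macdonald dual Cauchy identity \eqref{Eq_Mac-Cauchy-a} (with first alphabet $z(1,t,\dots,t^{n-1})$) and the principal-specialisation formula \eqref{Eq_Pspec}, reducing the integrand to a double sum over $\mu,\nu\subseteq m^n$ of terms $P_{\nu'}(y;t,q)P_{\mu'}(y^{-1};t,q)$; applying $I_K^{(m)}$ term by term through the known evaluation of the Koornwinder integral over such Cauchy pairs, and then resumming, should produce the $\qhyp{4}{3}{(n)}$. A good first step, and a consistency check, is the slice $z=t_0$: there the numerator parameter $t_0/z=1$ forces the $\qhyp{4}{3}{(n)}$ to collapse to its $\la=0$ term, while $\prod_j(zy_j;t)_n(zy_j^{-1};t)_n=\prod_{i'=0}^{n-1}\prod_j(1-t^{i'}t_0y_j^{\pm})$ can be fed one factor at a time into the contiguous relation for $I_K^{(m)}$ established inside the proof of Theorem~\ref{Thm_VKI} — each factor $\prod_j(1-t^{i'}t_0y_j^{\pm})$ shifting $t_0\mapsto t^{i'+1}t_0$ at the cost of an explicit product — and, after rewriting with \eqref{Eq_qshift}, one recovers exactly \eqref{Eq_AWn} at $z=t_0$; by the symmetry of $K_{m^n}$ in $t_0,\dots,t_3$ this also disposes of $z\in\{t_1,t_2,t_3\}$. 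Applying the duality \eqref{Eq_duality} first, which turns the $\qhyp{4}{3}{(n)}$ on the right of \eqref{Eq_AWn} into a $\qhyp{4}{3}{(m)}$ in base $(t,q)$, matches the $m$ integration variables of $I_K^{(m)}$ and may streamline the bookkeeping.

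The main obstacle is the term-by-term evaluation and resummation — in effect a multivariable Sears–Saalschütz-type summation forcing the double sum over $\mu,\nu$ to collapse to the single sum over $\la\subseteq m^n$ that defines the $\qhyp{4}{3}{(n)}$. I would first pin down the normalisation from the $z=t_r$ slices (essentially free, given the contiguous relation already in hand), then carry out the general expansion; should that become unwieldy, one can fall back on using it only to determine finitely many Laurent coefficients in $z$, both sides of \eqref{Eq_AWn} being Laurent polynomials in $z$ supported in degrees $[-mn,mn]$.
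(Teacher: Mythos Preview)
Your reduction via Mimachi's identity is correct and is essentially the content of Proposition~\ref{Prop_fIK}: the specialised Koornwinder polynomial becomes the single integral
\[
z^{-mn}t^{-m\binom{n}{2}}\,I_K^{(m)}\Big(\prod_{j=1}^m(zy_j^{\pm};t)_n;\,t,q;\tees\Big).
\]
The gap is in the next step. If you expand the integrand in one piece via \eqref{Eq_Mac-Cauchy-a} with the $2m$-letter alphabet $y_1^{\pm},\dots,y_m^{\pm}$, you land on a single sum whose coefficients are $I_K^{(m)}(P_{\lambda'}(t,q);t,q;\tees)$ for \emph{general} $\tees$; these virtual Koornwinder integrals are not known in closed form---the evaluations in Section~\ref{Sec_virtual} all require special choices of the $t_r$. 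If instead you expand the two factors $\prod_j(zy_j;t)_n$ and $\prod_j(zy_j^{-1};t)_n$ separately, the terms $P_{\nu'}(y;t,q)P_{\mu'}(y^{-1};t,q)$ are only $\Symm_m$-invariant, not $W$-invariant, and the resulting ``Cauchy pair'' integrals $\ip{P_{\mu'}}{P_{\nu'}}_{t,q;\tees}^{(m)}$ (type-$\A$ Macdonald polynomials paired in the Koornwinder inner product) are not orthogonality relations and have no known closed form either. So the ``known evaluation'' you invoke does not exist at this level of generality; supplying it would be tantamount to proving the lemma itself.

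Your $z=t_r$ check is fine, but four values (fewer, by symmetry) cannot determine a Laurent polynomial supported on $[-mn,mn]$, so the degree-counting fallback does not close the gap. For comparison, the paper avoids the integral altogether: it specialises Okounkov's binomial expansion of $K_{\la}$ in $\BC_n$ interpolation polynomials. For $\la=m^n$ the binomial coefficients $\qbin{m^n}{\mu}_{q,t,s}$, the principal specialisation of $K_{\la}$, and the principal specialisation of $\bar P_\mu^{\ast}$ are all explicit products, and substituting them directly yields the $\qhyp{4}{3}{(n)}$.
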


For later use we note that by \eqref{Eq_amn} and \eqref{Eq_reverse} 
we may rewrite this as
\begin{align}\label{Eq_Kmaxrectangle}
&K_{m^n}\big(z(1,t,\dots,t^{n-1});q,t;\tees\big) \\
&\qquad = z^{-mn} t^{-m\binom{n}{2}}
(zt_0t^{n-1},zq^{1-m}t^{n-1}/t_0;q,t)_{m^n} \notag \\[1mm]
& \qquad \qquad \times
\qhyp{4}{3}{(n)}\bigg[\genfrac{}{}{0pt}{}
{q^{1-m}/t_0t_1,q^{1-m}/t_0t_2,q^{1-m}/t_0t_3,q^{-m}}
{zq^{1-m}t^{n-1}/t_0,q^{1-m}/zt_0,q^{2-2m}/t_0t_1t_2t_3};q,t;q\bigg]. \notag
\end{align}
We further note that the symmetry of the left-hand side of
\eqref{Eq_AWn} under permutation of the $t_i$ implies the 
multiple Sears transformation \cite[Eq.~(5.10)]{BF99} 
\begin{multline}\label{Eq_Sears}
\qhyp{4}{3}{(n)}\bigg[\genfrac{}{}{0pt}{}
{a,b,c,q^{-m}}{d,e,f};q,t;q\bigg] \\
=\frac{(e/a,f/a;q,t)_{m^n}}{(e,f;q,t)_{m^n}}\, a^{mn}\:
\qhyp{4}{3}{(n)}\bigg[\genfrac{}{}{0pt}{}
{a,d/b,d/c,q^{-m}}{d,de/bc,df/bc};q,t;q\bigg],
\end{multline}
where $t^{n-1}abc=q^{m-1}def$.

\begin{proof}[Proof of Lemma~\ref{Lem_Ksquare}]
In \cite[Theorem 7.10]{Okounkov98} Okounkov gives an
expansion of the Koornwinder polynomials
in terms of $\mathrm{BC}_n$ interpolation polynomials 
$\bar{P}_{\mu}^{\ast}(x;q,t,s)$ \cite{Okounkov98,Rains05}.
The coefficients in this expansion are $\mathrm{BC}_n$
$q$-binomial coefficients $\qbin{\la}{\hspace{0.5pt}\mu\hspace{0.5pt}}_{q,t,s}$ 
(see e.g., \cite[page 64]{Rains05}) times a ratio of
principally specialised Koornwinder polynomials:
\begin{multline}\label{Eq_Binomial-Koornwinder}
K_{\la}(x;q,t;\tees) \\
=\sum_{\mu\subseteq\la}
\qbin{\la}{\mu}_{q,t,s} \,
\frac{K_{\la}\big(t_0(1,t,\dots,t^{n-1});q,t;\tees\big)}
{K_{\mu}\big(t_0(1,t,\dots,t^{n-1});q,t;\tees\big)}\,
\bar{P}_{\mu}^{\ast}(x;q,t,t_0),
\end{multline}
where $s=t^{n-1}\sqrt{t_0t_1t_2t_3/q\,}$.

Specialising $x=z(1,t,\dots,t^{n-1})$ and $\la=m^n$ in 
\eqref{Eq_Binomial-Koornwinder}, and using 
\cite[Proposition 4.1]{Rains05}
\[
\qbin{m^n}{\mu}_{q,t,s}=(-q)^{\abs{\mu}} t^{n(\mu)} q^{n(\mu')}\,
\frac{(t^n,q^{-m},s^2q^mt^{1-n};q,t)_{\mu}}
{C_{\mu}^{-}(q,t;q,t)C_{\mu}^{+}(s^2;q,t)}
\]
as well as \cite[Theorem 3]{vDiejen96}, \cite{Sahi99}
\begin{multline*}
K_{\la}\big(t_0(1,t,\dots,t^{n-1});q,t;\,t_0,t_1,t_2,t_3\big) \\
=\frac{t^{n(\la)}}{(t_0t^{n-1})^{\abs{\la}}} \cdot
\frac{(t^n,t_0t_1t^{n-1},t_0t_2t^{n-1},t_0t_3t^{n-1};q,t)_{\la}}
{C^{-}_{\la}(t;q,t)C^{+}_{\la}(t_0t_1t_2t_3t^{2n-2}/q;q,t)} 
\end{multline*}
and \cite[Corollary 3.11]{Rains05}
\[
\bar{P}_{\mu}^{\ast}\big(z(1,t,\dots,t^{n-1});q,t,s\big)
=\frac{t^{2n(\mu)} q^{-n(\mu')}}{(-st^{n-1})^{\abs{\mu}}}\cdot
\frac{(t^n,s/z,szt^{n-1};q,t)_{\mu}} {C_{\mu}^{-}(t;q,t)},
\]
we obtain \eqref{Eq_Kmaxrectangle}.
\end{proof}

Lemma~\ref{Lem_KBn} implies an analogue of \eqref{Eq_Kmaxrectangle} 
for the Macdonald--Koornwinder polynomial
$K_{m^n}(q,t;t_2,t_3)$.

\begin{lemma}\label{Lem_Ksquare2}
For $m$ a nonnegative integer or half-integer,
\begin{align}\label{Eq_half-Ksquare}
&K_{m^n}\big(z(1,t,\dots,t^{n-1});q,t;t_2,t_3\big) \\
&\qquad = z^{-mn} t^{-m\binom{n}{2}}
(-zq^{1/2-m}t^{n-1};q,t)_{(2m)^n} \notag \\[1mm]
& \qquad \qquad \times
\qhyp{4}{3}{(n)}\bigg[\genfrac{}{}{0pt}{}
{-q^{1/2-m}/t_2,-q^{1/2-m}/t_3,q^{1/2-m},q^{-m}}
{-zq^{1/2-m}t^{n-1},-q^{1/2-m}/z,q^{3/2-2m}/t_2t_3};q,t;q\bigg]. \notag
\end{align}
\end{lemma}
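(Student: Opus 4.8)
The plan is to derive \eqref{Eq_half-Ksquare} from Lemma~\ref{Lem_KBn}, which expresses the Macdonald--Koornwinder polynomial $K_{m^n}(q,t;t_2,t_3)$ in terms of genuine Koornwinder polynomials, and then to invoke the rectangular hypergeometric expansion \eqref{Eq_Kmaxrectangle}. Since the left-hand side of \eqref{Eq_Kmaxrectangle} is symmetric in $\tees$, that identity holds with the four parameters assigned to $(t_0,t_1,t_2,t_3)$ in any order; the whole point is to choose the distinguished parameter $t_0$ correctly. The integer and half-integer cases of $m$ are handled separately.

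For $m$ a nonnegative integer, the partition case of Lemma~\ref{Lem_KBn} gives
\[
K_{m^n}\big(z(1,t,\dots,t^{n-1});q,t;t_2,t_3\big)
=K_{m^n}\big(z(1,t,\dots,t^{n-1});q,t;-1,-q^{1/2},t_2,t_3\big).
\]
I would then apply \eqref{Eq_Kmaxrectangle} with $(t_0,t_1,t_2,t_3)=(-q^{1/2},-1,t_2,t_3)$. A direct substitution turns the upper parameters into $-q^{1/2-m}/t_2,\,-q^{1/2-m}/t_3,\,q^{1/2-m},\,q^{-m}$ and the lower ones into $-zq^{1/2-m}t^{n-1},\,-q^{1/2-m}/z,\,q^{3/2-2m}/t_2t_3$, which is exactly the series in \eqref{Eq_half-Ksquare}, while the prefactor becomes $z^{-mn}t^{-m\binom{n}{2}}\bigl(-zq^{1/2}t^{n-1},-zq^{1/2-m}t^{n-1};q,t\bigr)_{m^n}$. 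To finish I would use $(a;q)_{2m}=(a;q)_m(aq^m;q)_m$ row by row in $(a;q,t)_{r^n}=\prod_{i=1}^n(at^{1-i};q)_r$ to rewrite
\[
\bigl(-zq^{1/2}t^{n-1};q,t\bigr)_{m^n}\,\bigl(-zq^{1/2-m}t^{n-1};q,t\bigr)_{m^n}
=\bigl(-zq^{1/2-m}t^{n-1};q,t\bigr)_{(2m)^n}.
\]

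For $m=k+\tfrac12$ a half-integer, the half-partition case of Lemma~\ref{Lem_KBn} gives
\[
K_{m^n}\big(x;q,t;t_2,t_3\big)=K_{k^n}\big(x;q,t;-q,-q^{1/2},t_2,t_3\big)
\prod_{i=1}^n\big(x_i^{1/2}+x_i^{-1/2}\big),
\]
and under the principal specialisation $x_i=zt^{i-1}$ the auxiliary factor equals $(x_1\cdots x_n)^{-1/2}\prod_{i=1}^n(1+x_i)=z^{-n/2}t^{-\binom{n}{2}/2}(-z;t)_n$. Applying \eqref{Eq_Kmaxrectangle} with $(t_0,t_1,t_2,t_3)=(-q,-q^{1/2},t_2,t_3)$ and $m$ replaced by $k$, and using $q^{-k}=q^{1/2-m}$, $q^{-1/2-k}=q^{-m}$ and $q^{1/2-2k}=q^{3/2-2m}$, one checks that the resulting $\qhyp{4}{3}{(n)}$ coincides with the one in \eqref{Eq_half-Ksquare} and that the scalar factors combine to $z^{-mn}t^{-m\binom{n}{2}}$. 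It then remains to verify
\[
(-z;t)_n\,\bigl(-zqt^{n-1};q,t\bigr)_{k^n}\,\bigl(-zq^{-k}t^{n-1};q,t\bigr)_{k^n}
=\bigl(-zq^{-k}t^{n-1};q,t\bigr)_{(2k+1)^n},
\]
which follows by writing the $i$th row of the right-hand side as $\prod_{l=-k}^{k}\bigl(1+zq^{l}t^{n-i}\bigr)$ and splitting the product at $l=0$; since $2k+1=2m$ and $q^{-k}=q^{1/2-m}$ this is the prefactor in \eqref{Eq_half-Ksquare}.

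All of the manipulations above are elementary. The two points that need care are the assignment of the distinguished parameter in \eqref{Eq_Kmaxrectangle} — one must take $t_0=-q^{1/2}$ in the integer case and $t_0=-q$ in the half-integer case, the other admissible labellings producing the wrong powers of $q$ — and the bookkeeping that folds the auxiliary factor $\prod_i(x_i^{1/2}+x_i^{-1/2})$ together with the two rectangular $q$-shifted factorials coming out of \eqref{Eq_Kmaxrectangle} into the single $(2m)^n$-indexed factorial appearing in the statement; I expect the latter to be the main (if still routine) obstacle.
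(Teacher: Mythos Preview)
Your proposal is correct and follows essentially the same route as the paper: split into integer and half-integer $m$ via Lemma~\ref{Lem_KBn}, apply \eqref{Eq_Kmaxrectangle} with the distinguished parameter $t_0=-q^{1/2}$ (integer case) or $t_0=-q$ (half-integer case), and then merge the resulting rectangular $q$-shifted factorials into the single $(2m)^n$-indexed factorial. The paper packages the integer-case merge as the identity \eqref{Eq_double} and presents the half-integer factorial identity in the equivalent divided form, but the content is identical to what you wrote.
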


\begin{proof}
When $m$ is an integer the claim follows from 
\eqref{Eq_Kmaxrectangle} by specialising $\{t_0,t_1\}=\{-q^{1/2},-1\}$ 
and applying
\begin{equation}\label{Eq_double}
(a,aq^{-m};q,t)_{m^n}=(aq^{-m};q,t)_{(2m)^n}.
\end{equation}

When $m$ is a half-integer we set $m=k+1/2$, where $k$ a nonnegative integer.
By Lemma~\ref{Lem_KBn}, we then need to show that
\begin{align}\label{Eq_halfintcase}
K_{k^n} &\big(z(1,t,\dots,t^{n-1});q,t;-q,-q^{1/2},t_2,t_3\big) \\
&\qquad = z^{-kn} t^{-k\binom{n}{2}} (-zq^{-k}t^{n-1},-zqt^{n-1};q,t)_{k^n} 
\notag \\ & \qquad \qquad \times
\qhyp{4}{3}{(n)}\bigg[\genfrac{}{}{0pt}{}
{-q^{-k}/t_2,q^{-k}/t_3,q^{-k-1/2},q^{-k}}
{-zq^{-k}t^{n-1},-q^{-k}/z,q^{1/2-2k}/t_2t_3};q,t;q\bigg]. \notag 
\end{align}
Here we have also used 
\[
\prod_{i=1}^n \big(x_i^{1/2}+x_i^{-1/2}\big)\big|_{x_i=zt^{i-1}}=
z^{-n/2} t^{-\binom{n}{2}/2} (-z;t)_n
\]
and
\[
\frac{(-zq^{-k}t^{n-1};q,t)_{(2k+1)^n}}{(-z;t)_n}=
(-zq^{-k}t^{n-1},zqt^{n-1};q,t)_{k^n}.
\]
Since \eqref{Eq_halfintcase} is \eqref{Eq_Kmaxrectangle} with 
$(t_0,t_1,m)\mapsto (-q,-q^{1/2},k)$ we are done.
\end{proof}

\medskip

Equipped with the above lemmas we can specialise
Theorems~\ref{THM_BOUNDED1} and \ref{THM_BOUNDED5} 
to obtain quadratic transformation formulas
for Kaneko--Macdonald-type basic hypergeometric series.

\subsection{The specialisation of Theorem~\ref{THM_BOUNDED1}}
Our first result arises by specialising $x\mapsto z(1,t,\dots,t^{n-1})$ 
in the bounded Littlewood identity~\eqref{Eq_bounded1a0}.

\begin{proposition}\label{Prop_4321-trafo}
For $m$ a nonnegative integer,
\begin{multline}\label{Eq_4321-trafo}
\qhyp{4}{3}{(n)}\bigg[\genfrac{}{}{0pt}{}
{a,-a,-t^n,q^{-m}}{aq^{1/2}t^n,-aq^{1/2}t^n,-q^{-m}/t};q,t;q\bigg] \\[0.5mm]
=\frac{(a^2t)^{mn} q^{m^2n} (qt^{2n};q,t)_{m^n}}
{(a^2qt^{2n};q^2,t^2)_{m^n}(-qt^n;q,t)_{m^n}}\:
\qhyp{2}{1}{(n)}\bigg[\genfrac{}{}{0pt}{}
{q^{-m}/t,q^{-m}}{qt^{2n-1}};q,t^2;\frac{q}{a^2}\bigg].
\end{multline}
\end{proposition}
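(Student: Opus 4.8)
The plan is to derive the transformation by principally specialising the bounded Littlewood identity of Theorem~\ref{Thm_bounded1}, turning each of its two sides into a Kaneko--Macdonald series with the help of \eqref{Eq_Pspec}, \eqref{Eq_PCB-K} and Lemma~\ref{Lem_Ksquare}. Thus I would put $x=z(1,t,\dots,t^{n-1})$ in Theorem~\ref{Thm_bounded1} and, at the very end, express the answer through a single free parameter $a$, a combination of $z$ and the free parameter of that theorem.

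On the left-hand side, \eqref{Eq_Pspec} replaces $P_\la(x;q,t)$ by $z^{\abs{\la}}t^{n(\la)}(t^n;q,t)_\la/C^{-}_\la(t;q,t)$, so the summand becomes $a^{\op(\la)}b^{\textup{oa}}_{\la;m}(q,t)\,z^{\abs{\la}}t^{n(\la)}(t^n;q,t)_\la/C^{-}_\la(t;q,t)$, and the task is to recognise the resulting series as the $\qhyp43{(n)}$ on the left of the proposition. The key tools here are the quadratic splittings $(b^2;q^2,t^2)_\la=(b;q,t)_\la(-b;q,t)_\la$ and $C^{-}_\la(b^2;q^2,t^2)=C^{-}_\la(b;q,t)C^{-}_\la(-b;q,t)$, which are exactly what is needed to produce the groupings $(t^n;q,t)_\la(-t^n;q,t)_\la=(t^{2n};q^2,t^2)_\la$ and $(aq^{1/2}t^n;q,t)_\la(-aq^{1/2}t^n;q,t)_\la=(a^2qt^{2n};q^2,t^2)_\la$, and hence also to account for the coexistence of the bases $(q,t)$ and $(q^2,t^2)$ in the prefactor on the right. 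The arm-parity and arm-colength-parity products hidden in $b^{\textup{oa}}_{\la;m}(q,t)$ are converted into ordinary Pochhammer ratios by the doubling formulas \eqref{Eq_double}, \eqref{Eq_double_C0}, the conjugation symmetries \eqref{Eq_qtswap} and, for the ``$q^{2m-a'(s)}$''-factors, the complementation formulas \eqref{Eq_Gdual}.

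On the right-hand side, \eqref{Eq_PCB-K} gives $P^{(\C_n,\B_n)}_{m^n}(x;q,t,qt)=K_{m^n}(x;q,t;q^{1/2},-q^{1/2},(qt)^{1/2},-(qt)^{1/2})$, and Lemma~\ref{Lem_Ksquare} writes its principal specialisation as a $\qhyp43{(n)}$. The decisive point is the coincidence $t_0t_1=-q$: two of the series parameters then differ pairwise by a factor $q$, and the doubling identity $(b,bq;q^2,t^2)_\nu=(b;q,t^2)_{2\nu}$ (together with its $C^{-}$-companion) contracts this $\qhyp43{(n)}$ to the $\qhyp21{(n)}\big[\genfrac{}{}{0pt}{}{q^{-m}/t,\,q^{-m}}{qt^{2n-1}};q,t^2;q/a^2\big]$, with the base $t^2$ now exposed. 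Equating the two sides, cancelling the Pochhammer-free common prefactor and tracking the rectangular symbols $(t^{2n};q,t)_{m^n}$, $(-qt^n;q,t)_{m^n}$, $(a^2qt^{2n};q^2,t^2)_{m^n}$ together with the stray powers of $z$, $t$, $q$ then yields the constant, and a single application of the reversal \eqref{Eq_reverse} or the duality \eqref{Eq_duality} straightens out either series if it emerges in a permuted form. The main obstacle is entirely computational, namely reconciling the three bases $(q,t)$, $(q^2,t^2)$, $(q,t^2)$: I expect the delicate points to be (i) verifying that the parity-restricted products assembling $a^{\op(\la)}b^{\textup{oa}}_{\la;m}(q,t)$ really collapse into the stated $\qhyp43{(n)}$ summand after \eqref{Eq_double_C0} and \eqref{Eq_Gdual}, and (ii) checking that the $t_0t_1=-q$ contraction reproduces exactly the prefactor in the proposition and not that prefactor times a spurious monomial in $q$ and $t$. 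No ingredient beyond the lemmas of Section~2 and of the present section should be required.
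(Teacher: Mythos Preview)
Your plan has a genuine gap, and it stems from a misidentification of the parameter~$a$.

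The $a$ in the proposition is \emph{not} the $a$ of Theorem~\ref{Thm_bounded1}. In the paper the proposition is obtained from the $a=0$ case~\eqref{Eq_bounded1a0}, and the $a$ in the final formula is a reparametrisation of the principal-specialisation variable $z$ (via $z\mapsto q^{1/2-m}t^{-1/2}/a$ at the end). This matters because the factor $a^{\op(\la)}$ you want to carry through the left-hand side is not of the form $\prod_{s\in\la}f\big(a'(s),l'(s)\big)$: the number of odd parts cannot be written as a Pochhammer symbol $(c;q,t)_\la$, and so the specialised left side of the full Theorem~\ref{Thm_bounded1} is not a $\qhyp{4}{3}{(n)}$ at all. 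The paper instead restricts to even $\la$, substitutes $\la\mapsto 2\la$, and uses the doubling identities to turn the left side directly into a $\qhyp{2}{1}{(n)}$ in base $(q^2,t)$; it is the Koornwinder side that becomes the $\qhyp{4}{3}{(n)}$.

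Your contraction of the Koornwinder $\qhyp{4}{3}{(n)}$ to a $\qhyp{2}{1}{(n)}$ also does not go through. With $(\tees)=(\pm q^{1/2},\pm(qt)^{1/2})$ the numerator parameters of~\eqref{Eq_Kmaxrectangle} are $\pm q^{-m}$ and $\pm q^{-m}t^{-1/2}$; no two of them differ by a factor of $q$, so the identity $(b,bq;q^2,t^2)_\nu=(b;q,t^2)_{2\nu}$ does not apply, and the remaining factors $(t^n;q,t)_\la$ and $C^{-}_\la(q,t;q,t)$ lack the $\pm$-partners needed for the quadratic splitting $(b;q,t)_\la(-b;q,t)_\la=(b^2;q^2,t^2)_\la$ you invoke. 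Two further ingredients are essential and missing from your list: after the specialisation one must apply the duality~\eqref{Eq_duality} to \emph{both} sides (this swaps $m\leftrightarrow n$ and $q\leftrightarrow t$, producing the base $(q,t^2)$ on the $\qhyp{2}{1}{(n)}$ side), and then the multiple Sears transformation~\eqref{Eq_Sears} to bring the $\qhyp{4}{3}{(n)}$ into the stated form. Neither is a cosmetic ``straightening out''; without them the bases and the roles of $m$ and $n$ do not match.
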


Before giving the details of the proof, we first present two equivalent
identities which allow us to connect with quadratic transformation
formulas of Cohl et al.~\cite{CCSHW14} and Gasper and Rahman \cite{GR86}. 
To this end we need the very-well poised multiple 
basic hypergeometric series \cite{CG06,Rains05,W02}
\begin{align}\label{Eq_VWP-BHS}
&\Whyp{r+1}{r}{(n)}(a_1;a_4,\dots,a_{r+1};q,t;z) \\[1mm]
&\quad:=\sum_{\substack{\la \\[1pt] l(\la)\leqslant n}}\bigg(
\frac{(a_1q,a_1q^2,a_1q/t,a_1q^2/t;q^2,t^2)_{\la}}
{C_{\la}^{+}(a_1,a_1q/t;q,t)}\notag \\[-3mm] & \quad\quad\qquad\qquad\times
\frac{(t^n,a_4,\dots,a_{r+1};q,t)_{\la}}
{(a_1q/t^n,a_1q/a_4,\dots,a_1q/a_{r+1};q,t)_{\la}} 
\cdot\frac{z^{\abs{\la}} t^{2n(\la)}}{C^{-}_{\la}(q,t;q,t)} \bigg),
\notag 
\end{align}
where $\abs{q},\abs{z}<1$ if the series does not terminate.
For $n=1$ this definition simplifies to that of the 
very-well poised basic hypergeometric series
\[
\Whyp{r+1}{r}{}(a_1;a_4,\dots,a_{r+1};q,z)
:=\sum_{k=0}^{\infty}\frac{1-a_1q^{2k}}{1-a_1}\cdot
\frac{(a_4,\dots,a_{r+1};q)_k\, z^k}
{(q,a_1q/a_4,\dots,a_1q/a_{r+1};q)_k},
\]
see \cite{GR04}.
For nonnegative integers $n$ and $m$ it follows from 
\eqref{Eq_qtswap} that
\begin{multline}\label{Eq_duality2}
\Whyp{r+1}{r}{(n)}(a_1;a_4,\dots,a_r,q^{-m};q,t;z) \\[1mm]
=\Whyp{r+1}{r}{(m)}\bigg((a_1qt)^{-1};a_4^{-1},\dots,a_r^{-1},t^{-n};t,q;
\frac{(a_4\cdots a_r)^2zq^{1-2m}t^{2n-1}}{(a_1q)^{r-3}}\bigg).
\end{multline}

Watson's transformation between a very-well poised ${_8W_7}$ series
and a balanced ${_4\phi_3}$ series (see \cite[Equation (III.18)]{GR04})
has the following multiple analogue \cite{CG06,W02,Rains05}:
\begin{multline}\label{Eq_Watson}
\Whyp{8}{7}{(n)}\big(a;b,c,d,e,q^{-m};q,t;
a^2q^{m+2}t^{1-n}/bcde\big) \\[0.5mm]
=\frac{(aq,aq/de;q,t)_{m^n}}{(aq/d,aq/e;q,t)_{m^n}}\,
\qhyp{4}{3}{(n)}\bigg[\genfrac{}{}{0pt}{}
{aq/bc,d,e,q^{-m}}{aq/b,aq/c,deq^{-m}t^{n-1}/a};q,t;q\bigg].
\end{multline}
Transforming the right-hand side using the multiple Sears transformation
\eqref{Eq_Sears} with 
\[
(a,b,c,d,e,f)\mapsto(e,aq/bc,d,deq^{-m}t^{n-1}/a,aq/b,aq/c)
\]
yields the iterated Watson transformation
\begin{align}\label{Eq_iterated-Watson}
\Whyp{8}{7}{(n)}&\big(a;b,c,d,e,q^{-m};q,t;
a^2q^{m+2}t^{1-n}/bcde\big) \\[0.5mm]
&=\frac{(aq,aq/be,aq/ce,aq/de;q,t)_{m^n}}{(aq/b,aq/c,aq/d,aq/e;q,t)_{m^n}}\,
e^{mn} \notag \\[1.1mm] & \qquad \times
\qhyp{4}{3}{(n)}\bigg[\genfrac{}{}{0pt}{}
{eq^{-m}t^{n-1}/a,bcdeq^{-1-m}t^{n-1}/a^2,e,q^{-m}}
{beq^{-m}t^{n-1}/a,ceq^{-m}t^{n-1}/a,deq^{-m}t^{n-1}/a};q,t;q\bigg].
\notag
\end{align}

Applying \eqref{Eq_iterated-Watson} with
\[
(a,b,c,d,e)\mapsto 
\big(q^{-m}t^{2n-1}/a,-q^{1/2}t^n,q^{-m}/at,q^{1/2}t^n,-t^n)
\]
to the $\qhyp{4}{3}{(n)}$ series in
\eqref{Eq_4321-trafo}, then replacing $a$ by $1/c$, and finally
simplifying some of the $q,t$-shifted factorials using \eqref{Eq_amn},
\[
(a,-a,aq^{1/2},-aq^{1/2};q,t)_{m^n}=(a;q,t^2)_{(2m)^n}
\]
and
\[
(a,at^n;q,t)_{m^n}=(at^n;q,t)_{m^{2n}},
\]
we obtain the following quadratic transformation formula.

\begin{corollary}\label{Cor_multiple-Cohl}
For $m$ a nonnegative integer,
\begin{multline}\label{Eq_Cohl-terminating}
\qhyp{2}{1}{(n)}\bigg[\genfrac{}{}{0pt}{}
{q^{-m}/t,q^{-m}}{qt^{2n-1}};q,t^2;c^2q\bigg]
=\frac{(c^2q^{1-2m}t^{2n-2};q,t^2)_{(2m)^n}}
{(cq^{1-m}t^{2n-1};q,t)_{m^{2n}}} \\[1mm] 
\times \Whyp{8}{7}{(n)}
\big(cq^{-m}t^{2n-1};cq^{-m}/t,-t^n,q^{1/2}t^n,-q^{1/2}t^n,q^{-m};q,t;cq\big).
\end{multline}
\end{corollary}
For $n=1$ this is a terminating version of \cite[Theorem 21]{CCSHW14}
by Cohl et al., suggesting the following nonterminating analogue.

Let 
\[
(a;q,t)_{\infty^n}:=\prod_{i=1}^n (at^{1-i};q)_{\infty}.
\]

\begin{theorem}\label{Thm_Cohl}
For $\abs{q},\abs{cq},\abs{c^2q}<1$,
\begin{multline}\label{Eq_Cohl}
\qhyp{2}{1}{(n)}\bigg[\genfrac{}{}{0pt}{}
{a,a/t}{qt^{2n-1}};q,t^2;c^2q\bigg]
=\frac{(a^2c^2qt^{2n-2};q,t^2)_{\infty^n}}{(c^2qt^{2n-2};q,t^2)_{\infty^n}}
\cdot\frac{(cqt^{2n-1};q,t)_{\infty^{2n}}}
{(acqt^{2n-1};q,t)_{\infty^{2n}}} \\[1mm] 
\times \Whyp{8}{7}{(n)}
\big(act^{2n-1};ac/t,a,-t^n,q^{1/2}t^n,-q^{1/2}t^n;q,t;cq\big).
\end{multline}
\end{theorem}

An easy consistency check is provided by the $ac=t$ case. Then the
$\Whyp{8}{7}{(n)}$ series trivialises to $1$
and the $\qhyp{2}{1}{(n)}$ series can be summed by the multiple Gauss
sum \cite[Proposition 5.4]{Kaneko96} 
\begin{equation}\label{Eq_Gauss}
\qhyp{2}{1}{(n)}\bigg[\genfrac{}{}{0pt}{}
{a,b}{c};q,t;\frac{ct^{1-n}}{ab}\bigg]=
\frac{(c/a,c/b;q,t)_{\infty^n}}{(c,c/ab;q,t)_{\infty^n}},
\end{equation}
for $\abs{q},\abs{ct^{1-n}/ab}<1$.
A full proof of Theorem~\ref{Thm_Cohl} based on a quadratic transformation
formula for elliptic Selberg integrals is given in 
Appendix~\ref{App_B}.

Applying the dualities \eqref{Eq_duality} and \eqref{Eq_duality2} 
to a terminating summation or transformation formula for 
Kaneko--Macdonald-type basic hypergeometric series, 
interchanges the roles of $m$ and $n$ (and $q$ and $t$).
Since, by lack of a parameter $m$, no analogues of these dualities exist 
in the nonterminating setting, a second inequivalent nonterminating
analogue of \eqref{Eq_Cohl-terminating} can be obtain by first dualising,
then interchanging $m$ and $n$ as well as $q$ and $t$, and 
finally, after writing the resulting transformation in a suitable form, 
by replacing $q^{-m}$ by $a$.
To be more precise, the dual of \eqref{Eq_Cohl-terminating}
(after making the substitutions $m\leftrightarrow n$ and 
$q\leftrightarrow t$) is
\begin{multline*}
\qhyp{2}{1}{(n)}\bigg[\genfrac{}{}{0pt}{}
{qt^n,q^{-2m}}{q^{1-2m}/t};q^2,t;c^2t^{-2n}\bigg]
=\frac{(c^2;q^2,t)_{m^{2n}}}{(c;q,t)_{(2m)^n}} \\[1mm]
\times \Whyp{8}{7}{(n)}
\big(q^{-2m}t^{n-1}/c;qt^n/c,q^{-m}t^{-1/2},-q^{-m}t^{-1/2},
q^{-m},-q^{-m};q,t;q/c\big).
\end{multline*}
Using the generalised Watson and Sears transformations \eqref{Eq_Watson}
and \eqref{Eq_Sears}, we can transform the $\Whyp{8}{7}{(n)}$ on the right. 
Also replacing $c\mapsto ct^{1/2}$ this gives our second
corollary.\footnote{Alternatively, \eqref{Eq_GR-terminating}
may be obtained by equating \eqref{Eq_2phi1-left} and \eqref{Eq_3phi3-right}
below, and using \eqref{Eq_Sears} and \eqref{Eq_Watson}.}

\begin{corollary}\label{Cor_multiple-GR}
For $m$ a nonnegative integer,
\begin{multline}\label{Eq_GR-terminating}
\qhyp{2}{1}{(n)}\bigg[\genfrac{}{}{0pt}{}
{qt^n,q^{-2m}}{q^{1-2m}/t};q^2,t;c^2t^{1-2n}\bigg]=
\frac{(c^2q^{-2m};q^2,t)_{m^{2n}}}
{(c^2q^{-m},q^{-2m}/t;q,t)_{m^n}} \\[1mm]
\times \Whyp{8}{7}{(n)}
\big(c^2 q^{-m-1};c,-c,ct^{1/2},-ct^{1/2},q^{-m};q,t;q^{-m}t^{-n}\big).
\end{multline}
\end{corollary}

For $n=1$ this is a terminating analogue of a transformation formula
of Gasper and Rahman \cite[Equation (1.4)]{GR86} (see also
\cite[Equation (3.5.4)]{GR04}). This suggests a second
nonterminating transformation formula.

\begin{theorem}\label{Thm_GR}
For $\abs{q},\abs{c^2t^{1-2n}},\abs{at^{-n}}<1$,
\begin{multline}\label{Eq_GR}
\qhyp{2}{1}{(n)}\bigg[\genfrac{}{}{0pt}{}
{a^2,qt^n}{a^2q/t};q^2,t;c^2t^{1-2n}\bigg]
=\frac{(a^2c^2;q^2,t)_{\infty^{2n}}}
{(c^2;q^2,t)_{\infty^{2n}}} \cdot
\frac{(a/t,c^2;q,t)_{\infty^n}} 
{(a^2/t,ac^2;q,t)_{\infty^n}} \\[1mm]
\times \Whyp{8}{7}{(n)}
\big(ac^2/q;a,c,-c,ct^{1/2},-ct^{1/2};q,t;at^{-n}\big).
\end{multline}
\end{theorem}

For a proof of this result we again refer to
Appendix~\ref{App_B}.

\smallskip

\begin{proof}[Proof of Proposition~\ref{Prop_4321-trafo}]
In \eqref{Eq_bounded1a0} we specialise $x\mapsto z(1,t,\dots,t^{n-1})$
and replace the summation index $\la$ by $2\la$.
On the left side we then use 
\[
b_{2\la;m}^{\textup{oa}}(q,t)=\Big(\frac{q}{t}\Big)^{\abs{\la}}
\frac{(q^{-2m};q^2,t)_{\la}}{(q^{1-2m}/t;q^2,t)_{\la}}\cdot
\frac{C_{\la}^{-}(qt;q^2,t)}{C_{\la}^{-}(q^2;q^2,t)}
\]
(see the proof of Theorem~\ref{THM_BOUNDED1} on 
page~\pageref{proof-THM_BOUNDED1}) and
\begin{align*}
P_{2\la}\big(z(1,t,\dots,t^{n-1});q,t\big)
&\stackrel{\eqref{Eq_Pspec}}{=}
z^{2\abs{\la}} t^{2n(\la)} \frac{(t^n;q,t)_{2\la}}{C^{-}_{2\la}(t;q,t)} \\
&\stackrel{\hphantom{\eqref{Eq_Pspec}}}{=}
z^{2\abs{\la}} t^{2n(\la)} \frac{(t^n,qt^n;q^2,t)_{\la}}
{C^{-}_{\la}(t,qt;q^2,t)}.
\end{align*}
As a result we obtain the $\qhyp{2}{1}{(n)}$ series
\begin{equation}\label{Eq_2phi1-left}
\qhyp{2}{1}{(n)}\bigg[\genfrac{}{}{0pt}{}
{qt^n,q^{-2m}}{q^{1-2m}/t};q^2,t;\frac{z^2q}{t}\bigg].
\end{equation}
On the right we use
\eqref{Eq_Kmaxrectangle} with 
\[
(\tees)=\big(q^{1/2},-q^{1/2},(qt)^{1/2},-(qt)^{1/2}\big)
\]
and the elementary relation \eqref{Eq_double} to find
\begin{equation}\label{Eq_3phi3-right}
(zq^{1/2-m}t^{n-1};q,t)_{(2m)^n} \,
\qhyp{4}{3}{(n)}\bigg[\genfrac{}{}{0pt}{}
{q^{-m}t^{-1/2},-q^{-m}t^{-1/2},-q^{-m},q^{-m}}
{zq^{1/2-m}t^{n-1},q^{1/2-m}/z,q^{-2m}t^{-1}};q,t;q\bigg].
\end{equation}
Next we equate \eqref{Eq_2phi1-left} and 
\eqref{Eq_3phi3-right}\footnote{The transformation
obtained by equating \eqref{Eq_2phi1-left} and \eqref{Eq_3phi3-right}
generalises Verma's quadratic transformation 
\cite[Equation (2.5)]{Verma80}.}, 
apply the duality \eqref{Eq_duality} to both sides,
and interchange $m$ and $n$ as well as $q$ and $t$. By 
\[
(a;t,q)_{n^m}=(aq^{1-m}t^{n-1};q,t)_{m^n}
\]
(which follows from \eqref{Eq_qtswap_C0} and \eqref{Eq_Gdual_C0} 
for $\la=m^n$) this yields
\begin{multline}\label{Eq_21-43-intermediate}
\qhyp{2}{1}{(n)}\bigg[\genfrac{}{}{0pt}{}
{q^{-m}/t,q^{-m}}{qt^{2n-1}};q,t^2;z^2q^{2m}t\bigg] \\
=(zt^{n-1/2};q,t)_{m^{2n}} \,
\qhyp{4}{3}{(n)}\bigg[\genfrac{}{}{0pt}{}
{q^{1/2}t^n,-q^{1/2}t^n,-t^n,q^{-m}}
{zt^{n-1/2},q^{1-m}t^{n-1/2}/z,qt^{2n}};q,t;q\bigg].
\end{multline}
We finally rewrite the right-hand side using the multiple Sears 
transformation \eqref{Eq_Sears} with
\[
(a,b,c,d,e) \mapsto 
\big({-}t^n,q^{1/2}t^n,-q^{1/2}t^n,
zt^{n-1/2},q^{1-m}t^{n-1/2}/z\big),
\]
and use
\begin{multline*}
(zt^{n-1/2};q,t)_{m^{2n}} \,
\frac{(-zt^{-1/2},-q^{-m}/t;q,t)_{m^n}}
{(zt^{n-1/2},q^{-m}t^{-1-n};q,t)_{m^n}} \\
=\frac{(z^2t^{-1};q^2,t^2)_{m^n} 
(-q^{-m}t^{-1};q,t)_{m^n}}
{(q^{-m}t^{-1-n};q,t)_{m^n}}
\end{multline*}
to clean up the prefactor.
By the substitution $z\mapsto q^{1/2-m}t^{-1/2}/a$ and
application of \eqref{Eq_amn} the claim follows.
\end{proof}

\subsection{The specialisation of Theorem~\ref{THM_BOUNDED5}}
In this section we consider the principal specialisation of the
bounded Littlewood identity \eqref{Eq_bounded5}.

\begin{proposition}\label{Prop_Phinew2}
For $m$ a nonnegative integer,
\begin{multline}\label{Eq_Phinew2}
\qhyp{4}{3}{(n)}\bigg[\genfrac{}{}{0pt}{}
{a,aq,q^{-m},q^{1-m}}{aq^{1-m}/t,aq^{2-m}/t,qt^{2n}};q^2,t^2;q^2\bigg] \\
=\frac{(t^{2n};q^2,t^2)_{m^n}}{(t^{2n},t^{2n-1}/a;q,t^2)_{m^n}} \:
\qhyp{2}{1}{(n)}\bigg[\genfrac{}{}{0pt}{}
{-t^n,q^{-m}}{-q^{1-m}/t};q,t;\frac{q}{a}\bigg].
\end{multline}
\end{proposition}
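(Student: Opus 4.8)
The plan is to specialise the bounded Kawanaka identity, Theorem~\ref{Thm_bounded5}, at a principal specialisation of the variables, exactly as the preceding proposition was obtained from \eqref{Eq_bounded1a0}. Specifically, in \eqref{Eq_bounded5} I would put $x\mapsto z(1,t^2,\dots,t^{2n-2})$ and eventually set $z:=-t/a$; note that then $x_1\cdots x_n=z^n t^{n(n-1)}$.

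\emph{The Macdonald side.} By the principal specialisation formula \eqref{Eq_Pspec} with $(q,t)\mapsto(q^2,t^2)$ one has $P_{\la}(z(1,t^2,\dots,t^{2n-2});q^2,t^2)=z^{\abs{\la}}t^{2n(\la)}(t^{2n};q^2,t^2)_{\la}/C^-_{\la}(t^2;q^2,t^2)$. I would then use the elementary identities $b^-_{\la}(q,t)=C^-_{\la}(-t;q,t)/C^-_{\la}(q;q,t)$, $C^-_{\la}(t;q,t)\,C^-_{\la}(-t;q,t)=C^-_{\la}(t^2;q^2,t^2)$ and $(t^n;q,t)_{\la}(-t^n;q,t)_{\la}=(t^{2n};q^2,t^2)_{\la}$ (the last two following from \eqref{Eq_qshift} and $(b;q)_k(-b;q)_k=(b^2;q^2)_k$), together with the rewriting
\[
\prod_{s\in\la}\frac{1-q^{m-a'(s)}t^{l'(s)}}{1+q^{m-a'(s)-1}t^{l'(s)+1}}
=\Big({-}\frac{q}{t}\Big)^{\abs{\la}}\,\frac{(q^{-m};q,t)_{\la}}{(-q^{1-m}/t;q,t)_{\la}},
\]
obtained by extracting the monomials $q^{a'(s)-m}t^{-l'(s)}$, respectively $q^{a'(s)+1-m}t^{-l'(s)-1}$, from the individual factors and invoking $\sum_{s\in\la}a'(s)=n(\la')$ and $\sum_{s\in\la}l'(s)=n(\la)$. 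Collecting everything, the left-hand side of the specialised \eqref{Eq_bounded5} collapses to $\qhyp{2}{1}{(n)}[\genfrac{}{}{0pt}{}{-t^n,q^{-m}}{-q^{1-m}/t};q,t;-qz/t]$, which for $z=-t/a$ is exactly the $\qhyp{2}{1}{(n)}$ series on the right of \eqref{Eq_Phinew2}, with \emph{no} extra prefactor.

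\emph{The Koornwinder side.} Using \eqref{Eq_PBC-K} to write $P^{(\B_n,\C_n)}_{\halfm{n}}(x;q^2,t^2,-t)=K_{\halfm{n}}(x;q^2,t^2;-t,-qt)$, I would apply Lemma~\ref{Lem_Ksquare2} — i.e.\ \eqref{Eq_half-Ksquare} with $(q,t,m,t_2,t_3)\mapsto(q^2,t^2,m/2,-t,-qt)$ — to evaluate the right-hand side of \eqref{Eq_bounded5} at $z(1,t^2,\dots,t^{2n-2})$. After multiplying by $(x_1\cdots x_n)^{m/2}$ the powers of $z$ and $t$ cancel against those produced by the lemma, leaving $(q^{1-m}t^{2n-1}/a;q^2,t^2)_{m^n}$ (once $z=-t/a$) times a $\qhyp{4}{3}{(n)}$ in base $q^2$, parameter $t^2$ and argument $q^2$ whose numerator entries are $q^{1-m}/t,\ q^{-m}/t,\ q^{1-m},\ q^{-m}$. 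Equating the two sides of \eqref{Eq_bounded5} then expresses the $\qhyp{2}{1}{(n)}$ series as $(q^{1-m}t^{2n-1}/a;q^2,t^2)_{m^n}$ times this $\qhyp{4}{3}{(n)}$.

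It remains to transform this $\qhyp{4}{3}{(n)}$ into the one on the left of \eqref{Eq_Phinew2}. I would do this by a single application of the multiple Sears transformation \eqref{Eq_Sears}, taking (in base $q^2$, parameter $t^2$) the data $(a,b,c,d,e)=(q^{1-m},a,aq,aq^{1-m}/t,aq^{2-m}/t)$, for which the Sears balance condition correctly reduces to $(q^2)^{1-m/2}(t^2)^{n-1}abc/(de)=qt^{2n}$; the Sears prefactor that results is then reconciled with the factor $(t^{2n};q^2,t^2)_{m^n}/(t^{2n},t^{2n-1}/a;q,t^2)_{m^n}$ of \eqref{Eq_Phinew2} by a (somewhat lengthy but routine) manipulation of $q$-shifted factorials via \eqref{Eq_amn}, \eqref{Eq_double} and \eqref{Eq_qtswap}. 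The one genuinely delicate point is the parity of $m$: the terminating entry of the base-$q^2$ series is $q^{-m}$ when $m$ is even but $q^{1-m}$ when $m$ is odd, so the Sears labelling (and the half-integer index $m/2$ entering Lemma~\ref{Lem_Ksquare2}) must be adjusted accordingly; equivalently, one may prove \eqref{Eq_Phinew2} first for even $m$ and then extend it to all $m$ by treating $q^{m}$ as an indeterminate. I expect this matching of the two $\qhyp{4}{3}{(n)}$ series — pinning down $z$, choosing the Sears data, and checking that the prefactors line up — to be the main obstacle, but the argument is entirely parallel to the final step in the proof of the preceding proposition.
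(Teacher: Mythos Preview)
Your proposal is correct and follows essentially the same route as the paper: specialise \eqref{Eq_bounded5} at $x=z(1,t^2,\dots,t^{2n-2})$, collapse the Macdonald side to a $\qhyp{2}{1}{(n)}$ via \eqref{Eq_Pspec} and the closed form for $b^{-}_{\la;m}(q,t)$, express the Koornwinder side through Lemma~\ref{Lem_Ksquare2}, and then apply a single Sears transformation \eqref{Eq_Sears} in base $q^2$, handling the parity of $m$ by choosing which of $q^{-m}$, $q^{1-m}$ plays the role of the terminating parameter---the paper encodes this uniformly via $\lceil m/2\rceil$ and $\lfloor m/2\rfloor$ in its Sears data. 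One caution: your suggested alternative of proving the identity for even $m$ and then ``treating $q^m$ as an indeterminate'' is not straightforward here, since the range of summation on both sides genuinely depends on $m$; stick with the parity-adjusted Sears labelling, which works directly.
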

For $n=1$, and up to the change $t\mapsto b$, this is a transformation 
stated on page 2310 of \cite{BW05}.

Again we derive some related results before giving a proof.
If we let $c,f\to 0$ and $b,d\to\infty$ in the multiple Sears
transformation \eqref{Eq_Sears}, such that $b/d$ and $f/c$ are
fixed as $z/q$ and $azq^{-m}t^{n-1}/e$ respectively, we find
\[
\qhyp{2}{1}{(n)}\bigg[\genfrac{}{}{0pt}{}
{a,q^{-m}}{e};q,t;z\bigg]
=\frac{(e/a;q,t)_{m^n}}{(e;q,t)_{m^n}}\, a^{mn}\:
\qhyp{3}{1}{(n)}\bigg[\genfrac{}{}{0pt}{}
{a,q/z,q^{-m}}{aq^{1-m}t^{n-1}/e};q,t;\frac{z}{e}\bigg].
\]
This can be used to transform the right-hand side of \eqref{Eq_Phinew2},
so that an equivalent form of that identity is given by
\begin{multline*}
\qhyp{4}{3}{(n)}\bigg[\genfrac{}{}{0pt}{}
{a,aq,q^{-m},q^{1-m}}{aq^{1-m}/t,aq^{2-m}/t,qt^{2n}};q^2,t^2;q^2\bigg] \\
=\frac{(q^{1-m}/t;q,t^2)_{m^n}}
{(aq^{1-m}/t;q,t^2)_{m^n}}\, a^{mn} \:
\qhyp{3}{1}{(n)}\bigg[\genfrac{}{}{0pt}{}
{a,-t^n,q^{-m}}{t^{2n}};q,t;-\frac{q^mt}{a}\bigg].
\end{multline*}
This generalises the $c=aq^{1-m}/t$ case of 
Jain's quadratic transformation \cite[Equation (3.6)]{Jain81}:
\[
\qhypc{4}{3}\bigg[\genfrac{}{}{0pt}{}
{a,aq,q^{-m},q^{1-m}}{c,cq,qt^2};q^2,q^2\bigg]
=\frac{(c/a;q)_m}{(c;q)_m}\,a^n \:
\fourIdx{}{4}{}{2}\phi\bigg[\genfrac{}{}{0pt}{}
{a,t,-t,q^{-m}}{t^2,aq^{1-m}/c};q,-\frac{q}{c}\bigg].
\]
Unfortunately, the obvious guess 
\begin{multline*}
\qhyp{4}{3}{(n)}\bigg[\genfrac{}{}{0pt}{}
{a,aq,q^{-m},q^{1-m}}{c,cq,qt^{2n}};q^2,t^2;q^2\bigg] \\
=\frac{(c/a;q,t^2)_{m^n}}{(c;q,t^2)_{m^n}}\, a^{mn} \:
\qhyp{4}{2}{(n)}\bigg[\genfrac{}{}{0pt}{}
{a,t^n,-t^n,q^{-m}}{t^{2n},aq^{1-m}t^{n-1}/c};q,t;-\frac{q}{c}\bigg],
\end{multline*}
is false for all $n\geqslant 2$.

Another rewriting of \eqref{Eq_Phinew2}
arises by expressing the left-hand side as a very well-poised series.
First we note that if we make the simultaneous substitutions
\[
(m,e,q,t)\mapsto \big(\floor{m/2},q^{1-2\ceil{m/2}},q^2,t^2)
\]
in \eqref{Eq_iterated-Watson}, and use
\[
\frac{(aq^2;q^2,t^2)_{\floor{m/2}^n}}
{(aq^{1+2\ceil{m/2}};q^2,t^2)_{\floor{m/2}^n}}=
\frac{(aq;q,t^2)_{m^n}}{(aq;q^2t^2)_{m^n}},
\]
it follows that
\begin{align*}
\Whyp{8}{7}{(n)}&\big(a;b,c,d,q^{-m},q^{1-m};q^2,t^2;
a^2q^{2m+3}t^{2-2n}/bcd\big) \\
&=\frac{(aq/b,aq/c,aq/d;q^2,t^2)_{m^n}(aq;q,t^2)_{m^n}}
{(aq/b,aq/c,aq/d;q,t^2)_{m^n}(aq;q^2,t^2)_{m^n}}\,
q^{-n\binom{m}{2}} \\[1mm] & \qquad \times
\qhyp{4}{3}{(n)}\bigg[\genfrac{}{}{0pt}{}
{q^{1-2m}t^{2n-2}/a,bcdq^{-1-2m}t^{2n-2}/a^2,q^{-m},q^{1-m}}
{bq^{1-2m}t^{2n-2}/a,cq^{1-2m}t^{2n-2}/a,
dq^{1-2m}t^{2n-2}/a};q^2,t^2;q^2\bigg].
\end{align*}
Taking 
\[
(a,b,c,d)\mapsto \big(q^{1-2m}t^{2n-2}/a,qt^{2n}/a,q^{1-m}/t,q^{2-m}/t\big),
\]
this can be applied to transform the left-hand side of \eqref{Eq_Phinew2}.
Then replacing $a\mapsto q/c$ and simplifying the generalised
$q$-shifted factorials using \eqref{Eq_amn}, \eqref{Eq_double} and
\[
(a,aq;q^2,t)_{\la}=(a;q,t)_{2\la},
\]
we obtain the following companion to Corollaries~\ref{Cor_multiple-Cohl}
and \ref{Cor_multiple-GR}.

\begin{corollary}
For $m$ a nonnegative integer,
\begin{multline*}
\qhyp{2}{1}{(n)}\bigg[\genfrac{}{}{0pt}{}
{-t^n,q^{-m}}{-q^{1-m}/t};q,t;c\bigg]
=\frac{(cq^{-m}t^{2n-1};q,t^2)_{m^n}
(cq^{1-2m}t^{2n-2};q^2,t^2)_{m^n}}
{(cq^{1-2m}t^{2n-2};q,t^2)_{m^n}} \\[1mm] 
\times 
\Whyp{8}{7}{(n)}\big(cq^{-2m}t^{2n-2};ct^{2n},q^{1-m}/t,q^{2-m}/t,
q^{-m},q^{1-m};q^2,t^2;c\big).
\end{multline*}
\end{corollary}

Again this permits a nonterminating analogue.

\begin{theorem}\label{Thm_nonterminating3}
For $\abs{q},\abs{c}<1$,
\begin{multline}\label{Eq_nonterminating3}
\qhyp{2}{1}{(n)}\bigg[\genfrac{}{}{0pt}{}
{a,-t^n}{-aq/t};q,t;c\bigg] 
=\frac{(act^{2n-1},acqt^{2n-2};q,t^2)_{\infty^n}}
{(ct^{2n-1};q,t^2)_{\infty^n}
(cqt^{2n-2},a^2cq^2t^{2n-2};q^2,t^2)_{\infty^n}} \\[1mm]
\times \Whyp{8}{7}{(n)}\big(
a^2ct^{2n-2};ct^{2n},a,aq,aq/t,aq^2/t;q^2,t^2;c\big).
\end{multline}
\end{theorem}

This time $c=qt^{-2n}$ provides a consistency check.
Assuming $\abs{q},\abs{q/t^{2n}}<1$ for convergence,
the left-hand side can be summed by \eqref{Eq_Gauss} and the 
right-hand side, which simplifies to a $\Whyp{6}{5}{(n)}$ series,
can be summed by
\[
\Whyp{6}{5}{(n)}\big(a;b,c,d;q,t;aqt^{1-n}/bcd\big)
=\frac{(aq,aq/bc,aq/bd,aq/cd;q,t)_{\infty^n}}
{(aq/b,aq/c,aq/d,aq/bcd;q,t)_{\infty^n}},
\]
see e.g., \cite{CG06,vDS00,W02,Rains05,Rosengren01}.
We also note that for $c=t^{-2n}$ the $\Whyp{8}{7}{(n)}$ series
trivialises to $1$. For such $c$ the left-hand side is, however,
not summable by the multiple Gauss sum \eqref{Eq_Gauss}, 
and we obtain the curious summation
\begin{equation}\label{Eq_Gauss-variant}
\qhyp{2}{1}{(n)}\bigg[\genfrac{}{}{0pt}{}
{a,-t^n}{-aq/t};q,t;t^{-2n}\bigg] 
=\frac{(a/t,aq/t^2;q,t^2)_{\infty^n}}
{(1/t;q,t^2)_{\infty^n}(q/t^2,a^2q^2/t^2;q^2,t^2)_{\infty^n}},
\end{equation}
where $\abs{q},\abs{1/t}<1$. 
For $n=1$ this is a consequence of the $n=1$ case of \eqref{Eq_Gauss},
together with Heine's contiguous relation \cite[page 26]{GR04}
\[
\qhypc{2}{1}\bigg[\genfrac{}{}{0pt}{}{a,b}{cq};q,z\bigg]
=\qhypc{2}{1}\bigg[\genfrac{}{}{0pt}{}{a,b}{c};q,z\bigg]-
cz\frac{(1-a)(1-b)}{(1-c)(1-cq)}\, 
\qhypc{2}{1}\bigg[\genfrac{}{}{0pt}{}{aq,bq}{cq^2};q,z\bigg].
\]
For $n>1$ there does not appear to be a simple analogue
of this relation which, combined with \eqref{Eq_Gauss}, 
would imply \eqref{Eq_Gauss-variant}.

\smallskip

\begin{proof}[Proof of Proposition~\ref{Prop_Phinew2}]
This time we specialise $x=z(1,t^2,\dots,t^{2n-2})$ in \eqref{Eq_bounded5}.
On the left we use \eqref{Eq_Pspec} and
\[
b_{\la;m}^{-}(q,t)=\Big({-}\frac{q}{t}\Big)^{\abs{\la}}
\frac{(q^{-m};q,t)_{\la}}{(-q^{1-m}/t;q,t)_{\la}}
\cdot \frac{C^{-}_{\la}(-t;q,t)}{C^{-}_{\la}(q;q,t)},
\]
(see the proof of Theorem~\ref{THM_BOUNDED5} on 
page~\pageref{proof-Thm_bounded5}), as well as
\[
(a^2;q^2,t^2)_{\la}=(a,-a;q,t)_{\la}\quad\text{and}\quad
C_{\la}(a^2;q^2,t^2)=C_{\la}(a,-a;q,t).
\]
On the right we first write
\[
P^{(\mathrm{B}_n,\mathrm{C}_n)}_{\halfm{n}}(x;q^2,t^2,-t)
=K_{\halfm{n}}(x;q^2,t^2;-t,-qt)
\]
using \eqref{Eq_PBC-K}, then specialise $x$, and finally
apply \eqref{Eq_half-Ksquare}. As a result,
\begin{multline*}
\qhyp{2}{1}{(n)}\bigg[\genfrac{}{}{0pt}{}
{-t^n,q^{-m}}{-q^{1-m}/t};q,t;-\frac{zq}{t}\bigg] 
=(-zq^{1-m}t^{2n-2};q^2,t^2)_{m^n} \\ \times
\qhyp{4}{3}{(n)}\bigg[\genfrac{}{}{0pt}{}
{q^{-m}/t,q^{1-m}/t,q^{-m},q^{1-m}}
{-zq^{1-m}t^{2n-2},-q^{1-m}/z,q^{2-m}/t^2};q^2,t^2;q^2\bigg].
\end{multline*}
Again we apply the Sears transformation \eqref{Eq_Sears}, this time with
\begin{multline*}
(a,b,c,d,e,m,q,t) \\ \mapsto 
(q^{1-2\ceil{m/2}},q^{-m}/t,q^{1-m}/t,
-q^{1-m}/z,-zq^{1-m}t^{2n-2},\floor{m/2},q^2,t^2).
\end{multline*}
Noting that
\begin{align*}
(-zq^{1-m}t^{2n-2};q^2,t^2)_{m^n}
&=\frac{(-zq^{-m+2\ceil{m/2}}t^{2n-2},qt^{2n};
q^2,t^2)_{\floor{m/2}^n}}
{(-zq^{1-m}t^{2n-2},q^{2\ceil{m/2}}t^{2n};
q^2,t^2)_{\floor{m/2}^n}} \\
&=\frac{(t^{2n},-zt^{2n-2};q,t^2)_{m^n}}{(t^{2n};q^2,t^2)_{m^n}}
\end{align*}
and replacing $z\mapsto -t/a$ completes the proof.
\end{proof}


\chapter{Open problems}\label{Ch_Open}
We conclude this paper with a list of open problems.

\section{Missing $q$-analogues}
If we specialise $\{t_2,t_3\}=\{\pm t^{1/2}\}$ or
$\{t_2,t_3\}=\{\pm \iup t^{1/4}\}$ in Theorem~\ref{Thm_bounded6}
then the Rogers--Szeg\H{o} polynomials in the summand factorise
by \eqref{Eq_spec}.

\begin{problem}
Find $q$-analogues of 
\[
\sum t^{\op(\la)/2} 
\bigg( \prod_{i=1}^{2m-1} (t;t^2)_{\ceil{m_i(\la)/2}}\bigg)
P_{\la}(x;t) 
=(x_1\cdots x_n)^m P^{(\mathrm{C}_n)}_{m^n}(x;t,t)
\]
and
\begin{multline*}
\sum t^{\op(\la)/4} 
\bigg(\prod_{\substack{i=1 \\[0.5pt] i \textup{ even}}}^{2m-1}
(t^{1/2};t)_{\ceil{m_i(\la)/2}}
(-t;t)_{\floor{m_i(\la)/2}}\bigg) \\ \times
\bigg(\prod_{\substack{i=1 \\[0.5pt] i \textup{ odd}}}^{2m-1}
(t;t^2)_{m_i(\la)/2} \bigg) P_{\la}(x;t) 
=(x_1\cdots x_n)^m P^{(\mathrm{C}_n)}_{m^n}\big(x;t,t^{1/2}\big).
\end{multline*}
In both cases the sum is over partitions $\la$ such that 
$\la_1\leqslant 2m$ and such that parts of odd size have even multiplicity.
\end{problem}
Similarly, if we specialise $t_2=0$ in Theorem~\ref{THM_BOUNDED7}
then the Rogers--Szeg\H{o} polynomial in the summand trivialises to $1$.
\begin{problem}
Find a $q$-analogue of 
\[
\sum_{\substack{\la \\[1pt] \la_1\leqslant m}} P_{\la}(x;t)
=(x_1\cdots x_n)^{\frac{m}{2}} P_{\halfm{n}}^{(\mathrm{B}_n)}(x;t,0).
\]
\end{problem}
As remarked previously, the above is equivalent to 
\eqref{Eq_Macdonald-HL}, which was key in Macdonald's proof of 
the MacMahon conjecture. This makes finding a $q$-analogue particularly 
desirable.

\section{Littlewood identities for near-rectangular partitions}
The bounded Littlewood identities proven in this paper correspond to
decompositions of $(R,S)$ Macdonald polynomials (or $R$ Hall--Littlewood
polynomials) indexed by rectangular partitions or half-partitions 
of maximal length. 
In the Schur case more general shapes have been considered in the 
literature.
For example, Goulden and Krattenthaler 
\cite{Goulden92,Krattenthaler93,Krattenthaler98} proved the following 
result for the character of the irreducible $\mathrm{Sp}(2n,\Complex)$-module 
of highest weight $\fwc_r+(m-1)\fwc_n$, generalising the
D\'esarm\'enien--Proctor--Stembridge formula \eqref{Eq_DPS}:
\[
\sum_{\substack{\la \\[1.5pt] \la_1\leqslant 2m \\[1.5pt] \op(\la)=r}}
s_{\la}(x) =(x_1\cdots x_n)^m \symp_{2n,m^{n-r} (m-1)^r}(x).
\]
Here $0\leqslant r\leqslant n$ ($\fwc_0:=0$) and $m^{n-r} (m-1)^r$ is shorthand
for the near-rectangular partition
\[
m^{n-r} (m-1)^r=(\underbrace{m,\dots,m}_{n-r \text{ times}},
\underbrace{m-1,\dots,m-1}_{r \text{ times}}).
\]

\begin{problem}
\textup{(}i\textup{)} 
For positive integers $m,n$ and $r$ an integer such that
$0\leqslant r\leqslant n$, prove that\footnote{The identity \eqref{Eq_P3}
has recently been proved by Nguyen \cite{Nguyen17}.}
\begin{equation}\label{Eq_P3}
\sum_{\substack{\la \\[1pt] \op(\la)=r}}
b_{\la;m,r}^{\textup{oa}}(q,t) P_{\la}(x;q,t)
=(x_1\cdots x_n)^m P^{(\mathrm{C}_n,\mathrm{B}_n)}_{m^{n-r}(m-1)^r}(x;q,t,qt),
\end{equation}
where
\[
b_{\la;m,r}^{\textup{oa}}(q,t)=
b_{\la}^{\textup{oa}}(q,t)
\prod_{\substack{s\in\skew{\la}{1^r} 
\\[1pt] a'_{\la}(s) \textup{ even}}}
\frac{1-q^{2m-a'_{\la}(s)}t^{l'_{\la}(s)}}
{1-q^{2m-a'_{\la}(s)-1}t^{l'_{\la}(s)+1}}.
\]
\textup{(}ii\textup{)} 
Prove similar such near-rectangular identities
for other admissible pairs $(R,S)$.
\end{problem}

\section{Littlewood identities of Pfaffian type}
In Chapter~\ref{Ch_Bounded} we obtained bounded 
analogues of most of the known Littlewood identities
in the literature.
There are however a number of extensions of the 
(unbounded) Littlewood identities discussed in that section 
where the product forms on the right are replaced by Pfaffians.
Two characteristic examples are 
\cite[Theorem 4.1]{IOW96}\footnote{There is an unfortunate
error in the right-hand side of \cite[Theorem 4.1]{IOW96}, in that the
factor $(1+stx_ix_j)$ should have been $(1+stvx_ix_j)$. With this
correction, it readily follows that the theorem in question only
depends on two variables instead of four. Scaling 
$x_i\mapsto x_i/(stv)^{1/2}$ and using that 
\[
s^{\sum_{i\geqslant 1} \la_{2i}-\abs{\la}/2}
t^{\sum_{i\geqslant 1} \la_{2i-1}-\abs{\la}/2}=
(t/s)^{\op(\la')/2},
\]
equation \eqref{Eq_Pfaffian-1} follows after replacing
$u/v^{1/2}$ by $a$ and $(t/s)^{1/2}$ by $b$.}
\begin{multline}\label{Eq_Pfaffian-1}
\sum_{\la} a^{\op(\la)} b^{\op(\la')} s_{\la}(x)
=\prod_{1\leqslant i<j\leqslant n} \frac{1}{x_i-x_j} 
\prod_{i=1}^n \frac{1}{1-b^2x_i^2} \\ \times
\Pf_{1\leqslant i,j\leqslant n}
\bigg(\frac{(x_i-x_j)\big( (1+b^2x_ix_j)(1+a^2x_ix_j)
+ab(1+x_ix_j)(x_i+x_j)\big)}{1-x_i^2x_j^2}\bigg)
\end{multline}
and \cite[Corollary 6.26]{Rains14} 
(see also \cite[Conjecture 1]{BWZ14} and \cite[Theorem 5]{WZ16}) 
\begin{multline}\label{Eq_Pfaffian-2}
\sum_{\la' \text{ even}} 
b_{\la}^{\textup{el}}(q,t) P_{\la}(x;q,t)
\prod_{\substack{i=1 \\[1pt] i \text{ even}}}^n 
\big(1-uq^{\la_i}t^{n-i}\big) \\
=\prod_{1\leqslant i<j\leqslant n} \frac{(tx_ix_j;q)_{\infty}}
{(x_i-x_j)(qx_ix_j;q)_{\infty}}
\Pf_{1\leqslant i,j\leqslant n}\bigg(
 \frac{(x_i-x_j)(1-u+(u-t)x_ix_j)}{(1-x_ix_j)(1-tx_ix_j)}\bigg),
\end{multline}
where in both cases $n$ is assumed to be even.
By \cite{LLT89,Stembridge90b} 
\[
\Pf_{1\leqslant i,j\leqslant n}\bigg(
\frac{x_i-x_j}{1-x_ix_j}\bigg)=\prod_{1\leqslant i<j\leqslant n}
\frac{x_i-x_j}{1-x_ix_j}, \qquad \text{$n$ even},
\]
and $\Pf(a_i a_j A_{ij})=\Pf(A_{ij})\prod_i a_i$,
the $b=1$ and $a=1$ specialisations of \eqref{Eq_Pfaffian-1}
simplify to the $q=t$ cases of \eqref{Eq_Pb} and \eqref{Eq_Pc}
respectively.
Similarly, for $u=0$ the identity \eqref{Eq_Pfaffian-2} yields
the $a=0$ case of \eqref{Eq_Pc}.

\begin{problem}
Find bounded analogues of \eqref{Eq_Pfaffian-1} and \eqref{Eq_Pfaffian-2}.
\end{problem}

\section{Elliptic Littlewood identities}
Most of the virtual Koornwinder integrals of Section~\ref{Sec_closed-form}
have elliptic analogues, see \cite{Rains12,Rains14}.

\begin{problem}
Prove elliptic analogues of the bounded Littlewood identities of
Theorems~\ref{THM_BOUNDED1}--\ref{THM_BOUNDED5}.
\end{problem}

For elliptic analogues of \eqref{Eq_Pb}--\eqref{Eq_Kawanaka} we refer 
the reader to \cite{Rains12}.

\section{$q,t$-Littlewood--Richardson coefficients} 
Let $\mathfrak{g}$ be a complex semisimple Lie algebra and
$V(\la)$ an irreducible $\mathfrak{g}$-module of highest weight 
$\la\in P_{+}$. The Littlewood--Richardson coefficient
$c_{\mu\nu}^{\la}$ of type $\mathfrak{g}$ is defined as the multiplicity 
of $V(\la)$ in the tensor product $V(\mu)\otimes V(\mu)$:
\[
c_{\mu\nu}^{\la}=\dim\Hom_{\mathfrak{g}}
\big(V(\la),V(\mu)\otimes V(\nu)\big).
\]
If $\chi_{\la}=\sum_{\mu\in P} K_{\la\mu} \eup^{\mu}$ (with $K_{\la\mu}$
the multiplicity of $\mu$ in $V(\la)$) denotes the formal character 
of $V(\la)$, then
\[
\chi_{\mu}\chi_{\nu}=\sum_{\la\in P_{+}} c_{\mu\nu}^{\la} \chi_{\la}.
\]
In \cite{Stembridge03}, Stembridge gave a classification of all
multiplicity-free tensor products, i.e., of all pairs of dominant 
weight $\mu,\nu$ such that $c_{\mu\nu}^{\la}\leqslant 1$ for all 
$\la\in P_{+}$.
For such a pair it is not generally known for which $\la$ the 
Littlewood--Richardson coefficient is actually non-vanishing,
except for a number of classical Lie algebras and specially chosen
weights (typically, for $\mu$ and $\nu$ multiples of miniscule weights).
For example, using his minor summation formula, Okada proved that 
for $\mathfrak{g}=\mathrm{B}_n$ and 
\begin{equation}\label{Eq_munu-weights}
(\mu,\nu)=(r\fwc_n,s\fwc_n)
\end{equation}
with $r,s$ nonnegative integers,
$c_{\mu\nu}^{\la}=1$ for all dominant weights $\la$ of the form
\eqref{Eq_Pplus-B}, where $(\la_1,\dots,\la_n)$
is a partition/half-partition if $r+s$ is even/odd, 
and $\la_1\leqslant (r+s)/2$, $\la_n\geqslant \abs{r-s}/2$. 
For weights $\la$ not of this form
$c_{\mu\nu}^{\la}=0$.
In terms of odd-orthogonal Schur functions, this result may be
expressed as the identity \cite[Theorem 2.5 (1)]{Okada98}
\[
\so_{2n+1,(\frac{r}{2})^n}(x) \so_{2n+1,(\frac{s}{2})^n}(x)=
\sum \so_{2n+1,(\frac{r+s}{2})^n-\la}(x).
\]
where the sum is over partitions (as opposed to half-partitions)
$\la$ of such that $\la_1\leqslant\min\{r,s\}$.

For $r,s$ nonnegative integers and $\la$ a partition, let
\begin{align*}
b_{\la;m_1,m_2}^{\textup{el}}(q,t):=b_{\la}^{\textup{el}}(q,t) &
\prod_{\substack{s\in\la \\[1pt] l'(s) \textup{ even}}} 
\frac{1-q^{m_1-a'(s)}t^{l'(s)}}{1-q^{m_1-a'(s)-1}t^{l'(s)+1}}
\cdot
\frac{1-q^{m_2-a'(s)}t^{l'(s)}}{1-q^{m_2-a'(s)-1}t^{l'(s)+1}} \\
\times&\prod_{\substack{s\in\la \\[1pt] l(s) \textup{ even}}} 
\frac{1-q^{m_1+m_2-\hat{a}(s)-1}t^{\hat{l}(s)+1}}
{1-q^{m_1+m_2-\hat{a}(s)}t^{\hat{l}(s)}}.
\end{align*}
We note that $b_{\la;m_1,m_2}^{\textup{el}}(q,t)=0$ unless 
$\la_1\leqslant\min\{m_1,m_2\}$, and 
\begin{equation}\label{Eq_m2limit}
\lim_{m_2\to\infty} b_{\la;m,m_2}^{\textup{el}}(q,t)=
b_{\la;m}^{\textup{el}}(q,t),
\end{equation}
where $b_{\la;m}^{\textup{el}}(q,t)$ is defined in \eqref{Eq_blamel}.

\begin{problem}
For $x=(x_1,\dots,x_n)$ and $r,s$ nonnegative integers, prove that
\begin{align}\label{Eq_BnLR}
P_{(\frac{r}{2})^n}^{(\mathrm{B}_n,\mathrm{B}_n)}(x;q,t,t) &
P_{(\frac{s}{2})^n}^{(\mathrm{B}_n,\mathrm{B}_n)}(x;q,t,t) \\[1mm]
&\qquad=\sum_{\la} b_{\la;r,s}^{\textup{el}}(q,t)
P_{(\frac{r+s}{2})^n-\la}^{(\mathrm{B}_n,\mathrm{B}_n)}(x;q,t,t).  \notag
\end{align}
\end{problem}
Dividing both sides by 
\[
(x_1\cdots x_n)^{-\frac{1}{2}r}
P_{(\frac{s}{2})^n}^{(\mathrm{B}_n,\mathrm{B}_n)}(x;q,t,t),
\] 
and then using that
\begin{align*}
(x_1\cdots x_n)^{\frac{1}{2}r} &
\lim_{s\to\infty} 
\frac{P_{(\frac{r+s}{2})^n-\la}^{(\mathrm{B}_n,\mathrm{B}_n)}(x;q,t,t)}
{P_{(\frac{s}{2})^n}^{(\mathrm{B}_n,\mathrm{B}_n)}(x;q,t,t)} \\
&=\frac{\lim_{s\to\infty} (x_1\cdots x_n)^{\frac{1}{2}(r+s)}
P_{(\frac{r+s}{2})^n-\la}^{(\mathrm{B}_n,\mathrm{B}_n)}(x;q,t,t)}
{\lim_{s\to\infty} (x_1\cdots x_n)^{\frac{1}{2}s} 
P_{(\frac{s}{2})^n}^{(\mathrm{B}_n,\mathrm{B}_n)}(x;q,t,t)} \\
&=P_{\la}(x;q,t),
\end{align*}
by \eqref{Eq_PBB-K} and \eqref{Eq_largem-2},
it follows that in the large-$s$ limit we recover the 
bounded Littlewood identity of Theorem~\ref{Thm_qtL}
with $m$ replaced by $r$.

There is an analogous result for $(\mathrm{C}_n,\mathrm{B}_n)$. 
For $m_1,m_2$ nonnegative integers and $\la$ an even partition, let
\begin{align*}
b_{\la;m_1,m_2}^{\textup{oa}}(q,t):=b_{\la}^{\textup{oa}}(q,t) &
\prod_{\substack{s\in\la \\[1pt] a'(s) \textup{ even}}}
\frac{1-q^{2m_1-a'(s)}t^{l'(s)}}{1-q^{2m_1-a'(s)-1}t^{l'(s)+1}}
\cdot
\frac{1-q^{2m_2-a'(s)}t^{l'(s)}}{1-q^{2m_2-a'(s)-1}t^{l'(s)+1}} \\
\times & \prod_{\substack{s\in\la\\[1pt] a(s) \text{ even}}} 
\frac{1-q^{2m_1+2m_2-\hat{a}(s)-1} t^{\hat{l}(s)+1}}
{1-q^{2m_1+2m_2-\hat{a}(s)}t^{\hat{l}(s)}}.
\end{align*}
This time $b_{\la;m_1,m_2}^{\textup{oa}}(q,t)=0$ unless 
$\la_1\leqslant\min\{2m_1,2m_2\}$, and 
\[
\lim_{m_2\to\infty} b_{\la;m,m_2}^{\textup{oa}}(q,t)=
b_{\la;m}^{\textup{oa}}(q,t),
\]
where $b_{\la;m}^{\textup{oa}}(q,t)$ for even partitions $\la$ 
is given by \eqref{Eq_blamel}.

\begin{problem}
For $x=(x_1,\dots,x_n)$ and $r,s$ nonnegative integers, prove that
\begin{align*}
P_{r^n}^{(\mathrm{C}_n,\mathrm{B}_n)}(x;q,t,qt) &
P_{s^n}^{(\mathrm{C}_n,\mathrm{B}_n)}(x;q,t,qt) \\[1mm]
&\qquad=\sum_{\la \textup{ even}} b_{\la;r,s}^{\textup{oa}}(q,t)
P_{(r+s)^n-\la}^{(\mathrm{C}_n,\mathrm{B}_n)}(x;q,t,t).
\end{align*}
Let $x=(x_1,\dots,x_n)$ and $r,s$ integers such that 
$-r\leqslant s\leqslant r$.
\end{problem}

Taking the large-$s$ limit using \eqref{Eq_largem} and \eqref{Eq_PCB-K} 
yields the Littlewood identity \eqref{Eq_bounded1a0}.
Moreover, in the classical limit we recover Okada's 
formula for the $\mathrm{C}_n$ Littlewood--Richardson coefficient
$c_{\mu\nu}^{\la}$ with $\mu$ and $\nu$ given by
\eqref{Eq_munu-weights}, see \cite[Theorem 2.5 (1)]{Okada98}.
For such $\mu,\nu$, $c_{\mu\nu}^{\la}=1$ if $\la$ is a weight
of the form \eqref{Eq_Pplus-C}, 
where $(\la_1,\dots,\la_n)$ is a partition such that 
and $\la_1\leqslant r+s$, $\la_n\geqslant \abs{r-s}$ and
$\la_1+r+s$ is even. For all other $\la$, $c_{\mu\nu}^{\la}=0$.

\section{Dyson--Macdonald-type identities}
Let $\mathfrak{g}$ be an affine Lie algebra 
with simple roots $\{\alpha_0,\dots,\alpha_n\}$.
The map $\eup^{-\alpha_1},\dots,\eup^{-\alpha_n}\mapsto 1$
(so that $\eup^{-\delta}\mapsto \eup^{-a_0\alpha_0}$)
is known as the basic specialisation \cite{Kac90}.
When applied to character formulas for affine Lie algebras,
the basic specialisation results in (generalised) Dyson--Macdonald 
type expansions for powers of the Dedekind eta-function, see e.g., 
\cite{BW13,Dyson72,Kac90,Milas04,Stoyanovsky98,WZ12}.
For example, taking the basic specialisation 
of the $\mathrm{B}_n^{(1)\dagger}$ identity \eqref{Eq_Bndagger-character} 
(and replacing $t$ by $q$) yields
the following generalisation of \cite[p. 135, (6c)]{Macdonald72}:
\begin{multline}\label{Eq_1356c}
\frac{1}{\eta(\tau/2)^{2n}\eta(\tau)^{2n^2-3n}} 
\sum (-1)^{\frac{\abs{v}-\abs{\rho}}{m+2n-1}} 
\chi_{\mathrm{D}}(v/\rho) \hspace{1pt}
q^{\frac{\|v\|^2-\|\rho\|^2}{2(m+2n-1)}+\frac{\|\rho\|^2}{2(2n-1)}} \\[-1mm]
=\sum_{\substack{\la\\[1pt] \la_1\leqslant m}} q^{\abs{\la}/2} 
P'_{\la}(\underbrace{1,\dots,1}_{2n-1 \textup{ times}};q)
\prod_{i=0}^{m-1} (-q^{1/2};q^{1/2})_{m_i(\la)}.
\end{multline}
Here $q=\exp(2\pi\iup\tau)$, $\rho=(n-1,\dots,1,0)$, 
\[
\chi_{\mathrm{D}}(v/w):=\prod_{i<j}(v_i^2-v_j^2)/(w_i^2-w_j^2),
\]
and the sum is over $v\in\Z^n$ such that $v_i\equiv \rho_i \pmod{m+2n-1}$.

Let $C=C_n$ be the Cartan matrix of the Lie algebra $\mathrm{A}_n$, i.e.,
$(C^{-1})_{ab}=\min\{a,b\}-ab/(n+1)$, and for
$\{r_i^{(a)}\}_{1\leqslant a\leqslant n;\,1\leqslant i\leqslant k}$
a set of nonnegative integers, let
\[
R_i^{(a)}:=r_i^{(a)}+\cdots+r_k^{(a)}.
\]
Following \cite{BW13,WZ12} we define
\[
F_{k,n}(q):=
\begin{cases}\displaystyle
(-q^{1/2};q)_{\infty}^{2n-1} & \text{for $k=0$} \\[2mm]\displaystyle
(-q^{1/2};q^{1/2})_{\infty}^{2n-1} \sum_{\{r_i^{(a)}\}}
\frac{q^{\frac{1}{2}\sum_{a,b=1}^n \sum_{i=1}^k C_{ab} R_i^{(a)} R_i^{(b)}}}
{\prod_{a=1}^n \prod_{i=1}^{k-1}\big((q;q)_{r_i^{(a)}}\big)
(q^2;q^2)_{r_k^{(a)}}} & \text{for $k\geqslant 1$}
\end{cases}
\]
and
\[
G_{k,n}(q):=(-q;q)_{\infty}^n \sum_{\{r_i^{(a)}\}}
\frac{q^{\frac{1}{2}\sum_{a,b=1}^n \sum_{i=1}^k C_{ab} R_i^{(a)} R_i^{(b)}}
(-q^{1/2-R_1^{(1)}};q)_{R_1^{(1)}}}
{\prod_{a=1}^n \prod_{i=1}^{k-1}\big((q;q)_{r_i^{(a)}}\big)
(q^2;q^2)_{r_k^{(a)}}}
\]
for $k\geqslant 1$.

\begin{problem}
For $m,n$ positive integers, prove that
\begin{multline*}
\sum_{\substack{\la\\[1pt] \la_1\leqslant m}} q^{\abs{\la}/2} 
P'_{\la}(\underbrace{1,\dots,1}_{2n-1 \textup{ times}};q)
\prod_{i=1}^{m-1} (-q^{1/2};q^{1/2})_{m_i(\la)} \\[-1mm]
=\begin{cases}
F_{k,2n-1}(q) & \text{if $m=2k+1$} \\
G_{k,2n-1}(q) & \text{if $m=2k$}.
\end{cases}
\end{multline*}
\end{problem}
For $n=1$ this follows from a minor modification of \cite[Lemma A.1]{W06},
for $m=1$ it follows from $P'_{(1^r)}(x;t)=e_r(x)$ and
\[
\sum_{r=0}^{\infty} z^r e_r\Big[\frac{n}{1-q}\Big]
=(-z;q)_{\infty}^n,
\]
and for $m=2$ a proof is given in \cite[Theorem 3.7]{BW13}.
By \eqref{Eq_1356c}, the above problem for even $m$ is equivalent to
\cite[Conjecture 2.4; (2.6a)]{WZ12}.

\appendix


\chapter{The Weyl--Kac formula}\label{App_A}

\addtocounter{section}{1}

In this first of two appendices we state some simple 
consequences of the Weyl--Kac formula, needed in the proofs 
of our combinatorial character formulas in Section~\ref{Sec_char}.

Recall the symplectic and odd-orthogonal Schur functions
\eqref{Eq_symplectic} and \eqref{Eq_odd-orthogonal}. 
It will be convenient to also define the normalised functions
\begin{equation}\label{Eq_soandsp}
\widetilde{\so}_{2n+1,\la}(x)=
\Delta_{\mathrm{B}}(x) \so_{2n+1,\la}(x) 
\quad\text{and}\quad
\widetilde{\symp}_{2n,\la}(x)=
\Delta_{\mathrm{C}}(x) \so_{2n+1,\la}(x),
\end{equation}
so that $\widetilde{\so}_{2n+1,0}(x)=\Delta_{\mathrm{B}}(x)$ and 
$\widetilde{\symp}_{2n,0}(x)=\Delta_{\mathrm{C}}(x)$.

Mimicking the proofs of \cite[Lemmas 2.1--2.4]{BW13} yields expressions
for the characters of $\mathrm{B}_n^{(1)}$ and $\mathrm{A}_{2n-1}^{(2)}$
in terms of the symplectic and odd orthogonal Schur functions as follows.

\begin{lemma}[$\mathrm{B}_n^{(1)}$ character formula]\label{Lem_Bn-WK}
Let 
\begin{equation}\label{Eq_qxi}
x_i:=\eup^{-\alpha_i-\cdots-\alpha_n}\quad
(1\leqslant i\leqslant n), \qquad\quad t:=\eup^{-\delta},
\end{equation}
and parametrise $\Lambda\in P_{+}$, as
\[
\Lambda=c_0\fwa_0+(\la_1-\la_2)\fwa_1+\cdots+(\la_{n-1}-\la_n)\fwa_{n-1}
+2\la_n\fwa_n,
\]
where $c_0$ is a nonnegative integer and $\la=(\la_1,\dots,\la_n)$ a 
partition or half-partition. Then
\begin{multline}\label{Eq_char-Bn}
\eup^{-\Lambda} \ch V(\Lambda)=
\frac{1}{(t;t)_{\infty}^n\prod_{i=1}^n \theta(x_i;t)
\prod_{1\leqslant i<j\leqslant n} x_j\theta(x_ix_j^{\pm};t)} \\[1mm]
\times
\sum_{\substack{r\in\Z^n \\[1pt] \abs{r}\equiv 0\:(2)}}
\widetilde{\so}_{2n+1,\la}(xt^r)
\prod_{i=1}^n x_i^{\kappa r_i+\la_i} 
t^{\frac{1}{2}\kappa r_i^2-(n-\frac{1}{2})r_i},
\end{multline}
where
$\kappa=2n-1+c_0+\la_1+\la_2$.
\end{lemma}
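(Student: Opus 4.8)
The plan is to obtain \eqref{Eq_char-Bn} by a direct manipulation of the Weyl--Kac formula \eqref{Eq_Weyl-Kac}, following the template of \cite[Lemmas 2.1--2.4]{BW13}. The key structural input is the decomposition of the affine Weyl group of $\B_n^{(1)}$ as $W=W_0\ltimes t_{Q^\vee}$, where $W_0=\Symm_n\ltimes(\Z/2\Z)^n$ is the Weyl group of the classical part $B_n$, acting on the variables $x_i=\eup^{-\epsilon_i}$ as signed permutations, and $t_\beta$ ($\beta\in Q^\vee$) denotes translation by the coroot lattice $Q^\vee$ of $B_n$. Since $Q^\vee$ is spanned by $\epsilon_1-\epsilon_2,\dots,\epsilon_{n-1}-\epsilon_n,2\epsilon_n$, it equals the $\D_n$ root lattice $\{v\in\Z^n:\abs{v}\equiv 0\pmod 2\}$; writing $\beta=\sum_i r_i\epsilon_i$ this is exactly the origin of the parity restriction $\abs{r}\equiv 0\pmod 2$ in \eqref{Eq_char-Bn}. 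Both the numerator and the denominator of \eqref{Eq_Weyl-Kac} will be split according to this decomposition.

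First I would handle the numerator $\sum_{w\in W}\sgn(w)\eup^{w(\La+\rho)-\rho}$. Writing $w=t_\beta\sigma$ with $\sigma\in W_0$ and $\beta\in Q^\vee$, and using the standard formula for the action of $t_\beta$ on a weight of level $\kappa:=\langle\La+\rho,c\rangle$, with $c$ the canonical central element, one computes $\kappa=(2n-1)+c_0+\la_1+\la_2$ --- the dual Coxeter number $2n-1$ of $\B_n^{(1)}$ plus the level $c_0+\la_1+\la_2$ of $\La$ (using the comarks $(1,1,2,\dots,2,1)$) --- and finds that $t_\beta\big(\sigma(\La+\rho)\big)$ contributes, after specialising $x_i=\eup^{-\epsilon_i}$ and $t=\eup^{-\delta}$, the factor $\prod_i x_i^{\kappa r_i+\la_i}\,t^{\kappa r_i^2/2-(n-1/2)r_i}$ times the alternating monomial $\sgn(\sigma)\,\sigma\big(\prod_i(x_it^{r_i})^{\la_i+\rho_{0,i}}\big)$, with $\rho_0=(n-\tfrac12,n-\tfrac32,\dots,\tfrac12)$ the half-sum of the positive roots of $B_n$. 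Summing over $\sigma\in W_0$ reassembles the numerator of the Weyl character formula for the irreducible $\mathrm{SO}(2n+1,\Complex)$-module of highest weight $\la$ at the argument $xt^r$, i.e.\ $\widetilde{\so}_{2n+1,\la}(xt^r)$ by \eqref{Eq_odd-orthogonal} and \eqref{Eq_soandsp} (this remains valid when $\la$ is a half-partition, the corresponding spin weight). This produces precisely the sum in the second line of \eqref{Eq_char-Bn}.

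Next I would evaluate the denominator $\prod_{\alpha>0}(1-\eup^{-\alpha})^{\mult(\alpha)}$ under the same specialisation. The imaginary roots $k\delta$ ($k\geq 1$), each of multiplicity $n$, contribute $\prod_{k\geq 1}(1-t^k)^n=(t;t)_\infty^n$, and the positive real roots $\alpha+k\delta$ with $\alpha\in\Delta^+(B_n)$, $k\geq 0$, together with $-\alpha+k\delta$, $k\geq 1$, group pairwise and, via the Jacobi triple product \eqref{Eq_JTPI}, collapse into $\prod_{i=1}^n\theta(x_i;t)\prod_{1\leq i<j\leq n}\theta(x_i/x_j,x_ix_j;t)$. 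Up to the monomial $\prod_{1\leq i<j\leq n}x_j$ --- absorbed into the $\eup^{-\La}$ on the left-hand side, where it exactly compensates the residual monomial left over from the numerator step so that both halves of the quotient are $\widetilde{\so}$-normalised --- this is the denominator in \eqref{Eq_char-Bn}; equivalently, it is the Weyl--Kac denominator identity for $\B_n^{(1)}$, which one also recovers as the $\la=0$, $c_0=0$ case of the formula being proven. Assembling numerator over denominator then yields \eqref{Eq_char-Bn}.

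The main obstacle will be the constant-tracking in the numerator step: one must fix a normalisation of the invariant form compatible with the realisation \eqref{Eq_Delta-B} of $B_n$ (short root $\alpha_n=\epsilon_n$ of squared length $1$), compute correctly the $\delta$-coefficient of the translated weight --- hence the quadratic exponent $\kappa r_i^2/2-(n-\tfrac12)r_i$ and the value $\kappa=2n-1+c_0+\la_1+\la_2$ --- and follow the monomial shifts carefully enough that the $\widetilde{\so}$-factor emerges cleanly and the leftover monomial cancels against the $\prod_{i<j}x_j$ in the denominator. One also has to check that half-partitions $\la$ are handled uniformly throughout, since the determinantal expression \eqref{Eq_odd-orthogonal} continues to represent the relevant character in that case. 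Once these bookkeeping points are settled, the argument runs in complete parallel to \cite[Lemmas 2.1--2.4]{BW13}, with the $\D_{n+1}^{(2)}$ Macdonald identity (or the $\B_n^{(1)}$ denominator identity) available as a cross-check on the denominator evaluation.
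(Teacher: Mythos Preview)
Your proposal is correct and follows essentially the same approach as the paper, which simply states that the lemma follows by ``mimicking the proofs of \cite[Lemmas 2.1--2.4]{BW13}'' without giving further detail. Your outline --- decompose the affine Weyl group as $W_0\ltimes t_{Q^\vee}$, identify $Q^\vee(\B_n)$ with the $\D_n$ root lattice to produce the parity constraint $\abs{r}\equiv 0\pmod 2$, recognise the $W_0$-sum as $\widetilde{\so}_{2n+1,\la}(xt^r)$, and evaluate the Weyl--Kac denominator via the Jacobi triple product --- is exactly that template, and your computation of $\kappa$ as level plus dual Coxeter number is correct.
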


\begin{lemma}[$\mathrm{A}_{2n-1}^{(2)}$ character formula]
\label{Lem_A2n12-WK}
Let 
\begin{equation}\label{Eq_qxi-2}
x_i:=\eup^{-\alpha_i-\cdots-\alpha_{n-1}-\alpha_n/2}\quad
(1\leqslant i\leqslant n), \qquad\quad t:=\eup^{-\delta},
\end{equation}
and parametrise $\Lambda\in P_{+}$, as
\[
\Lambda=c_0\fwa_0+(\la_1-\la_2)\fwa_1+\cdots+
(\la_{n-1}-\la_n)\fwa_{n-1}+\la_n\fwa_n,
\]
where $c_0$ is a nonnegative integer and $\la=(\la_1,\dots,\la_n)$
a partition.
Then
\begin{multline}\label{Eq_char-A2n12}
\eup^{-\Lambda} \ch V(\Lambda)=
\frac{1}{(t;t)_{\infty}^{n-1}(t^2;t^2)_{\infty}
\prod_{i=1}^n \theta(x_i^2;t^2)
\prod_{1\leqslant i<j\leqslant n} x_j \theta(x_ix_j^{\pm};t)} \\[1mm]
\times
\sum_{\substack{r\in\Z^n \\[1pt] \abs{r}\equiv 0\:(2)}}
\widetilde{\symp}_{2n,\la}(xt^r)
\prod_{i=1}^n x_i^{\kappa r_i+\la_i} t^{\frac{1}{2}\kappa r_i^2-nr_i},
\end{multline}
where
$\kappa=2n+c_0+\la_1+\la_2$.
\end{lemma}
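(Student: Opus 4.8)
The plan is to obtain \eqref{Eq_char-A2n12} directly from the Weyl--Kac formula \eqref{Eq_Weyl-Kac} for $\A_{2n-1}^{(2)}$, following line by line the arguments used for $\C_n^{(1)}$ and $\A_{2n}^{(2)}$ in \cite[Lemmas 2.1--2.4]{BW13} and making the modifications forced by the twisted root datum; this will run exactly parallel to the companion Lemma~\ref{Lem_Bn-WK}. First I would fix the realisation of $\A_{2n-1}^{(2)}$ in which the classical part is $\C_n$, so that (with $x_i=\eup^{-\epsilon_i}$, $t=\eup^{-\delta}$) the real roots are the $\bar W$-orbits of $\pm\epsilon_i\pm\epsilon_j$ ($i<j$) shifted by all integer multiples of $\delta$, the long roots $\pm2\epsilon_i$ shifted only by even multiples of $\delta$, and the imaginary roots $k\delta$ ($k\ge1$) with $\mult(k\delta)=n$ for $k$ even and $n-1$ for $k$ odd. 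Writing the affine Weyl group as the semidirect product $W=\bar W\ltimes M$ of the hyperoctahedral group $\bar W=W(\C_n)$ with its translation lattice $M=\{r\in\Z^n:\abs{r}\equiv0\pmod2\}$, one splits the numerator $\sum_{w\in W}\sgn(w)\eup^{w(\La+\rho)-\rho}$ of \eqref{Eq_Weyl-Kac} into a sum over $M$ of alternating sums over $\bar W$.

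After the substitution \eqref{Eq_qxi-2}, each inner $\bar W$-sum over the translate $\La+\rho+\kappa(r_1\epsilon_1+\cdots+r_n\epsilon_n)$ collapses, by the Weyl character formula for $\C_n$ together with the determinantal definition \eqref{Eq_symplectic}, to $\widetilde{\symp}_{2n,\la}(xt^r)$ times the monomial $\prod_{i=1}^n x_i^{\kappa r_i+\la_i}t^{\kappa r_i^2/2-nr_i}$, where $\kappa=2n+c_0+\la_1+\la_2$ is read off from the action of the translations on $\La+\rho$ and the restriction $\abs{r}\equiv0\pmod2$ is precisely the defining condition of $M$ for $\A_{2n-1}^{(2)}$ (in contrast to $\C_n^{(1)}$, where $M=\Z^n$ and no such restriction appears, cf.\ \eqref{Eq_L22}). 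For the denominator I would group the positive roots into their $\bar W$-orbits: the imaginary multiplicities give $\prod_{k\ge1}(1-t^k)^{n-1}\cdot\prod_{k\ge1}(1-t^{2k})=(t;t)_\infty^{n-1}(t^2;t^2)_\infty$; the long-root orbits $\pm2\epsilon_i+2k\delta$ reassemble, via the Jacobi triple product \eqref{Eq_JTPI}, into $\prod_i\theta(x_i^2;t^2)$; and the short-root orbits $\pm(\epsilon_i-\epsilon_j)+k\delta$, $\pm(\epsilon_i+\epsilon_j)+k\delta$ reassemble into $\prod_{i<j}x_j\,\theta(x_i/x_j,x_ix_j;t)$, the factors $x_j$ arising from normalising the theta functions. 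Dividing numerator by denominator and cancelling the common powers of $x_i$ and $t$ yields \eqref{Eq_char-A2n12}.

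The routine part is the Jacobi triple product bookkeeping and the identification of the $\bar W$-sum with a symplectic character, both of which copy \cite{BW13} verbatim. The hard part will be getting the twisted-algebra data exactly right: which real roots of $\A_{2n-1}^{(2)}$ are long and which are short, the integer versus even shifts by $\delta$, the precise lattice $M$, and hence the exponent $\kappa r_i^2/2-nr_i$ together with the value of $\kappa$. It is precisely here that the answer bifurcates between the $(t^2;t^2)_\infty\prod_i\theta(x_i^2;t^2)$ shape of \eqref{Eq_char-A2n12} and the $(t;t)_\infty\prod_i\theta(x_i;t)$ shape of Lemma~\ref{Lem_Bn-WK} for $\B_n^{(1)}$. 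A useful sanity check is to specialise $\La=0$ (i.e.\ $c_0=0$, $\la=0$, $\kappa=2n$), where \eqref{Eq_char-A2n12} must reduce to $\eup^{-\La}\ch V(\La)=1$ and hence to a $\C_n$-type Macdonald identity; a further check is to compare the principal specialisation $\eup^{-\alpha_i}\mapsto q$ of the resulting formula with the product side of \eqref{Eq_RR5-pre}, which pins down both $\kappa$ and the parity restriction on $r$.
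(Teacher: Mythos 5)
Your proposal is correct and follows essentially the same route as the paper, which proves Lemma~\ref{Lem_A2n12-WK} simply by stating that one mimics the Weyl--Kac manipulations of \cite[Lemmas 2.1--2.4]{BW13}: decompose the affine Weyl group as $W(\C_n)\ltimes t_M$ with $M$ the even-sum lattice, collapse the finite Weyl sums to $\widetilde{\symp}_{2n,\la}$, and reorganise the denominator via the triple product using the twisted root multiplicities. Your identification of the real/imaginary root data, the lattice $M$, and $\kappa=2n+c_0+\la_1+\la_2$ all check out, so the sketch matches the paper's intended argument.
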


For a two-parameter subset of weights, the above two formulas may 
be rewritten as a pair of character formulas for $\mathrm{B}_n^{(1)\dagger}$
and $\mathrm{A}_{2n-1}^{(2)\dagger}$ where the sum is over the full
$\mathbb{Z}^n$-lattice. 
For $\kappa,k$ positive integers and $x=(x_1,\dots,x_n)$, define
\[
\mathscr{N}_{\kappa,k}(x;t):=
\sum_{r\in\Z^n} \Delta_{\mathrm{D}}(x^kt^{kr})
\prod_{i=1}^n (-1)^{r_i} x_i^{\kappa r_i-(k-1)(i-1)}
t^{\frac{1}{2}\kappa r_i^2-k(n-1)r_i},
\]
where $\Delta_{\mathrm{D}}(x)$ is the $\mathrm{D}_n$ Vandermonde 
product \eqref{Eq_VdMD} and
$x^k:=(x_1^k,\dots,x_n^k)$. Further set
\[
\mathscr{D}_{\mathfrak{g}}(x;t):=
\begin{cases}\displaystyle
(t;t)_{\infty}^n\prod_{i=1}^n \theta(t^{1/2}x_i;t)
\prod_{1\leqslant i<j\leqslant n} x_j \theta(x_ix_j^{\pm};t) 
& \text{for $\mathfrak{g}=\mathrm{B}_n^{(1)\dagger}$}, \\[3mm]
\displaystyle
(t;t)_{\infty}^{n-1}(t^2;t^2)_{\infty}\prod_{i=1}^n \theta(tx_i^2;t^2)
\prod_{1\leqslant i<j\leqslant n} x_j \theta(x_ix_j^{\pm};t)
& \text{for $\mathfrak{g}=\mathrm{A}_{2n-1}^{(2)\dagger}$}.
\end{cases}
\]

\begin{lemma}[$\mathrm{B}_n^{(1)\dagger}$ and $\mathrm{A}_{2n-1}^{(2)\dagger}$
character formulas]\label{Lem_Bn-WK-2}
Let 
\begin{equation}\label{Eq_qxi-3}
x_i:=\eup^{-\alpha_i-\cdots-\alpha_{n-1}+(\alpha_{n-1}-\alpha_n)/2}
\quad (1\leqslant i\leqslant n), \qquad\quad t:=\eup^{-\delta}
\end{equation}
and, for $k$ a positive integer and $m$ a nonnegative integer,
\begin{equation}\label{Eq_weightkm}
\Lambda=(k-1)\rho+m\fwa_0\in P_{+}.
\end{equation}
Then
\begin{equation}\label{Eq_Bdag-Adag}
\eup^{-\Lambda} \ch V(\Lambda)=
\frac{1}{\mathscr{D}_{\mathfrak{g}}(x;t)} \times
\begin{cases}
\mathscr{N}_{m+k(2n-1),k}(x;t) &
\text{for $\mathfrak{g}=\mathrm{B}_n^{(1)\dagger}$}, \\[3mm]
\mathscr{N}_{m+2kn,k}(x;t) &
\text{for $\mathfrak{g}=\mathrm{A}_{2n-1}^{(2)\dagger}$}.
\end{cases}
\end{equation}
\end{lemma}

For $m=0$ and $k=1$ this gives what may be viewed as
$\mathrm{B}_n^{(1)\dagger}$ and $\mathrm{A}_{2n-1}^{(2)\dagger}$ 
Macdonald identities:
\[
\mathscr{N}_{2n-1,1}(x;t)=\mathscr{D}_{\mathrm{B}_n^{(1)\dagger}}(x;t)
\]
and
\begin{equation}\label{Eq_Macdonald-Adagger}
\mathscr{N}_{2n,1}(x;t)=\mathscr{D}_{\mathrm{A}_{2n-1}^{(2)\dagger}}(x;t).
\end{equation}

\begin{corollary}[$\mathrm{B}_n^{(1)\dagger}$ product formula]
\label{Cor_Bdagger}
Let $k$ be a positive integer and
\begin{equation}\label{Eq_Lambda-k}
\Lambda=(k-1)\rho+k\fwa_0\in P_{+}.
\end{equation}
Then
\[
\eup^{-\Lambda} \ch V(\Lambda)=
\frac{\mathscr{D}_{\mathrm{A}_{2n-1}^{(2)\dagger}}(x;t)}
{\mathscr{D}_{\mathrm{B}_n^{(1)\dagger}}(x;t)}\,
\prod_{i=1}^n x_i^{-(k-1)(i-1)}.
\]
\end{corollary}

\begin{proof}
If we set $m=k$ in the $\mathrm{B}_n^{(1)\dagger}$ case 
of \eqref{Eq_Bdag-Adag} then
\[
\eup^{-\Lambda} \ch V(\Lambda)=\frac{\mathscr{N}_{2kn,k}(x;t)}
{\mathscr{D}_{\mathrm{B}_n^{(1)\dagger}}(x;t)}
\]
with $\Lambda$ as in \eqref{Eq_Lambda-k}. By
\[
\mathscr{N}_{2kn,k}(x;t)=\mathscr{N}_{2n,1}(x^k;t^k)
\prod_{i=1}^n x_i^{-(k-1)(i-1)}
\]
and \eqref{Eq_Macdonald-Adagger} the result follows.
\end{proof}

We still need to prove Lemma~\ref{Lem_Bn-WK-2}. For this we
first prepare a determinantal identity.
For $\sigma=0,1$ and $\kappa,k$ positive integers, let
\begin{multline*}
\mathscr{N}_{\kappa,k;\sigma}(x;t):=(-1)^{\sigma}
\sum_{\substack{r\in\Z^n \\[1pt] \abs{r}\equiv \sigma\:(2)}}
\det_{1\leqslant i,j\leqslant n} 
\Big( x_i^{-\kappa r_i-k(i-j)+i-1} 
t^{\frac{1}{2}\kappa r_i^2+k(n-j)r_i} \\[-1mm] 
- x_i^{-\kappa (r_i+1)+k(2n-i-j)+i-1} 
t^{\frac{1}{2}\kappa (r_i+1)^2-k(n-j)(r_i+1)}\Big).
\end{multline*}

\begin{lemma}\label{Lem_NisN0}
We have
\begin{equation}\label{Eq_fsigma}
\mathscr{N}_{\kappa,k;\sigma}(x;t)=\mathscr{N}_{\kappa,k}(x;t).
\end{equation}
\end{lemma}

\begin{proof}
Let $\varepsilon\in\{-1,1\}$. Then
\begin{align*}
&\mathscr{N}_{\kappa,k;0}(x;t)+
\varepsilon\mathscr{N}_{\kappa,k;1}(x;t) \\[1mm]
&=\sum_{r\in\Z^n} \det_{1\leqslant i,j\leqslant n} 
\Big( (-1)^{\frac{1}{2}(1+\varepsilon)r_i} 
x_i^{-\kappa r_i-k(i-j)+i-1} t^{\frac{1}{2}\kappa r_i^2+k(n-j)r_i} \\
& \qquad \quad
-(-1)^{\frac{1}{2}(1+\varepsilon)r_i} x_i^{-\kappa (r_i+1)+k(2n-i-j)+i-1}
t^{\frac{1}{2}\kappa (r_i+1)^2-k(n-j)(r_i+1)} \Big) \\
&=\det_{1\leqslant i,j\leqslant n} 
\bigg( \sum_{r\in\Z} (-1)^{\frac{1}{2}(1+\varepsilon)r}
x_i^{-\kappa r-k(i-j)+i-1} t^{\frac{1}{2}\kappa r^2+k(n-j)r} 
\\ & \qquad \quad -\sum_{r\in\Z} (-1)^{\frac{1}{2}(1+\varepsilon)r} 
x_i^{-\kappa (r+1)+k(2n-i-j)+i-1}
t^{\frac{1}{2}\kappa (r+1)^2-k(n-j)(r+1)} \bigg) \\
&=\det_{1\leqslant i,j\leqslant n} 
\bigg( \sum_{r\in\Z} (-1)^{\frac{1}{2}(1+\varepsilon)r}
x_i^{\kappa r-k(i-j)+i-1} t^{\frac{1}{2}\kappa r^2-k(n-j)r} \\
& \qquad \qquad \quad 
+\varepsilon\sum_{r\in\Z} (-1)^{\frac{1}{2}(1+\varepsilon)r}
x_i^{\kappa r+k(2n-i-j)+i-1} t^{\frac{1}{2}\kappa r^2+k(n-j)r} \bigg) \\
&=\sum_{r\in\Z^n} 
\prod_{i=1}^n (-1)^{\frac{1}{2}(1+\varepsilon)r_i}
x_i^{\kappa r_i-(k-1)(i-1)} t^{\frac{1}{2}\kappa r_i^2-k(n-1)r_i} \\[-1mm]
& \qquad \qquad \quad \times \det_{1\leqslant i,j\leqslant n} 
\Big((x_it^{r_i})^{k(j-1)}+\varepsilon(x_it^{r_i})^{k(2n-j-1)}\Big).
\end{align*}
Here the second and last equality use multilinearity and the 
third equality follows from a shift of $r\mapsto r-1$ in the second
sum over $r$.
When $j=n$ the final line reads
$(1+\varepsilon)\big(x_it^{r_i}\big)^{k(n-1)}$, so that
\begin{equation}\label{Eq_nul}
\mathscr{N}_{\kappa,k;0}(x;t)-\mathscr{N}_{\kappa,k;1}(x;t)=0.
\end{equation}
Also, by the $\mathrm{D}_n$ Vandermonde determinant \eqref{Eq_VdMD},
\begin{align*}
\mathscr{N}_{\kappa,k;0}(x;t)&+\mathscr{N}_{\kappa,k;1}(x;t) \\
&=2\sum_{r\in\Z^n} \Delta_{\mathrm{D}}(x^k t^{kr})
\prod_{i=1}^n (-1)^{r_i} x_i^{\kappa r_i-(k-1)(i-1)}
t^{\frac{1}{2}\kappa r_i^2-k(n-1)r_i} \\
&=2\mathscr{N}_{\kappa,k}(x;t).
\end{align*}
Together with \eqref{Eq_nul} this implies \eqref{Eq_fsigma}.
\end{proof}

\begin{proof}[Proof of Lemma~\ref{Lem_Bn-WK-2}]
For $\kappa,k$ positive integers, define
\begin{multline*}
\mathscr{M}_{\kappa,k}(x;t):=
\sum_{\substack{r\in\Z^n \\[1pt] \abs{r}\equiv 0\:(2)}}
\det_{1\leqslant i,j\leqslant n} 
\Big(x_i^{\kappa r_i+k(j-1)-(k-1)(i-1)} 
q^{\kappa\binom{r_i}{2}+k(j-1)r_i} \\[-2mm]
-x_i^{\kappa (r_i+1)-k(j-1)-(k-1)(i-1)} 
q^{\kappa \binom{r_i+1}{2}-k(j-1)r_i}\Big).
\end{multline*}

Taking $c_0=k-1$, $\la_i=\tfrac{1}{2}m+(k-1)(n-i+\tfrac{1}{2})$
$(1\leqslant i\leqslant n)$ in \ref{Eq_char-Bn},
and using \eqref{Eq_odd-orthogonal} and \eqref{Eq_soandsp},
we obtain
\begin{multline}\label{Eq_B-case}
\eup^{-(k-1)\rho-m\fwa_n} \ch V\big((k-1)\rho+m\fwa_n\big) \\[1mm]
=\frac{\mathscr{M}_{m+k(2n-1),k}(x;t)}{(t;t)_{\infty}^n\prod_{i=1}^n 
\theta(x_i;t) \prod_{1\leqslant i<j\leqslant n}x_j \theta(x_ix_j^{\pm};t)},
\end{multline}
where $\mathfrak{g}=\mathrm{B}_n^{(1)}$ and
$x_1,\dots,x_n$ are given by \ref{Eq_qxi}.
Similarly, taking $c_0=k-1$, $\la_i=\tfrac{1}{2}m+(k-1)(n-i+1)$
$(1\leqslant i\leqslant n)$ in \ref{Eq_char-A2n12},
and using \eqref{Eq_symplectic} and \eqref{Eq_soandsp}, we get
\begin{multline}\label{Eq_A-case}
\eup^{-(k-1)\rho-m\fwa_n} \ch V\big((k-1)\rho+m\fwa_n\big) \\[1mm]
=\frac{\mathscr{M}_{m+2kn,k}(x)}{(t;t)_{\infty}^{n-1}(t^2;t^2)_{\infty}
\prod_{i=1}^n \theta(x_i^2;t^2)
\prod_{1\leqslant i<j\leqslant n} x_j \theta(x_ix_j^{\pm};t)},
\end{multline}
where $\mathfrak{g}=\mathrm{A}_{2n-1}^{(2)}$ and
$x_1,\dots,x_n$ are given by \ref{Eq_qxi-2}.

Next we replace $\alpha_i\mapsto\alpha_{n-i}$ for $0\leqslant i\leqslant n$.
This maps maps $\fwa_n$ to $\fwa_0$ but leaves the Weyl vector $\rho$ 
unchanged. 
On the right it has the effect of replacing \eqref{Eq_qxi} by
\[
x_{n-i+1}=\eup^{-\alpha_0-\cdots-\alpha_{i-1}}\quad
(1\leqslant i\leqslant n)
\]
in the case of \eqref{Eq_B-case}, and by
\[
x_{n-i+1}=\eup^{-\alpha_0/2-\alpha_1-\cdots-\alpha_{i-1}}\quad
(1\leqslant i\leqslant n).
\]
in the case of $\mathrm{A}_{2n-1}^{(2)}$.
Moreover, in the first case $t$ is now given by
\[
t=\eup^{-2\alpha_0-\cdots-2\alpha_{n-2}-\alpha_{n-1}-\alpha_n}
\]
in accordance with the interpretation of $\delta$ as the null root of
$\mathrm{B}_n^{(1)\dagger}$, and in the second case by
\[
t=\eup^{-\alpha_0-2\alpha_1-\cdots-2\alpha_{n-2}-\alpha_{n-1}-\alpha_n}
\]
in accordance with $\mathrm{A}_{2n-1}^{(2)\dagger}$.
We now replace $x_i\mapsto t^{1/2}/x_{n-i+1}$---so that the transformed 
$x_i$ is given by \eqref{Eq_qxi-3} in both cases---and $r_i\mapsto r_{n-i+1}$ 
for $1\leqslant i\leqslant n$.
Also reversing the order of the rows and columns in the determinant and using 
$\theta(x;t)=\theta(t/x;t)$, we get
\begin{multline*}
\eup^{-(k-1)\rho-m\fwa_0} \ch V\big((k-1)\rho+m\fwa_0\big) \\
=\frac{1}{\mathscr{D}_{\mathfrak{g}}(x;t)}
\times \begin{cases}\mathscr{N}_{m+k(2n-1),k;0}(x)
& \text{for $\mathrm{B}_n^{(1)\dagger}$}, \\[5mm]
\mathscr{N}_{m+2kn,k;0}(x)
& \text{for $\mathrm{A}_{2n-1}^{(2)\dagger}$}.
\end{cases}
\end{multline*}
By Lemma~\ref{Lem_NisN0} the claim follows.
\end{proof}

\chapter{Limits of elliptic hypergeometric integrals}\label{App_B}

\addtocounter{section}{1}


We review some results of van de Bult and the first author 
\cite{vdBR09,vdBR11} regarding limits of elliptic beta integrals.
We then prove the quadratic transformation formulas of
Theorems~\ref{Thm_Cohl}, \ref{Thm_GR} and \ref{Thm_nonterminating3}
by taking limits in three quadratic transformation formulas
for elliptic beta integrals, originally conjectured in \cite{Rains12}
in the context of elliptic Littlewood identities, and subsequently
proved in \cite{vdBult11,Rains14}. 

\medskip

For complex $p,q$ such that $\abs{p},\abs{q}<1$ and $z\in\mathbb{C}^{\ast}$,
let $\Gamma(z;p,q)$ be the elliptic gamma function \cite{Ruijsenaars97}
\[
\Gamma(z;p,q):=
\prod_{i,j=0}^{\infty} \frac{1-z^{-1}p^{i+1} q^{j+1}}{1-zp^iq^j},
\]
which satisfies the functional equation
\begin{equation}\label{Eq_EG-fun}
\Gamma(z;p,q)\Gamma(pq/z;p,q)=1.
\end{equation}
For $z=(z_1,\dots,z_n)\in(\mathbb{C}^{\ast})^n$ and 
$t,t_0,\dots,t_{2m+5}\in\mathbb{C}^{\ast}$ ($m$ a nonnegative integer)
such that
\[
t^{2n-2}t_0\cdots t_{2m+5}=(pq)^{m+1},
\]
define the density
\[
\Delta(z;t_0,\dots,t_{2m+5};t;p,q)
:=\prod_{i=1}^n \frac{\prod_{r=0}^{2m+5} \Gamma(t_r z_i^{\pm};p,q)}
{\Gamma(z_i^{\pm 2};p,q)}
\prod_{1\leq i<j\leq n} \frac{\Gamma(tz_i^{\pm}z_j^{\pm};p,q)}
{\Gamma(z_i^{\pm}z_j^{\pm};p,q)}.
\]
Note that for $0<\alpha<1$,
\[
\lim_{p\to 0}\Delta(z;t_0,t_1,t_2,t_3,p^{\alpha}t_4,
p^{1-\alpha}q/t_0t_1t_2t_3t_4;t;p,q)
=\Delta(z;q,t;t_0,t_1,t_2,t_3),
\]
with on the right the Koornwinder density \eqref{Eq_Kdensity}.
The elliptic density may be used to define the higher-order elliptic 
Selberg integral (also known as a type-II $\mathrm{C}_n$ beta integral) 
as \cite{Rains10,RW17,Spiridonov03}
\begin{multline}\label{Eq_typeII}
\II^{(n)}_m(t_0,\dots,t_{2m+5};t;p,q) \\
:=\kappa_n \int_{C^n}
\Delta(z;t_0,\dots,t_{2m+5};t;p,q)\,
\frac{\dup z_1}{z_1}\cdots\frac{\dup z_n}{z_n},
\end{multline}
where
\[
\kappa_n:=\frac{1}{2^n n!} 
\bigg(\frac{(p;p)_{\infty}(q;q)_{\infty}\Gamma(t;p,q)}
{2\pi\iup}\bigg)^n
\]
and $C$ is positively oriented, star-shaped Jordan curve around 
the origin such that $C=C^{-1}$ and such that the points
$\{t_r p^iq^j\}_{0\leqslant r\leqslant 2m+5;i,j\geqslant 0}$ 
all lie in the interior of $C$.

For $\alpha=(\alpha_0,\dots,\alpha_7)\in\mathbb{R}^7$ 
and $u=(u_0,\dots,u_7)\in(\mathbb{C}^{\ast})^7$ such that
\begin{equation}\label{Eq_u-restriction}
t^{2n-2}u_0\cdots u_7=q^2,
\end{equation}
let
\[
B^{(n)}_{\alpha}(u;t;p,q):=
\II^{(n)}_1(u_0p^{\alpha_0},\dots,u_7p^{\alpha_7};t;p,q).
\]
From \cite[Proposition 4.3]{vdBR09} and \cite[Proposition 6.5]{vdBR11}
we may infer the following $p\to 0$ limit of $B^{(n)}_{\alpha}(u;t;p,q)$.

\begin{proposition}\label{Prop_P43}
Let $q,t,u_0,\dots,u_7\in\mathbb{C}^{\ast}$ such that 
$\abs{q},\abs{u_1u_2}<1$ and such that \eqref{Eq_u-restriction} holds,
and let 
\[
\alpha=(-\gamma,-\beta,\beta,\gamma,\gamma,\delta,1-\delta,1-\gamma)
\]
for $0\leqslant \beta<\gamma<\delta\leqslant 1/2$. Then
\begin{multline*}
\lim_{p\to 0} \,
(u_0u_1p^{-\gamma-\beta}t^{n-1},
u_0u_2p^{-\gamma+\beta}t^{n-1};q,t)_{\infty^n}\,
B^{(n)}_{\alpha}(u;t,p,q) \\
=\frac{(qt^{n-1}u_0/u_7;q,t)_{\infty^n}}
{(t^n,t^{n-1}u_0u_3,t^{n-1}u_0u_4;q,t)_{\infty^n}}\,
\qhyp{2}{1}{(n)}\bigg[\genfrac{}{}{0pt}{}
{t^{n-1}u_0u_3,t^{n-1}u_0u_4}{qt^{n-1}u_0/u_7};q,t;u_1u_2\bigg].
\end{multline*}
\end{proposition}

To shorten some of our subsequent calculations we restate this in a form 
that hides the symmetry in $u_3$ and $u_4$, and which is obtained by 
applying the multiple analogue of Heine transformation 
\cite[Equation (2.2)]{BF99}
\begin{equation}\label{Eq_Multiple-Heine}
\qhyp{2}{1}{(n)}\bigg[\genfrac{}{}{0pt}{}
{a,b}{c};q,t;z\bigg]
=\frac{(b,azt^{n-1};q,t)_{\infty^n}}
{(c,zt^{n-1};q,t)_{\infty^n}} \,
\qhyp{2}{1}{(n)}\bigg[\genfrac{}{}{0pt}{}
{c/b,zt^{n-1}}{azt^{n-1}};q,t;bt^{1-n}\bigg],
\end{equation}
for $\abs{z},\abs{bt^{1-n}}<1$.

\begin{corollary}\label{Cor_vdBR2}
Let $q,t,u_0,\dots,u_7\in\mathbb{C}^{\ast}$ such that 
$\abs{q},\abs{u_0u_4}<1$ and such that \eqref{Eq_u-restriction} holds,
and let 
\[
\alpha=(-\gamma,-\beta,\beta,\gamma,\gamma,\delta,1-\delta,1-\gamma)
\]
for $0\leqslant \beta<\gamma<\delta\leqslant 1/2$. Then
\begin{multline*}
\lim_{p\to 0} \,
(u_0u_1p^{-\gamma-\beta}t^{n-1},
u_0u_2p^{-\gamma+\beta}t^{n-1};q,t)_{\infty^n}\,
B^{(n)}_{\alpha}(u;t,p,q) \\
=\frac{(q^2/u_4u_5u_6u_7;q,t)_{\infty^n}}
{(t^n,t^{n-1}u_1u_2,t^{n-1}u_0u_3;q,t)_{\infty^n}}\,
\qhyp{2}{1}{(n)}\bigg[\genfrac{}{}{0pt}{}
{t^{n-1}u_1u_2,q/u_4u_7}{q^2/u_4u_5u_6u_7};q,t;u_0u_4\bigg].
\end{multline*}
\end{corollary}

The second $p\to 0$ limit of $B^{(n)}_{\alpha}(u;t;p,q)$ we need
is as follows.

\begin{proposition}\label{Prop_vdBR1}
Let $q,t,u_0,\dots,u_7\in\mathbb{C}^{\ast}$ such that 
$\abs{q},\abs{u_1u_5}<1$ and such that \eqref{Eq_u-restriction} holds,
and let 
\begin{equation}\label{Eq_alpha}
\alpha=(-\gamma,-\gamma,\gamma,\gamma,\gamma,\gamma,1-\gamma,1-\gamma)
\end{equation}
for $0<\gamma<1/2$. Then
\begin{multline}\label{Eq_limit-VWP}
\lim_{p\to 0} \,
(u_0u_1p^{-2\gamma}t^{n-1};q,t)_{\infty^n}\, B^{(n)}_{\alpha}(u;t;p,q) \\
=\frac{(qt^{n-1}u_0/u_6,qt^{n-1}u_0/u_7;q,t)_{\infty^n}}
{(t^n,t^{n-1}u_0u_5,q^2t^{n-1}u_0/u_5u_6u_7;q,t)_{\infty^n}} 
\prod_{r=2}^4 \frac{(q^2/u_ru_5u_6u_7;q,t)_{\infty^n}}
{(t^{n-1}u_0u_r,t^{n-1}u_1u_r;q,t)_{\infty^n}} \\[1mm] \times
\Whyp{8}{7}{(n)}\Big(\frac{qt^{n-1}u_0}{u_5u_6u_7};t^{n-1}u_0u_2,
t^{n-1}u_0u_3,t^{n-1}u_0u_4,
\frac{q}{u_5u_6},\frac{q}{u_5u_7};q,t;u_1u_5\Big).
\end{multline}
\end{proposition}

This result follows from a special case 
($m=1$ and $(\alpha_0,\dots,\alpha_7)$ given by the sequence 
\eqref{Eq_alpha}) of \cite[Proposition 6.3]{vdBR11}. 
After some symmetrisation, this expresses
the left-hand side of \eqref{Eq_limit-VWP} as a virtual Koornwinder integral.
By \cite[Theorem 5.15]{Rains05} this integral can be expressed
as the $\Whyp{8}{7}{(n)}$ series on given on the right of \eqref{Eq_limit-VWP}.

\smallskip

\begin{remark}
There is some redundancy in the expression on the right,
and by the substitution 
\[
(u_1,u_2,\dots,u_7)\mapsto 
(u_0u_1,u_2/u_0,u_3/u_0,u_4/u_0,u_5/u_0,u_0u_6,u_0u_7),
\]
so that the constraint \eqref{Eq_u-restriction} simplifies to
$t^{2n-2}u_1\cdots u_7=q^2$, the $u_0$-dependence drops out.
\end{remark}

\smallskip

We now have the tools to prove the nonterminating quadratic transformation 
formulas of Section~\ref{Sec_KM}. Because it is much simpler than
Theorem~\ref{Thm_Cohl}, we first consider Theorem~\ref{Thm_GR}.

\begin{proof}[Proof of Theorem~\ref{Thm_GR}]
Our starting point is the following quadratic transformation formula
for elliptic Selberg integrals.

\begin{theorem}
For $p,q,t,t_0,\dots,t_4\in\mathbb{C}^{\ast}$ such that 
$\abs{p},\abs{q}<1$ and
\begin{equation}\label{Eq_t0t1t2t3-Q1}
t^{n-1}t_0t_1t_2t_3=pq,
\end{equation} 
we have
\begin{multline}\label{Eq_Con-Q1}
\II^{(n)}_1\big(t^{-1/2}t_0^2,t^{-1/2}t_1^2,t^{-1/2}t_2^2,t^{-1/2}t_3^2,
t^{1/2},pt^{1/2},qt^{1/2},pqt^{1/2};t;p^2,q^2\big) \\
=\prod_{i=1}^{2n}\prod_{0\leqslant r<s\leqslant 2}
\frac{\Gamma(-t^{i/2-1}t_rt_s;p,q)}
{\Gamma(t^{i/2-1/2} t_rt_s;p,q)} \\ \times
\II^{(n)}_1\big(t^{\pm 1/4}t_0,t^{\pm 1/4}t_1,t^{\pm 1/4}t_2,
-t^{\pm 1/4}t_3;t;p,q\big).
\end{multline}
\end{theorem}

This theorem is the special case
\[
\boldsymbol{\la}=\boldsymbol{0}\quad\text{and}\quad
(n,t,t_0,t_1,t_2,u_0)\mapsto (2n,t^{1/2},t^{-1/4}t_0,t^{-1/4}t_1,
t^{-1/4}t_2,-t^{-1/4}t_3)
\]
of \cite[Conjecture Q1]{Rains12}, which was proved in 
\cite[Corollary 7.6]{vdBult11} and \cite[Section 8]{Rains14}.

If we make the substitutions 
\[
(t_0,t_1,t_2,t_3)\mapsto 
(p^{-1/4},ap^{1/4}t^{1/2-n},cp^{1/4}t^{1-n},p^{3/4}qt^{n-1/2}/ac),
\]
consistent with the constraint \eqref{Eq_t0t1t2t3-Q1}, the
transformation \eqref{Eq_Con-Q1} takes the form
\begin{align*}
&B^{(n)}_{\alpha}\bigg(t^{-1/2},t^{1/2},qt^{1/2},a^2t^{1/2-2n},c^2t^{3/2-2n},
t^{1/2},qt^{1/2},\frac{q^2t^{2n-3/2}}{a^2c^2};t;p^2,q^2\bigg) \\
&=\prod_{i=1}^{2n}
\frac{\Gamma(-ct^{(1-i)/2},-at^{-i/2},-acp^{1/2}t^{i/2+1/2-2n};p,q)}
{\Gamma(ct^{1-i/2},at^{(1-i)/2},acp^{1/2}t^{i/2+1-2n};p,q)} \\ 
&\times
B^{(n)}_{\alpha'}\bigg(t^{1/4},t^{-1/4},ct^{5/4-n},ct^{3/4-n},
at^{3/4-n},at^{1/4-n},-\frac{qt^{n-1/4}}{ac},
-\frac{qt^{n-3/4}}{ac};t;p,q\bigg),
\end{align*}
where
\begin{equation}\label{Eq_alpha-alphap}
\alpha=\big({-}\tfrac{1}{4},0,0,\tfrac{1}{4},\tfrac{1}{4},
\tfrac{1}{2},\tfrac{1}{2},\tfrac{3}{4}\big)\quad\text{and}\quad
\alpha'=\big({-}\tfrac{1}{4},-\tfrac{1}{4},\tfrac{1}{4},\tfrac{1}{4},
\tfrac{1}{4},\tfrac{1}{4},\tfrac{3}{4},\tfrac{3}{4}\big).
\end{equation}
Multiplying both sides by
\[
(p^{-1/2}t^{n-1};q,t)_{\infty^n}=
(p^{-1/2}t^{n-1},p^{-1/2}qt^{n-1};q^2,t)_{\infty^n},
\]
we can take the $p\to 0$ limit using Corollary~\ref{Cor_vdBR2} and
Proposition~\ref{Prop_vdBR1}.
After some simplifications the claim follows.
\end{proof}

\smallskip

\begin{proof}[Proof of Theorem~\ref{Thm_Cohl}]
This time the proof is more involved, and as a first step we need to
carry out a non-trivial rewriting of the theorem.

Appealing to analytic continuation, it suffices to prove \eqref{Eq_Cohl}
for $\abs{t}<1$. As a function of $a$, both sides of 
\eqref{Eq_Cohl} are analytic for $\abs{a}<\abs{1/cqt^n}$. 
Hence it is enough to prove the identity 
for the sequence of $a$-values $\{t^N\}_{N\geqslant 0}$
with accumulation point $0$.
If we set $a=t^N$ in \eqref{Eq_Cohl} we obtain
\begin{multline}\label{Eq_presplit}
\qhyp{2}{1}{(n)}\bigg[\genfrac{}{}{0pt}{}
{t^N,t^{N-1}}{qt^{2n-1}};q,t^2;c^2q\bigg]
=\frac{(c^2qt^{2N+2n-2};q,t^2)_{\infty^n}}{(c^2qt^{2n-2};q,t^2)_{\infty^n}} 
\cdot \frac{(cqt^{2n-1};q,t)_{\infty^{2n}}}
{(cqt^{N+2n-1};q,t)_{\infty^{2n}}} \\[1mm] 
\times \Whyp{8}{7}{(n)}
\big(ct^{N+2n-1};ct^{N-1},t^N,-t^n,q^{1/2}t^n,-q^{1/2}t^n;q,t;cq\big).
\end{multline}
Now define integers $m_1,m_2$ as
\[
m_1:=\Floor{\tfrac{N}{2}}\quad\text{and}\quad
m_2:=2\Ceil{\tfrac{N}{2}}-1=2(N-m_1)-1.
\]
Since $\{N-1,N\}=\{2m_1,m_2\}$, it follows from
\eqref{Eq_KM-PS} that
\[
\qhyp{r}{s}{(n)}\bigg[\genfrac{}{}{0pt}{}
{t^N,t^{N-1},a_3,\dots,a_r}{b_1,\dots,b_s};q,t^2;z\bigg]
=\qhyp{2}{1}{(m_1)}\bigg[\genfrac{}{}{0pt}{}
{t^{2n},t^{m_2},a_3,\dots,a_r}{b_1,\dots,b_s};q,t^2;z\bigg].
\]
Also, by \eqref{Eq_VWP-BHS},
\[
\Whyp{r+1}{r}{(n)}(a_1;t^N,a_5,\dots,a_{r+1};q,t;z)=
\Whyp{r+1}{r}{(N)}(a_1;t^n,a_5,\dots,a_{r+1};q,t;z).
\]
Applying these two results to \eqref{Eq_presplit}, and using
\begin{multline*}
\frac{(c^2qt^{2N+2n-2},cqt^{2n-1},cqt^{2n-2};q,t^2)_{\infty^n}}
{(c^2qt^{2n-2},cqt^{N+2n-1},cqt^{N+2n-2};q,t^2)_{\infty^n}} \\[1mm]
=\frac{(c^2qt^{2N+2n-2};q,t^2)_{\infty^N}}{(c^2qt^{2N-2};q,t^2)_{\infty^N}}
\cdot \frac{(cqt^{N-1};q,t)_{\infty^N}}{(cqt^{N+2n-1};q,t)_{\infty^N}},
\end{multline*}
yields
\begin{multline*}
\qhyp{2}{1}{(m_1)}\bigg[\genfrac{}{}{0pt}{}
{b^2,t^{m_2}}{b^2q/t};q,t^2;c^2q\bigg]
=\frac{(b^2c^2qt^{2N-2};q,t^2)_{\infty^N}}{(c^2qt^{2N-2};q,t^2)_{\infty^N}}
\cdot \frac{(cqt^{N-1};q,t)_{\infty^N}}{(b^2cqt^{N-1};q,t)_{\infty^N}} \\[1mm]
\times \Whyp{8}{7}{(N)}
\big(b^2ct^{N-1};ct^{N-1},b,-b,bq^{1/2},-bq^{1/2};q,t;cq\big),
\end{multline*}
where $b=t^n$. 
In the following we will prove this transformation formula
for arbitrary $b$. We may then rename $N$ as $n$ so that
the identity to be proved takes the form
\begin{multline}\label{Eq_tobeproved}
\qhyp{2}{1}{(n_1)}\bigg[\genfrac{}{}{0pt}{}
{b^2,t^{n_2}}{b^2q/t};q,t^2;c^2q\bigg]
=\frac{(b^2c^2qt^{2n-2};q,t^2)_{\infty^n}}{(c^2qt^{2n-2};q,t^2)_{\infty^n}}
\cdot \frac{(cqt^{n-1};q,t)_{\infty^n}}{(b^2cqt^{n-1};q,t)_{\infty^n}} \\[1mm]
\times \Whyp{8}{7}{(n)}
\big(b^2ct^{n-1};ct^{n-1},b,-b,bq^{1/2},-bq^{1/2};q,t;cq\big),
\end{multline}
where $\abs{q},\abs{cq},\abs{c^2q}<1$ and 
\begin{equation}\label{Eq_n1n2}
n_1:=\Floor{\tfrac{n}{2}}, \qquad 
n_2:=2\Ceil{\tfrac{n}{2}}-1=2(n-n_1)-1.
\end{equation}
The prerequisite integral transformation is as follows.

\begin{theorem}
Let $p,q,t,t_0,\dots,t_4\in\mathbb{C}^{\ast}$ such that 
$\abs{p},\abs{q}<1$ and such that \eqref{Eq_t0t1t2t3-Q1} holds.
Let $n_1$ and $n_2$ be given by \eqref{Eq_n1n2} and $\tau$ by
\[
\tau=\begin{cases}
1 & \text{if $n$ is even} \\[1mm]
\displaystyle
\frac{1}{\Gamma(t_0^2,t_1^2,t_2^2,t_3^2,p,t,pt;p^2,q)}
& \text{if $n$ is odd}.
\end{cases}
\]
Then
\begin{multline*}
\II^{(n_1)}_1\big(t_0^2,t_1^2,t_2^2,t_3^2,t^{n_2-2n_1+1},
p,t,pt;t^2;p^2,q\big) \\
=\tau\,\prod_{i=1}^n \prod_{0\leqslant r<s\leqslant 2}
\frac{\Gamma(-t^{i-1}t_rt_s;p,q^{1/2})}
{\Gamma(q^{-1/2}t^{i-1}t_rt_s;p,q^{1/2})} \\ 
\times \II^{(n)}_1\big(q^{\pm 1/4}t_0,q^{\pm 1/4}t_1,
q^{\pm 1/4}t_2,-q^{\pm 1/4}t_3;t;p,q\big).
\end{multline*}
\end{theorem}

Noting that $t^{n_2-2n_1+1}$ is $1$ for $n$ even and $t^2$ for $n$ odd,
the above theorem is \cite[Conjecture Q4]{Rains12} with 
\[
\boldsymbol{\la}=\boldsymbol{0}\quad\text{and}\quad
(q,u_0)\mapsto (q^{1/2},-t_3),
\]
which was proved in \cite[Section 8]{Rains14}.

If we make the substitutions
\[
(t_0,t_1,t_2,t_3)=
(p^{-1/4},bp^{1/4}q^{1/2}t^{1-n},cp^{1/4}q^{1/2},p^{3/4}/bc)
\]
then \eqref{Eq_t0t1t2t3-Q1} is automatically satisfied.
Again defining $\alpha$ and $\alpha'$ as in \eqref{Eq_alpha-alphap},
we thus obtain
\begin{align*}
&B^{(n_1)}_{\alpha}\bigg(1,t^{n_2-2n_1+1},t,b^2qt^{2-2n},c^2q,1,t,
\frac{1}{b^2c^2};t^2;p^2,q\bigg) \\
&\quad=\tau'\, \prod_{i=1}^n \frac{\Gamma(-bq^{1/2}t^{1-i},-cq^{1/2}t^{n-i},
-bcp^{1/2}qt^{1-i};p,q^{1/2})}
{\Gamma(bt^{1-i},ct^{n-i},bcp^{1/2}q^{1/2}t^{1-i};p,q^{1/2})} \\ 
&\qquad \times B^{(n)}_{\alpha'}\bigg(
q^{-1/4},q^{1/4},bq^{1/4}t^{1-n},bq^{3/4}t^{1-n},
cq^{1/4},cq^{3/4},-\frac{q^{-1/4}}{bc},-\frac{q^{1/4}}{bc};t;p,q\bigg),
\end{align*}
where $\tau'=1$ if $n$ is even and 
\[
\tau'=
\frac{1}{\Gamma(p^{-1/2},b^2p^{1/2}qt^{2-2n},c^2p^{1/2}q,
p^{3/2}/b^2c^2,p,t,pt;p^2,q)}.
\] 
After multiplying both sides by 
\begin{multline*}
(p^{-1/2}t^{n_2-1},p^{-1/2}t^{2n_1-1};q,t^2)_{\infty^{n_1}} \\
=(p^{-1/2}t^{n-1};q,t)_{\infty^n}
\times 
\begin{cases}
1 & \text{if $n$ is even} \\[1mm] \displaystyle
\frac{1}{(p^{-1/2};q)_{\infty}} & \text{if $n$ is odd},
\end{cases}
\end{multline*}
we can use Corollary~\ref{Cor_vdBR2} and Proposition~\ref{Prop_vdBR1} 
to take the $p\to 0$ limit, resulting in
\eqref{Eq_tobeproved}.
\end{proof}

\smallskip

\begin{proof}[Proof of Theorem~\ref{Thm_nonterminating3}]
Our final proof is very similar to that of Theorem~\ref{Thm_GR}.
We begin with the $\boldsymbol{\la}=0$ case of
\cite[Conjecture Q3]{Rains12}, proved in \cite[Theorem 6.1]{vdBult11} 
and \cite[Section 8]{Rains14}.

\begin{theorem}
For $p,q,t,u,t_0,\dots,t_3\in\mathbb{C}^{\ast}$ such that 
$\abs{p},\abs{q}<1$ and
\begin{equation}\label{Eq_t0t1t2t3-Q2}
t^{2n-2}t_0t_1t_2t_3=pq^2,
\end{equation} 
we have
\begin{align*}
\II^{(n)}_2&\big(t^{-1/2}t_0,t^{-1/2}t_1,t^{-1/2}t_2,
t^{1/2}t_3,t^{-1/2}u,pqt^{-1/2}/u,
\pm t^{1/2},\pm (pt)^{1/2};t;p,q\big) \\
&=\prod_{i=1}^n\prod_{0\leqslant r<s\leqslant 2}
\frac{\Gamma(t^{2i-3}t_rt_s;p,q)}
{\Gamma(t^{2i-2} t_rt_s/q;p,q)} \\ 
& \qquad \times
\II^{(n)}_2\big(q^{\pm 1/2}t_0,q^{\pm 1/2}t_1,q^{\pm 1/2}t_2,
q^{\pm 1/2}tt_3,q^{1/2}u/t,pq^{3/2}/tu;t^2;p,q^2\big).
\end{align*}
\end{theorem}

For $u=t_3$ both sides reduce to a 
$\II^{(n)}_1$ integral by \eqref{Eq_EG-fun}, so that
\begin{multline*}
\II^{(n)}_1\big(t^{-1/2}t_0,t^{-1/2}t_1,t^{-1/2}t_2,t^{-1/2}t_3,
\pm t^{1/2},\pm (pt)^{1/2};t;p,q\big) \\
=\prod_{i=1}^n\prod_{0\leqslant r<s\leqslant 2}
\frac{\Gamma(t^{2i-3}t_rt_s;p,q)}
{\Gamma(t^{2i-2} t_rt_s/q;p,q)}\qquad\qquad  \\ \times
\II^{(n)}_1\big(q^{\pm 1/2}t_0,q^{\pm 1/2}t_1,q^{\pm 1/2}t_2,
(q^{1/2}/t)^{\pm}t_3;t^2;p,q^2\big).
\end{multline*}

After carrying out the substitutions
\[
(t_0,t_1,t_2,t_3)\mapsto 
(p^{-1/4},ap^{1/4}qt^{1-2n},p^{3/4}q/ac,cp^{1/4}t),
\]
consistent with \eqref{Eq_t0t1t2t3-Q2}, we then obtain
\begin{align*}
&B^{(n)}_{\alpha}\bigg(t^{-1/2},t^{1/2},-t^{1/2},
aqt^{1/2-2n},ct^{1/2},t^{1/2},-t^{1/2},\frac{qt^{-1/2}}{ac};t;p,q\bigg) \\
&=\prod_{i=1}^n
\frac{\Gamma(aqt^{-2i},p^{1/2}qt^{2n-2i-1}/ac,pq^2t^{-2i}/c;p,q)}
{\Gamma(at^{1-2i},p^{1/2}t^{2n-2i}/ac,pqt^{1-2i}/c;p,q)} \\
&\times
B^{(n)}_{\alpha'}\bigg(q^{1/2},q^{-1/2},aq^{3/2}t^{1-2n},aq^{1/2}t^{1-2n},
cq^{-1/2}t^2,cq^{1/2},\frac{q^{3/2}}{ac},
\frac{q^{1/2}}{ac};t^2;p,q^2\bigg),
\end{align*}
with $\alpha$ and $\alpha'$ once again given by \eqref{Eq_alpha-alphap}.
Multiplying both sides by
\[
(p^{-1/4}t^{n-1},-p^{1/4}t^{n-1};q,t)_{\infty^n}=
(p^{-1/2}t^{2n-2};q^2,t^2)_{\infty^n}
\]
and then letting $p$ tend to $0$ yields \eqref{Eq_nonterminating3}.
\end{proof}

\backmatter

\bibliographystyle{amsalpha}

\printindex

\end{document}